\numberwithin{equation}{subsection}
\theoremstyle{plain}
\newtheorem{theorem}[equation]{Theorem}
\newtheorem{proposition}[equation]{Proposition}
\newtheorem{corollary}[equation]{Corollary}
\newtheorem{lemma}[equation]{Lemma}
\newtheorem{conjecture}[equation]{Conjecture}
\newtheorem{itheorem}{Theorem}
\numberwithin{itheorem}{section}
\newtheorem{icorollary}[itheorem]{Corollary}
\theoremstyle{definition}
\newtheorem{definition}[equation]{Definition}
\newtheorem{notation}[equation]{Notation}
\newtheorem{example}[equation]{Example}
\newtheorem{remark}[equation]{Remark}
\newcommand{\beq}{\begin{equation}}
\newcommand{\eeq}{\end{equation}}
\newcommand{\mb}{\mathbb}
\newcommand{\mf}{\mathfrak}
\newcommand{\kk}{{\Bbbk}}
\renewcommand{\AA}{\mb{A}}
\newcommand{\CC}{{\Bbbk}}
\newcommand{\NN}{\mathbb Z_{\ge0}} 
\newcommand{\OO}{\mb{O}}
\newcommand{\QQ}{\mb{Q}}
\newcommand{\RR}{\mb{R}}
\newcommand{\ZZ}{\mb{Z}}
\newcommand{\DMO}{\DeclareMathOperator}
\DMO{\GK}{GKdim}
\DMO{\PGK}{PGKdim}
\DMO{\gr}{gr}
\DMO{\Der}{Der}
\DMO{\trdeg}{trdeg}
\DMO{\Kdim}{Kdim}
\DMO{\codim}{codim}
\DMO{\Res}{Res}
\DMO{\MSpec}{MSpec}
\DMO{\Sa}{S}
\DMO{\Ua}{U}
\DMO{\Core}{Core}
\DMO{\rk}{rk}
\DMO{\Ker}{\ker}
\DMO{\End}{End}
\DMO{\Aut}{Aut}
\DMO{\im}{im}
\DMO{\Ann}{Ann}
\DMO{\supp}{supp}
\DMO{\ldeg}{ldeg}
\DMO{\sdeg}{gaps}
\DMO{\CoreP}{P}
\newcommand{\Shifts}{{\it Shifts}}
\newcommand{\Shift}{{\it Shift}}
\newcommand{\DLoc}{{\it DLoc}}
\newcommand{\Loc}{{\it Loc}}
\newcommand{\Dil}{{\it Dil}}
\DMO{\lie}{lie}
\DMO{\PSpec}{PSpec}
\DMO{\len}{len}
\DMO{\Spec}{Spec}
\newcommand{\Vir}{V\!ir}
\newcommand{\VP}{V_P}
\newcommand{\del}{\partial}
\newcommand{\ssm}{\setminus}
\DMO{\ev}{ev}
\title[ Poisson primes in the symmetric algebra of the Virasoro algebra]{The Poisson spectrum of the symmetric algebra of the Virasoro algebra}
\author{Alexey V. Petukhov and Susan J. Sierra}
\date{\today}
\address{Sierra: School of Mathematics, The University of Edinburgh, Edinburgh EH9 3FD, United Kingdom}
\email{s.sierra@ed.ac.uk}
\address{Petukhov: Institute for Information Transmission Problems, Bolshoy Karetniy 19-1, Moscow 127994, Russia}
\email{alex-{}-2@yandex.ru}
\keywords{Virasoro algebra, Witt algebra, pseudo-orbit, symmetric algebra, Poisson ideal, Poisson primitive ideal}
\subjclass[2010]{Primary: 17B68, 17B63; Secondary 17B08, 14L99}
\begin{document}

\begin{abstract}
    Let $W = \mathbb{C}[t,t^{-1}]\del_t$ be the {\em Witt algebra } of algebraic vector fields on $\mathbb{C}^\times$ and let $\Vir$ be the {\em Virasoro algebra}, the unique nontrivial central extension of $W$.
    In this paper, we study the Poisson ideal structure of the symmetric algebras of $\Vir$ and $W$, as well as several related Lie algebras.
    We classify prime Poisson ideals and Poisson primitive ideals of $\Sa(\Vir)$ and $\Sa(W)$.
    In particular, we show that the only functions in $W^*$ which vanish on a nontrivial Poisson ideal (that is, the only maximal ideals of $\Sa(W)$ with a nontrivial Poisson core) are given by linear combinations of derivatives at a finite set of points; we call such functions {\em local}.  
   Given a local function $\chi\in W^*$, we construct the associated Poisson primitive ideal through computing the algebraic symplectic leaf of $\chi$, which gives a notion of coadjoint orbit in our setting.
   
    As an application, we prove a structure theorem for subalgebras of $\Vir$ of finite codimension and show in particular that any such subalgebra of $\Vir$ contains the central element $z$, substantially generalising a result of Ondrus and Wiesner on subalgebras of codimension 1.
    As a consequence, we deduce that $\Sa(\Vir)/(z-\zeta)$ is Poisson simple if and only if $ \zeta \neq 0$.
    \end{abstract}
    
    \maketitle
    
\tableofcontents

\section{Introduction}\label{INTRO}


Let $G$ be a connected algebraic group over $\mathbb C$ with Lie algebra $\mf g$, and consider the coadjoint action of $G$ on $\mf g^*$.
This is a beautiful classical topic, with profound connections to areas from geometric representation theory to combinatorics to physics.
Algebraic geometry tells us that  coadjoint orbits in $\mf g^*$ correspond to $G$-invariant radical ideals in the symmetric algebra $\Sa(\mf g)$.  

These can  also be defined using the Kostant-Kirillov {\em Poisson bracket} on $\Sa(\mf g)$:
$$\{f, g\}=\sum_{i,j}\frac{\partial f}{\partial e_i}\frac{\partial f}{\partial e_j}[e_i, e_j]$$
where $\{e_i\}$ is a basis of $\mf g$. 
Recall that an ideal $I$ of $\Sa(\mf g)$ is a {\em Poisson ideal} if $I$ is also a Lie ideal for the Poisson bracket.  
A basic fact is that $I$ is $G$-invariant if and only if $I$ is Poisson.

Thus to compute the closure of the coadjoint orbit  of 
 $\chi \in \mf g^*$, let $\mf m_\chi$ \label{ind:mchi} 
 be the kernel of the 
evaluation morphism $${\rm ev}_\chi: {\rm S}(\mf g)\to\mb \CC,$$ 
and let $\CoreP(\chi)$ 
be the {\em Poisson core} \label{ind:poissoncore}
of $\mf m_\chi$:  
the maximal Poisson ideal contained in $\mf m_\chi$.  
By definition, an ideal of the form $\CoreP(\chi)$ is called {\em Poisson primitive}; 
\label{ind:poissonprimitive}
by a slight abuse of notation, we refer to $\CoreP(\chi)$ as the {\em Poisson core of $\chi$}. 
The closure of the coadjoint orbit of $\chi$ is defined by $\CoreP(\chi)$:
\beq\label{orbit} \overline{G \cdot \chi} = V(\CoreP(\chi)) := \{ \nu \in \mf g^* \ | \ \ev_\nu(\CoreP(\chi))=0\},\eeq
and so $\chi,  \nu \in \mf g^*$ are in the same $G$-orbit if and only if $\CoreP(\chi) = \CoreP(\nu)$.
In the case of algebraic Lie algebras over $\mathbb C$ or $\RR$, coadjoint orbits are symplectic leaves for the respective Poisson structure. 

In this paper, we investigate how this theory extends to the {\em Witt algebra} $W= \mb C[t, t^{-1}]\del_t$ of algebraic vector fields on $\mb C^\times$, and to its central extension the {\em Virasoro algebra} $\Vir = \mb C[t, t^{-1}] \del_t \oplus \mb C z$, with Lie bracket given by 
\[ [f \del_t, g \del_t] = (f g' - f'g) \del_t + \Res_0(f' g''-g'f'') z, \quad  \mbox{$z$ is central.} \]
(We also consider some important Lie subalgebras of $W$.)
These infinite-dimensional Lie algebras, of fundamental importance in representation theory and in physics, have no adjoint group \cite{Lempert}, but one can still study the Poisson cores of maximal ideals, and more generally the Poisson ideal structure of $\Sa(W)$ and $\Sa(\Vir)$.
Motivated by \eqref{orbit}, we will say that functions $\chi, \nu \in \Vir^*$ or in $W^*$ are in the same {\em pseudo-orbit} if $\CoreP(\chi) = \CoreP(\nu)$.
These (coadjoint) pseudo-orbits can be considered as algebraic symplectic leaves in $\Vir^*$ or $W^*$.

Taking the discussion above as our guide, we focus on prime Poisson ideals and Poisson primitive ideals of $\Sa(\Vir)$ and $\Sa(W)$.
Important  questions here, which for brevity we ask in the introduction only for $\Vir$, include:
\begin{itemize}
    \item Given $\chi \in \Vir^*$, can we compute the Poisson core $\CoreP(\chi)$ and the pseudo-orbit of $\chi$?  When is $\CoreP(\chi)$ nontrivial?
    \item How can we understand prime Poisson ideals of $\Sa(\Vir)$?  Can we parameterise them in a reasonable fashion, ideally in a way which gives us further information about the ideal?  How does one distinguish Poisson primitive ideals from other prime Poisson ideals?  
    \item It is known, see \cite[Corollary~5.1]{LSS}, that $\Sa(\Vir)$ satisfies the ascending chain condition on prime Poisson ideals.  The augmentation ideal of $\Sa(\Vir)$, that is, the ideal generated by $\Vir \subset \Sa(\Vir)$, is clearly a maximal Poisson ideal.  What are the others?   Conversely, does any nontrivial prime Poisson ideal have finite height?
    \item Do prime Poisson ideals induce any reasonable algebraic geometry on the uncountable-dimensional vector space $\Vir^*$?
\end{itemize}
We  answer all of these questions,   almost completely working out the structure of the Poisson spectra of $\Sa(\Vir)$ and $\Sa(W)$. 

Let us begin by discussing the idea of algebraic geometry on $\Vir^*$.  
A priori, this seems completely intractable as $\Vir^*$ is an uncountable-dimensional affine space;  little interesting can be said about $\Sa(\mf a)$ where $\mf a$ is a countable-dimensional {\em abelian} Lie algebra.
However,  $\Vir$ and $W$ are extremely noncommutative and so Poisson ideals in their symmetric algebras are very large:  in particular, by  a  result of Iyudu and the second author \cite[Theorem~1.3]{IS},  if $I$ is a nontrivial Poisson ideal of $\Sa(W)$ (respectively, a non-centrally generated Poisson ideal of $\Sa(\Vir)$), then $\Sa(W)/I$ (respectively, $\Sa(\Vir)/I$) has polynomial growth. 
This suggests that we might hope that a Poisson primitive ideal, and more generally a prime Poisson ideal, would correspond to a finite-dimensional algebraic subvariety of $\Vir^*$, which we might be able to investigate using tools from affine algebraic geometry.  We will see that this is indeed the case.

From the discussion above, it is important to characterise which functions  $\chi \in \Vir^*$ have nontrivial Poisson cores.  
One striking result, proved in this paper, is  that  such $\chi$ must vanish on the central element $z$. 
Further, the induced function $\overline\chi \in W^*$ is given by evaluating {\em local} behaviour on a  proper (that is, finite) subscheme of $\mb C^\times$.  
We have:
\begin{itheorem}[Theorem~\ref{Tlocvir}]  \label{iTlocvir}
Let $\chi \in \Vir^*$. The following are equivalent:
\begin{enumerate}
    \item[$(1)$] The Poisson core of $\chi$ is nontrivial:  that is, $\CoreP(\chi) \supsetneqq (z - \chi(z))$.
    \item[$(2)$] $\chi(z) = 0$ and the induced function $\overline{\chi} \in W^*$ is a linear combination of functions of the form
    \[f\del_t\mapsto \alpha_0f(x)+\ldots+\alpha_nf^{(n)}(x)\]
    where $x \in \mb C^\times$ and $\alpha_0, \dots, \alpha_n \in \mb C$.
    \item[$(3)$] The isotropy subalgebra $\Vir^\chi$ of $\chi$ has finite codimension in $\Vir$.
    \end{enumerate}
\end{itheorem}
We call functions $\chi \in \Vir^*$ satisfying the equivalent conditions of Theorem~\ref{iTlocvir} {\em local functions} as by condition $(2)$ they are defined by local data.

Motivated by condition $(3)$ of Theorem~\ref{iTlocvir}, we 
investigate subalgebras of $\Vir$ of finite codimension.  
We prove:
\begin{itheorem}[Proposition~\ref{prop:4.15}]
\label{ithm:4.15}
Let $\mf k \subseteq \Vir$ be a subalgebra of finite codimension.  Then there is $f \in \mb C[t, t^{-1}] \setminus \{0\}$ so that $\mf k \supseteq \mb C z + f \mb C[t, t^{-1}] \del_t$.  In particular, any finite codimension subalgebra of $\Vir$   contains $z$.
\end{itheorem}

As an immediate corollary of 
Theorem~\ref{ithm:4.15}, we show: 
\begin{icorollary}[Corollary~\ref{cor:Psimple}] \label{icor:Psimple}
If $0 \neq \zeta \in \mb C$, then $\Sa(\Vir)/(z-\zeta)$ is {\em Poisson simple}:  it has no nontrivial Poisson ideals. 
\end{icorollary}

We then study the pseudo-orbits of local functions on $\Vir$, $W$, and related Lie algebras;  we describe our results for $\Vir$ in the introduction.
If $\chi \in \Vir^*$ is local, then by combining Theorem~\ref{iTlocvir} and \cite[Theorem~1.3]{IS} $\Sa(\Vir)/\CoreP(\chi)$ has polynomial growth and we thus expect the pseudo-orbit of $\chi$ to be finite-dimensional.  
We show that  pseudo-orbits of local functions in $\Vir^*$ are in fact orbits of a finite-dimensional solvable algebraic (Lie) group acting on an affine variety which maps injectively to $\Vir^*$, and we describe these orbits   explicitly 
(Section~\ref{SSexd}).
This allows us to completely determine the pseudo-orbit of an arbitrary local function in $ \Vir^*$ (Theorem~\ref{Tlfpoi}) and thus also determine the Poisson primitive ideals of $\Sa(\Vir)$ (Remark~\ref{rem:6}). 
We also classify maximal Poisson ideals in $\Sa(\Vir)$ (Corollary~\ref{cor:augmentation}):  they are  the augmentation ideal, the ideals $(z-\zeta)$ for $\zeta \in \mb C^\times$, and the defining ideals of all but one of the two-dimensional pseudo-orbits.

Through this analysis, we obtain a nice combinatorial description of pseudo-orbits in $W^*$:  pseudo-orbits of local functions on $W$, and thus Poisson primitive ideals of $\Sa(W)$, correspond to a choice of a partition $\lambda$ and a point in an open subvariety of a finite-dimensional affine space $\AA^k$, where $k$ can be calculated from $\lambda$.
(See Remark~\ref{rem:dagdag}.)  In Theorem~\ref{thm:paramprimes} and Remark~\ref{rem:paramprimesVir}, we expand this correspondence to obtain a parameterisation of all prime Poisson ideals of $\Sa(W)$ and $\Sa(\Vir)$. 
We also study the related Lie algebra $W_{\geq -1} = \mb C[t]\del_t$, and prove (Corollary~\ref{Csameprimespec}) that Poisson primitive  and prime Poisson ideals of $\Sa(W_{\geq -1})$ are induced by restriction from $\Sa(W)$.

Our understanding of prime Poisson ideals  allows us to determine  exactly which prime Poisson ideals of $\Sa(\Vir)$ obey the {\em Poisson Dixmier-Moeglin equivalence}\label{ind:PDME}, which generalises the characterisation of primitive ideals in enveloping algebras of finite-dimensional Lie algebras due to Dixmier and Moeglin.  
The central question is when a Poisson primitive ideal of $\Sa(\Vir)$ is {\em Poisson locally closed}:  that is, locally closed in the Zariski topology on Poisson primitive ideals. 
(If $\dim \mf g < \infty $ then a prime Poisson ideal of $\Sa(\mf g)$ is Poisson primitive if and only if it is Poisson locally closed \cite[Theorem~2]{LLS}.)
We show (Theorem~\ref{thm:PDME}) that $(z)$ is the only Poisson primitive ideal of $\Sa(\Vir)$ which is not Poisson locally closed.
We further prove (Corollary~\ref{Cinfo}) that $\Sa(W)$ has no nonzero prime Poisson ideals of finite height.


One part of the proof of Theorem~\ref{ithm:4.15} is to show that, given a subalgebra $\mf k$ of finite codimension in $W$, there are a finite collection of points $S$ (the ``support'' of $\mf k$) and $n\in\NN$ such that all vector fields in $\mf k$ vanish at all points of $S$ and so that $\mf k$ contains all vector fields vanishing to order $\ge n$ at every point of $S$. 
Based on this result, we classify subalgebras of $\Vir$ of codimension $\leq 3$ in Section~\ref{Svirfd}. 

Our original motivation for studying Poisson ideals of $\Sa(\Vir)$ was to study two-sided ideals in the universal enveloping algebra $\Ua(\Vir)$, and we turn to enveloping algebras in Section~\ref{SUg}.  
In the finite-dimensional setting, Kirillov's orbit method gives a correspondence between primitive ideals of $\Ua(\mf g)$ and coadjoint orbits in $\mf g^*$.  
We conjecture that a similar correspondence exists for $\Vir$ and related Lie algebras, and in subsection~\ref{SSorbit} we show that   pseudo-orbits of dimension 2 in $\Vir^*$ quantise to give a family of primitive ideals, kernels of well-known maps from $\Ua(\Vir)$ through $\Ua(W)$ to the localised Weyl algebra.
We end the paper with some conjectures about (two-sided) ideals of the universal enveloping algebra $\Ua(\Vir)$ which are motivated by our work on $\Sa(\Vir)$; these are the subject of further research. 

\smallskip

To end the introduction, let us briefly discuss the classical (continuous) version of the theory of coadjoint orbits of the Virasoro algebra.
If one considers the real Lie algebra of continuous vector fields on the circle and its central extension the real Virasoro algebra $\Vir_{\RR}$,  then the group of diffeomorphisms of the circle acts on $\Vir_{\RR}$ and its continuous dual.  There are of course notions of coadjoint orbits in this context,  see~\cite{K, W}.
However, it  happens  that the corresponding orbits do not define an interesting (Poisson) ideal in the  symmetric algebra $\Sa(\Vir_{\RR})$; in fact, this can be viewed as a somewhat informal result of our paper.
Note that the local functions which appear in Theorem~\ref{iTlocvir} and which we study in this paper can be thought of as a product of a point-based distribution with a vector field --- and hence  are very far from being continuous.

\smallskip

\noindent {\bf Acknowledgements: } We thank Jason Bell,  Ken Brown, Lucas Buzaglo, Iain Gordon, Ivan Frolov, Omar Le\'on S\'anchez, and Toby Stafford for interesting discussion and questions.  
We would also like to thank the anonymous referee for their comments, and in particular for suggesting Remark~\ref{rem:GK3two} to us.

The first author was supported by the Russian Foundation for Basic Research, grant no. 20--01--00091--a. 
The second author is  partially   
supported by  EPSRC grants  EP/M008460/1 and EP/T018844/1. 
In addition, we are grateful to the Edinburgh Mathematical Society and the University of Edinburgh School of Mathematics for supporting a 2019 mini-workshop on enveloping algebras of infinite-dimensional Lie algebras.

\section{Poisson ideals and pseudo-orbits}\label{BACKGROUND}

In this section we recall the general notions of Poisson algebra, Poisson ideal, and Poisson primitive ideal.   We  then consider  how these concepts behave for the symmetric algebras of the infinite-dimensional Lie algebras in which we are interested. 

Throughout,  let $\kk$ be an   uncountable algebraically closed field of characteristic 0.
Let us define the Lie algebras of interest in this paper.
The {\em Witt  algebra} 
$W =\CC[t, t^{-1}]\partial$ 
\label{ind:W}
is the Lie algebra of vector fields  on $\CC^\times:=\CC\setminus\{0\}$; here $\del = \del_t = \frac{d}{dt}$.
It is graded by setting $\deg t^n \del = n-1$.  
Then $W_{\ge-1}$ 
\label{ind:W-1}
is the subalgebra $\CC[t]\del$ of $W$, and $W_{\ge 1}$, 
\label{ind:W1}
sometimes called the {\em positive Witt algebra}, is  the subalgebra $t^2\CC[t]\del$; $W_{\ge0}$ stands for $t\CC[t]\del$.

The {\em Virasoro algebra } $\Vir$
\label{ind:Vir}
 is isomorphic as a  vector space to $\CC[t, t^{-1}]\del\oplus\CC z$.  It is endowed with a Lie algebra  structure  by the formula
$$[f\del + c_1 z, g\del +  c_2z ]=(fg'-f'g)\del + \Res_0(f'g''-f''g')z.$$
(Here  $\Res_0(f)$ stands for the algebraic residue of $f$ at 0, i.e. the coefficient  of $t^{-1}$ in the Laurent expansion of $f$ at 0.)
It is well-known that $\Vir$ is the unique nontrivial  one-dimensional central extension of $W$.  There is a canonical Lie algebra homomorphism  $\Vir\to W$ given by factoring out $z$.

Let $V$ be a $\kk$-vector space.  We will use 
 $\Sa(V)$ to denote the symmetric algebra of $V$; that is, polynomial functions on $V^*$.  
For $\chi\in V^*$ we denote by $\ev_\chi$ 
\label{ind:evchi}
the induced homomorphism $\Sa(V)\to\CC$  defined by $\ev_\chi(f) = f(\chi)$. 

Let $\mf g$ be one of the Lie algebras $\Vir$, $W$, $W_{\geq -1}$, or $W_{\geq 1}$.  Our assumption on the cardinality of the  field $\kk$ means that the  following extended Nullstellensatz applies to $\Sa(\mf g)$; see~\cite[Corollary~9.1.8, Lemma 9.1.2]{MR}. 
\begin{theorem}[Extended Nullstellensatz] \label{ENSS}
Let $A$ be a commutative $\kk$-algebra such that $\dim_\kk A<|\kk|.$ Then:

$\bullet$ $A$ is a Jacobson ring: every radical ideal is an intersection of a family of maximal ideals;

$\bullet$ if $\mf m\subseteq A$ is a maximal ideal then the canonical map $\kk\to A/\mf m$ is an isomorphism;

$\bullet$ if $A = \Sa(V)$ for a vector space $V$ with  $\dim V<|\kk|$, then the maximal ideals of $A$ are all of the form $\mf m_\chi=\Ker {\rm ev}_\chi$ for some $\chi \in V^*$. 
\end{theorem}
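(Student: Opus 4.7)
The plan is to prove the three bullet points in the order $(2) \Rightarrow (1) \Rightarrow (3)$, since the characterization of residue fields is the cardinality-driven heart of the statement and the other two follow by standard reductions.

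For the second bullet, I would let $\mf m$ be a maximal ideal of $A$ and set $L := A/\mf m$, a field extension of $\kk$ of $\kk$-dimension strictly less than $|\kk|$. Since $\kk$ is algebraically closed, if some $x \in L$ lies outside $\kk$ it must be transcendental over $\kk$, and then the family $\{(x-\alpha)^{-1} : \alpha \in \kk\}$ in $L$ is $\kk$-linearly independent (by the usual partial-fractions argument: a nontrivial finite linear relation would force a nonzero polynomial to vanish at infinitely many $\alpha$). This family has cardinality $|\kk|$, contradicting $\dim_\kk L < |\kk|$. Hence $L = \kk$.

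For the first bullet, I would show that any non-nilpotent $f \in A$ is excluded from some maximal ideal; intersecting such maximal ideals over generators of a radical ideal $I$ then gives $I$ itself, which is the Jacobson property. Given $f$ not nilpotent, form the localization $A_f$, which is a quotient of $A[y]$ by $(fy-1)$, hence of $\kk$-dimension at most $\max(\dim_\kk A, \aleph_0) < |\kk|$ because $|\kk|$ is uncountable. Pick any maximal ideal $\mf n \subseteq A_f$; by the case already established, $A_f/\mf n = \kk$, so the composition $A \to A_f \to A_f/\mf n = \kk$ is a surjective $\kk$-algebra map whose kernel is a maximal ideal of $A$ not containing $f$.

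For the third bullet, I would let $\mf m \subseteq \Sa(V)$ be maximal. First note that $\dim_\kk \Sa(V) = \max(\dim_\kk V, \aleph_0) < |\kk|$ so the previous parts apply. By the second bullet, the projection $\pi : \Sa(V) \to \Sa(V)/\mf m = \kk$ is a $\kk$-algebra map, and its restriction to $V$ is an element $\chi \in V^*$. Since both $\pi$ and $\ev_\chi$ are $\kk$-algebra homomorphisms agreeing on the generating set $V$, they coincide, so $\mf m = \ker \ev_\chi = \mf m_\chi$. The main obstacle throughout is the second bullet: the partial-fractions independence argument is elementary but requires the uncountability of $\kk$ in exactly the right way, and everything else is a localization or generators-of-algebra reduction to that case.
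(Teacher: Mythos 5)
Your proof is correct. The paper does not actually prove this theorem but only cites \cite[Corollary~9.1.8, Lemma~9.1.2]{MR}; the argument you give---residue fields are $\kk$ because the family $\{(x-\alpha)^{-1}\}_{\alpha\in\kk}$ is $\kk$-linearly independent for $x$ transcendental (Amitsur's trick), then the Jacobson property by localizing at a non-nilpotent $f$ and passing to $A/I$, then the identification of maximal ideals of $\Sa(V)$ with points of $V^*$---is precisely the standard proof underlying that citation, so the approaches coincide. The only cosmetic quibble is that ``intersecting such maximal ideals over generators of a radical ideal $I$ then gives $I$ itself'' should be phrased as: for $f\notin I$, apply the localization argument to the reduced ring $A/I$ (which again satisfies $\dim_\kk A/I<|\kk|$) to find a maximal ideal containing $I$ but not $f$; this yields $I=\bigcap_{\mf m\supseteq I}\mf m$.
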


Let $A$ be a commutative $\kk$-algebra. (Note that we make no noetherianity or finiteness assumption on $A$.)
Denote by $\MSpec A$
\label{ind:Mspec}
 the set of maximal ideals of $A$, which we consider  as a (potentially infinite-dimensional) variety.  
We denote by $\mathfrak{m}_x$ the maximal ideal corresponding to a point $x \in {\rm MSpec}A$. 
\label{ind:mchi2}
As usual,  $\MSpec A$  is a topological space under the Zariski topology. 
Given an ideal $N$ of $A$, we denote the corresponding closed subset of $\MSpec A$ by 
\[ V(N) := \{ \mf m \in \MSpec A \ |\  N \subseteq \mf m\}.\]
\label{ind:V}
Given $X \subseteq \MSpec A$, we denote the corresponding radical ideal of $A$ by
\[ I(X) := \bigcap_{x \in X} \mf m_x.\]
\label{ind:I}

As we have enlarged the class of varieties somewhat beyond the usual, we make the convention that for us an {\em algebraic variety} is a classical variety:  a (nonempty) integral separated scheme of finite type over $\kk$, as in \cite[p. 105]{Hartshorne}. 
If $A, B$ are commutative $\CC$-algebras which are domains, an algebraic map or {\em morphism of varieties} $\phi: \MSpec A \to \MSpec B$ has the usual meaning:  a function so that the pullback $\phi^*$ defines an algebra homomorphism $B \to A$.

Let $\{\cdot\ , \cdot\}: A\times A\to A$ be a skew-symmetric $\CC$-bilinear map. 
We say that $(A, \{\cdot\ , \cdot\})$ is a {\it Poisson algebra}
\label{ind:Poissonalg}
 if $\{\cdot\ , \cdot\}$ satisfies the Leibniz rule on each input and the Jacobi identity.  
An ideal $I$ of $A$ is {\it Poisson} if $\{I, A\}\subseteq I$. 
\label{ind:Poissonideal}
It is clear that the sum of all Poisson subideals of any ideal $I$ of $A$ is the maximal Poisson ideal inside $I$; we denote this ideal by ${\rm Core}(I)$ and refer to it as the {\em Poisson core} 
\label{ind:poissoncore2}
of $I$. Note that if $I$ is radical, respectively prime, then ${\rm Core}(I)$ inherits this property; see \cite[Lemmata~2.6 and 2.8]{PS}.

A Poisson ideal $I$ is called {\it Poisson primitive}
\label{ind:poissonprim2}
 if $I={\rm Core}(\mf m)$ for a maximal ideal $\mf m$ of $A$; Poisson primitive ideals are prime.
We denote the set of Poisson primitive ideals of $A$ by  $\PSpec_{\rm prim}A$, 
\label{ind:Pspec}
and the set of prime Poisson ideals by $\PSpec A$.  Both are given the Zariski topology, where the closed subsets are defined by Poisson ideals of $A$.

Consider a Lie algebra $\mf g$ with $\dim \mf g < |\kk|$. 
It is well known that $\Sa(\mf g)$ possesses a canonical Poisson algebra structure, induced by defining $\{u, v\} = [u, v]$ for any $u, v \in \mf g$. 
As $\dim\mf g<|\CC|$ 
then by the Nullstellensatz $\MSpec(\Sa(\mf g))$ can be canonically identified with~$\mf g^*:$
$$\chi\in\mf g^*\leftrightarrow \mathfrak{m}_\chi:=\Ker {\rm ev}_\chi\in \MSpec(\Sa(\mf g)).$$
Thus any  Poisson primitive ideal of $\Sa(\mf g)$ is equal to  $\Core(\mathfrak{m}_\chi)$ for some $\chi\in\mf g^*$.
Set $\CoreP (\chi):=\Core(\mathfrak{m}_\chi)$.
\label{ind:poissoncore3}
For any Lie algebra $\mf g$, the ideal $\mathfrak m_0 = \Core(\mathfrak{m}_0)=\CoreP(0)$ (the augmentation ideal)  is the Poisson core of $0 \in \mf g^*$ and so is Poisson primitive.

Even in the absence of an adjoint group to $\mf g$, the Poisson primitives  $\CoreP(\chi)$ give analogues of coadjoint orbits.  

\begin{definition} \label{def:pseudoorbit}
Let $\mf g$ be any Lie algebra. 
The {\em pseudo-orbit} of $\chi \in \mf g^*$ is
\[ \mb O(\chi) :=  \{ \nu \in \mf g^* | \CoreP(\nu) = \CoreP(\chi)\} .\]
The {\em dimension} of $\mb O(\chi)$ is defined to be $\GK \Sa(\mf g)/\CoreP(\chi)$.  (Here if $R$ is a $\kk$-algebra then $\GK R$ denotes the {\em Gelfand-Kirillov dimension} of $R$; see \cite{KL}.)
\end{definition}

\begin{remark}\label{rem:orbit}
If $\mf g = {\rm Lie} \ G$ is the Lie algebra of a connected algebraic group and $\chi \in \mf g^*$, then $\mb O(\chi)$ is the coadjoint orbit of $\chi$.  
In our setting, $\mf g$ is not the Lie algebra of any algebraic group.  
However, we will see that we can still define algebraic group actions on pieces of $\mf g^*$ that allow us to recover pseudo-orbits as actual orbits.
\end{remark}

Part (b) of the next result is an analogue of \eqref{orbit} for pseudo-orbits.

\begin{lemma}\label{lem:basics}
Let $\mf g $ be a Lie algebra with $\dim \mf g < | \kk | $.  
\begin{itemize}
    \item[(a)]
 Any radical Poisson ideal in $\Sa(\mf g) $ is equal to an intersection of Poisson primitive ideals.
Explicitly, given a radical Poisson ideal $I$ we have
\[ I = \bigcap \{\CoreP(\chi) \ | \ \chi \in \mf g^*,~\ev_\chi(I) = 0\} .\]
\item[(b)]
Assume now that $\dim \mf g$ is countable,
and let $\chi \in \mf g^*$ be  nonzero.
Then
\[ \CoreP(\chi) = \bigcap_{\nu \in \mb O(\chi)} \mf m_\nu.\]
\end{itemize}
\end{lemma}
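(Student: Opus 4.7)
The plan for part (a) is a direct application of the Extended Nullstellensatz: since $\dim_\kk \Sa(\mf g) < |\kk|$, Theorem~\ref{ENSS} makes $\Sa(\mf g)$ a Jacobson ring, so any radical ideal $I$ decomposes as $I = \bigcap_{\chi \in V(I)} \mf m_\chi$. For each such $\chi$, $I$ is a Poisson subideal of $\mf m_\chi$, hence contained in the largest such subideal $\CoreP(\chi)$. Sandwiching yields
\[ I \;\subseteq\; \bigcap_{\chi \in V(I)} \CoreP(\chi) \;\subseteq\; \bigcap_{\chi \in V(I)} \mf m_\chi \;=\; I, \]
giving the claimed formula.

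For part (b), one inclusion is immediate: if $\nu \in \mb O(\chi)$ then $\CoreP(\chi) = \CoreP(\nu) \subseteq \mf m_\nu$, so $\CoreP(\chi) \subseteq \bigcap_{\nu \in \mb O(\chi)} \mf m_\nu$. The reverse is the substantial content. My approach is to take $f \notin \CoreP(\chi)$ and locate some $\nu \in \mb O(\chi)$ with $\ev_\nu(f) \neq 0$, which will show $f \notin \bigcap_\nu \mf m_\nu$. Since $\CoreP(\chi)$ is prime by \cite[Lemma~2.8]{PS}, the quotient $B := \Sa(\mf g)/\CoreP(\chi)$ is a Poisson domain of countable $\kk$-dimension, and the image $\bar f \in B$ is nonzero; the localisation $B_f := B[\bar f^{-1}]$ is then a nonzero countable-dimensional Poisson domain.

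Given a maximal ideal $\mf n \subset B_f$ with $\Core(\mf n) = 0$, I would argue that $\mf n \cap B$ is a maximal ideal of $B$ not containing $\bar f$ and with vanishing Poisson core: maximality follows from $B/(\mf n \cap B) \hookrightarrow B_f/\mf n = \kk$ combined with the fact that $B/(\mf n \cap B)$ contains $\kk$, forcing equality; vanishing comes from the extension $\Core(\mf n \cap B) \cdot B_f$ being a Poisson subideal of $\mf n$, hence contained in $\Core(\mf n) = 0$, together with injectivity of $B \hookrightarrow B_f$. Lifting $\mf n \cap B$ back to $\Sa(\mf g)$ then produces the required $\nu \in \mb O(\chi)$ with $\ev_\nu(f) \neq 0$.

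The heart of the argument---and the main obstacle---is thus the existence of such $\mf n$: every countable-dimensional Poisson domain $R$ over the uncountable algebraically closed field $\kk$ must admit a maximal ideal with zero Poisson core. The plan here is a Baire-style argument. Were every maximal ideal of $R$ to have nonzero Poisson core, then $\MSpec R$ would be covered by the proper closed subsets $V(Q)$, with $Q$ running over the nonzero prime Poisson ideals of $R$. The hypothesis that $\dim \mf g$ is countable now enters: $R$ has a countable $\kk$-basis, and by choosing representatives of nonzero Poisson ideals (for instance, the smallest Poisson ideal generated by each basis element and each element of a fixed countable generating set for $\mf g$ acting on $R$) one reduces the covering to a countable subfamily $\{Q_i\}$. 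Since $|\kk|$ is uncountable, a countable union of proper closed subsets of the irreducible variety $\MSpec R$ cannot exhaust it (the algebraic Baire category theorem), so there exists $\mf n$ outside every $V(Q_i)$, with $\Core(\mf n) = 0$, as required.
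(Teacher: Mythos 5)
Your part (a) is correct and is the same argument as the paper's, and most of your part (b) bookkeeping is also fine: reducing the problem to producing, for each $f \notin \CoreP(\chi)$, a maximal ideal of $B=\Sa(\mf g)/\CoreP(\chi)$ avoiding $\bar f$ and with zero Poisson core, and the localisation/Extended Nullstellensatz steps for transporting such an ideal back to a point $\nu\in\mb O(\chi)$, all work and run parallel to the paper's proof. The genuine gap is exactly at what you call the heart of the argument. The lemma you formulate there is false as stated: a countable-dimensional Poisson domain over $\kk$ need not have any maximal ideal with zero Poisson core --- take $\kk[x]$ with the zero bracket, where every ideal is Poisson and so every maximal ideal is its own (nonzero) Poisson core. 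Correspondingly, your proposed reduction of the covering $\bigcup_Q V(Q)$ to a countable subfamily does not work: a nonzero Poisson prime $Q$ of $R$ contains some nonzero element, but that element ranges over the uncountable set $R\setminus\{0\}$; there is no reason for $Q$ to contain a basis element or any member of a fixed countable subset, so the countable family of Poisson ideals generated by basis vectors does not detect all $Q$ (already in $\kk[x]$ the ideals $(x-\lambda)$ escape every such family). Nor can you fall back on ``the zero ideal of $B_f$ is Poisson primitive'': after inverting $\bar f$ the maximal ideal $\mf m_\chi/\CoreP(\chi)$ witnessing primitivity may disappear, since $f$ may lie in $\mf m_\chi$.

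This missing countability statement is precisely the nontrivial external input in the paper's proof: by (the proof of) \cite[Theorem~6.3]{LSS} (see also \cite[Theorem~3.2]{BLLSM}), for $\mf g$ countably generated there is a sequence of nonzero Poisson ideals $L_1, L_2, \ldots$ of $R=\Sa(\mf g)/\CoreP(\chi)$ such that every $\CoreP(\mu)$ strictly containing $\CoreP(\chi)$ contains some $L_i$; equivalently there are $f_1, f_2,\ldots \in R\setminus\{0\}$ vanishing on every strictly smaller pseudo-orbit, so that $R[f_1^{-1}, f_2^{-1},\ldots]$ is Poisson simple, at which point the Jacobson property from Theorem~\ref{ENSS} finishes the argument much as in your last two paragraphs. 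That detecting-family theorem is the content of a separate paper and uses the specific structure of symmetric algebras of countably generated Lie algebras; a Baire-style covering argument in the generality of ``countable-dimensional Poisson domains'' cannot substitute for it. To repair your proof you should either quote that result or give an actual proof of the countable detecting family in your setting.
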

\begin{proof}
(a) is well-known, but we give a proof for completeness.
 As $I$ is radical and Poisson, 
 $$I \subseteq \bigcap \{  \CoreP(\chi) \ | \ \chi \in \mf g^*,~\ev_\chi(I) = 0\},$$ 
 and this is contained by definition in $\bigcap_{\chi \in \mf g^*, \ev_\chi(I) = 0} \mf m_\chi $. 
 By the  Nullstellensatz, this last is equal to~$I$.  
 
For (b), set $R:=\Sa(\mf g)/\CoreP(\chi)$. 
Thanks to the proof of~\cite[Theorem 6.3]{LSS}, which uses only that $\mf g$ is countably generated (see also~\cite[Theorem 3.2]{BLLSM}), there is a sequence of non-zero Poisson ideals $L_1, L_2, \ldots$ of $R$ such that if  $\mu \in \mf g^*$ with $\CoreP(\mu)$ strictly containing $\CoreP(\chi)$, then $\CoreP(\mu)/\CoreP(\chi)$ contains $L_i$ for at least one $i$. 

This is equivalent to the following statement: there is a sequence $f_1, f_2, \ldots\in R \setminus \{0\}$
such that if $\CoreP(\mu)$ strictly contains $\CoreP(\chi)$ then $f_i(\mu)=0$ for at least one $i$. 
Therefore $R'=(\Sa(\mf g)/\CoreP(\chi))[f_1^{-1}, f_2^{-1},\ldots]$ is Poisson simple. 
On the other hand $R'$ is clearly at most countable-dimensional and hence $(0)\subset R'$ is an intersection of a family of maximal ideals of $R'$ by the Nullstellensatz. The definition of $R'$ and $f_i'$ guarantees that $\CoreP(\mu)=\CoreP(\chi)$ for each $\mu \in \mf g^*$ defining such an ideal.
We thus obtain a family of maximal ideals of $\Sa(\mf g)$
contained in the pseudo-orbit of $\chi$ whose intersection is $\CoreP(\chi)$.  
This implies the desired result. 
\end{proof}

\subsection{Some results on pseudo-orbits} \label{SSfrpo}
We now give several results which are well known for  finite-dimensional algebraic Lie algebras, but  require  proof in our setting.   Throughout this subsection, let $\mf g$ be a Lie algebra with $\dim \mf g < |\kk|$.

For every $\chi \in\mf g^*$ define the skew-symmetric bilinear form $B_\chi(x, y):=\chi([x, y])$.
\label{ind:Bchi}
The kernel of $B_\chi$ is a Lie subalgebra of $\mf g$, which we denote by $\mf g^\chi$.
\label{ind:isotropy}
Observe that $\mf g^\chi$ is precisely $\{v \in \mf g \ |\  v \cdot \chi = 0\}$; that is $\mf g^\chi$ is the isotropy subalgebra of $\chi$ under the (coadjoint) action of $\mf g$ on $\mf g^*$. 
As $B_\chi$ induces a nondegenerate skew-symmetric form on $\mf g / \mf g^\chi$, we have $\rk B_\chi = \dim \mf g/\mf g^\chi$.

\begin{lemma}\label{Lrk+} The dimension of  $\mb O(\chi)$ is at least $\rk B_\chi$, i.e. $${\rm GKdim}({\rm S}(\mf g)/\CoreP(\chi))\ge \rk B_\chi.$$\end{lemma}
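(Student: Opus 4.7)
The plan is to exhibit $r := \rk B_\chi$ elements of $R := \Sa(\mf g)/\CoreP(\chi)$ that are algebraically independent over $\kk$. Since $\CoreP(\chi)$ is prime (as the Poisson core of a maximal ideal), $R$ is a domain, and producing such elements forces $\GK R \ge r$ as desired.

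First, I would choose $z_1, \dots, z_r \in \mf g$ whose images form a basis of a complement to $\mf g^\chi = \ker B_\chi$, so that the matrix $M := (\chi([z_i, z_j]))_{1 \le i, j \le r}$ is invertible---it is the Gram matrix of the induced nondegenerate form on $\mf g/\mf g^\chi$. Writing $\bar z_i$ for the image of $z_i$ in $R$ and $\bar{\mf m}_\chi := \mf m_\chi/\CoreP(\chi)$ for the maximal ideal of $R$ at $\chi$, the task reduces to proving that $\bar z_1, \dots, \bar z_r$ are algebraically independent over $\kk$.

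The main step will be a Poisson-derivation argument. Assume for contradiction that $p(\bar z_1, \dots, \bar z_r) = 0$ for some nonzero $p \in \kk[X_1, \dots, X_r]$. Each $\{\bar z_j, \cdot\}$ is a well-defined derivation of $R$ (because $\CoreP(\chi)$ is Poisson), so applying these derivations to any polynomial identity $q(\bar z) = 0$ yields, via the Leibniz rule, the matrix equation $Nv = 0$ in $R^r$, where $N \in \mathrm{Mat}_r(R)$ has entries $\overline{[z_j, z_i]}$ and $v$ has entries $(\partial_i q)(\bar z)$. Specialising at $\chi$ recovers $N(\chi) = M$, which is invertible, so $\det N \notin \bar{\mf m}_\chi$; thus $N$ is invertible over the localisation $R_{\bar{\mf m}_\chi}$, forcing $v = 0$ there. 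Because $R$ is a domain it embeds in $R_{\bar{\mf m}_\chi}$, so in fact $(\partial_i q)(\bar z) = 0$ in $R$ for every $i$. Iterating with $q$ replaced successively by each $\partial_i q$ yields $(\partial^\alpha p)(\bar z) = 0$ in $R$ for every multi-index $\alpha$.

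Finally, Taylor-expanding $p$ about the point $(\bar z_1, \dots, \bar z_r) \in R^r$---valid in characteristic $0$---gives $p(X) = \sum_\alpha \tfrac{1}{\alpha!} (\partial^\alpha p)(\bar z)\,(X - \bar z)^\alpha = 0$ as an identity in $R[X_1, \dots, X_r]$. Since $\kk$ embeds in $R$, this forces $p = 0$, contradicting the choice of $p$. I expect the main subtlety to be the descent from $v = 0$ in the localisation to $v = 0$ in $R$ itself, which works precisely because $\CoreP(\chi)$ is prime; once that is in place, the iteration and Taylor-expansion steps are routine.
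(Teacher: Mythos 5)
Your argument is in essence the paper's: both proofs Poisson-differentiate a hypothetical algebraic dependence among elements $z_1,\dots,z_r$ chosen so that the skew form $B_\chi$ is nondegenerate on their span, arrive at the matrix equation with entries $\{z_i,z_j\}$, and contradict the nonvanishing at $\chi$ of the corresponding determinant. The paper's finish is a little shorter: it takes $P$ of \emph{minimal degree} in $\kk[u_1,\dots,u_r]\cap \CoreP(\chi)$, so the gradient vector $(\partial_1 P,\dots,\partial_r P)$ is automatically nonzero in the fraction field $Q$ of $R$; the matrix equation then forces $\bigl(\{u_i,u_j\}\bigr)_{ij}$ to be singular over $Q$, hence its determinant lies in $\CoreP(\chi)\subseteq\mf m_\chi$, contradicting the choice of the $u_i$. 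Your route via localising at $\bar{\mf m}_\chi$, iterating the derivation over all higher partials, and Taylor-expanding is correct (and the descent from $v=0$ in $R_{\bar{\mf m}_\chi}$ to $v=0$ in $R$ does hinge on $R$ being a domain, as you say), but the minimality trick replaces the whole iteration-plus-Taylor step in one stroke.

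One small gap to repair: you begin by setting $r := \rk B_\chi$ and choosing a basis of a complement to $\mf g^\chi$, which tacitly assumes $\rk B_\chi<\infty$. The lemma is also asserted (and needed for the reduction to local functions) when $\rk B_\chi=\infty$, in which case the conclusion is $\GK R=\infty$. The paper avoids this by fixing an arbitrary finite $r\in\NN$ with $r\le\rk B_\chi$, picking an $r$-dimensional subspace $V$ on which $B_\chi$ has full rank, and proving $\GK R\ge r$; since $r$ was arbitrary below $\rk B_\chi$, the stated inequality follows even in the infinite-rank case. Your argument should be phrased in this ``for every finite $r\le \rk B_\chi$'' form.
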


Before proving Lemma~\ref{Lrk+}, we establish some notation, which we will need for several results.  For $u_1,\ldots, u_n,$ $ v_1, \ldots, v_n\in \mf g$ set $D(u_1,  \ldots, u_n; v_1,  \ldots, v_n)\in\Sa(\mf g)$ 
\label{ind:D}
to be the determinant 
\begin{equation}\begin{vmatrix}[u_1, v_1]&\cdots&[u_1, v_{n}]\\
\cdots&\cdots&\cdots\\
{[u_{n}, v_1]}&\cdots&{[u_{n}, v_{n}]}\\
\end{vmatrix}. \label{Ednp1}\end{equation} 
\begin{proof}[Proof of Lemma~\ref{Lrk+}]
Pick $r\in\NN$ such that $r\le \rk B_\chi$. 
Then there is an $r$-dimensional subspace $V$ of $\mf g$ so that 
 $\rk ( B_\chi|_{V})=r$;
 that is, if $u_1, \ldots, u_r$ is  a basis of $V$ then $\ev_{\chi}(D(u_1, \dots, u_r, u_1, \dots, u_r)) \neq 0$.
We will show that  $u_1, \ldots, u_r$ are algebraically independent in ${\rm S}(\mf g)/\CoreP(\chi)$; that is, that $\GK (\Sa(\mf g)/\CoreP(\chi) ) \geq r$.

Assume to the contrary that 
the $u_i$ are not algebraically independent modulo $\CoreP(\chi)$.  
This means that there is some nonzero $P \in \kk[u_1, \dots, u_r] \cap \CoreP(\chi)$, and we may assume $P$ is of minimal degree among such elements.
As $\CoreP(\chi)$ is Poisson, 
\beq \label{biff}
\{ u_i, P \} = \sum_j \{u_i, u_j\} \del_j P \in \CoreP(\chi),
\eeq
where $\del_j P = \frac{\del P}{\del u_j}$.

Let $Q$ be the field of fractions of $\Sa(\mf g)/\CoreP(\chi)$ (recall that $\CoreP(\chi)$ is prime).
We may rewrite \eqref{biff} as the matrix equation
\[ \Bigl( \{u_i, u_j\} \Bigr)_{ij} \begin{pmatrix} \del_1 P \\ \vdots \\ \del_r P \end{pmatrix} = \vec{0}\]
over $Q$.  By minimality of $\deg P$, the vector $\begin{pmatrix} \del_1 P \\ \vdots \\ \del_r P \end{pmatrix}  \in Q^r$ is nonzero.  
Thus
$\Bigl( \{u_i, u_j\} \Bigr)_{ij} \in M_{r\times r}(Q)$ is singular, so
$D(u_1, \dots, u_r; u_1, \dots, u_r) \in \CoreP(\chi) \subseteq \mathfrak{m}_\chi$.  This contradicts the first paragraph of the proof.
\end{proof}

It is not necessarily easy to calculate  $\CoreP(\chi)$, but we can sometimes use GK-dimension to show that a Poisson ideal contained in $\mathfrak{m}_\chi$ is in fact equal to $\CoreP(\chi)$.  This is given by the following lemma. 

\begin{lemma} \label{Lprtop}
Let $\mf g $ be a Lie algebra and $A$ a commutative $\kk$-algebra that is a domain.  Let ${\phi}: \Sa(\mf g) \to A$ be an algebra homomorphism so that $\ker {\phi} $ is Poisson.  
Let $\chi \in \mf g^*$ be such that 
\begin{equation}\Ker\phi\subseteq \mathfrak{m}_\chi\mathrm{~and~}\rk B_\chi \geq \GK A.\label{Epo}\end{equation}
Then $\Ker\phi= \CoreP(\chi)$ and thus is Poisson  primitive; further, $\rk B_\chi = \GK A = \dim \OO(\chi)$.\end{lemma}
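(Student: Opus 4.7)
The plan is to combine the hypothesis $\rk B_\chi \geq \GK A$ with Lemma~\ref{Lrk+} so as to force a chain of a priori inequalities among Gelfand-Kirillov dimensions to collapse into equalities, and then to upgrade this dimensional coincidence to the equality of ideals $\Ker\phi = \CoreP(\chi)$ via a transcendence degree argument.

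First I would observe that $\Ker\phi \subseteq \CoreP(\chi)$: this is immediate since $\Ker\phi$ is a Poisson ideal contained in $\mathfrak{m}_\chi$ and $\CoreP(\chi)$ is by definition the largest such ideal. Consequently there is a surjection $\Sa(\mf g)/\Ker\phi \twoheadrightarrow \Sa(\mf g)/\CoreP(\chi)$, while the factorisation of $\phi$ gives an injection $\Sa(\mf g)/\Ker\phi \hookrightarrow A$. Combined with Lemma~\ref{Lrk+}, these yield the chain
$$\GK A \;\geq\; \GK(\Sa(\mf g)/\Ker\phi) \;\geq\; \GK(\Sa(\mf g)/\CoreP(\chi)) \;\geq\; \rk B_\chi \;\geq\; \GK A,$$
the last inequality being the hypothesis. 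All four quantities therefore coincide, giving $\rk B_\chi = \GK A$ as required.

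For the equality $\Ker\phi = \CoreP(\chi)$, let $r = \rk B_\chi$ and take the elements $u_1, \ldots, u_r \in \mf g$ constructed in the proof of Lemma~\ref{Lrk+}, whose images are algebraically independent in $\Sa(\mf g)/\CoreP(\chi)$. Since $\Ker\phi \subseteq \CoreP(\chi)$, they remain algebraically independent in the domain $\Sa(\mf g)/\Ker\phi$. Using the fact that for commutative domains Gelfand-Kirillov dimension agrees with the transcendence degree of the fraction field, the $u_i$ form a transcendence basis of the fraction field of each quotient.

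The main obstacle is the final step: showing that the prime $\bar P := \CoreP(\chi)/\Ker\phi$ of $\Sa(\mf g)/\Ker\phi$ is zero. Suppose for contradiction that $\bar P \neq 0$ and pick $0 \neq a \in \bar P$. Being algebraic over $\kk(u_1, \ldots, u_r)$, $a$ satisfies a relation $F(u_1, \ldots, u_r, a) = 0$ in $\Sa(\mf g)/\Ker\phi$ for some nonzero $F \in \kk[X_1, \ldots, X_r, T]$. Writing $F = T^k H$ with $H(X_1, \ldots, X_r, 0) \neq 0$ as a polynomial, I obtain $a^k H(u_1, \ldots, u_r, a) = 0$; since $a \neq 0$ and $\Sa(\mf g)/\Ker\phi$ is a domain, this forces $H(u_1, \ldots, u_r, a) = 0$. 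Reducing modulo $\bar P$, where $a \equiv 0$, gives $H(u_1, \ldots, u_r, 0) = 0$ in $\Sa(\mf g)/\CoreP(\chi)$, contradicting the algebraic independence of $u_1, \ldots, u_r$ there. Therefore $\bar P = 0$ and $\Ker\phi = \CoreP(\chi)$, which is Poisson primitive by definition as $\CoreP(\chi) = \Core(\mathfrak{m}_\chi)$.
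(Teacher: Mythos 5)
Your proof is correct and takes essentially the same route as the paper's: both rest on the containment $\Ker\phi\subseteq\CoreP(\chi)$ together with Lemma~\ref{Lrk+} and the hypothesis $\rk B_\chi\ge\GK A$, which force the chain of GK-dimension inequalities to collapse. The only difference is that where the paper simply invokes the strict drop of GK-dimension when a commutative domain is quotiented by a nonzero prime, you re-derive that strictness by hand via the transcendence-basis and lowest-coefficient argument with the $u_i$ from the proof of Lemma~\ref{Lrk+} — a harmless (and correct) elaboration of the same idea.
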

\begin{proof}
Certainly $\ker {\phi} \subseteq \CoreP(\chi)$.  
If the containment is strict, then as $A$ is a domain, 
$$\GK(\Sa (\mf g) / \CoreP(\chi)) < \GK A \leq \rk B_\chi,$$ contradicting
Lemma~\ref{Lrk+}.
The final equality is direct from Lemma~\ref{Lrk+}.
\end{proof}

\begin{corollary}\label{Cpodscr}
If both $\chi_1, \chi_2$ satisfy \eqref{Epo} for the same ${\phi}$ then $\CoreP(\chi_1)=\CoreP(\chi_2)$. \qed \end{corollary}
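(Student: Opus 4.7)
The plan is to observe that this is essentially an immediate consequence of Lemma~\ref{Lprtop}. The hypothesis is that both $\chi_1$ and $\chi_2$ satisfy the two conditions of \eqref{Epo} with respect to the same algebra homomorphism $\phi: \Sa(\mf g) \to A$, namely $\Ker\phi \subseteq \mathfrak{m}_{\chi_i}$ and $\rk B_{\chi_i} \geq \GK A$ for $i = 1, 2$. Since $\Ker \phi$ is Poisson (part of the setup of Lemma~\ref{Lprtop}, which we inherit), applying the lemma to $\chi_1$ yields $\Ker\phi = \CoreP(\chi_1)$, and applying it again with $\chi_2$ in place of $\chi_1$ yields $\Ker\phi = \CoreP(\chi_2)$. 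Transitivity of equality then gives $\CoreP(\chi_1) = \CoreP(\chi_2)$.

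There is really no obstacle here; the whole point of stating the corollary is to record the fact that Lemma~\ref{Lprtop} identifies $\Ker\phi$ with the Poisson core of \emph{any} $\chi$ meeting its hypotheses, so that any two such $\chi$'s must have the same Poisson core (and therefore, by definition, lie in the same pseudo-orbit $\mb O(\chi_1) = \mb O(\chi_2)$). The proof is a single line invoking Lemma~\ref{Lprtop} twice.
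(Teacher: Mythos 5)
Your proof is correct and is precisely the intended argument — the paper marks the corollary with an immediate \qed because it follows by applying Lemma~\ref{Lprtop} to each $\chi_i$ to get $\CoreP(\chi_1) = \Ker\phi = \CoreP(\chi_2)$.
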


We will use Lemma~\ref{Lprtop} and Corollary~\ref{Cpodscr} to describe primitive ideals in $\Sa(\mf g)$ and the corresponding pseudo-orbits  in $\mf g^*$ explicitly. 
In the situation of Lemma~\ref{Lprtop}, $A$ may not necessarily be a Poisson algebra.
We abuse notation slightly, however, and say that if $\ker {\phi}$ is a Poisson ideal,
then ${\phi}$ is a {\em Poisson morphism}.  
\label{ind:poissonmor}
Further, 
given a morphism of varieties 
\[\psi: \MSpec(A) \to \MSpec(\Sa(\mf g)) = \mf g^*,\]
 we say that $\psi$ is {\em Poisson} if $\psi^*:  \Sa(\mf g) \to A$ is Poisson.

It is important to know under which conditions maps  are Poisson and the answer is given in Proposition~\ref{Prpoi}. 
To explain the setup we need to use some concepts from (affine) algebraic geometry.  
As mentioned, we will, somewhat loosely, refer to infinite-dimensional vector spaces such as $\mf g^*$ as  varieties, enlarging the class  from  standard usage.  
For any variety $X$ and any point $x\in X$ let ${\rm T}_xX = (\mathfrak{m}_x/\mathfrak{m}_x^2)^*$ 
\label{ind:tangent}
denote the tangent space  to $X$ at $x$; this definition makes sense for $X= \mf g^*$ as well.  We will, without further comment,  identify ${\rm T}_x X$ with elements of $\mathfrak{m}_x^*$ which vanish on $\mathfrak{m}_x^2$.
If there is a map $\psi: X\to Y$ between algebraic varieties $X$ and $Y$ then, for all $x\in X$, there is an induced map $\psi_x: {\rm T}_xX\to {\rm T}_{\psi(x)}Y$. 

We canonically identify the tangent space to $\mf g^*$ at $\chi\in\mf g^*$ with $\mf g^*$. 
For any derivation $D$ of $\Sa(\mf g)$ and any $\chi\in\mf g^*$ we denote by $D_{\chi}$
\label{ind:Dchi}
 the tangent vector defined by $D$ at $\chi$; 
that is $D_\chi(f) = \chi(Df)$ for any $f \in \mathfrak{m}_\chi$.
Now,  $\mf g$ acts by derivations on $\Sa(\mf g)$.  For $u \in \mf g$ and $\chi\in\mf g^*$ let $u_\chi$ 
be the tangent vector defined by $u$ at $\chi$.  
\label{ind:uchi}
If $v \in \mf g$ then $u_\chi(v) = \chi([u, v])$, and  thus $u_\chi \in {\rm T}_\chi(\mf g^*) = \mf g^*$ is identified with $ u \cdot \chi$.   
If $\mf u \subseteq \mf g $ let $\mf u_\chi = \{ u_\chi \ | \ u \in \mf u\}$.
Thus $\mf g_\chi \cong \mf g \cdot \chi \cong \mf g/ \mf g^\chi$ and $\rk B_\chi = \dim \mf g_\chi$.

\begin{proposition}\label{Prpoi}
Let $\mf g $ be a Lie algebra with $\dim \mf g < |\kk|$.
Let $X$ be an affine variety and let $\phi:  X \to \mf g^* $ be a morphism of varieties. 
Then $\phi$ is Poisson if and only if $\mf g_{\phi (x)}\subseteq \phi_x({\rm T}_xX)$ for all $x\in X$.
\end{proposition}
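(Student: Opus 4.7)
I would translate the Poisson condition on $\ker\phi^*$ into a pointwise condition involving the tangent vectors $u_{\phi(x)}$, and then identify that condition with membership in $\phi_x(T_xX)$.

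The first step is to reduce the Poisson condition to a condition on generators. Because $\Sa(\mf g)$ is generated as a $\kk$-algebra by $\mf g$ and $\{\cdot,\cdot\}$ is a derivation in each argument, a short induction on degree using the Leibniz rule shows that an ideal $I\subseteq\Sa(\mf g)$ is Poisson iff, for each $u\in\mf g$, the derivation $\{u,\cdot\}=u\cdot(-)$ preserves $I$. Applied to $I=\ker\phi^*$ and evaluated at $x\in X$, this says $(u\cdot f)(\phi(x))=0$ for every $u\in\mf g$, $x\in X$, $f\in\ker\phi^*$. The paper's identification $(u\cdot f)(\phi(x))=u_{\phi(x)}(f)$ then rewrites ``$\phi$ is Poisson'' as the requirement that $u_{\phi(x)}$ annihilate $\ker\phi^*$ as a tangent vector at $\phi(x)$, for every $u\in\mf g$ and $x\in X$.

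The second step is to recognise the set of tangent vectors $v\in T_{\phi(x)}\mf g^*$ that annihilate $\ker\phi^*$ as being exactly $\phi_x(T_xX)$. The easy direction uses the chain rule: if $v=\phi_x(w)$ for some $w\in T_xX$, then $v(f)=w(\phi^*f)=w(0)=0$ for every $f\in\ker\phi^*$. The reverse direction is the standard statement that, for a morphism of varieties, the image of the differential at a point is cut out inside the ambient tangent space by the linear equations coming from the ideal of the image, i.e.\ by $\ker\phi^*$ read modulo $\mathfrak m_{\phi(x)}^2$. Combining both steps, $\phi$ is Poisson iff $u_{\phi(x)}\in\phi_x(T_xX)$ for every $u\in\mf g$ and $x\in X$, which by definition of $\mf g_{\phi(x)}=\{u_{\phi(x)}:u\in\mf g\}$ is the same as $\mf g_{\phi(x)}\subseteq\phi_x(T_xX)$ for every $x$.

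The main obstacle is the reverse direction of this tangent-space identification. Phrased at the cotangent level, it amounts to describing $\phi_x(T_xX)\subseteq T_{\phi(x)}\mf g^*$ as the annihilator of $\ker d\phi^*$, where $d\phi^*\colon\mathfrak m_{\phi(x)}/\mathfrak m_{\phi(x)}^2\to\mathfrak m_x/\mathfrak m_x^2$ is the map on cotangent spaces induced by $\phi^*$; this is the linear-algebra fact ``image of a transpose equals the annihilator of the kernel''. Some bookkeeping is required because $\mf g^*$, and hence $\mathfrak m_{\phi(x)}/\mathfrak m_{\phi(x)}^2$, is infinite-dimensional, but because $T_xX$ is finite-dimensional, $\phi_x(T_xX)$ is a finite-dimensional subspace of the ambient tangent space, and the duality between a linear map and its transpose is valid for vector spaces of arbitrary dimension, so the argument carries through without trouble.
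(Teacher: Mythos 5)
Your first step (Poisson-ness of $\ker\phi^*$ is equivalent to the pointwise condition that $u_{\phi(x)}$ annihilates $\ker\phi^*$ for all $u\in\mf g$ and $x\in X$) is fine, and your ``easy direction'' via the chain rule matches the paper. The gap is in the reverse direction of your tangent-space identification. The ``standard statement'' you invoke is false: for a morphism of varieties, the image of the differential $\phi_x(T_xX)$ is \emph{contained in}, but in general strictly smaller than, the set of tangent vectors at $\phi(x)$ annihilating $\ker\phi^*$; that annihilator is the Zariski tangent space to the closed set $V(\ker\phi^*)=\overline{\phi(X)}$. The correct linear-algebra statement is $\phi_x(T_xX)=\operatorname{Ann}\bigl(\ker d\phi^*_x\bigr)$, where $d\phi^*_x:\mf m_{\phi(x)}/\mf m_{\phi(x)}^2\to \mf m_x/\mf m_x^2$, and $\ker d\phi^*_x$ consists of all $f$ with $\phi^*f\in\mf m_x^2$, which usually strictly contains the image of $\ker\phi^*$ (where $\phi^*f=0$). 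Concretely, for $\phi:\AA^1\to\AA^2$, $t\mapsto(t^2,t^3)$, at $t=0$ one has $\phi_0(T_0\AA^1)=0$ while every tangent vector of $\AA^2$ at the origin annihilates $\ker\phi^*=(y^2-x^3)$. So your argument only proves ``$\ker\phi^*$ Poisson $\iff$ $\mf g_{\phi(x)}$ is tangent to $\overline{\phi(X)}$ at $\phi(x)$ for all $x$,'' which yields the ``if'' direction of the proposition but not the ``only if'' direction, precisely at points where $\phi$ fails to be an immersion or the image is singular.

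The proposition is nonetheless true, and the paper's proof shows what extra input is needed: not just that each $u_{\phi(x)}$ kills $\ker\phi^*$ at the single point $\phi(x)$, but that the derivation $D=\{u,-\}$ preserves the ideal $\ker\phi^*$ \emph{globally}. This lets $D$ descend to a derivation $\overline D$ of the image algebra $\phi^*(\Sa(\mf g))\cong \Sa(\mf g)/\ker\phi^*$; the induced functional $\overline D_x$ on $\mf n_x/\mf n_x^2$, where $\mf n_x=\mf m_x\cap\phi^*(\Sa(\mf g))$, is then extended to an element $\ell\in T_xX=(\mf m_x/\mf m_x^2)^*$, and one checks $D_{\phi(x)}=\phi_x(\ell)$, which places $u_{\phi(x)}$ in $\phi_x(T_xX)$ rather than merely in the tangent space of the image closure. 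Your pointwise reformulation discards exactly this global information, so the hard direction does not go through as written; to repair it you would need some version of the descended-derivation argument (or another mechanism exploiting stability of the whole ideal), not a purely linear-algebraic duality at the point $\phi(x)$.
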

 Proposition~\ref{Prpoi} is a  direct consequence of the following lemma.
\begin{lemma}Let $\mf g, X, \phi$ be as in the statement of Proposition~\ref{Prpoi} and let $D$ be a derivation of $\Sa(\mf g)$. 
Then $\ker \phi^*$ is $D$-stable if and only if $D_{\phi(x)}\in \phi_x({\rm T}_xX)$ for all $x\in X$.\end{lemma}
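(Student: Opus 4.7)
The plan is to translate the tangent-space condition $D_{\phi(x)}\in\phi_x({\rm T}_xX)$ into a pointwise statement about functions via cotangent--tangent duality, and then to match it against the ideal-theoretic condition $D(\ker\phi^*)\subseteq\ker\phi^*$.

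Fix $x\in X$ and set $y:=\phi(x)$. The image $\phi_x({\rm T}_xX)$ inside ${\rm T}_y\mf g^*=(\mathfrak{m}_y/\mathfrak{m}_y^2)^*$ is, by duality, the annihilator of the kernel of the cotangent map $\phi_x^*\colon\mathfrak{m}_y/\mathfrak{m}_y^2\to\mathfrak{m}_x/\mathfrak{m}_x^2$ induced by $\phi^*$; explicitly, that kernel is $\{f+\mathfrak{m}_y^2:f\in\mathfrak{m}_y,\ \phi^*(f)\in\mathfrak{m}_x^2\}$. Combined with the identity $D_y(f)=\phi^*(Df)(x)$, the condition $D_y\in\phi_x({\rm T}_xX)$ becomes the following pointwise statement:
\[
(\star)\quad \phi^*(Df)(x)=0 \text{ whenever } f\in\mathfrak{m}_y \text{ and } \phi^*(f)\in\mathfrak{m}_x^2.
\]

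For the ``$\Leftarrow$'' direction, I would check it directly: if $(\star)$ holds at every $x\in X$ and $f\in\ker\phi^*$, then $f\in\mathfrak{m}_{\phi(x)}$ and $\phi^*(f)=0\in\mathfrak{m}_x^2$ trivially, so $(\star)$ yields $\phi^*(Df)(x)=0$ for every $x$; hence $Df\in\bigcap_{x\in X}\mathfrak{m}_{\phi(x)}=\ker\phi^*$. For the ``$\Rightarrow$'' direction, assume $D(\ker\phi^*)\subseteq\ker\phi^*$ and fix $f\in\mathfrak{m}_y$ with $\phi^*(f)\in\mathfrak{m}_x^2$. Write $\phi^*(f)=\sum_i p_iq_i$ with $p_i,q_i\in\mathfrak{m}_x$ and lift $p_i,q_i$ to $g_i,h_i\in\mathfrak{m}_y$ with $\phi^*(g_i)=p_i$, $\phi^*(h_i)=q_i$; then $f-\sum_i g_ih_i\in\ker\phi^*$, so $D(f-\sum_i g_ih_i)\in\ker\phi^*$ by hypothesis. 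Applying $\phi^*$ and evaluating at $x$, the Leibniz rule combined with $\phi^*(g_i)(x)=\phi^*(h_i)(x)=0$ forces $\phi^*(Df)(x)=0$, which is $(\star)$.

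I expect the main obstacle to be the lifting step in the ``$\Rightarrow$'' direction: one needs the cotangent map $\phi_x^*$ to be surjective enough that $\phi^*(f)\in\mathfrak{m}_x^2$ can be represented as a sum of products lying in $\phi^*(\mathfrak{m}_y)$. This is automatic when $\phi^*$ is surjective (the closed-immersion case), and in the general setting is handled by reducing along the factorisation of $\phi$ through the scheme-theoretic image $V(\ker\phi^*)\subseteq\mf g^*$, where the restricted $\phi^*$ becomes surjective onto $\phi^*(\Sa(\mf g))$ and the required lifts exist.
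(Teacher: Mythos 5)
Your reduction of the condition $D_{\phi(x)}\in\phi_x({\rm T}_x X)$ to the pointwise statement $(\star)$ via duality of the cotangent map is correct (one uses here that ${\rm T}_xX$ is finite-dimensional, since $X$ is of finite type), and your $(\Leftarrow)$ argument coincides with the paper's. In $(\Rightarrow)$ you have also located the right sticking point. However, the patch you sketch does not repair the gap, and in fact the gap cannot be repaired without a further hypothesis on $\phi$.

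Passing to the scheme-theoretic image $V(\ker\phi^*)$ changes the relevant maximal ideal from $\mathfrak m_x\subseteq\kk[X]$ to $\mf n_x:=\phi^*(\mathfrak m_y)$, hence changes the hypothesis of $(\star)$ from $\phi^*(f)\in\mathfrak m_x^2$ to $\phi^*(f)\in\mf n_x^2$; these are genuinely different, since $\mathfrak m_x^2\cap\phi^*(\Sa(\mf g))$ can strictly contain $\mf n_x^2$. Concretely, take $\mf g$ abelian of dimension one so $\Sa(\mf g)=\kk[u]$, take $X=\AA^1=\Spec\kk[t]$ with $\phi^*(u)=t^2$, and take $D=\partial_u$. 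Then $\ker\phi^*=(0)$ is $D$-stable, yet at $x=0$ we have $\phi^*(u)=t^2\in\mathfrak m_0^2$ while $\phi^*(Du)(0)=1\ne0$, so $(\star)$ fails; equivalently $D_{\phi(0)}\ne 0$ while $\phi_0({\rm T}_0X)=0$, so $(\Rightarrow)$ is false as stated. For comparison, the paper's own proof of $(\Rightarrow)$ meets the same obstruction in a different guise: it constructs $\overline D_x\in(\mf n_x/\mf n_x^2)^*$ and then asserts that it extends to some $\ell\in{\rm T}_xX=(\mathfrak m_x/\mathfrak m_x^2)^*$, but such an extension exists precisely when $\overline D_x$ annihilates $(\mathfrak m_x^2\cap\mf n_x)/\mf n_x^2$, which is your condition $(\star)$ and is exactly what the example violates. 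The implication holds under an additional hypothesis such as surjectivity of all the cotangent maps $\phi_x^*:\mathfrak m_{\phi(x)}/\mathfrak m_{\phi(x)}^2\to\mathfrak m_x/\mathfrak m_x^2$; absent that, only the $(\Leftarrow)$ direction (which is all one needs to recognise a morphism as Poisson) is valid.
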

\begin{proof}
Suppose that $D_{\phi(x)} \in \phi_x({\rm T}_xX)$ for all $x\in X$.
Let $f \in \ker \phi^*$.  
We show that $\phi^*(Df) = 0$, or, equivalently, that $Df \in \mathfrak{m}_{\phi(x)}$ for all $x \in X$.
Fix $x$, and let $\ell \in {\rm T}_x X$ be such that $D_{\phi(x)} = \phi_x(\ell)$.
Then $$(Df)(\phi(x)) = \ell(\phi^*f) = \ell(0) = 0,$$
as needed.

Conversely, let $x \in X$ and let $\mf n_x := \mathfrak{m}_x \cap \phi^*(\Sa(\mf g))$.
If $\ker \phi^*$ is $D$-stable, then $D$ induces a derivation $\overline{D}$ on $\phi^*(\Sa(\mf g))$, defined by 
\[ \overline D(\phi^*f) = \phi^*(Df),\]
and thus defines an element $\overline D_x \in (\mf n_x/\mf n_x^2)^*$.  
Let $\ell \in {\rm T}_xX$ be any extension of $\overline D_x$ to $\mathfrak{m}_x/\mathfrak{m}_x^2$.
Then for $f \in \mathfrak{m}_{\phi(x)}$ we have
\[ D_{\phi(x)}(f) = \ev_{\phi(x)}(Df) = {\phi}^*(Df)(x) = \overline{D}_x  \phi^*f = \ell  \phi^*f,\]
showing that $D_{\phi(x)} \in \phi_x({\rm T}_xX)$.
\end{proof}

\subsection{Some pseudo-orbits in $W^*$ and $W_{\geq-1}^*$}
\label{SSexample}

This subsection is effectively an extended example, where we use the methods of the previous subsection, particularly Lemma~\ref{Lprtop}, to compute the Poisson cores of some particular functions in $W^*$ and $W_{\geq-1}^*$.
We will see later (Proposition~\ref{PGK2}) that these give all of the prime Poisson ideals of $\Sa(W)$ and $\Sa(W_{\geq-1})$ of co-GK-dimension 2.

Throughout the subsection fix $x,\alpha, \gamma \in \kk$ with $x \neq 0$ and $\alpha, \gamma$ not both 0.
Let $\chi := \chi_{x;\alpha,\gamma} \in W^*$ be defined by
\[  f \del \mapsto \alpha f(x) + \gamma f'(x).\]
Further, given $g\in \kk[t,t^{-1}]$ let $W(g) :=  \kk[t,t^{-1}] g \del \subseteq W$.
\label{ind:Wg}
Both notations will be generalised and used more extensively in Section~\ref{Spridlf}. 

We first compute the isotropy subalgebra of $\chi$.

\begin{lemma}\label{lem:Wchi}
We have:
\[ W^\chi = \begin{cases} W((t-x)^2) & \gamma = 0 \\
  \{ g \del \ | \ g(x) = \alpha g'(x) + \gamma g''(x) = 0 \} & \gamma \neq 0.
  \end{cases} \]
\end{lemma}
\begin{proof}
Recall that $\chi$ defines a bilinear form $B_\chi$ on $W$ by $B_\chi(v, w) = \chi([v, w])$, 
and that $W^\chi = \ker B_\chi$.  For all $\chi$---that is, for all choices of $x, \alpha, \gamma$---therefore, $W^\chi \neq W$.

First assume that $\gamma = 0$.  
If $g\del \in W((t-x)^2)$ 
then $$\chi([g\del,f\del]) = \alpha(g(x) f'(x) -f(x) g'(x)) = 0,$$ so $W((t-x)^2) \subseteq W^\chi$.  But  $B_\chi$ defines a nondegenerate bilinear form on $W/W^\chi$, so $\dim W/W^\chi \geq 2$ and thus  $W((t-x)^2) = W^\chi$.

Now suppose $\gamma \neq 0$.  
Let
\[ V:= \{ g \in \kk[t,t^{-1}]\  |\  g(x) = \chi(g'\del) = 0 \},\]
which is a codimension 2 subspace of $\kk[t,t^{-1}]$, 
and let $g \in V$, $f\in \kk[t,t^{-1}]$.  
Then 
\[
\chi([f\del, g\del]) = \alpha (f(x) g'(x) - f'(x) g(x))+ \gamma(f(x) g''(x) - f''(x) g(x))   = f(x) \chi(g'\del) - g(x) \chi(f'\del),
\]
which is 0 by assumption on $g$.
Thus $W^\chi \supseteq V\del$, and as before  the two must be equal.
\end{proof}

Note that in all cases $W^\chi \supseteq W((t-x)^3)$,
and that  if $\lambda \neq 0$ then $W^{\chi} = W^{\chi_{x; \lambda \alpha , \lambda \gamma}}$. 

We now compute $\CoreP(\chi)$.
Let $B = \CC [t,t^{-1}, y]$, and define a Poisson bracket on $B$  induced from
defining $\{y, t\} = 1$.

Define
$p_\gamma:  \Sa(W) \to B$ 
as the algebra homomorphism induced by defining 
\beq\label{pgamma} p_\gamma( f \del) = f y + \gamma f'.
\eeq
We verify:
\[    \{ p_\gamma(f\del), p_\gamma(g\del)\} =
    \{fy+\gamma f', gy+ \gamma g' \} 
    = y(fg'-f'g) + \gamma(fg''-f''g) = p_\gamma((fg'-f'g)\del).
\]
Thus $p_\gamma$ 
respects Poisson brackets, so 
 $\ker p_\gamma$
 is a Poisson ideal of $\Sa(W)$.


\begin{lemma}\label{lem:Jgamma}
The Poisson core of $\chi$ is equal to $\ker p_\gamma$, and in particular depends only on $\gamma$ as long as $(\alpha , \gamma) \neq (0,0)$.
\end{lemma}

\begin{proof}
First, $\chi(f\del) = p_\gamma(f \del)|_{t=x,y=\alpha}$, and it follows immediately that if we extend $\chi$ to a homomorphism $\ev_\chi:  \Sa(W) \to \kk$, it factors through $p_\gamma$. 
As a result,  $\ev_\chi(\ker p_\gamma) = 0.$
Since in all cases $W^\chi$ has codimension $ 2 = \GK(B)$ by Lemma~\ref{lem:Wchi}, the result is a direct consequence of Lemma~\ref{Lprtop}.
\end{proof}

\begin{remark}\label{rem:restrictJgamma}
Let $x, \alpha, \gamma \in \kk$ with $\alpha, \gamma$ not both 0, and define $\nu  \in W_{\geq-1}^*$ analogously to $\chi$:  that is, $\nu (f\del) = \alpha f(x) + \gamma f'(x)$.
One may similarly prove that $\CoreP(\nu) =  \Sa(W_{\geq-1}) \cap \ker p_\gamma$, and in particular that
\[\CoreP(\chi|_{W_{\geq-1}}) = \CoreP(\chi) \cap \Sa(W_{\geq-1}).\]
We will see in Proposition~\ref{Plfpoi} that this is true for all elements of $ W^*$.
Likewise, $\CoreP(\chi|_{W_{\geq 1}}) = \CoreP(\chi) \cap \Sa(W_{\geq 1})$.
\end{remark}

\subsection{Pseudo-orbits versus orbits}
We wish to relate the pseudo-orbits from  subsection~\ref{SSfrpo} to  orbits of an algebraic group acting on an appropriate algebraic variety $X$. 
The next result gives us a general technique to do this.

\begin{proposition}\label{Practpoi} 
Let $\mf g$ be  a Lie algebra with $ \dim \mf g < | \kk |$.
Let $X$ be an irreducible affine algebraic variety acted on by a connected algebraic group $H$ with Lie algebra $\mf h$ and let $U\subseteq X$ be an open affine subset. 
Fix a morphism of varieties $\phi: U \to \mf g^*$. 
Assume that 
for every $x\in U$ we have $\mf g_{\phi(x)}\subseteq \phi_x(\mf h_x)$. 
\begin{itemize}
\item[(a)] For all $x\in U$ the pseudo-orbit of $\phi(x)$ is contained in $\overline{\phi(Hx\cap U)}$, where recall that the topology on $\mf g^*$ is the Zariski topology.
\item[(b)] For $x\in U$, let $I^U(Hx)$ be  the defining ideal in $\kk[U]$ of $\overline{Hx} \cap U$.
Let $x\in U$ be such that $\dim \mf g_{\phi(x)}=\dim \mf h_x$.
Then $\CoreP(\phi(x))$ is equal to the kernel of the induced homomorphism 
\beq \label{homom} \Sa(\mf g) \to \kk[U]/I^U(Hx) = \kk[\overline{Hx} \cap U],
\eeq
and 
\beq\label{EGK}
\dim \OO(\phi(x)) = \GK \Sa(\mf g)/\CoreP(\phi(x)) = \dim \mf g_{\phi(x)}.
\eeq

In particular, if $y \in U$ is such that $Hx = Hy $ and 
 $\dim \mf g_{\phi(y)}=\dim \mf h_y$ then $\CoreP(\phi(x))=\CoreP(\phi(y))$.
\item[(c)] Let $x, y\in U$ be such that $\dim \mf g_{\phi(x)}=\dim \mf h_x$ and $\dim \mf g_{\phi(y)}=\dim \mf h_y$. Then $\CoreP(\phi(x))=\CoreP(\phi(y))$ if and only if there are open subsets $U_x\subseteq Hx\cap U$ and $U_y\subseteq Hy\cap U$ such that $\phi(U_x)=\phi(U_y)$.
\end{itemize}
\end{proposition}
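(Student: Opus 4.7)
The plan is to prove (a) first, since this is the substantive step, and then deduce (b) and (c) as corollaries.  The hypothesis $\mf g_{\phi(x)} \subseteq \phi_x(\mf h_x) \subseteq \phi_x(\mathrm T_x U)$ for all $x \in U$ lets Proposition~\ref{Prpoi} assert that $\phi\colon U \to \mf g^*$ is itself a Poisson morphism.  Let $J_x := (\phi^*)^{-1}(I^U(Hx))$, so that $J_x$ is the kernel of the composition \eqref{homom}; equivalently, $J_x$ is the radical ideal of $\Sa(\mf g)$ whose vanishing set is $\overline{\phi(Hx \cap U)}$.  Since $\phi(x) \in \phi(Hx \cap U)$ we certainly have $J_x \subseteq \mathfrak m_{\phi(x)}$.

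The heart of the matter is to show that $J_x$ is itself a Poisson ideal; this is the step I expect to be the main obstacle.  To do so I would fix $f \in J_x$, $u \in \mf g$, and an arbitrary $y \in Hx \cap U$, and prove $\{f,u\}(\phi(y)) = 0$.  Up to a sign that is irrelevant here, $\{f,u\}(\phi(y))$ equals the directional derivative $u_{\phi(y)}(f)$ coming from the tangent vector $u_{\phi(y)} \in \mathrm T_{\phi(y)}\mf g^*$.  By hypothesis $u_{\phi(y)} = \phi_y(v_y)$ for some $v \in \mf h$, giving $u_{\phi(y)}(f) = v_y(\phi^* f)$; but $H$-invariance of the orbit places $v_y \in \mathrm T_y(Hx)$, while $\phi^* f \in I^U(Hx)$ by construction, so $v_y(\phi^* f) = 0$.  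This is the precise point at which the Poisson hypothesis on $\phi$ interacts with the $H$-invariance of the orbit.  With $J_x$ thus Poisson and contained in $\mathfrak m_{\phi(x)}$, the maximality of $\CoreP(\phi(x))$ yields $J_x \subseteq \CoreP(\phi(x))$, and therefore $\mb O(\phi(x)) \subseteq V(\CoreP(\phi(x))) \subseteq V(J_x) = \overline{\phi(Hx \cap U)}$, proving (a).

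For (b), the ring $\kk[U]/I^U(Hx) = \kk[\overline{Hx} \cap U]$ is a domain of Gelfand--Kirillov dimension $\dim \overline{Hx} = \dim \mf h_x$, and by assumption this equals $\dim \mf g_{\phi(x)} = \rk B_{\phi(x)}$.  Lemma~\ref{Lprtop} applied to the homomorphism \eqref{homom} then upgrades the inclusion $J_x \subseteq \CoreP(\phi(x))$ to equality and delivers \eqref{EGK}.  The ``in particular'' clause is immediate, since $Hx = Hy$ forces $J_x = J_y$.  For (c), part (b) identifies $V(\CoreP(\phi(x))) = \overline{\phi(Hx \cap U)}$ and symmetrically for $y$, so $\CoreP(\phi(x)) = \CoreP(\phi(y))$ is equivalent to equality of these two irreducible closures; call this common variety $Z$.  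By Chevalley's theorem each of $\phi(Hx \cap U)$ and $\phi(Hy \cap U)$ is constructible and dense in $Z$, so each contains an open dense subset of $Z$, and their intersection is a nonempty open $Z_0 \subseteq Z$; the preimages of $Z_0$ inside $Hx \cap U$ and $Hy \cap U$ are then the required open subsets $U_x, U_y$.  Conversely, given such $U_x, U_y$ with $\phi(U_x) = \phi(U_y)$, density of $U_x$ (resp.\ $U_y$) in the irreducible $\overline{Hx}$ (resp.\ $\overline{Hy}$) forces $\overline{\phi(Hx \cap U)} = \overline{\phi(U_x)} = \overline{\phi(U_y)} = \overline{\phi(Hy \cap U)}$, and we conclude once more via (b).
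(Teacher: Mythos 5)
Your treatment of (a), (b), and the ``if'' direction of (c) is correct and is essentially the paper's own argument: your direct verification that $J_x=(\phi^*)^{-1}(I^U(Hx))$ is Poisson is just an unwinding of the paper's appeal to Proposition~\ref{Prpoi} applied to the restricted morphism $\overline{Hx}\cap U\to\mf g^*$ (using that $\mf h_y\subseteq {\rm T}_y(\overline{Hx}\cap U)$ for $y\in\overline{Hx}\cap U$ because $\overline{Hx}$ is $H$-stable), and your (b) is the same application of Lemma~\ref{Lprtop}; the converse direction of (c) via density of $U_x$ in $\overline{Hx}$ also matches the paper.

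The gap is in the ``only if'' direction of (c), at the words ``by Chevalley's theorem''. The common closure $Z=V(\CoreP(\phi(x)))$ sits inside $\mf g^*=\MSpec\Sa(\mf g)$, and its coordinate ring $\Sa(\mf g)/\CoreP(\phi(x))$ is a domain of finite Gelfand--Kirillov dimension but is not known to be finitely generated: it is only a subalgebra of the affine ring $\kk[\overline{Hx}\cap U]$. Thus $Hx\cap U\to Z$ is a morphism from a variety into a possibly non-noetherian affine scheme, and the classical Chevalley theorem (which needs the target to be of finite type, or the morphism to be finitely presented) cannot be cited to conclude that the image is constructible in $Z$. The statement you actually use --- that $\phi(Hx\cap U)$ contains a nonempty open subset of $Z$ --- is true, but in this setting it requires proof: it is exactly Corollary~\ref{cor:chevalley} of the paper, whose proof relies on Lemma~\ref{lem:chevalley} and the extended Nullstellensatz (hence on the cardinality hypothesis $\dim_\kk A<|\kk|$), and which is established later in the paper, independently of this proposition, so invoking it would not be circular but it is not the off-the-shelf theorem. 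The paper's own proof of (c) sidesteps this point differently: it forms the fibre product $(Hx)\times_{\mf g^*}(Hy)$, notes that, being closed in the noetherian variety $(Hx)\times(Hy)$, its defining ideal is generated by finitely many elements, so that $(Hx)\times_{\mf g^*}(Hy)=(Hx)\times_{V^*}(Hy)$ for some finite-dimensional subspace $V\subseteq\mf g$, and then applies ordinary Chevalley to the images of $Hx$ and $Hy$ in the finite-dimensional space $V^*$, pulling the resulting open sets back to $Hx$ and $Hy$. Either repair (the fibre-product reduction, or proving the extended Chevalley statement) closes the gap, but as written the appeal to Chevalley is unjustified.
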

\begin{proof} 
(a). 
The kernel of \eqref{homom} is Poisson by Proposition~\ref{Prpoi} and is contained in $\mathfrak{m}_{\phi(x)}$ by definition.  
Thus it is contained in $\CoreP(\phi(x))$, which is what we need. 
Note also that this statement is completely trivial if $\mf g$ is the Lie algebra of a finite-dimensional algebraic group.

(b). 
Let $K$ be the kernel of \eqref{homom}.  
Then $K$ is Poisson by (a).  
We have
\[ \GK \Sa(\mf g)/K \leq \dim Hx = \dim \mf h_x = \rk B_{\phi(x)},\]
so $K = \CoreP(\phi(x))$ by Lemma~\ref{Lprtop}.  
Certainly 
\[\GK \Sa(\mf g)/\CoreP(\phi(x)) \leq \GK \kk[U]/I^U(Hx) = \dim \mf h_x = \dim \mf g_{\phi(x)},\]
and the two are equal by Lemma~\ref{Lprtop}.
The final statement follows from Corollary~\ref{Cpodscr}.

(c). 
For any dense subset $U_x \subseteq Hx \cap U$, by (b) $\CoreP(\phi(x))$ is equal to the kernel of the induced map $\Sa(\mf g) \to \kk[U_x]$.
This is determined by $\phi(U_x)$, so if $\phi(U_x) = \phi(U_y)$ then $\CoreP(\phi(x)) = \CoreP(\phi(y))$.

Suppose now that $\CoreP(\phi(x)) = \CoreP(\phi(y))$. 
Consider the induced maps $Hx\to \mf g^*, Hy\to \mf g^*$ and the respective fibre product $(Hx)\times_{\mf g^*}(Hy)$. 
Note that $(Hx)\times_{\mf g^*}(Hy)$ is a closed subset of $(Hx)\times (Hy)$ and hence the ideal defining $(Hx)\times_{\mf g^*}(Hy)$ is generated by a finite collection of elements. 
This implies that there is a finite-dimensional subspace $V\subset \mf g$ such that $$(Hx)\times_{\mf g^*}(Hy)=(Hx)\times_{V^*}(Hy).$$
As $\CoreP(\phi(x)) =\CoreP(\phi(y))$, the images of $Hx$  and $Hy$ in $V^*$ have the same closure in $V^*$; call it $Z$.
The image of $Hx$ in $V^*$  may not be open in $Z$, but it is constructible and thus contains an open subset $U(x)$ of $Z$.  
Likewise the image of $Hy$ in $V^*$ contains an open subset $U(y)$ of $Z$.
We pick $U_x $ to be the preimage of $U(x) \cap U(y)$ in $Hx$ and let $U_y$ be the preimage of $U(x) \cap U(y)$ in $Hy$; then they satisfy the desired property. 
\end{proof}

\section{Primitive ideals and local functions}\label{Spridlf}
We now specialise to let $\mf g$ be one of $W$,  $W_{\geq 1}$, $W_{\geq -1}$, or $\Vir$. 
The main goal of this section is to determine which functions $\chi \in \mf g^*$ have  nontrivial Poisson core $\CoreP(\chi)$.
We will see that these $\chi$ are precisely those $\chi$ which measure the local behavior of $f\del \in \mf g$ at a finite collection of points; we  call these functions {\em local}.  (For example, the functions $\chi \in W^*$ of Subsection~\ref{SSexample} are local.)
We  provide several equivalent characterisations of local functions, and  apply these to classify subalgebras of $\mf g$ of finite codimension.
As a consequence, we  show that $\Sa(\Vir)/(z-\zeta)$ is Poisson simple for $\zeta \neq 0$. 

More formally, consider $\mf g = W_{\geq -1}$.
Let $x, \alpha_0, \dots,\alpha_n \in \kk$  with $(\alpha_0, \dots, \alpha_n) \neq \vec{\hspace{2pt}0}$.  
Define a linear function $\chi_{x; \alpha_0, \dots, \alpha_n} \in W_{\geq -1}^*$ by
\begin{equation}\chi_{x; \alpha_0,\ldots, \alpha_n}: f\del\mapsto \alpha_0f(x)+\alpha_1 f'(x)+\ldots+\alpha_nf^{(n)}(x).  
\label{Elf}\end{equation}
The same formula defines elements of $W_{\ge1}^*$ and $W^*$, although in the last case we need to require that $x\ne0$.

We now formally define local functions.
\begin{definition}\label{def:local}
\begin{enumerate}
\item[(a)] A {\em local function on $W_{\ge -1}$ or $W_{\ge 1}$} 
is a  sum of finitely many  functions of the form \eqref{Elf} with (possibly) distinct $x$;
\item[(b)] A {\em local function on $W$} is a  sum of finitely many functions of the form \eqref{Elf} with (possibly) distinct $x\ne0$;
\item[(c)] A {\em local function on $\Vir$} is the pullback of a local function on $W$ via the canonical map  $\Vir \to W$.
\end{enumerate}
A local function of the form \eqref{Elf} is called a {\em one-point local function}.
Let $\chi = \chi_{x; \alpha_0,\ldots, \alpha_n}$ be a one-point local function.
We say that $\{x\}$ is the {\em support}
 of  $\chi$ and  that $x$ is the {\em base point} of $\chi$.  
If $\alpha_n  \neq 0$ we say that $n$ is the {\em order} of $\chi$.

Let $\chi$ be  an arbitrary local function. The  {\em support} 
\label{ind:support}
of $\chi$ is the union of the supports of the component one-point local functions. 
Further, the orders of the component one-point local functions give rise to a partition $\lambda(\chi)$. 
More explicitly, 
write $\chi = \chi_1 + \dots + \chi_{r}$, where the $\chi_i $ are one-point local functions based at distinct points.
Let $m_i$ be the order of $\chi_i$.  
By reordering the $\chi_i$ if necessary, we may assume that $m_1 \geq m_2 \geq \dots \geq m_r$.
The partition
\[ \lambda(\chi) := (m_1+1, \dots, m_r+1)\]
\label{ind:lambda}
is called the {\em order partition} of $\chi$.
(We add 1 here so that the partition $(0)$ corresponds to the zero function.)  
We call $m_1$ the  {\em order} of $\chi$.
\label{ind:order}
\end{definition}

It follows from the Chinese remainder theorem  that a local function is 0 if and only if it is 0 pointwise and thus any  local function $\chi$ on $W$, $W_{\geq -1}$, or $\Vir$ has a unique presentation as a sum of 
nonzero one-point local functions with distinct base points.
This also shows that the  partition $\lambda(\chi)$ and the order of $\chi$ are well-defined.
For $W_{\ge1}$ it is easy to see that $\chi_{0; 1}=\chi_{0;0, 1}=0$ and the presentation is unique under the assumption that the coefficients  of  $f(0)$ and  $f'(0)$ are 0.

\begin{remark}\label{rem:dag}
Let $\mf g$ be $\Vir$ or $W$ or $W_{\geq -1}$ or $W_{\geq 1}$.  
Then $\mf g^*$ and the subspace of local functions are both uncountable-dimensional vector spaces;
for local functions observe that any set of one-point local functions with distinct base points is linearly independent. 
On the other hand, clearly ``most'' elements of $\mf g^*$ are not local.
In fact, we will see in  Remark~\ref{rem:countableU} that local functions are parameterised by a countable union of algebraic varieties.  

For a specific example of a non-local function, let $\alpha_0, \alpha_1, \dots \in \kk$ be algebraically independent over $\QQ$, and define  $\varkappa \in W_{\geq -1}^*$ by 
$\varkappa (t^i\del) = \alpha_i$.  
\end{remark}
As local functions are defined similarly for $W$, $ W_{\ge 1}$,  $W_{\ge -1}$, and $\Vir$ we sometimes discuss all of them simultaneously. 
When we do so, we will assume without comment  whenever we talk about a one-point local function on $W$ or $\Vir$ that $x\ne0$. 

Pick $\chi\in W^*$ or $W_{\ge1}^*$ or $W_{\ge-1}^*$. 
In this section we will show that $\CoreP(\chi)$ is nonzero iff $\chi$ is local and prove a similar statement for $\Vir^*$. 
The starting point here is the following result, due in its strongest form to Iyudu and the second author. 
\begin{theorem}\label{Tgkf} Let $\mf g$ be $W$ or $W_{\ge1}$ or $W_{\ge-1}$ and let $I$ be a nonzero Poisson ideal of $\Sa(\mf g)$.
Then 
\[\GK (\Sa(\mf g))/I < \infty.\]
In particular, if $\chi\in\mf g^*$ is such that $(0) \neq \CoreP(\chi)$, then 
 $\GK (\Sa(\mf g)/\CoreP(\chi))<\infty$.

Further, if $\chi \in \Vir^*$ is such that $\CoreP(\chi) \neq (z - \chi(z))$, then 
\[ \GK (\Sa(\Vir)/\CoreP(\chi))< \infty.\]
\end{theorem}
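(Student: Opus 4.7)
The overall strategy is to reduce to the Iyudu--Sierra theorem \cite[Theorem~1.3]{IS}, which already handles both $\mf g = W$ and $\mf g = \Vir$ (under the corresponding non-central-generation hypothesis). The genuinely new content of Theorem~\ref{Tgkf} concerns $\mf g = W_{\ge -1}$ and $\mf g = W_{\ge 1}$.

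First, for $\mf g = W$ the main statement is literally \cite[Theorem~1.3]{IS}, and the ``In particular'' sentence is then immediate upon taking $I = \CoreP(\chi)$.

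For the ``Further'' clause on $\Vir$, I would reduce to the $W$ case directly. Since $z$ is Poisson-central in $\Sa(\Vir)$ and $z - \chi(z) \in \mathfrak{m}_\chi$, the principal ideal $(z - \chi(z))$ is a Poisson ideal contained in $\CoreP(\chi)$. The quotient $\Sa(\Vir)/(z - \chi(z))$ is canonically isomorphic to $\Sa(W)$ as a Poisson algebra, and the hypothesis $\CoreP(\chi) \ne (z - \chi(z))$ says precisely that the image of $\CoreP(\chi)$ in this quotient is a nonzero Poisson ideal of $\Sa(W)$. Applying the $W$ case to that image gives the required GK-dimension bound on $\Sa(\Vir)/\CoreP(\chi)$.

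The real work is in the cases $\mf g = W_{\ge -1}$ and $\mf g = W_{\ge 1}$, which I would handle by adapting the proof of \cite{IS} rather than by trying to reduce to it. The IS argument exploits the natural $\ZZ$-grading $W = \bigoplus_n W_n$ together with the bracket rule $\{t^i\del, t^j\del\} = (j - i)t^{i+j-1}\del$ to show that from any nonzero element of a Poisson ideal one can generate enough distinct homogeneous elements to force polynomial growth in the quotient. Both $W_{\ge -1}$ and $W_{\ge 1}$ are graded Lie subalgebras inheriting this bracket, so the combinatorial core of the IS argument should transplant with only minor bookkeeping changes, after the usual associated-graded reduction to the case of graded Poisson ideals. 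A naive reduction along the inclusion $\Sa(W_{\ge \pm 1}) \hookrightarrow \Sa(W)$ is \emph{not} available: if $I \subseteq \Sa(W_{\ge \pm 1})$ is a nonzero Poisson ideal and $\widetilde I$ is the Poisson ideal of $\Sa(W)$ it generates, then \cite{IS} yields $\GK \Sa(W)/\widetilde I < \infty$, but $\widetilde I \cap \Sa(W_{\ge \pm 1})$ is only known to \emph{contain} $I$, producing a surjection $\Sa(W_{\ge \pm 1})/I \twoheadrightarrow \Sa(W_{\ge \pm 1})/(\widetilde I \cap \Sa(W_{\ge \pm 1}))$ in the wrong direction for transferring a GK-bound. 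Overcoming this and pinning down the graded step is the main technical obstacle.
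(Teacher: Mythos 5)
Your handling of $W$ and $\Vir$ is fine: $W$ is a direct citation of~\cite[Theorem~1.3]{IS}, and your reduction of the $\Vir$ clause to the $W$ case via the Poisson isomorphism $\Sa(\Vir)/(z-\chi(z)) \cong \Sa(W)$ is a valid (and clean) alternative to citing~\cite{IS} for $\Vir$ directly, as the paper does.

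The gap is in how you treat $W_{\ge-1}$ and $W_{\ge1}$. You flag these as ``the real work,'' propose to redo the Iyudu--Sierra argument for them, and explicitly admit that ``pinning down the graded step is the main technical obstacle'' --- which means your proof is incomplete exactly where you say the new content lies. In fact no new argument is needed here at all: the paper's proof is a two-line citation, invoking~\cite[Theorem~1.3]{IS} not only for $W$ and $\Vir$ but also for $W_{\ge-1}$, and~\cite[Theorem~1.4]{PS} for $W_{\ge1}$. You appear to have read the scope of~\cite[Theorem~1.3]{IS} too narrowly (perhaps from the introduction's informal summary, which mentions only $\Sa(W)$ and $\Sa(\Vir)$); the actual theorem covers $W_{\ge-1}$ as well, and the $W_{\ge1}$ case was already proved in the earlier paper~\cite{PS}. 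Your observation that the naive restriction-along-inclusion reduction from $\Sa(W_{\ge\pm1})$ to $\Sa(W)$ points the wrong way is correct, but moot: one should simply cite the existing results rather than try to reprove them. As written, your proof does not establish the theorem for $W_{\ge-1}$ or $W_{\ge1}$.
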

\begin{proof}See~\cite[Theorem~1.4]{PS} for $W_{\geq 1}$ and~\cite[Theorem~1.3]{IS} for all other $\mf g$.\end{proof}

Theorem~\ref{Tgkf} has the following extremely useful consequence:
\begin{corollary}\label{cor:finiteorbit} Let $\mf g$ be as above and let $\chi \in \mf g^*$.
If $\mf g = W$ or $W_{\geq-1}$ or $W_{\geq 1}$ assume that $\CoreP(\chi) \neq (0)$, and if $\mf g = \Vir$ assume that $\CoreP(\chi) \neq (z - \chi(z))$.
Then  $\dim \mf g\cdot\chi<\infty$.
\end{corollary}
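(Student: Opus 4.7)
The proof should be essentially a two-line deduction from the two preceding results. The plan is to combine Lemma~\ref{Lrk+} (which bounds the rank of $B_\chi$ from above by the Gelfand--Kirillov dimension of $\Sa(\mf g)/\CoreP(\chi)$) with Theorem~\ref{Tgkf} (which says that this GK-dimension is finite under precisely the hypothesis we are given).

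First, I would recall from the setup preceding Lemma~\ref{Lrk+} the identification $\mf g \cdot \chi \cong \mf g / \mf g^\chi$ and the observation that, since $B_\chi$ descends to a nondegenerate skew form on $\mf g/\mf g^\chi$, we have $\dim (\mf g \cdot \chi) = \dim \mf g / \mf g^\chi = \rk B_\chi$. So the corollary reduces to showing $\rk B_\chi < \infty$.

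Next, I would apply Lemma~\ref{Lrk+} (which only requires $\dim \mf g < |\kk|$, satisfied in all our cases) to conclude
\[ \rk B_\chi \leq \GK \Sa(\mf g)/\CoreP(\chi). \]
Finally, in the cases $\mf g = W, W_{\geq -1}, W_{\geq 1}$ the hypothesis $\CoreP(\chi) \neq (0)$ together with Theorem~\ref{Tgkf} guarantees $\GK \Sa(\mf g)/\CoreP(\chi) < \infty$; in the Virasoro case the hypothesis $\CoreP(\chi) \neq (z - \chi(z))$ yields the same conclusion via the final sentence of Theorem~\ref{Tgkf}. Either way $\rk B_\chi < \infty$, whence $\dim \mf g \cdot \chi < \infty$.

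There is no real obstacle here: the nontrivial work has been done in Theorem~\ref{Tgkf} (invoking \cite{IS,PS}) and in the rank bound of Lemma~\ref{Lrk+}, so the corollary is just the formal combination of the two. The only point meriting a sentence in the written proof is the clarification that the two different hypotheses on $\CoreP(\chi)$ for the non-central versus central cases are exactly what Theorem~\ref{Tgkf} requires.
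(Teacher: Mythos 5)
Your proposal is correct and matches the paper's own proof exactly: the paper's proof reads simply ``Combine Theorem~\ref{Tgkf} and Lemma~\ref{Lrk+},'' which is precisely the two-line deduction you outline. Your clarifying remark about $\dim \mf g \cdot \chi = \rk B_\chi$ is a helpful unpacking of what ``combine'' means here.
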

\begin{proof}
Combine Theorem~\ref{Tgkf} and Lemma~\ref{Lrk+}.
\end{proof}

We also recall:
\begin{proposition}\cite[Corollary~5.1]{LSS}
\label{prop9}
Let $\mf g $ be $\Vir$ or $W$ or $W_{\geq-1}$ or $W_{\geq 1}$.
Then $\Sa(\mf g)$ satisfies the ascending chain condition on radical Poisson ideals and every Poisson ideal has finitely many minimal primes above it, each of which is Poisson.
\end{proposition}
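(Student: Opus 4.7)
My plan is to use Theorem~\ref{Tgkf} as the principal finiteness engine: any nontrivial Poisson ideal of $\Sa(W)$, $\Sa(W_{\geq -1})$, or $\Sa(W_{\geq 1})$ (and any Poisson ideal of $\Sa(\Vir)$ other than the centrally-generated family $(z-\lambda)$) gives a quotient of finite Gelfand--Kirillov dimension. I would first dispatch the Virasoro case by intersecting a given Poisson ideal $I$ with the central subalgebra $\kk[z]$: if $z-\lambda \in I$ for some $\lambda \neq 0$ then Corollary~\ref{icor:Psimple} makes $\Sa(\Vir)/(z-\lambda)$ Poisson simple and both claims are trivial; if $\lambda = 0$ the assertion reduces to the corresponding one for $\Sa(W)$; and if $I \cap \kk[z] = 0$ one base-changes to $\kk(z)$ and again treats $I$ as a Poisson ideal over an enlarged Witt symmetric algebra. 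So the whole proposition rests on the Witt-type algebras.

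For $\mf g \in \{W, W_{\geq -1}, W_{\geq 1}\}$ the heart of the argument is ACC on radical Poisson ideals. Given an ascending chain $0 \neq I_1 \subsetneq I_2 \subsetneq \cdots$ of such ideals, all quotients $\Sa(\mf g)/I_k$ have finite GK-dimension by Theorem~\ref{Tgkf}, weakly decreasing in $k$, so we may assume this dimension is a constant $d$ from some point on. Using Lemma~\ref{lem:basics}(a) each $I_k$ decomposes as an intersection of Poisson primitives $\CoreP(\chi)$, each of which by Corollary~\ref{cor:finiteorbit} has the property that $\mf g \cdot \chi$ is finite-dimensional. The plan is then to bound the family of Poisson primitives appearing in the decomposition of a GK-codimension-$d$ radical Poisson ideal by a scheme of essentially finite type, forcing the descending chain of closed subsets $V(I_k) \subseteq \mf g^*$ to stabilise. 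Once ACC is in hand, finitely many minimal primes over any Poisson ideal is automatic, since an infinite set of minimal primes $\{p_i\}$ of $I$ would yield an infinite strictly ascending chain $I \subseteq p_1 \cap p_2 \cap \cdots$ of radical Poisson ideals contradicting ACC; and minimality being preserved under the Poisson structure follows because each Hamiltonian derivation $\{u,\cdot\}$, $u \in \mf g$, permutes the finite set of minimal primes and, acting by a $\kk$-linear derivation, fixes each one set-theoretically.

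The main obstacle is the ACC step itself. Finite GK-dimension in the commutative reduced setting is far from Noetherianity --- commutative reduced $\kk$-algebras of a fixed finite GK-dimension form a very large class --- so the argument needs an essentially geometric finiteness statement about the stratification of $\mf g^*$ by pseudo-orbits. The conceptually cleanest path would invoke the forthcoming classification of pseudo-orbits via local functions (Theorem~\ref{iTlocvir}), which parametrises Poisson primitives by finite support data plus finite jets, immediately bounding the chains. But this would be circular in the present paper, so the fallback, taken in [LLS], is to extract a more primitive finiteness statement directly from Theorem~\ref{Tgkf} combined with the general theory of Poisson brackets on symmetric algebras, isolating exactly the Noetherian behaviour needed for radical Poisson ideals without asserting anything as strong as Noetherianity of $\Sa(\mf g)$.
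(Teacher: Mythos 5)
The proposition you were asked to prove is not actually proved in the paper: it is imported verbatim from \cite[Corollary~5.1]{LSS}, and that citation is the paper's entire ``proof''. Measured against this, your proposal does not close the gap it identifies. The whole content of the statement is the ascending chain condition on radical Poisson ideals, and your treatment of that step --- ``bound the family of Poisson primitives appearing in the decomposition \dots by a scheme of essentially finite type, forcing the descending chain of closed subsets to stabilise'' --- is a plan rather than an argument; you then concede it is the main obstacle and fall back on invoking \cite{LSS}. That fallback is the correct resolution (it is exactly what the paper does), but it means your writeup proves only the easy consequences and cites the hard part. Be clear that ACC cannot be extracted from Theorem~\ref{Tgkf} alone: finite GK-dimension of all proper Poisson quotients carries no Noetherian-type finiteness, as you yourself note, and the route through the pseudo-orbit classification would be circular in this paper, since Theorem~\ref{thm:paramprimes} already invokes Proposition~\ref{prop9} to produce minimal primes. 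In \cite{LSS} the chain condition is the main theorem and has its own self-contained argument; it is not a formal consequence of the GK bound of \cite{IS,PS}.

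Two secondary problems in the self-contained portion. First, the reduction of $\Vir$ to $W$ by base-changing to $\kk(z)$ when $I \cap \kk[z] = 0$ is not available as stated: the Nullstellensatz (Theorem~\ref{ENSS}), Lemma~\ref{lem:basics}, and Theorem~\ref{Tgkf} are all formulated over the uncountable algebraically closed field $\kk$, and none of them transfers automatically to $\kk(z)$, so this case would need a genuinely different argument. Second, a Hamiltonian derivation $\{u,\cdot\}$ is not an automorphism and does not ``permute'' minimal primes; the correct statement is the classical characteristic-zero fact that minimal primes over a derivation-stable radical ideal are themselves derivation-stable (cf.\ the references used at \cite[Lemmata~2.6 and 2.8]{PS}), and the finiteness of the set of minimal primes follows from ACC by the standard induction showing every radical Poisson ideal is a finite intersection of prime Poisson ideals --- note that your chain $I \subseteq p_1 \cap p_2 \cap \cdots$ is descending, not ascending, as written. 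These are repairable, but the central ACC step remains exactly where you left it: in \cite{LSS}.
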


It is not known for any of these Lie algebras whether $\Sa(\mf g)$ satisfies the ascending chain condition on arbitrary Poisson ideals. 

Although we use similar notation for $W_{\geq -1}$, $W_{\geq 1}$, $W$, and $\Vir$, the details are slightly different, so we analyse local functions in each of these cases separately.

\subsection{Local functions on $W_{\ge-1}$ and $W_{\ge1}$} \label{SSloc}
In this subsection we set $\mf g=W_{\ge-1}=\CC[t]\del$. 
It will be useful to consider Lie subalgebras of $W_{\geq -1}$ of a particular form.  
For any $f\in\kk[x] \setminus \{0\}$ denote by $W_{\ge-1}(f)$ 
\label{ind:W-1f}
the space of vector fields of the form $$\{gf\del\}_{g\in\CC[x]}.$$
In other words, $W_{\geq -1}(f) = f W_{\geq -1}$ under the obvious notation.
It is clear that $W_{\ge-1}(f)$ is a Lie subalgebra of $W_{\ge-1}=W_{\ge-1}(1)$. 

We give five equivalent conditions for local functions.  Similar conditions will hold for the other Lie algebras we consider, see Theorems~\ref{Tloc} and~\ref{Tlocvir}.

\begin{theorem}\label{Tloc01} Let  $f\in\CC[t] \setminus \{0\}$ and $\chi\in W_{\ge-1}(f)^*$.  
Then the following conditions are equivalent:
\begin{itemize}

\item[(0)] $\chi$ is the restriction of a local function on $W_{\geq -1}$; 

\item[(1)] $\CoreP(\chi)\ne(0)$;

\item[(2)]  $\dim W_{\ge-1}(f)/W_{\ge-1}(f)^\chi=\dim W_{\geq -1}(f) \cdot\chi <\infty$;

\item[(3)] there exists $h\in\CC[t]$ such that $\chi|_{W_{\ge-1}(fh)}=0$;
\item[(4)]  $W_{\ge-1}(f)^\chi\ne (0)$.
\end{itemize}
\end{theorem}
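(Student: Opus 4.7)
The plan is to establish the cycle $(0) \Rightarrow (3) \Leftrightarrow (4) \Leftrightarrow (2)$ with a separate reverse $(3) \Rightarrow (0)$, then connect $(1)$ using the Poisson-theoretic framework of Section~\ref{BACKGROUND}. For $(0) \Rightarrow (3)$: if $\chi$ is the restriction of a local function $\tilde\chi \in W_{\ge -1}^*$ of support $S$ and maximum order $n$, then setting $h_0 = \prod_{x \in S}(t-x)^{n+1}$, one verifies directly that $\tilde\chi$ vanishes on $W_{\ge -1}(h_0)$, hence $\chi$ vanishes on $W_{\ge -1}(fh_0) \subseteq W_{\ge -1}(f) \cap W_{\ge -1}(h_0)$. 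The reverse $(3) \Rightarrow (0)$ uses the identification $W_{\ge -1}(f)/W_{\ge -1}(fh) \cong \CC[t]/(h)$ via $pf\del \leftrightarrow p \bmod h$; the Chinese Remainder Theorem splits the induced functional into local jet pieces at each root of $h$, each of which can be matched by solving an upper-triangular linear system in the coefficients of a candidate local function based at that root (the diagonal entries involve $f^{(r)}(a)$ for the order of vanishing $r$ of $f$ at each root $a$, so the system remains solvable even when $f$ shares roots with $h$).

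The bracket identity $[gf\del, pf\del] = f^2(gp' - g'p)\del$ drives the equivalences among $(3)$, $(4)$, $(2)$. For $(4) \Rightarrow (3)$: given a nonzero $gf\del \in W_{\ge -1}(f)^\chi$ and arbitrary $q \in \CC[t]$, setting $p = gQ$ with $Q' = q$ produces $gp' - g'p = g^2 q$, so $\chi$ vanishes on $W_{\ge -1}(f \cdot fg^2)$, giving $(3)$ with $h = fg^2$. For $(3) \Rightarrow (4)$ and $(3) \Rightarrow (2)$: a parallel computation gives $[fh^2\del, pf\del] = f^2h(hp' - 2h'p)\del \in W_{\ge -1}(fh)$, so $W_{\ge -1}(fh^2) \subseteq W_{\ge -1}(f)^\chi$; this is nonzero (yielding $(4)$) and has codimension at most $2\deg h$ in $W_{\ge -1}(f)$ (yielding $(2)$). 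The implication $(2) \Rightarrow (4)$ is immediate since $W_{\ge -1}(f)$ is infinite-dimensional, so any subspace of finite codimension is nonzero.

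For $(0) \Rightarrow (1)$, I would construct an explicit Poisson morphism $\Sa(W_{\ge -1}(f)) \to A$ for a finitely-generated commutative algebra $A$ parametrising a candidate pseudo-orbit of $\chi$, with Proposition~\ref{Prpoi} verifying the Poisson property and Lemma~\ref{Lprtop} identifying the kernel with $\CoreP(\chi)$. The main obstacle is the direction $(1) \Rightarrow (2)$: Theorem~\ref{Tgkf} is stated for $W_{\ge -1}$ rather than for the subalgebra $W_{\ge -1}(f)$, so one cannot directly invoke Corollary~\ref{cor:finiteorbit}. I expect this is overcome either by adapting the Iyudu--Sierra argument directly to $W_{\ge -1}(f)$, or by lifting a nonzero element of $\CoreP(\chi) \subseteq \Sa(W_{\ge -1}(f))$ to a nonzero Poisson ideal of $\Sa(W_{\ge -1})$ for some chosen extension of $\chi$; once finite GK-dim of $\Sa(W_{\ge -1}(f))/\CoreP(\chi)$ is in hand, Lemma~\ref{Lrk+} bounds $\rk B_\chi$ from above and $(2)$ follows.
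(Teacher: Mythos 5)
Your treatment of conditions (0), (2), (3), (4) is correct and essentially the same as the paper's: your bracket computations ($[gf\del,pf\del]=f^2(gp'-g'p)\del$ with $p=gQ$, and $[fh^2q\del, pf\del]\in W_{\geq-1}(fh)$) are minor variants of the paper's use of $\chi(h^2f^2r'\del)=0$ and of $[W_{\geq-1}(f^2h^2),W_{\geq-1}]\subseteq W_{\geq-1}(fh)$, and your CRT/jet-matching argument, including the observation that the triangular system stays solvable when $f$ shares roots with $h$, is the content of the paper's ``straightforward application of the Chinese remainder theorem'' for (0) $\Leftrightarrow$ (3).

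The genuine gap is in connecting condition (1), in both directions. For (2)/(0) $\Rightarrow$ (1) you only announce that you ``would construct'' a Poisson morphism onto a finitely generated algebra parametrising a pseudo-orbit; nothing is constructed, the tangency hypothesis of Proposition~\ref{Prpoi} for restrictions of local functions to $W_{\geq-1}(f)$ is not verified (this uses machinery the paper only develops in Section~\ref{SSgra}), and the lower bound $\rk B_\chi\geq \GK A$ needed for Lemma~\ref{Lprtop} is not addressed. The paper's actual argument here is much cheaper and self-contained: the determinantal ideal $I(n)$ of Lemma~\ref{lem:detideal} is Poisson for an arbitrary Lie algebra, lies in $\mf m_\chi$ exactly when $\dim W_{\geq-1}(f)\cdot\chi\leq n$, and is nonzero because some $\omega\in W_{\geq-1}(f)^*$ --- e.g.\ one corresponding via Remark~\ref{Rlrec} to a non--linearly-recurrent sequence --- has infinite-dimensional orbit; hence $0\neq I(n)\subseteq \CoreP(\chi)$. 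For (1) $\Rightarrow$ (2) you are right to flag that Theorem~\ref{Tgkf}/Corollary~\ref{cor:finiteorbit} are stated for $W$, $W_{\geq\pm1}$, $\Vir$ rather than for $W_{\geq-1}(f)$ (the paper itself just cites Corollary~\ref{cor:finiteorbit} at this point), but neither of your proposed repairs is an argument: re-proving the Iyudu--Sierra theorem for $W_{\geq-1}(f)$ is exactly the deep input and is not sketched, and the ``lifting'' of a nonzero element of $\CoreP(\chi)\subseteq \Sa(W_{\geq-1}(f))$ to a nonzero Poisson ideal of $\Sa(W_{\geq-1})$ has no mechanism --- the ideal of $\Sa(W_{\geq-1})$ generated by $\CoreP(\chi)$ lies in $\mf m_{\tilde\chi}$ for an extension $\tilde\chi$ but need not be Poisson for the larger bracket, and its Poisson core could a priori be zero, which is precisely what one is trying to rule out. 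As written, the cycle through (1) does not close.
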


\begin{remark}\label{Rlrec} Fix a basis $\{ ft^i\del \ | \  i\ge-1 \} $ of $W_{\ge-1}(f)$ and consider a local function $\chi\in W_{\ge-1}(f)^*$. 
Then $\chi$ can be identified with a sequence
\beq\label{Elr} \chi_0=\chi(f\del), \chi_1=\chi(ft\del), \chi_2=\chi(ft^2\del), \chi_3=\chi(ft^3\del),\cdots \in \kk. \eeq
Condition $(3)$ above
can be restated as follows:
$$a_n\chi_{m+n}+...+a_0\chi_{m}=0$$
for all $m\ge 0$, where $h=a_nt^n+\cdots+a_0$.
Therefore,  local functions on $W_{\ge-1}(f)$ can be identified with   sequences \eqref{Elr} obeying a linear recurrence relation.
This shows, in particular, that the function $\varkappa$ defined in Remark~\ref{rem:dag} is not local.  
\end{remark}

Part of the proof of Theorem~\ref{Tloc01} is a general technique that can allow us to show that Poisson cores of elements of $\mf g^*$ are nontrivial for any Lie algebra $\mf g$. 

\begin{lemma}\label{lem:detideal}
Let $\mf g$ be an arbitrary Lie algebra  and let $n \in \NN$.  
There is a Poisson ideal $I(n)$ with the property that
\beq\label{detideal} I(n) \subseteq \mf m_\chi \iff \dim \mf g \cdot \chi \leq n\eeq
for any $\chi \in \mf g^*$. 
\end{lemma}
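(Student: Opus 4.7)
The plan is to take for $I(n)$ the ideal generated by all determinants $D(u_1,\ldots,u_{n+1};v_1,\ldots,v_{n+1})$ of the form \eqref{Ednp1}, as $u_1,\ldots,u_{n+1},v_1,\ldots,v_{n+1}$ range over $\mf g$. This is the natural candidate since these determinants are precisely the $(n+1)\times(n+1)$ minors of the skew form $B_\chi$ evaluated at $\chi$.

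First I would verify the rank characterisation \eqref{detideal}. By construction
\[\ev_\chi\bigl(D(u_1,\ldots,u_{n+1};v_1,\ldots,v_{n+1})\bigr)=\det\bigl(\chi([u_i,v_j])\bigr)_{ij},\]
so $I(n)\subseteq \mf m_\chi$ if and only if every $(n+1)\times(n+1)$ minor of the (possibly infinite) matrix representing $B_\chi$ vanishes, which happens if and only if $\rk B_\chi\le n$. Since $\dim\mf g\cdot\chi=\dim\mf g/\mf g^\chi=\rk B_\chi$ as noted before Lemma~\ref{Lrk+}, this yields the equivalence.

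The real content is showing that $I(n)$ is Poisson. By the Leibniz rule it is enough to check that for any $w\in\mf g$ and any generator $D=D(u_1,\ldots,u_{n+1};v_1,\ldots,v_{n+1})$ one has $\{w,D\}\in I(n)$. Since $\{\cdot,\cdot\}$ restricts to the Lie bracket on $\mf g\subseteq\Sa(\mf g)$, acting by $\{w,-\}$ on the polynomial $D$ via Leibniz amounts to differentiating each matrix entry $[u_i,v_j]$, using
\[\{w,[u_i,v_j]\}=[w,[u_i,v_j]]=[[w,u_i],v_j]+[u_i,[w,v_j]]\]
by the Jacobi identity. Multilinearity of the determinant in its rows (resp.\ columns) then collects these single-entry derivatives into
\[\{w,D\}=\sum_{i=1}^{n+1} D(u_1,\ldots,[w,u_i],\ldots,u_{n+1};v_1,\ldots,v_{n+1})+\sum_{j=1}^{n+1} D(u_1,\ldots,u_{n+1};v_1,\ldots,[w,v_j],\ldots,v_{n+1}),\]
and every summand on the right is again a generator of $I(n)$.

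The main (and only nontrivial) obstacle is the derivative-of-determinant calculation in the previous paragraph: one must be careful that expanding $\{w,D\}$ via the Leibniz rule on the explicit expression \eqref{Ednp1} really does reassemble into complete determinants of the same shape. Given this, Poisson-stability of $I(n)$ follows, and combined with the minor-vanishing characterisation, this establishes the lemma.
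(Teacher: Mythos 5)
Your proposal is correct and matches the paper's own proof: the paper takes $I(n)$ to be the ideal generated by the determinants $D(u_1,\ldots,u_{n+1};v_1,\ldots,v_{n+1})$, identifies $\dim\mf g\cdot\chi$ with $\rk B_\chi$ to get the minor-vanishing equivalence, and verifies Poisson-stability by exactly the derivative-of-determinant identity you describe.
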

\begin{proof}
Recall the determinant $D(u_1,  \ldots, u_n; v_1,  \ldots, v_n)$ from \eqref{Ednp1}, and note that $\dim \mf g \cdot \chi  \leq~n-1$
if and only if for all $u_1,\ldots, u_{n}, v_1, \ldots, v_{n}\in \mf g$, \eqref{Ednp1} evaluated at $\chi$
is degenerate, i.e. $${\rm ev}_\chi(D(u_1, u_2, \ldots, u_{n}; v_1, v_2, \ldots, v_{n}))=0.$$

Let $I(n)$ be the ideal generated by the $D(u_1, u_2, \ldots, u_{n+1}; v_1, v_2, \ldots, v_{n+1})$
for all possible tuples \[u_1, \ldots, u_{n+1}, v_1, \ldots, v_{n+1} \in \mf g.\] 
By the previous paragraph,
$$I(n) \subseteq \mf m_\chi\iff\dim \mf g \cdot \chi \leq~n.$$ 

Let $w \in \mf g$.  
It is easy to check that
\begin{multline*}\{D(u_1, u_2, \ldots; v_1, v_2, \ldots), w\}=\\
D([u_1, w], u_2, u_3, \ldots; v_1, v_2, \ldots)+D(u_1, [u_2, w], u_3, \ldots; v_1, v_2, \ldots)+\ldots\\+ D(u_1, u_2, \ldots; [v_1, w], v_2, v_3, \ldots)+D(u_1, u_2, \ldots; v_1, [v_2, w], v_3, \ldots)+\ldots \in I(n).
\end{multline*}
It follows that $I(n)$ is Poisson.
\end{proof}

\begin{proof}[Proof of Theorem~\ref{Tloc01}] That $(0)$ $\iff$ $(3)$ is a straightforward application of the Chinese remainder theorem. 
It is clear that $(2)$  $\Rightarrow$ $(4)$;  
Corollary~\ref{cor:finiteorbit} gives that $(1)$ $\Rightarrow$ $(2)$.
We will show that $(4)$ $\Rightarrow$ $(3)$ $\Rightarrow$ $(2)$ $\Rightarrow$ $(1)$.
This will complete the proof. 

We  first show that $(4)$ implies $(3)$. 
Let $hf\del \in W_{\ge-1}(f)^\chi\setminus\{0\}$ with $h\in\CC[t]$. 
Then 
\begin{equation}0=\chi([hf\del, hfr\del])=\chi(h^2f^2r'\del)\label{Erprime}\end{equation}
for all $r\in\CC[t]$. 
This is equivalent to $\chi|_{W_{\ge-1}(h^2f^2)}=0$ as needed.

Next, we will show that $(3)$ implies $(2)$.  
Let $h$ satisfy condition $(3)$. 
As $$[W_{\geq -1}(f^2h^2), W_{\geq -1}] \subseteq W_{\geq -1}(fh),$$ we have
 $W_{\ge-1}(h^2f^2)\subseteq  W_{\geq -1}(f)^\chi$ and thus 
 $\dim W(f)/W(f)^\chi < \infty$ 
as needed.

Finally we show that $(2)$ implies $(1)$. 
Suppose that  $\chi\in W_{\ge-1}(f)$ satisfies condition $(2)$. 
Let $n = \dim W_{\geq -1}(f)/W_{\geq -1}(f)^\chi = \dim W_{\geq -1}(f) \cdot \chi$ and let $I(n)$ be the ideal defined in Lemma~\ref{lem:detideal}.  
By that lemma, $I(n)$ is Poisson and $I(n) \subseteq \mf m_\chi$, so 
 $I(n)\subseteq  \CoreP(\chi)$. 

Therefore if $I(n)\ne0$ then $\CoreP(\chi)\ne0$. 
To show that $I(n)\ne0$ it suffices to find $\omega \in W_{\geq -1}(f)^*$  with $\dim W_{\geq -1}(f) \cdot\omega>n$. 
In fact, we will find $\omega \in W_{\geq-1}^*$ with $$\dim W_{\geq-1}(f) \cdot \omega =\infty.$$ 
Indeed, as $(2)$ $\Rightarrow$ $(3)$, if $\dim W_{\geq -1}(f)\cdot \omega < \infty$ then $\omega$ can be represented by a linearly recurrent sequence by Remark~\ref{Rlrec}.  On the other hand, the sequence  $1, \frac12, \frac13, \ldots$ is clearly not linearly recurrent.
\end{proof}

\begin{remark}\label{rem:majority}
Similarly to Remarks~\ref{rem:dag} and ~\ref{rem:countableU}, we should expect that ``most'' sequences are not linearly recurrent and that the linearly recurrent sequences are parameterised by a countable union of affine varieties, although we do not formalise these notions here.
\end{remark}

Note that $W_{\ge-1}(t^2)=W_{\ge1}$ and that $  W_{\geq -1}(t)$ is equal to the  {\em non-negative Witt algebra} $W_{\geq 0}$.  Therefore Theorem~\ref{Tloc01} gives a complete characterization of local functions on $W_{\ge1}$ and $W_{\geq 0}$.

\subsection{Local functions on $W$ and applications} 
In this subsection we set $\mf g=\CC[t, t^{-1}]\del$ and  define $W(f)$ similarly to $W_{\ge1}(f)$.  
\label{ind:Wg2}
A partial analogue of Theorem~\ref{Tloc01} holds for $W$. 
\begin{theorem}\label{Tloc} For any $f\in\CC[t] \setminus\{0\}$ and $\chi\in W(f)^*$ the following conditions are equivalent:
\begin{itemize}
   
\item[(0)] $\chi$ is the restriction of a local function on $W$;

\item[(1)] $\CoreP(\chi)\ne(0)$;

\item[(2)] $\dim W(f)/W(f)^\chi<\infty$;

\item[(3)] there exists $h\in\kk[x]$ such that $\chi|_{W(fh)}=0$.
\end{itemize}
\end{theorem}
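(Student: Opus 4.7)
The strategy is to mirror the proof of Theorem~\ref{Tloc01}, adjusted to account for the fact that $t$ is a unit in $\CC[t,t^{-1}]$. I will establish $(0)\iff(3)$ directly and then close the cycle via $(3)\Rightarrow(2)\Rightarrow(1)\Rightarrow(2)$, the last arrow being the main difficulty.

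For $(0)\iff(3)$, the Chinese remainder theorem is the key tool. Since $t^k\CC[t,t^{-1}]=\CC[t,t^{-1}]$ for any $k\in\ZZ$, we may replace $h$ by $h/t^{v}$, where $v$ is the order of vanishing of $h$ at $0$, and so assume $h(0)\neq 0$. Writing $h=\prod_i(t-x_i)^{n_i}$ with $x_i\neq 0$, the quotient $W(f)/W(fh)$ is isomorphic as a vector space to $\bigoplus_i\CC[t]/(t-x_i)^{n_i}$, whose dual consists precisely of restrictions of local functions on $W$ supported in $\{x_i\}$. Conversely, if $\chi$ is local with support $\{x_1,\dots,x_r\}$ and maximal order $n$, then $\chi$ annihilates $W(fh)$ for $h=\prod_i(t-x_i)^{n+1}$.

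For $(3)\Rightarrow(2)$, a direct expansion of $[f^2h^2 g\del,\, fr\del]$ via the Leibniz rule shows that every term is divisible by $fh$ (in fact by $f^2h$), so $[W(f^2h^2),W(f)]\subseteq W(fh)$. Hence $W(f^2h^2)\subseteq W(f)^\chi$, and $W(f)/W(f^2h^2)$ is finite-dimensional by the argument of the previous step, giving $\dim W(f)\cdot\chi<\infty$. For $(2)\Rightarrow(1)$, I apply Lemma~\ref{lem:detideal} to $\mf g=W(f)$ with $n=\dim W(f)\cdot\chi$; the resulting Poisson ideal $I(n)\subseteq\mf m_\chi$ is nonzero provided there exists $\omega\in W(f)^*$ with $\dim W(f)\cdot\omega>n$. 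Such an $\omega$ is obtained by prescribing $\omega(ft^i\del)=\alpha_i$ for a sequence $(\alpha_i)_{i\in\ZZ}$ algebraically independent over $\QQ$; a short calculation with the form $B_\omega$ in the basis $\{ft^i\del\}$ then yields $W(f)^\omega=0$, mirroring the computation at the end of the proof of Theorem~\ref{Tloc01}.

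The main obstacle is the closing arrow $(1)\Rightarrow(2)$. Corollary~\ref{cor:finiteorbit} is stated only for $\mf g\in\{W,W_{\ge-1},W_{\ge 1},\Vir\}$, not for the subalgebra $W(f)$; however, $W(f)$ has finite codimension in $W$ equal to $\dim_\CC\CC[t,t^{-1}]/f\CC[t,t^{-1}]$, so one expects an analogous finiteness to hold. My plan is to show that any nonzero Poisson ideal $I\subseteq\Sa(W(f))$ with $I\subseteq\mf m_\chi$ gives rise, via the Poisson closure of $I\Sa(W)$ together with a careful choice of extension $\tilde\chi\in W^*$ of $\chi$, to a nonzero Poisson ideal of $\Sa(W)$ contained in $\mf m_{\tilde\chi}$. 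Theorem~\ref{Tgkf} then forces $\GK(\Sa(W)/\CoreP(\tilde\chi))<\infty$, whence $\dim W\cdot\tilde\chi<\infty$ by Lemma~\ref{Lrk+}, and restricting back to $W(f)$ yields $\dim W(f)\cdot\chi\leq\dim W\cdot\tilde\chi+\dim W/W(f)<\infty$, closing the cycle.
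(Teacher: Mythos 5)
Your proposed implications are $(0)\iff(3)$, $(3)\Rightarrow(2)$, $(2)\Rightarrow(1)$ and $(1)\Rightarrow(2)$, and these only give $(0)\iff(3)\Rightarrow(2)\iff(1)$: nothing in your argument returns from $(1)$ or $(2)$ to $(3)$ or $(0)$, so the equivalence of the four conditions is not established. The missing arrow $(2)\Rightarrow(3)$ is precisely where $W$ differs from $W_{\ge-1}$ and where the paper's proof does its real work. For $W_{\ge-1}$ one takes $hf\del\in W_{\ge-1}(f)^\chi\setminus\{0\}$, uses $0=\chi(h^2f^2r'\del)$ for all $r$, and concludes $\chi|_{W_{\ge-1}(h^2f^2)}=0$ because $r\mapsto r'$ is surjective on $\CC[t]$. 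On $\CC[t,t^{-1}]$ the image of the derivative misses $t^{-1}$, so the same computation only shows that $\chi|_{W(f^2h^2)}$ is a scalar multiple of the residue functional $\Res_0\bigl(\cdot/(f^2h^2)\bigr)$ (this is the content of Remark~\ref{Rres}); the paper then needs Lemma~\ref{lem:two} (built on Lemma~\ref{Lpsq}) to see that this functional has infinite-dimensional orbit, so that under $(2)$ the scalar must vanish and $(3)$ follows. Your proposal contains no substitute for this step, so as written the theorem is not proved.

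Two smaller points. In $(2)\Rightarrow(1)$, algebraic independence over $\QQ$ of the values $\omega(ft^i\del)$ does not by itself rule out a kernel vector, because a putative element of $W(f)^\omega$ may have transcendental coefficients (possibly involving the $\omega(ft^i\del)$ themselves); to make your choice of $\omega$ work you need an extra argument, e.g.\ that a kernel vector would force the doubly infinite sequence $\omega(f^2t^m\del)$ to satisfy a fixed constant-coefficient linear recurrence for all $m\ne-1$, bounding the transcendence degree of the field generated by the values and contradicting independence; alternatively one can produce an $\omega$ of infinite rank directly from Lemma~\ref{lem:two}. In $(1)\Rightarrow(2)$ your plan is only a sketch, and the sketched mechanism is doubtful: the Poisson ideal of $\Sa(W)$ generated by $I\Sa(W)$ need not be contained in $\mf m_{\tilde\chi}$ for any extension $\tilde\chi$ of $\chi$ (it could be all of $\Sa(W)$), so transferring a nonzero Poisson ideal of $\Sa(W(f))$ below $\mf m_\chi$ to one of $\Sa(W)$ below some $\mf m_{\tilde\chi}$ requires a real argument; the paper instead treats this arrow exactly as in Theorem~\ref{Tloc01}, via Theorem~\ref{Tgkf} and Lemma~\ref{Lrk+} (Corollary~\ref{cor:finiteorbit}). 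But the decisive gap is the absent implication $(2)\Rightarrow(3)$.
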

\begin{remark}\label{Rres} The reason that  Theorem~\ref{Tloc} differs slightly from Theorem~\ref{Tloc01} is that the function $\Res_0(\cdot) \in W^*$ satisfies condition (4) of Theorem~\ref{Tloc01} but does not satisfy the other conditions (0), (1), (2), (3). \end{remark}

Before proving Theorem~\ref{Tloc} we give two lemmata on functions defined by residues.  
Denote by $\CC((t))$ the field of formal Laurent power series in $t$.
Fix $f\in\CC((t))$ and consider the map
\begin{equation} 
(a, b)\mapsto (a, b)_f:=\Res_0(f(ab'-a'b))\label{Esres}\end{equation}
which defines a  skew-symmetric bilinear form on $\kk((t))$.
\begin{lemma}\label{Lpsq} The kernel of $(a, b)_f$ is one-dimensional if $f$ is a perfect square in $\CC((t))$ and is trivial otherwise.
In the first case the kernel is generated by $\frac1{\sqrt{f}}$.\end{lemma}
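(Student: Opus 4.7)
The plan is to use integration by parts on Laurent series to rewrite the form $(a,b)_f$ as the residue pairing of $b$ against a linear differential expression in $a$, and then invoke nondegeneracy of the residue pairing on $\kk((t))$.

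First I would note that $\Res_0((xy)') = 0$ for any $x,y\in\kk((t))$, so that $\Res_0(x'y) = -\Res_0(xy')$. Applying this with $x = fa$ yields
\[ (a,b)_f \;=\; \Res_0(fab') - \Res_0(fa'b) \;=\; -\Res_0\bigl((fa)'\,b\bigr) - \Res_0(fa'b) \;=\; -\Res_0\bigl((f'a + 2fa')\,b\bigr). \]
The residue pairing $(x,y)\mapsto \Res_0(xy)$ on $\kk((t))$ is nondegenerate (pairing with $y = t^m$ recovers every Laurent coefficient of $x$), so $a$ lies in the kernel of $(\cdot,\cdot)_f$ if and only if $f'a + 2fa' = 0$.

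The key observation is then that $(fa^2)' = f'a^2 + 2faa' = a(f'a + 2fa')$, so for $a\ne 0$ the equation $f'a + 2fa' = 0$ is equivalent to $(fa^2)' = 0$, i.e.\ $fa^2 = c$ for some constant $c\in\kk$. Since $f\ne 0$ and $\kk((t))$ is a field, $c = 0$ forces $a = 0$. If $c\ne 0$, then using that $\kk$ is algebraically closed (so $c$ itself is a square), the equation $a^2 = c/f$ has a solution in $\kk((t))$ if and only if $f$ is a perfect square in $\kk((t))$; when it does, the solutions form the one-dimensional $\kk$-subspace spanned by $1/\sqrt{f}$.

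There is no serious obstacle here beyond spotting the identity $(fa^2)' = a(f'a + 2fa')$, which converts the first-order linear ODE produced by integration by parts into a transparent algebraic condition on $a$; after that, the classification of perfect squares in $\kk((t))$ (in terms of parity of the $t$-adic valuation) finishes the argument.
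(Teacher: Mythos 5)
Your proof is correct. It reaches the same crux as the paper's — that $a$ lies in the kernel if and only if $fa^2$ is constant — but packages the residue calculus a little differently. The paper plugs $b = ar$ directly into the form and observes that $(a,ar)_f = \Res_0(fa^2 r')$ vanishes for all $r$, which forces $fa^2$ constant in one step. You instead integrate by parts to write $(a,b)_f = -\Res_0\bigl((f'a + 2fa')b\bigr)$, invoke nondegeneracy of the residue pairing to extract the first-order equation $f'a + 2fa' = 0$, and then spot the identity $(fa^2)' = a(f'a + 2fa')$ to get back to $fa^2$ constant. These are two phrasings of the same underlying manipulation (substituting $b = ar$ in your formula and integrating by parts once more recovers the paper's identity), so the difference is largely one of emphasis: your route makes the ODE explicit and appeals to a standard nondegeneracy fact, while the paper's is a bit more compact. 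Both are complete and correct, including the endgame about perfect squares and the spanning element $1/\sqrt{f}$.
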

\begin{proof}Let $a$ be in the kernel of $(\cdot, \cdot)_f$. 
Then 
\beq \label{Esker} \Res_0(fa^2r') = \Res_0(f(a (ar)' - a'(ar))) = (a, ar)_f = 0 \eeq
for all $r \in\kk((t))$.
This implies that $fa^2$ is constant. 
Thus $f$ is a perfect square in $\CC((t))$ and $a$ is proportional to $\frac1{\sqrt{f}}$. 
Equation~\eqref{Esker} also gives that if $f$ is a perfect square, then $\frac1{\sqrt f}$  belongs to the kernel of $(\cdot, \cdot)_f$.
\end{proof}
The second lemma proves part of (a more general version of) Remark~\ref{Rres}.
\begin{lemma}\label{lem:two}
Let $g \in \kk[t,t^{-1}] \setminus\{0\}$ and define $\omega \in W(g^2)^*$ by 
\[ \omega = \Res_0(\frac{\cdot}{g^2}).\]
Then $\dim W(g^2)\cdot \omega = \infty$.
\end{lemma}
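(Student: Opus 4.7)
The plan is to identify the skew form $B_\omega$ on $W(g^2)$ (up to a sign) with the bilinear form $(\cdot,\cdot)_{g^2}$ from \eqref{Esres}, and then to read off the infinite-dimensionality of the orbit $W(g^2)\cdot\omega$ from Lemma~\ref{Lpsq}. Since $\dim W(g^2)\cdot\omega = \rk B_\omega = \dim W(g^2)/W(g^2)^\omega$ and $W(g^2)$ is infinite-dimensional, it suffices to show that the isotropy subspace $W(g^2)^\omega$ has finite dimension; in fact, it will turn out to have dimension at most one.

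First I would carry out the bracket computation on $W(g^2) = g^2\kk[t,t^{-1}]\del$: expanding $[g^2 b\del, g^2 a\del]$ in $W$, the cross terms involving $g'$ cancel and leave $g^4(ba'-ab')\del = g^2\cdot g^2(ba'-ab')\del$. Applying $\omega$ then gives
$$B_\omega(g^2 b\del,\; g^2 a\del) \;=\; \Res_0\bigl(g^2(ba'-ab')\bigr) \;=\; -(a,b)_{g^2}.$$
So under the parametrization $b \leftrightarrow g^2 b\del$ the kernel of $B_\omega$ on $W(g^2)$ is precisely the kernel of $(\cdot,\cdot)_{g^2}$ restricted to $\kk[t,t^{-1}]$.

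Finally I would apply Lemma~\ref{Lpsq} with $f = g^2$, which is of course a perfect square in $\kk((t))$: the full kernel of $(\cdot,\cdot)_{g^2}$ on $\kk((t))$ is the one-dimensional space $\kk\cdot g^{-1}$. The coefficient-by-coefficient argument from the proof of that lemma---substitute $a = br$ and observe that vanishing of $\Res_0(g^2 b^2 r')$ for all $r \in \kk[t,t^{-1}]$ already forces $g^2 b^2$ to be constant---goes through unchanged when $r$ is restricted to $\kk[t,t^{-1}]$, so the restricted kernel is still at most one-dimensional. Hence $\dim W(g^2)^\omega \le 1$ and the claim follows. The only genuine pieces of work are the bookkeeping in the bracket cancellation and the small verification that Lemma~\ref{Lpsq}'s conclusion survives the passage from $\kk((t))$ to $\kk[t,t^{-1}]$; neither step is deep, and after them the result is essentially immediate.
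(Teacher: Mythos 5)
Your proposal is correct and follows essentially the same route as the paper: identify $B_\omega$ on $W(g^2)$ with the residue form $(\cdot,\cdot)_{g^2}$ of \eqref{Esres} and bound the kernel via Lemma~\ref{Lpsq}, so that $W(g^2)^\omega \subseteq \kk\cdot g\del$ and the orbit is infinite-dimensional. The only (harmless) difference is in handling the passage from $\kk((t))$ to $\kk[t,t^{-1}]$: the paper argues contrapositively, truncating the formal Laurent series witness $\hat b$ since the residue only sees finitely many of its terms, while you re-run the proof of Lemma~\ref{Lpsq} with Laurent-polynomial test functions $r$, which works because $r=t^n$, $n\neq 0$, already forces $g^2b^2$ to be constant.
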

\begin{proof}
For $a,b \in \kk[t, t^{-1}]$ we have 
\[ B_\omega(g^2 a \del, g^2 b\del) = \Res_0(g^2(ab'-a'b)).\]
Suppose that $a \not \in \kk \cdot \frac{1}{g}$.
Then by Lemma~\ref{Lpsq} there is a formal Laurent series $\hat{b}$ so that 
\[(a,b)_{g^2} = \Res_0(g^2(a \hat{b}'-a'\hat{b})) \neq 0.\]
However, the computation of $\Res_0(g^2(a \hat{b}'-a'\hat{b})) $ needs only finitely many terms in the Laurent expansion of $\hat{b}$ and so we may replace $\hat{b}$ by $b \in \kk[t, t^{-1}]$ so that
$\Res_0(g^2(ab'-a'b)) \neq 0$.
Thus $a g^2\del \not \in \ker B_\omega = W(g^2)^\omega$.
This means that $W(g^2)^\omega \subseteq \kk \cdot g \del$ and $\dim W(g^2)\cdot \omega = \infty$.
\end{proof}

\begin{proof}[Proof of Theorem~\ref{Tloc}] The proofs of $(0)$ $\iff$ $(3)$, $(1)$ $\Rightarrow$ $(2)$ and $(3)$ $\Rightarrow$ $(2)$ $\Rightarrow$ $(1)$ are very similar to the corresponding steps of the proof of Theorem~\ref{Tloc01}. 
The only part which is significantly different is (2)$\Rightarrow (3)$. 

Pick $\chi$ satisfying $(2)$ and $h \in W(f)^\chi \setminus\{0\}$. 
Then \eqref{Erprime} holds for all $r\in\CC[t, t^{-1}]$. 
Unfortunately, this is not enough to show that $\chi$ vanishes on $W(h^2f^2)$, as the  map $r\mapsto r'$ is not surjective on $\CC[t, t^{-1}]$. 

Consider $\chi|_{W(f^2h^2)}$:  we have
\[ \chi(f^2h^2p\del) = \chi(\frac{f^2h^2}{t}\del) \Res_0(p)\] 
for all $p \in \kk[t]$.  Suppose that $\chi(\frac{f^2h^2}{t}\del) \neq 0$.
Then $\dim W(f^2h^2)\cdot \chi|_{W(f^2h^2)} = \infty$  by Lemma~\ref{lem:two}.
Thus $\dim W(h^2f^2) \cdot \chi = \infty$, as $\dim W(f)/W(f^2h^2) < \infty$.
This contradicts our assumption that $\chi$ satisfies $(2)$ and so $\chi(\frac{f^2h^2}{t}\del ) = 0$, i.e. $\chi|_{W(f^2h^2)} = 0$.
\end{proof}

To end the subsection, we apply Theorem~\ref{Tloc} to obtain a structure result on finite codimension subalgebras of $W$.  
For a polynomial $f\in\CC[t, t^{-1}]$ set 
$${\rm rad}(f):=\prod \{ (t-x) | x\in\CC^\times, f(x) = 0\}.$$
\label{ind:rad}

\begin{proposition}\label{prop:4.14} Let $\mf k$ be a subalgebra of $W$ of finite codimension. Then
\begin{itemize}
\item[(a)] there exists $f\in \CC[t, t^{-1}]$ so that $W({\rm rad}(f)) \supseteq \mf k \supseteq W(f)$;

\item[(b)] we can choose $f$ satisfying $\mathrm{(a)}$ so that $f\in \CC[t]$, $f$ is monic and $f(0) \neq 0$; 

\item[(c)]  if we assume that $f$ is of minimal degree then such a choice of $f$ is unique.
\end{itemize}
\end{proposition}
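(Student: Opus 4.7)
The plan is to reduce to Theorem~\ref{Tloc}: first I would show that every $\chi\in\mf k^\perp\subseteq W^*$ is a local function, and then read off $f$ from the resulting supports and orders. The key input is that $\mf k$ being a subalgebra forces $\mf k^\perp$ to be a $\mf k$-submodule of $W^*$ under the coadjoint action; indeed, for $u\in\mf k$, $\chi\in\mf k^\perp$, and $w\in\mf k$ one has $(u\cdot\chi)(w)=\chi([w,u])=0$ because $[w,u]\in\mf k$. Since $\dim\mf k^\perp=\dim W/\mf k<\infty$, the kernel $\mf k_0$ of the induced Lie algebra homomorphism $\mf k\to\End(\mf k^\perp)$ has finite codimension in $\mf k$ and hence in $W$; and since $\mf k_0\subseteq W^\chi$ for every $\chi\in\mf k^\perp$, condition~(3) of Theorem~\ref{Tloc} (applied with $f=1$) is satisfied and $\chi$ is forced to be local.

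Next, set $S:=\bigcup_{\chi\in\mf k^\perp}\supp\chi\subseteq\CC^\times$, which is finite since $\mf k^\perp$ is finite-dimensional. For each $x\in S$ let $N_x$ be the maximum, over $\chi\in\mf k^\perp$, of the order at $x$ of the one-point component $\chi^{(x)}$, and set $f:=\prod_{x\in S}(t-x)^{N_x+1}\in\CC[t]$. By the definition of local function, every $\chi\in\mf k^\perp$ annihilates $W(f)$; since $\mf k$ is of finite codimension we have $\mf k^{\perp\perp}=\mf k$, whence $W(f)\subseteq\mf k$.

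The main obstacle is the reverse containment $\mf k\subseteq W({\rm rad}(f))$, which is equivalent to the statement $\chi_{x;1}\in\mf k^\perp$ for every $x\in S$. To prove it, fix $x\in S$ and pick $\chi\in\mf k^\perp$ whose $x$-component $\chi^{(x)}=\chi_{x;\alpha_0,\ldots,\alpha_{N_x}}$ has order exactly $N_x$, so that $\alpha_{N_x}\neq0$. Choose $h\in\CC[t,t^{-1}]$ with $h(x)\neq0$ and $h(y)=0$ for every $y\in S\setminus\{x\}$ (for instance, $h=\prod_{y\in S\setminus\{x\}}(t-y)$); then $g:=fh$ vanishes to order exactly $N_x+1$ at $x$ but to order strictly greater than $N_y+1$ at every other $y\in S$, so $u:=g\del\in W(f)\subseteq\mf k$. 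A direct Leibniz expansion of $(Vg'-V'g)^{(k)}(y)$ then shows that $u\cdot\chi^{(y)}=0$ for each $y\neq x$ (every relevant $g^{(\ell)}(y)$ vanishes), while $u\cdot\chi^{(x)}$ collapses to a single surviving term, which up to a nonzero scalar is $\chi_{x;1}$. Since $u\cdot\chi\in\mf k^\perp$, we conclude $\chi_{x;1}\in\mf k^\perp$.

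Parts (b) and (c) are then easy. Since $S\subseteq\CC^\times$, $f=\prod_{x\in S}(t-x)^{N_x+1}$ is automatically a monic element of $\CC[t]$ with $f(0)\neq0$, giving (b). For (c), any polynomial $f'$ satisfying (a) must have root set exactly $S$ (otherwise $W(f')$ would contain a vector field not vanishing at some $y\in S$, contradicting $\chi_{y;1}\in\mf k^\perp$) and multiplicity at least $N_x+1$ at each $x\in S$ (to annihilate a maximal-order element of $\mf k^\perp$ at $x$), so the minimal-degree monic choice is uniquely $\prod_{x\in S}(t-x)^{N_x+1}$.
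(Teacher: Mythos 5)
Your proof is correct, and it differs from the paper's argument at each of the three stages. First, a small bookkeeping slip: what you establish for $\chi\in\mf k^\perp$ is that $W^\chi$ has finite codimension in $W$ (since $W^\chi\supseteq\mf k_0$), which is condition~$(2)$ of Theorem~\ref{Tloc}, not condition~$(3)$. With that corrected, the argument via the kernel $\mf k_0$ of $\mf k\to\End(\mf k^\perp)$ works; the paper reaches the same conclusion more briefly by observing that $\mf k$ is isotropic for $B_{\chi_i}$, so $\rk B_{\chi_i}\le 2\dim(W/\mf k)<\infty$ and condition~$(2)$ follows from elementary symplectic linear algebra. For the reverse containment $\mf k\subseteq W(\operatorname{rad}(f))$, you show $\chi_{x;1}\in\mf k^\perp$ for each $x\in S$ by choosing a cutoff polynomial $g=fh$ and tracking a Leibniz expansion; the paper instead fixes $f$ of minimal degree with $W(f)\subseteq\mf k$ and notes that for $k\del\in\mf k$, $[k\del, fr\del]\equiv kf'r\del \pmod{W(f)}$ lies in $\mf k$, so $W(kf')\subseteq\mf k$ and hence $f\mid kf'$, forcing $k$ to vanish at every root of $f$. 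Finally, for uniqueness you give an explicit formula $f=\prod_{x\in S}(t-x)^{N_x+1}$ in terms of the support $S$ and the maximal orders $N_x$ of elements of $\mf k^\perp$, whereas the paper deduces uniqueness abstractly from the observation that $W(f)\subseteq\mf k$ and $W(h)\subseteq\mf k$ imply $W(\gcd(f,h))\subseteq\mf k$, so the minimal-degree $f$ divides every other candidate. Your version is longer but has the merit of describing the minimal $f$ concretely in terms of the local data of $\mf k^\perp$; the paper's approach is more streamlined and does not require computing orders at each point.
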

\begin{proof} 
The inclusion $\mf k \subseteq W$ induces the dual map $W^*\to\mf k^*$. 
We identify $(W/\mf k)^*$ with the kernel of this map; that is, with elements of $W^*$ which vanish on $\mf k$, so $\mf k$ is the set of common zeroes of $(W/\mf k)^* \subseteq W^*$.
Let $\chi_1, \dots, \chi_s$ be a basis of $(W/\mf k)^*$.
 Fix $i\in \{1, \dots, s\}$; by definition we have $B_{\chi_i}(\mf k, \mf k)=0$,  so $\mf k$ is an isotropic subspace of $W$ with respect to $B_{\chi_i}$. 
Hence the rank of $B_{\chi_i}$ is at most $2\dim (W/\mf k)$ and thus is finite. 
By Theorem~\ref{Tloc},   $\chi_i$ is  local. 

Theorem~\ref{Tloc} implies that for all $i$ there is $h_i\in\CC[t, t^{-1}]\setminus \{0\}$ with $\chi_i(W(h_i))=0$. 
Therefore $W(h_1\cdots h_s)$ is annihilated by all $\chi_i$  and therefore $W(h_1\cdots h_s)\subseteq\mf k$ as desired.

Let $f \in \kk[t,t^{-1}] \setminus \{0\}$ with $\mf k \supseteq W(f)$; we may assume without loss of generality that $f \in \CC[t]$, $f$ is monic  and $f(0) \neq 0$ as $W(f)$ corresponds to an ideal of $\CC[t, t^{-1}]$.  
Suppose in addition that  $f$ has minimal degree among all such polynomials with $\mf k \supseteq W(f)$.  
Thus if $\mf k \supseteq W(h)$, then by the Euclidean algorithm $f | h$. 
This justifies uniqueness of $f$. 

Write $f = \prod_{i=1}^n (t-x_i)^{a_i}$ with the $x_i\ne0$ distinct and $a_i > 0$; 
set $h:={\rm rad}(f)=(t-x_1)\cdots(t-x_n).$ It is clear that $h\mid f\mid h^{\max(a_i)}$.
Let $k\del \in \mf k$.  
We wish to show that $h| k$. 
Indeed, for all $r \in \CC[t,t^{-1}] $  the element
$[k\del, fr\del] = (k(fr'+ f'r)-frk')\del$ is in $\mf k$.  
As $\mf k \supseteq W(f)$, thus $kf'r \in \mf k$ for all $r\in\CC[t, t^{-1}]$.  
Thus $\mf k \supseteq W(kf')$ and so $f | f'k$.  
This forces $k$ to vanish at all roots of $f$, which is equivalent to $h | k$. 
\end{proof}

\subsection{Local functions on $\Vir$}\label{SSlvr}
In this subsection we set $\mf g=\Vir$. 

The natural map $\Vir\to W$ extends to the morphism $\Sa(\Vir)\to \Sa(W)$; the kernel is the Poisson ideal $(z)$ of $\Sa(\Vir)$.
The main goal of the subsection is to prove the following  analogue of Theorem~\ref{Tloc}  for $\Vir$. 
\begin{theorem}\label{Tlocvir} For 
$\chi\in \Vir^*$ we have 
\[\CoreP(\chi)\ne (z-\chi(z)) \quad \iff \quad \chi \mbox{ is local } \quad \iff \quad \dim \Vir/\Vir^\chi < \infty.\]
In particular, if $\CoreP(\chi) \neq (z-\chi(z))$ then $\chi(z) = 0$.
\end{theorem}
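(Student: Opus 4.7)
The plan is to prove the theorem via the three-step cycle
\[ \chi\text{ local}\;\Rightarrow\;\CoreP(\chi)\ne(z-\chi(z))\;\Rightarrow\;\dim\Vir/\Vir^\chi<\infty\;\Rightarrow\;\chi\text{ local}. \]
The ``in particular'' clause then comes for free, since being local on $\Vir$ is defined to force $\chi(z)=0$. The middle arrow is exactly Corollary~\ref{cor:finiteorbit}, so only the other two implications need argument.

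The first arrow I would handle by transferring the $W$-result along the canonical projection $\pi:\Sa(\Vir)\twoheadrightarrow\Sa(\Vir)/(z)=\Sa(W)$. If $\chi$ is local then $\chi(z)=0$, so $(z-\chi(z))=(z)$ and $\mf m_\chi=\pi^{-1}(\mf m_{\bar\chi})$ where $\bar\chi:=\chi|_W$ is itself local. Theorem~\ref{Tloc} gives $\CoreP(\bar\chi)\ne(0)$ in $\Sa(W)$, and $\pi^{-1}(\CoreP(\bar\chi))$ is then a Poisson ideal of $\Sa(\Vir)$ strictly containing $(z)$ while sitting inside $\mf m_\chi$, forcing $\CoreP(\chi)\ne(z)$.

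The real content is the third arrow. Centrality of $z$ puts $z\in\Vir^\chi$, so $\Vir^\chi=\mf k\oplus\kk z$ where $\mf k:=\Vir^\chi\cap W$ is a Lie subalgebra of $W$ with $\dim W/\mf k=\dim\Vir/\Vir^\chi<\infty$. Proposition~\ref{prop:4.14} now supplies an $f\in\kk[t,t^{-1}]\setminus\{0\}$ with $W(f)\subseteq\mf k$. Writing $\lambda:=\chi(z)$, the relation $\chi([g\del,h\del])=0$ for $g\del\in W(f)$ and $h\del\in W$ reads
\[ \bar\chi((gh'-g'h)\del)+\lambda\Res_0(g'h''-g''h')=0. \]
My plan is to specialise to the family $g=ft^k$, $h=t^{-k-1}$ (legitimate for every $k\in\ZZ$ since $ft^k\in f\kk[t,t^{-1}]$) and view both sides as polynomials in $k$. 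A short calculation shows that $gh'-g'h$ is a $k$-linear combination of the fixed vector fields $ft^{-2}\del$ and $f't^{-1}\del$, so the left-hand side is linear in $k$; by contrast $\Res_0(g'h''-g''h')$ turns out to be a cubic polynomial in $k$ whose leading coefficient is a nonzero scalar multiple of the $t^3$-coefficient of $f$. Since $W(t^Nf)=W(f)$ for every $N\in\ZZ$, we are free to shift $f$ by a power of $t$ to guarantee this coefficient is nonzero, and the resulting degree mismatch forces $\lambda=0$.

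Once $\chi(z)=0$ is in hand, the cocycle term vanishes and the same identity collapses to $\bar\chi([g\del,h\del]_W)=0$ for all $g\del\in\mf k$ and $h\del\in W$, i.e.\ $\mf k\subseteq W^{\bar\chi}$. Hence $W^{\bar\chi}$ has finite codimension in $W$, Theorem~\ref{Tloc} gives that $\bar\chi$ is local on $W$, and pulling back shows $\chi$ is local on $\Vir$. I expect the main obstacle to be the computational step that forces $\lambda=0$: one must choose commutators cleverly enough that the cocycle residue is a polynomial in the free parameter of strictly larger degree than the ordinary-bracket contribution, and check that the required coefficient of $f$ can always be made nonzero by a shift of $f$ by a power of $t$.
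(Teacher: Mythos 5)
Your proof is correct, and it follows the same three-step cycle as the paper's, with the first two implications essentially identical (local $\Rightarrow$ nontrivial Poisson core via pullback along $\Sa(\Vir)\twoheadrightarrow\Sa(W)$; nontrivial Poisson core $\Rightarrow$ finite codimension via Corollary~\ref{cor:finiteorbit}). The difference is in the key implication $\dim\Vir/\Vir^\chi<\infty\Rightarrow\chi$ local: the paper invokes Proposition~\ref{prop:4.15} as a black box, applied to $\mf k=\Vir^\chi$, to get $z\in[\Vir^\chi,\Vir^\chi]$ and hence $\chi(z)=0$ (since $\chi$ vanishes on $[\Vir^\chi,\Vir^\chi]=[\ker B_\chi,\ker B_\chi]$). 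You instead establish $\chi(z)=0$ by a direct cocycle computation. The two computations are closely related but not the same: the paper's proof of Proposition~\ref{prop:4.15} brackets \emph{two} elements $v_p=ft^p\del+\lambda_p z$, $v_q=ft^q\del+\lambda_q z$ of $\mf k$ and studies the $z$-coefficient as $p+q$ is held fixed, whereas you bracket a single element $ft^k\del\in W(f)\subseteq\Vir^\chi$ against the arbitrary vector field $t^{-k-1}\del$, exploiting that $B_\chi(u,v)=0$ needs only $u\in\ker B_\chi$. Your choice makes the $W$-contribution degree~$1$ in $k$ while the residue term $\Res_0(g'h''-g''h')=(k+1)\Res_0(f''t^{-2})+(k+1)(3k+2)\Res_0(f't^{-3})+k(k+1)(2k+1)\Res_0(ft^{-4})$ is cubic with leading coefficient $2\Res_0(ft^{-4})$, which can be made nonzero by replacing $f$ with $t^Nf$ (legitimate since $W(t^Nf)=W(f)$), forcing $\lambda=0$. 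The trade-off is that the paper's route yields the independently useful structure theorem (Theorem~\ref{ithm:4.15}, that every finite-codimension subalgebra of $\Vir$ contains $z$) along the way, while your route is more self-contained and a little shorter if one only cares about Theorem~\ref{Tlocvir}, since it bypasses the need to derive $z\in[\mf k,\mf k]$ for a general subalgebra $\mf k$.
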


\begin{remark}\label{rem:locvir}
It follows from Theorems~\ref{Tloc} and \ref{Tlocvir} that $\chi \in \Vir^*$ is local if and only if there is some $f \in \kk[t, t^{-1}]$ such that $\chi$ vanishes on $W(f) + \kk \cdot z \subseteq \Vir$.
\end{remark}

Before proving Theorem~\ref{Tlocvir}, we consider arbitrary subalgebras of $\Vir$ of finite codimension, and show they are strongly constrained. 

\begin{proposition}\label{prop:4.15}
If $\mf k$ is a subalgebra of $\Vir$ of finite codimension, then $z \in [\mf k, \mf k]$.  
Thus $\dim \Vir/\mf k < \infty$ if and only if $\mf k$ contains some $W(f) + \kk \cdot z$, where $f \in \kk[t,t^{-1}] \setminus \{0\}$.
\end{proposition}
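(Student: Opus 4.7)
The plan is to use the projection $\pi \colon \Vir \to W$ (with kernel $\kk z$) together with Proposition~\ref{prop:4.14} to reduce the claim $z \in [\mf k, \mf k]$ to a combinatorial statement about the Virasoro cocycle $\omega(u\del, v\del) := \Res_0(u'v'' - u''v')$ on some $W(f) \subseteq W$. Since $\mf k$ has finite codimension in $\Vir$, its image $\pi(\mf k)$ has finite codimension in $W$, so by Proposition~\ref{prop:4.14} there exists $f \in \kk[t, t^{-1}] \setminus \{0\}$ with $W(f) \subseteq \pi(\mf k)$. For each $p \in W(f)$ I would fix some lift $\tilde p \in \mf k$; then for all $p, q \in W(f)$,
\[
[\tilde p, \tilde q] \;=\; (pq' - p'q)\del + \omega(p, q)\, z \;\in\; [\mf k, \mf k].
\]
Thus to prove $z \in [\mf k, \mf k]$ it suffices to produce a cycle on which $\omega$ is nonzero: scalars $c_i \in \kk$ and pairs $p_i, q_i \in W(f)$ with $\sum_i c_i (p_i q_i' - p_i' q_i) = 0$ but $\sum_i c_i \omega(p_i, q_i) \neq 0$.

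To build such a cycle I would work inside the family $p_m := ft^m\del$, $q_m := ft^{k-m}\del$ for some fixed $k \in \ZZ$. A direct expansion gives $[p_m, q_m] = (k-2m)\, f^2 t^{k-1}\del$, so any scalars $c_m$ satisfying $\sum_m c_m (k-2m) = 0$ yield a cycle. Using the identity $\omega(u\del, v\del) = 2\Res_0(u'v'')$ (which follows from $\Res_0((u'v')') = 0$), I would expand $(ft^m)'(ft^{k-m})''$ and read off the $t^{-1}$-coefficient. The only summand cubic in $m$ comes from multiplying the $mft^{m-1}$ factor of $(ft^m)'$ by the $n(n-1)ft^{n-2}$ factor of $(ft^{k-m})''$ (where $n = k-m$), producing $m(k-m)(k-m-1)\,f^2 t^{k-3}$. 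Writing $[g](j)$ for the coefficient of $t^j$ in a Laurent polynomial $g$, this shows
\[
P_k(m) \;:=\; \omega(p_m, q_m) \;=\; 2\,[f^2](-k+2)\, m^3 \;+\; \text{(polynomial in $m$ of degree $\leq 2$)}.
\]
Since $f \ne 0$ forces $f^2 \ne 0$, I can choose $k$ with $[f^2](-k+2) \neq 0$, making $P_k(m)$ a genuine cubic and hence not a scalar multiple of the linear polynomial $k - 2m$. Consequently there exist $m_1, m_2 \in \ZZ$ with $P_k(m_1)(k-2m_2) - P_k(m_2)(k-2m_1) \neq 0$; setting $c_1 := k-2m_2$ and $c_2 := -(k-2m_1)$ yields the required cycle, completing the proof that $z \in [\mf k, \mf k]$.

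The equivalence then follows easily. The implication $\mf k \supseteq W(f) + \kk z \Rightarrow \dim \Vir/\mf k < \infty$ is immediate, since $\Vir/(W(f) + \kk z) \cong W/W(f)$ is finite-dimensional by Proposition~\ref{prop:4.14}. Conversely, having shown $z \in \mf k$, we have $\mf k = \pi^{-1}(\pi(\mf k))$, and Proposition~\ref{prop:4.14} applied to the finite-codimension subalgebra $\pi(\mf k) \subseteq W$ supplies an $f$ with $W(f) \subseteq \pi(\mf k)$, whence $\mf k \supseteq W(f) + \kk z$. The main obstacle in this plan is isolating the cubic-in-$m$ contribution to $P_k(m)$; this is routine but requires verifying that no cross-term in the expansion of $(ft^m)'(ft^{k-m})''$ other than the identified one can be cubic in $m$, which holds because $(ft^m)'$ is at most linear in $m$ and $(ft^{k-m})''$ is at most quadratic in $k-m$, so only their product of highest $m$-degree factors contributes to $m^3$.
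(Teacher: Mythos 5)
Your proposal is correct and follows essentially the same route as the paper's proof: reduce via Proposition~\ref{prop:4.14} to lifts of the elements $ft^p\del \in W(f)$, bracket pairs with fixed total exponent, cancel the vector-field part, and observe that the cocycle contribution varies nonlinearly in the index, so that $z \in [\mf k,\mf k]$ unless $\Res_0(t^{d-3}f^2)$ vanishes for all $d$, which would force $f^2=0$. The only difference is cosmetic: the paper normalises by $\tfrac{1}{q-p}$ and subtracts (seeing a quadratic in $q$), while you keep the brackets unnormalised and compare a cubic in $m$ against the linear $\del$-coefficient $k-2m$.
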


This result generalises \cite[Proposition~2.3]{OW}, which considered subalgebras of $\Vir$ of codimension 1. We also note that \cite{OW} refers to  subalgebras of $\Vir$ of the form $W(f)+\kk \cdot z$ as {\em polynomial subalgebras}.

\begin{proof}[Proof of Proposition~\ref{prop:4.15}] Let $\bar{\mf k}$ be the image of $\mf k$ in $W$.  By Proposition~\ref{prop:4.14} there is some $ f \in \CC[t, t^{-1}] \setminus \{0\}$ so that $\bar{\mf k} \supseteq W(f)$.  Thus for all $p \in \ZZ$, there is $\zeta_p \in \CC$ so that the element
\[ v_p := f t^p \del + \zeta_p z\]
is in $\mf k$.
Therefore, $[\mf k, \mf k]$ contains the elements  
\begin{equation}\frac1{q-p}[v_p, v_q]=f^2t^{p+q-1}\del +  \Res_0\Bigl(t^{p+q-3}(2t^2(f')^2+ff't(p+q-1)-t^2ff''+f^2pq)\Bigr)z
\label{Evir01}\end{equation}
for all $p,q \in \ZZ$.
Fix $d=p+q$ and consider $p = d-q$ as a function of $q$. 
The only  part of \eqref{Evir01} that varies with $q$ is 
$q(d-q) \Res_0(t^{d-3} f^2)$.

If 
$\Res_0(t^{d-3} f^2)$ 
is not zero then
$$\frac1{d-2q_1}[v_{d-q_1}, v_{q_1}]-\frac1{d-2q_2}[v_{d-q_2}, v_{q_2}] = (q_1-q_2)(d- q_1-q_2) \Res_0(t^{d-3} f^2) z$$
is a nonzero scalar multiple of $z$ for almost all  $q_1, q_2$. 
If $z\not \in [ \mf k, \mf k]$, we therefore have
$\Res_0(t^d f^2)=0$ for all $d\in\mathbb Z$. 
This implies that $f^2=0$, contradicting our assumption on $f$.

The final sentence is an immediate consequence of Proposition~\ref{prop:4.14}.
\end{proof}
\begin{proof}[Proof of Theorem~\ref{Tlocvir}] 
Let $\chi \in \Vir^*$.
If $\chi$ is local then by definition $\chi$ descends to a local function $\overline{\chi}$ on $\Vir/(z) \cong W$.
By Theorem~\ref{Tloc},
 $\CoreP(\chi) \supsetneqq (z)$ and $\dim \Vir/\Vir^\chi = \dim W/W^{\overline\chi} < \infty$.
 
 If $\dim \Vir/\Vir^\chi < \infty$ then by Lemma~\ref{prop:4.15}, $z \in [\Vir^\chi, \Vir^\chi]$ and so $\chi(z) = 0$, as $\chi$ vanishes, by definition, on $[\Vir^\chi, \Vir^\chi]$.
We may thus factor out $z$ and apply Theorem~\ref{Tloc} again to conclude that $\chi$ is local.

Finally, suppose that $\CoreP(\chi) \neq (z-\chi(z))$.
Then by Corollary~\ref{cor:finiteorbit} $\dim \Vir \cdot \chi = \dim \Vir/\Vir^\chi < \infty$.
\end{proof}

As an immediate corollary of Theorem~\ref{Tlocvir}, we obtain a powerful result on Poisson ideals of $\Sa(\Vir)$.
\begin{corollary}\label{cor:Psimple}
If $\zeta \in \kk^\times$ then $\Sa(\Vir)/(z-\zeta)$ is Poisson simple, i.e. contains no nontrivial Poisson ideals.
\end{corollary}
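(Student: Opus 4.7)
The plan is to argue by contradiction, reducing Poisson simplicity of $\Sa(\Vir)/(z-\lambda)$ to the ``in particular'' clause of Theorem~\ref{Tlocvir}. Suppose $\Sa(\Vir)/(z-\lambda)$ contains a nontrivial Poisson ideal; pulling it back along the quotient map $\Sa(\Vir) \twoheadrightarrow \Sa(\Vir)/(z-\lambda)$ yields a Poisson ideal $J$ of $\Sa(\Vir)$ with $(z-\lambda) \subsetneqq J \subsetneqq \Sa(\Vir)$.

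Next, I would apply the Extended Nullstellensatz (Theorem~\ref{ENSS}) to $\Sa(\Vir)$; this is legitimate because $\dim_\kk \Sa(\Vir)$ is countable, hence strictly less than $|\kk|$. This produces a maximal ideal $\mf m_\chi \supseteq J$ for some $\chi \in \Vir^*$. From $z-\lambda \in J \subseteq \mf m_\chi$ we immediately read off $\chi(z) = \lambda \neq 0$. On the other hand, because $J$ is Poisson and contained in $\mf m_\chi$, the maximality of $\CoreP(\chi)$ among Poisson subideals of $\mf m_\chi$ forces $J \subseteq \CoreP(\chi)$. Combining, we obtain the strict chain
\[ \CoreP(\chi) \supseteq J \supsetneqq (z-\lambda) = (z-\chi(z)). \]

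This strict containment is precisely the hypothesis of the final clause of Theorem~\ref{Tlocvir}, which then forces $\chi(z) = 0$, contradicting $\chi(z) = \lambda \neq 0$. No real obstacle arises: the corollary is an essentially immediate consequence of Theorem~\ref{Tlocvir} once a witnessing $\chi \in \Vir^*$ is extracted via the Nullstellensatz, and the only background check (the cardinality hypothesis of Theorem~\ref{ENSS}) is automatic. In particular one does not need to invoke Proposition~\ref{prop9} or Lemma~\ref{lem:basics}, since the single maximal ideal supplied by the Nullstellensatz already suffices.
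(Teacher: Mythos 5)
Your proof is correct and is essentially identical to the paper's own argument: lift a nontrivial Poisson ideal of $\Sa(\Vir)/(z-\lambda)$ to $J \supsetneqq (z-\lambda)$ in $\Sa(\Vir)$, use the Nullstellensatz to find $\mf m_\chi \supseteq J$, observe $\chi(z)=\lambda$ while $\CoreP(\chi) \supseteq J \supsetneqq (z-\chi(z))$, and conclude $\lambda=0$ from Theorem~\ref{Tlocvir}. Your explicit check that $\dim_\kk \Sa(\Vir)$ is countable (so the Extended Nullstellensatz applies) is a sensible addition, but otherwise this matches the paper step for step.
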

\begin{proof}
Let $\zeta \in \kk$.  
If $\Sa(\Vir)/(z-\zeta)$ is not Poisson simple then $(z-\zeta)$ is strictly contained in some proper Poisson ideal $J$ of $\Sa(\Vir)$.
By the Nullstellensatz there is some $\chi \in \Vir^*$ with $J \subseteq \mathfrak{m}_\chi$; thus $\chi(z) = \zeta$ as $z -\zeta \in \mf m_\chi$.
Further $\CoreP(\chi) \supseteq J \supsetneqq (z-\zeta)$ and so by Theorem~\ref{Tlocvir} we have $\zeta = 0$.
\end{proof}

We thus show that almost all prime Poisson ideals of $\Sa(\Vir)$ contain $z$.
\begin{corollary}\label{cor:primeVir}
Let $Q$ be a prime Poisson ideal of $\Sa(\Vir)$.
Then either:
\begin{itemize}
    \item $Q = (0)$;
    \item $Q = (z-\zeta)$ for some $\zeta \in \kk^\times$;
    \item $Q \supseteq (z)$.
\end{itemize}
\end{corollary}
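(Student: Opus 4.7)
The plan is to analyse the prime ideal $Q \cap \kk[z]$ of $\kk[z]$. Since $z$ is central in $\Vir$, it is Poisson-central in $\Sa(\Vir)$, so $(z-\lambda)$ is a Poisson ideal for every $\lambda \in \kk$, and consequently $(z-\chi(z)) \subseteq \CoreP(\chi)$ for every $\chi \in \Vir^*$. Because $Q$ is prime, $Q \cap \kk[z]$ is a prime ideal of $\kk[z]$, and the Nullstellensatz identifies these as $(0)$ and the $(z-\lambda)$ for $\lambda \in \kk$, yielding three cases.

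Two cases are immediate: if $Q \cap \kk[z] = (z)$, then $z \in Q$ and $Q \supseteq (z)$; if $Q \cap \kk[z] = (z-\lambda)$ with $\lambda \neq 0$, then $Q$ is a proper Poisson ideal containing $(z-\lambda)$, and Corollary~\ref{cor:Psimple} forces $Q/(z-\lambda) = (0)$, so $Q = (z-\lambda)$.

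The substantive case, and the main obstacle, is to rule out a nonzero prime Poisson ideal $Q$ with $Q \cap \kk[z] = (0)$. Since $Q$ is radical and $\dim \Vir < |\kk|$, Lemma~\ref{lem:basics}(a) writes
\[ Q = \bigcap_i \CoreP(\chi_i) \]
for a family $\{\chi_i\} \subseteq \Vir^*$. For each $i$ we have $\CoreP(\chi_i) \cap \kk[z] = (z-\chi_i(z))$, so the assumption $Q \cap \kk[z] = (0)$ forces the scalars $\chi_i(z)$ to range over an infinite subset of $\kk$. Discarding the at most one index with $\chi_i(z) = 0$, Corollary~\ref{cor:Psimple} collapses each remaining $\CoreP(\chi_i)$ to $(z-\chi_i(z))$, giving
\[ Q \;\subseteq\; \bigcap_{i\,:\,\chi_i(z)\neq 0} (z-\chi_i(z)). \]
Under the identification $\Sa(\Vir) \cong \Sa(W)[z]$ coming from the vector-space splitting $\Vir = W \oplus \kk z$, any element of $\bigcap_{\lambda \in S}(z-\lambda)$ is a polynomial in $z$ with coefficients in the domain $\Sa(W)$ having every $\lambda \in S$ as a root, hence vanishes whenever $S$ is infinite. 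Thus $Q = (0)$, contradicting the hypothesis.

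The only delicate step is this third case; once one combines Lemma~\ref{lem:basics}(a), Corollary~\ref{cor:Psimple}, and the elementary observation that a nonzero polynomial over a domain has only finitely many roots, the argument assembles itself and the remaining cases are direct.
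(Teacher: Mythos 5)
Your proof is correct, and it takes a genuinely different route from the paper's. The paper also reduces (via Corollary~\ref{cor:Psimple} and primeness) to showing that a nonzero $Q$ must meet $\kk[z]$ nontrivially, but then argues differently: it picks $h\in Q\setminus\{0\}$ with no nonconstant factor in $\kk[z]$ (possible by primeness), observes that for every $\chi$ with $Q\subseteq\mf m_\chi$ the presence of $h$ forces $\CoreP(\chi)\neq(z-\chi(z))$, and then invokes the full strength of Theorem~\ref{Tlocvir} to conclude $\CoreP(\chi)\supseteq(z)$ for all such $\chi$, whence $Q\supseteq(z)$ by Lemma~\ref{lem:basics}(a). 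Your version instead trades Theorem~\ref{Tlocvir} for elementary commutative algebra: a clean trichotomy on the prime $Q\cap\kk[z]$, with the only substantive case handled by showing $Q$ sits inside $\bigcap_{\lambda\in S}(z-\lambda)$ for an infinite $S\subset\kk^\times$, which is $(0)$ because a nonzero element of $\Sa(W)[z]$ has bounded $z$-degree and $\Sa(W)$ is a domain. Both proofs lean on Lemma~\ref{lem:basics}(a) and Corollary~\ref{cor:Psimple}; yours is arguably more self-contained since it does not re-invoke Theorem~\ref{Tlocvir} beyond what is already packaged in Corollary~\ref{cor:Psimple}, while the paper's version extracts the conclusion $Q\supseteq(z)$ somewhat more directly without a case split. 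One tiny slip: you say ``at most one index with $\chi_i(z)=0$'' --- there can be many such indices, but since they all contribute the same scalar $0$, discarding them all still leaves an infinite set of nonzero scalars, so the argument stands.
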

\begin{proof}
By Corollary~\ref{cor:Psimple} and primeness of $Q$, it suffices to prove that if $Q \neq (0)$ then $Q$ contains a nonzero element of $\kk[z]$.
Let $h \in Q \setminus \{0\}$; using primeness of $Q$ we may assume that $h$ is not a multiple of any element of $ \kk[z] \setminus \kk$.
Let $\chi \in \Vir^*$ so that $Q \subseteq \mf m_\chi$.  
As $h \in \CoreP(\chi)$, we see that $\CoreP(\chi) \neq (z-\chi(z))$.
By Theorem~\ref{Tlocvir} $\CoreP(\chi) \supseteq (z)$.
Thus, applying Lemma~\ref{lem:basics}(a), 
\[Q = \bigcap \{ \CoreP(\chi) \ | \ \ev_\chi(Q) = 0 \} \supseteq (z).\]
\end{proof}

Given Corollary~\ref{cor:Psimple} it is natural to conjecture:  
\begin{conjecture}\label{conj:Usimple}
If $\zeta \neq 0$ then $\Ua(\Vir)/(z-\zeta)$ is simple.
\end{conjecture}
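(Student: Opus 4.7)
The plan is to attempt an associated-graded reduction analogous to the proof of Corollary~\ref{cor:Psimple}. Equip $\Ua(\Vir)$ with the PBW filtration, so that $\gr\Ua(\Vir) = \Sa(\Vir)$ as Poisson algebras. The element $z-\lambda$ lies in filtration degree $1$ with principal symbol $z$, hence
\[ \gr\bigl(\Ua(\Vir)/(z-\lambda)\bigr) \;\cong\; \Sa(\Vir)/(z) \;\cong\; \Sa(W). \]
A proper two-sided ideal $J \supsetneq (z-\lambda)$ of $\Ua(\Vir)$ would yield a nonzero proper Poisson ideal $\gr(J/(z-\lambda))$ of $\Sa(W)$, and Theorem~\ref{Tgkf} would then force $\Ua(\Vir)/J$ to have finite Gelfand-Kirillov dimension. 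This is the commutative shadow that our classification results should control.

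Next, I would use Proposition~\ref{prop9} to pick a minimal Poisson prime $P$ of $\gr(J/(z-\lambda))$ and, by the classification of prime Poisson ideals of $\Sa(W)$ obtained in the paper, identify $P$ with $\CoreP(\chi)$ for some local $\chi \in W^*$. Pulling back along the projection $\Vir \twoheadrightarrow W$ gives $\widetilde\chi \in \Vir^*$ satisfying $\widetilde\chi(z) = 0$, as required by Theorem~\ref{Tlocvir}. The aim would then be to quantise $\CoreP(\widetilde\chi)$ to a primitive ideal $\widetilde I$ of $\Ua(\Vir)$ containing $J$ and to show $z \in \widetilde I$; combined with $z-\lambda\in \widetilde I$ this would give $\lambda\in \widetilde I$, contradicting primitivity of $\widetilde I$.

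The principal obstacle is this final quantisation step. In the commutative setting of Corollary~\ref{cor:Psimple} the Nullstellensatz converts a proper Poisson ideal immediately into a maximal ideal and hence into an evaluation character; no analogous tool is presently available for $\Ua(\Vir)$. Indeed, the Jacobson property for $\Ua(\Vir)/(z-\lambda)$ is not known, and there is no general Kirillov-type correspondence between pseudo-orbits in $\Vir^*$ and primitive ideals of $\Ua(\Vir)$. The explicit constructions of Section~\ref{Sorbit} cover only the $2$-dimensional pseudo-orbits, and although reassuringly the maps produced there do send $z$ to $0$, extending this to an orbit method valid for all local functions on $\Vir$ is open. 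Bridging this gap, either by establishing a general quantisation procedure matching local functions to primitive ideals of $\Ua(\Vir)$ or by producing, for each $\lambda\neq 0$, an explicit faithful simple $\Ua(\Vir)/(z-\lambda)$-module, is precisely the reason the statement is recorded here as a conjecture rather than a theorem.
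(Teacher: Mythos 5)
Your proposal is not a proof, and you correctly say so yourself: the paper records this statement as a conjecture precisely because no argument is known, and the authors' own remark after the conjecture identifies the same obstruction you run into. Your opening reduction is sound — $\gr\bigl(\Ua(\Vir)/(z-\lambda)\bigr) \cong \Sa(\Vir)/(z) \cong \Sa(W)$, so a proper ideal $J \supsetneqq (z-\lambda)$ yields a nonzero proper Poisson ideal of $\Sa(W)$ and hence finite GK-dimension of $\Ua(\Vir)/J$ by Theorem~\ref{Tgkf} — but this is exactly where the strategy stalls: because $\gr J$ contains $z$, the commutative shadow lives in $\Sa(W)$, which has an abundance of nontrivial Poisson ideals, so Corollary~\ref{cor:Psimple} (Poisson simplicity of $\Sa(\Vir)/(z-\lambda)$) gives no contradiction. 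This is verbatim the paper's observation that ``the obvious strategy of proof by taking the associated graded of an ideal $J \supsetneqq (z-\lambda)$ does not work.''

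The remaining steps you sketch are genuinely open, not merely technical. Producing a primitive ideal $\widetilde I \supseteq J$ is easy (take a maximal two-sided ideal), but showing $z \in \widetilde I$ is the whole problem: it would require a quantisation of Theorem~\ref{Tlocvir} asserting that every primitive ideal of $\Ua(\Vir)$ other than the $(z-\lambda)$ themselves contains $z$, and no orbit-method correspondence between Poisson primitive ideals of $\Sa(\Vir)$ and primitive ideals of $\Ua(\Vir)$ is available beyond the two-dimensional pseudo-orbits treated in Section~\ref{Sorbit} (where, as you note, the maps do kill $z$). A smaller quibble: identifying a minimal Poisson prime $P$ of $\gr(J/(z-\lambda))$ with some $\CoreP(\chi)$ is not automatic, since not every prime Poisson ideal of $\Sa(W)$ is Poisson primitive; you would need Lemma~\ref{lem:basics} or Proposition~\ref{Pradprim} to pass to a Poisson primitive ideal containing it. In short, your write-up is an accurate account of why the statement is a conjecture, but it does not close the gap, and neither does the paper.
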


However, we as yet have no proof of Conjecture~\ref{conj:Usimple}.  
Note that the obvious strategy of proof by taking the associated graded of an ideal $ (z-\zeta) \subsetneqq J \triangleleft \Ua(\Vir)$ does not work; for in this case $\gr J \ni z$ and Corollary~\ref{cor:Psimple} is not directly relevant.

\section{Pseudo-orbits and Poisson primitive ideals for the algebras of interest}
\label{Spo}
Let $\mf g$ be one of $\Vir, W, W_{\ge-1}$, or  $W_{\ge1}$.  
In this section we will describe the pseudo-orbits for $\mf g$, using the
results on local functions from the previous section and the general strategy of Proposition~\ref{Practpoi}, and derive some consequences for the Poisson primitive spectrum of $\Sa(\mf g)$. 
 
 We begin by describing the pseudo-orbits of non-local functions, where the results of Section~\ref{Spridlf} quickly give the  answer.
 
 \begin{proposition}\label{prop:nonlocal}
 For $\mf g =  W, W_{\ge-1}$, or $W_{\ge1}$, the non-local functions in $\mf g^*$ form a  pseudo-orbit.  If $\mf g = \Vir$, then for any $\zeta \in \kk$ the non-local functions $\chi$ with $\chi(z) = \zeta$ form a pseudo-orbit.
 \end{proposition}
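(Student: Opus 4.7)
The plan is to observe that Proposition~\ref{prop:nonlocal} is an essentially immediate consequence of the local function theorems (Theorems~\ref{Tloc01}, \ref{Tloc}, \ref{Tlocvir}) already established in Section~\ref{Spridlf}, together with the definition of pseudo-orbit as a level set of $\CoreP$. The strategy is simply to compute $\CoreP(\chi)$ on the nose for every non-local $\chi$ and then invoke the characterisation $\mb O(\chi) = \{\nu \in \mf g^* \mid \CoreP(\nu) = \CoreP(\chi)\}$.

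For $\mf g \in \{W, W_{\ge -1}, W_{\ge 1}\}$, I would apply the relevant theorem with $f=1$, so that $\mf g(f) = \mf g$. The equivalence $(0) \Leftrightarrow (1)$ in Theorems~\ref{Tloc01} and~\ref{Tloc} then reads: $\chi$ is non-local if and only if $\CoreP(\chi) = (0)$. In particular all non-local functions share the same Poisson core, namely $(0)$, and therefore lie in one pseudo-orbit. Non-emptiness is guaranteed by the explicit $\varkappa$ from Remark~\ref{rem:dag} (and its evident analogues in $W^*$ and $W_{\ge 1}^*$, e.g.\ by pullback along $W_{\ge-1} \hookrightarrow W$ or restriction to $W_{\ge 1}$).

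For $\mf g = \Vir$, I would first use Theorem~\ref{Tlocvir} in the form: $\chi$ is non-local if and only if $\CoreP(\chi) = (z-\chi(z))$. Fix $\lambda \in \kk$ and let $\chi \in \Vir^*$ be non-local with $\chi(z) = \lambda$; then $\CoreP(\chi) = (z-\lambda)$. Conversely, given $\nu$ with $\CoreP(\nu) = (z-\lambda)$, the inclusion $z-\lambda \in \CoreP(\nu) \subseteq \mf m_\nu$ forces $\nu(z) = \lambda$, and the theorem then forces $\nu$ to be non-local. Hence $\mb O(\chi) = \{\nu \in \Vir^* \mid \nu \text{ non-local},\ \nu(z) = \lambda\}$, which is exactly the set in the proposition. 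Non-emptiness for $\lambda = 0$ follows by pulling back any non-local $\chi_0 \in W^*$ along $\Vir \twoheadrightarrow W$; for $\lambda \neq 0$ it follows from Corollary~\ref{cor:Psimple}, or directly by extending any $\chi_0 \in W^*$ to $\Vir^*$ sending $z \mapsto \lambda$ (the condition $\chi(z) \neq 0$ alone guarantees non-locality, since a local function on $\Vir$ vanishes on $z$ by definition).

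There is no real obstacle here: the substance of the proposition lies entirely in Section~\ref{Spridlf}, and this statement is just a matter of reading off what the main theorems of that section say about the fibres of $\chi \mapsto \CoreP(\chi)$ over its ``trivial'' values. The only subtlety worth being careful about is bookkeeping for $\Vir$, namely distinguishing the two sources of non-locality ($\chi(z) \neq 0$ versus $\chi|_W$ non-local), both of which are absorbed uniformly by the formula $\CoreP(\chi) = (z - \chi(z))$.
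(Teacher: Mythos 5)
Your proposal is correct and matches the paper's own proof, which likewise reads the proposition off from Theorems~\ref{Tloc01}, \ref{Tloc}, and \ref{Tlocvir} via the equivalence ``non-local $\iff$ $\CoreP(\chi)$ is trivial'' (namely $(0)$, resp.\ $(z-\chi(z))$ for $\Vir$). The only pedantic note is that for $\mf g = W_{\ge 1}$ one invokes Theorem~\ref{Tloc01} with $f = t^2$ (since $W_{\ge 1} = W_{\ge -1}(t^2)$) rather than $f=1$, but this does not affect the argument.
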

 \begin{proof}
 This is immediate from Theorems~\ref{Tloc01} ($\mf g = W_{\geq -1}$ or $W_{\geq 1}$), \ref{Tloc}  ($\mf g= W$), or \ref{Tlocvir} ($\mf g = \Vir$).  For by those results, if $\mf g = W, W_{\geq -1},$ or $W_{\geq 1}$ then $\chi \in \mf g^*$ is not local if and only if $\CoreP(\chi) = 0$; and if $\mf g = \Vir$, then $\chi$ is not local if and only if $\CoreP(\chi) = (z - \chi(z))$.
 \end{proof}
 We may thus restrict to considering pseudo-orbits of local functions.  By Proposition~\ref{prop:nonlocal} if $\chi  \in \mf g^*$ is local and $\omega \in \mb O(\chi)$ then $\omega$ is also local. 
 Since by definition local functions on $\Vir$ vanish on $z$, the pseudo-orbits for $W$ directly determine those for $\Vir$.
 
 Thus for the rest of the section we  let $\mf g$ be $ W, W_{\ge-1}$, or $W_{\ge1}$.
In Subsection~\ref{SSgra} we will introduce a  finite-dimensional action which determines the pseudo-orbits of one-point local functions on $\mf g$, in   Subsection~\ref{SSexd} we describe the orbits of this action explicitly, and in Subsection~\ref{SSimd} we use this action to  describe pseudo-orbits of arbitrary local functions.

\subsection{An algebraic group acting on local functions} \label{SSgra} 
Set $\mf g$ to be $W$, $W_{\ge-1}$, or $W_{\ge1}$.  We fix notation for the subsection.
\begin{definition} 
For $x \in \kk$, $n \in \ZZ_{\geq 0}$, let $\Loc_x^{\le n}$ 
\label{ind:Loc1}
denote the subspace of $\mf g^*$ consisting of  one-point local functions based at $x$ and of order $\leq n$.
Let  $\Loc_x:=\bigcup_{n\ge 0}\Loc_x^{\le n}.$
\label{ind:Loc2}
Define $\Loc^{\le n}_{\mf g}=\bigcup_{x\in\CC^\times}\Loc_x^{\le n}$ if $\mf g=W$, and  $\Loc^{\le n}_{\mf g}=\bigcup_{x\in\CC}\Loc_x^{\le n}$ if $\mf g=W_{\ge-1}$ or $W_{\ge1}$.
\label{ind:Loc3}
Let
\[ \widetilde{\Loc}_{\mf g}^{\leq n}  = 
\begin{cases} 
\kk^\times \times \kk^{n+1} & \mf g = W \\
\kk \times \kk^{n+1} & \mf g= W_{\geq -1} \\
\kk \times \kk^{n+1}  & \mf g = W_{\geq 1}. 
\end{cases}
\]
\label{ind:Loc4}
For all $\mf g$ there is a canonical 
map $\pi_{\mf g}^{\leq n}: \widetilde{\Loc}_{\mf g}^{\leq n} \to \Loc_{\mf g}^{\leq n}$
\label{ind:pig}
 (this is in fact birational, cf. Proposition~\ref{prop:iso}).  
If $\mf g=W, W_{\ge-1}$ then  $\pi_{\mf g}^{\leq n} $ is an isomorphism away from $\chi = 0$.
If  $\mf g=W_{\ge1}$ it is an isomorphism away from $\chi = 0$ and $x=0$, 
due to the fact that $\chi_{0; \beta_0, \beta_1}=0$ for all $\beta_0, \beta_1\in\kk$ in this case.  
Formally, we let  $\phi_{\mf g }^{\leq n}:  \widetilde{\Loc}_{\mf g}^{\leq n} \to \mf g^*$ 
\label{ind:phig}
be the composition of $\pi_{\mf g}^{\leq n}$ with the inclusion $\Loc_{\mf g}^{\leq n} \subseteq \mf g^*$.
There is an induced pullback morphism   $(\phi^{\le n}_{\mf g})^*:\Sa(\mf g)\to \kk[\widetilde{\Loc}_{\mf g}^{\leq n}]$.
\end{definition}

If $\chi$ is a local function on $\mf g$ and $v \in \mf g$, then $v\cdot \chi$ is local, and in fact the coadjoint action of $\mf g$ preserves $\Loc_x$ for all $x$.
We now study the action of $\mf g$ on $\Loc_x$, and relate it to a finite-dimensional action using Proposition~\ref{Practpoi}.

We begin by defining a group action.
Fix  $x\in\CC$ (if $\mf g = W$ we assume that $x \neq 0$).  Let $\tilde t:=t-x$. 
Clearly, \eqref{Elf} makes sense for every formal power series $f\del\in \CC[[\tilde t]]\del$, and so $\Loc_{x}^{\leq n}$ also gives elements of  $(\kk[[\tilde t]]\del)^*$.  
For every $s\in \CC[[\tilde t]]$ with $s(x)=x$ 
we introduce a local change of coordinates endomorphism $\End_{t\to s}(\cdot)$
\label{ind:End}
 of $\CC[[\tilde t]]$ through the formula $t\to s$. 
Note that $\End_{t \to s}$ is  invertible if $s'(x)\ne0$. 
Let $\DLoc_x$,
\label{ind:DLoc}
 the group of {\em formal local diffeomorphisms at $x$}, denote the group of all $\End_{t \to s}$ with $s(x) = x$ and $s'(x) \neq 0$.
The group $\DLoc_x$ has a subgroup of transformations of the form $t \to \zeta t + (1-\zeta) x$, for $\zeta \in \CC^\times$; we also write this transformation as  $\tilde t\to\zeta \tilde t$ and let $\Dil_x$
\label{ind:Dil}
 denote the  group of such transformations, which we term {\em dilations at $x$}.

Pick $s\in\CC[[\tilde t]]$ with $s(x)=x$ and $s'(x) \neq 0$. 
We extend $\End_{t\to s}$ to an automorphism of $\CC[[\tilde t]]\del$ via the formulae
\beq \label{derived action}
t\mapsto s,\hspace{10pt} \del\mapsto \frac1{s'}\del~(=\partial_s).
\eeq
This gives  actions of $\DLoc_x$ on $\CC[[\tilde t]]\del$ and on $(\kk[[\tilde t]]\del)^*$. 
We may consider  $\Loc_x^{\leq n}$ as a subset of  $(\kk [[\tilde t]]\del)^*$, and this subset is preserved by the $\DLoc_x$-action. 

Denote by $\DLoc_x^{\le n}$
\label{ind:DLocn}
 the image of $\DLoc_x$ in the group $\Aut(\Loc_x^{\le n})$ of linear automorphisms of $\Loc_x^{\le n}$.
Although $\DLoc_x$ is  infinite-dimensional, its image in $\Aut(\Loc_x^{\le n})$ is a finite-dimensional solvable algebraic group. 
Let us consider the action of the corresponding (finite-dimensional) Lie algebra, which we denote by $\lie_x^{\le n}$. 
\label{ind:lien}

\begin{lemma}\label{Lwgr} Identify $\lie_x^{\le n}$ with the tangent space to  $\DLoc_x^{\le n}$ at the identity. 
Let $s \in \kk[[t]]$ with 
 $s(x)=0$  and denote by $\xi_s$ the tangent direction at the identity defined by the line $h\to \End_{t\to t+hs}(\cdot)$ for $ h\in\CC$. 
The action of $\DLoc_x^{\le n}$ derives to an action of $\xi_s$ on $\Loc_x^{\le n}$. Then

\begin{itemize}
\item[(a)] $\lie_x^{\le n}$ consists of vectors of the form $\xi_s$ with $s(x)=0$;

\item[(b)]  $\xi_s\cdot\chi=s\del\cdot \chi$ for all $\chi\in \Loc_x$. 
\end{itemize}
\end{lemma}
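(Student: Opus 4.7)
The plan is to prove both parts by a direct first-order computation about the identity element of $\DLoc_x$, together with the observation that the group is (pro-)algebraic.

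For part (a), I would first observe that $\DLoc_x$ is naturally a pro-algebraic group: an element $\sigma = \End_{t \to s}$ is determined by the coefficients of $s = x + a_1 \tilde t + a_2 \tilde t^2 + \cdots$ with $a_1 \neq 0$, and composition is a polynomial operation on any truncation. A smooth curve in $\DLoc_x$ based at the identity has the form $\sigma_h = \End_{t \to s_h}$ with $s_0 = t$ and $s_h(x) = x$ for all $h$; writing $s_h = t + h s + O(h^2)$ forces $s(x) = 0$, so the tangent direction at $h = 0$ is $\xi_s$. Passing to the finite-dimensional quotient $\DLoc_x^{\le n} \subseteq \Aut(\Loc_x^{\le n})$, the Lie algebra $\lie_x^{\le n}$ is the image of this tangent space under the derived homomorphism, which is precisely the image of $\{\xi_s : s \in \kk[[\tilde t]],\, s(x) = 0\}$. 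This yields (a).

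For part (b), the content is a direct chain-rule computation. For $v = f\del \in \mf g \subseteq \kk[[\tilde t]]\del$ and $\sigma_h = \End_{t \to t + hs}$, the definition \eqref{derived action} gives
\[ \sigma_h(f\del) = f(t + hs) \cdot \frac{1}{1 + h s'}\,\del \;\in\; \kk[[\tilde t]]\del. \]
Differentiating at $h = 0$ yields
\[ \frac{d}{dh}\bigg|_{h=0} \sigma_h(f\del) \;=\; (sf' - f s')\del \;=\; [s\del,\, f\del]. \]
Since $\chi \in \Loc_x$ makes sense on all of $\kk[[\tilde t]]\del$ (it depends only on finitely many derivatives at $x$), pairing with $\chi$ and invoking the convention $(u\cdot\chi)(v) = \chi([u,v])$ from the discussion preceding Proposition~\ref{Prpoi} gives
\[ (\xi_s \cdot \chi)(f\del) \;=\; \chi([s\del,\, f\del]) \;=\; (s\del \cdot \chi)(f\del). \]
Extending linearly over a basis of $\mf g$ completes the proof of (b).

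The one point requiring care is sign/convention bookkeeping: one must check that the $\DLoc_x$-action on $(\kk[[\tilde t]]\del)^*$ is defined by $(\sigma \cdot \chi)(v) = \chi(\sigma v)$, so that differentiating at the identity gives the Lie algebra action with the same sign as the paper's convention $(u \cdot \chi)(v) = \chi([u,v])$ and no extraneous minus appears. Once this is pinned down, there is no real obstacle; both parts are essentially unwindings of definitions, with the only substantive step being the one-line identity $\frac{d}{dh}|_{h=0}\sigma_h(f\del) = [s\del, f\del]$.
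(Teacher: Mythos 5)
Your proof is correct and follows essentially the same route as the paper: part (a) is immediate from the definition of $\DLoc_x^{\le n}$, and part (b) rests on the same one-line differentiation $\del_h\bigl(\End_{t\to t+hs}(f\del)\bigr)|_{h=0}=(sf'-fs')\del=[s\del,f\del]$, which the paper carries out via the Leibniz rule on $f$ and $\del$ separately rather than on the product, and then notes that the action on $\Loc_x$ is induced from the action on $\kk[[\tilde t]]\del$. Your extra remark on the sign convention for the dual action is a reasonable bookkeeping point that the paper leaves implicit, but it does not change the argument.
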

\begin{proof} Part (a) is clear from the definition of $\DLoc_x^{\le n}$. 
For (b), as the action of $\xi_s$ on $\Loc_x$ is induced by the action \eqref{derived action} on $\kk[[\tilde t]] \del$, it is enough to check that $\xi_s \cdot f \del = [s \del, f\del]$ for all $f \del \in \mf g$. 
But 
\begin{align*}
    \xi_s \cdot f\del & = (\xi_s \cdot f) \del + f (\xi_s \cdot \del) \\
    & = (\del_h f(t+hs))|_{h = 0}\del + f (\del_h(\End_{t \to t+hs}\del))|_{h=0} \\
    & = (f'(t+hs) s)|_{h = 0} \del + f (\del_h(\frac{1}{1+hs'}\del))|_{h = 0} \\
    & = f's \del - f s'\del = [s\del, f \del],
\end{align*}
as needed.
\end{proof}
We now let $\mf g=W_{\ge-1}$. 
For all $x, n$,  the coadjoint action of $W_{\geq -1}(t-x)$ 
preserves $\Loc_x^{\leq n}$, and Lemma~\ref{Lwgr} shows that 
\begin{equation}\label{Etsp}W_{\geq -1}(t-x)\cdot\chi=\lie_x^{\leq n}\cdot\chi\end{equation}
for $\chi\in \Loc_x^{\leq n}$. 
For $z \in \kk$ define
$$\Shift_z(\chi_{x; \alpha_0, \alpha_1, \ldots}):=\chi_{x+z; \alpha_0, \alpha_1, \ldots}.$$
The set $\{\Shift_z |  z\in\CC\}$ forms a one-dimensional algebraic group, which we denote by $\Shifts$; 
\label{ind:Shifts}
clearly $\Shifts \cong \kk^+$. 
Deriving the action of  $\Shifts$  on $\Loc_{W_{\geq -1}}$ and on $\Loc_{W_{\geq -1}}^{\leq n}$, we have
\begin{multline}
    \del_h \Shift_h(\chi_{x; \alpha_0, \cdots, \alpha_n}(f\del))|_{h=0} \\ 
    = \del_h(\alpha_0 f(x+h) + \alpha_1 f'(x+h) + \cdots + \alpha_n f^{(n)}(x+h)) |_{h=0}\\
    = \alpha_0 f'(x) + \cdots + \alpha_n f^{(n+1)}(x) = (\del \cdot \chi_{x; \alpha_0, \cdots, \alpha_n})(f\del)\label{Eshift}
\end{multline}
and thus we may without ambiguity identify the Lie algebra of $\Shifts$  with $\kk \cdot \del$.

For all $x, z\in \kk$,  $\Shift_z$ gives a   continuous homomorphism $\CC[[t-x]]\to\CC[[t-x-z]]$ and hence induces an isomorphism $$\Shift_z: \DLoc^{\le n}_x\to \DLoc^{\le n}_{x+z}.$$ 
In particular, we can take $x = 1$, and then for $\mf g = W_{\geq -1}$ and for all $n$ the 
 action map $ \Shifts\times \Loc^{\le n}_1\to \Loc_{W_{\ge-1}}^{\le n}$ is clearly bijective.
This allows us to introduce the action of $\widehat{\DLoc}^{\le n}:=\Shifts \times \DLoc_1^{\le n}$ 
and $\widehat{\DLoc} := \Shifts \times \DLoc_1$ 
\label{ind:whDloc}
on $$\Loc_{W_{\ge-1}}^{\le n}\cong \CC \times \Loc_1^{\le n}$$ componentwise.  
Note that $\widehat{\DLoc}$ and $\widehat{\DLoc}^{\le n}$ also act on $\widetilde{\Loc}_{W_{\geq -1}}^{\leq n}$ in such a way that $\pi_{W_{\geq -1}}^{\leq n}$ is equivariant. 

Let $\widehat{\lie}^{\leq n}$
\label{ind:whlie}
 denote the Lie algebra of $\widehat{\DLoc}^{\leq n}$.
From Lemma~\ref{Lwgr} we have:
\begin{lemma}\label{Llact} Let $\mf g = W, W_{\geq -1}$ or $W_{\ge1}$ and 
fix $x$, $n$, and $\chi \in \Loc_x^{\leq n}$ (recall that $x\ne0$ if $\mf g=W$).  
\begin{itemize}
\item[(a)] 
If $\mf g=W_{\ge-1}$ then
\[\mf g \cdot\chi=\lie_x^{\le n}\cdot\chi + \kk \del_x\cdot\chi.\] 

\item[(b)]
If $\mf g=W_{\ge-1}$ then $\mf g\cdot\chi=\widehat{\rm lie}^{\le n}\cdot\chi$ for all $\chi\in \Loc_{\mf g}^{\le n}$.

\item[(c)] If $x\ne0$ then we can identify $\Loc_x^{\le n}$ for $W, W_{\ge-1},$ and $W_{\ge1}$. 
If $\mf g = W$ or $\mf g = W_{\geq 1}$, under  this identification we have $\mf g\cdot\chi=W_{\ge-1}\cdot\chi$. 

\item[(d)] If $\mf g=W_{\ge1}$ and $x=0$ then $\mf g\cdot\chi=\lie_x^{\le n}\cdot\chi$. 
\end{itemize}
\end{lemma}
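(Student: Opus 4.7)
The plan is to handle each case by identifying the coadjoint action $\mf g \cdot \chi$ with a more tractable action coming either from the finite-dimensional Lie algebra $\lie_x^{\le n}$ (via Lemma~\ref{Lwgr}(b)) or from an infinitesimal shift of base point. The unifying identity is $\xi_s\cdot\chi = s\del\cdot\chi$ whenever $s(x)=0$.

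For (a), the first step is to decompose $W_{\ge -1}$ as the vector-space direct sum $\kk\cdot\del \oplus W_{\ge -1}(t-x)$ via $f\del = f(x)\del + (f-f(x))\del$. By \eqref{Etsp} the summand $W_{\ge -1}(t-x)$ acts on $\chi$ as $\lie_x^{\le n}\cdot\chi$, while the constant-field summand contributes the shift direction $\kk\del_x\cdot\chi$, since \eqref{Eshift} identifies $\del\cdot\chi$ with the tangent to $\Shift_h(\chi)$ at $h=0$; summing yields (a). For (b), combine (a) with the product structure $\widehat{\DLoc}^{\le n} = \Shifts \times \DLoc_1^{\le n}$: the $\Shifts$ factor contributes the tangent $\kk\del_x$, and conjugation by $\Shift_{x-1}$ transports $\lie_1^{\le n}$ to $\lie_x^{\le n}$ at a point based at $x$, recovering the decomposition from (a).

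For (c), observe that for $\chi \in \Loc_x^{\le n}$ the value of $(f\del\cdot\chi)(g\del)$ depends only on the image of $f$ in $\kk[[t-x]]/(t-x)^{n+2}$. Since $t$ and $t^2$ are units in $\kk[t]/(t-x)^{n+2}$ whenever $x \ne 0$, the natural maps from $W$, $W_{\ge -1}$, and $W_{\ge 1}$ all have the same image in this finite-dimensional quotient; hence the three orbits $\mf g\cdot\chi$ agree under the identification of $\Loc_x^{\le n}$ recorded in the preceding definition.

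Part (d) is the main obstacle. The inclusion $W_{\ge 1}\cdot\chi \subseteq \lie_0^{\le n}\cdot\chi$ is immediate from Lemma~\ref{Lwgr}(b), since every $f \in t^2\kk[t]$ satisfies $f(0) = 0$. The reverse inclusion is subtler: $\lie_0^{\le n}$ is a priori generated by $\xi_s$ for all $s$ with $s(0)=0$, including the dilation direction $s=t$, whereas $W_{\ge 1}$ contains only those $s\del$ with $s(0)=s'(0)=0$. The plan here is to exploit the identifications $\chi_{0;1} = \chi_{0;0,1} = 0$ valid in $W_{\ge 1}^*$: these collapse the image of $\DLoc_0$ inside $\Aut(\Loc_0^{\le n})$, and I would argue by a direct matrix calculation in the basis $\{\partial^i|_0\}$ that the apparent extra dilation endomorphism of $\Loc_0^{\le n}$ is already realised, modulo the stabiliser $(W_{\ge 1})^\chi$, by an appropriate element of $W_{\ge 1}$ acting on $\chi$; this bookkeeping is where I expect the bulk of the effort to lie.
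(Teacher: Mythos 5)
Your arguments for parts (a), (b), and (c) are correct and take essentially the paper's (implicit) route: (a) and (b) follow from \eqref{Etsp}, \eqref{Eshift}, and the product structure $\widehat{\DLoc}^{\le n} = \Shifts\times\DLoc_1^{\le n}$, and (c) from the fact that the coadjoint action on $\Loc_x^{\le n}$ sees only the jet of $f$ at $x\ne 0$ to a bounded order, where $t$ is a unit.

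For (d), you have correctly isolated the real difficulty: $\lie_0^{\le n}$ contains the dilation direction $\xi_t$, while $t\del\notin W_{\ge 1}$. But the proposed ``bookkeeping'' cannot close this gap, because the equality asserted in (d) fails as stated. Take $n=2$ and $\chi=e_1^*$, that is $\chi(f\del)=f''(0)/2$. Then $B_\chi(e_i,e_j)=(j-i)\chi(e_{i+j})=0$ for all $i,j\ge1$, since $i+j\ge2$; hence $W_{\ge1}\cdot\chi=0$. On the other hand $\Dil_0$ genuinely scales the one-dimensional space $\Loc_0^{\le 2}\subseteq W_{\ge1}^*$, so $\xi_t\cdot\chi$ is a nonzero multiple of $\chi$ and $\lie_0^{\le 2}\cdot\chi=\kk\chi$; this matches $\dim\DLoc_0^{\le 2}\chi=1$ from Theorem~\ref{Texpw+}(a). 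The underlying issue is that \eqref{Etsp} identifies $\lie_0^{\le n}\cdot\chi$ with $W_{\ge0}\cdot\chi$, not with $W_{\ge1}\cdot\chi$, and the two differ by exactly the Euler direction; the matrix calculation you propose can only confirm this discrepancy, not remove it. So your instinct that (d) is not ``straightforward from the above discussion'', as the paper asserts, is well founded --- but the resolution is not bookkeeping: the statement needs weakening (for example to the inclusion $W_{\ge1}\cdot\chi\subseteq\lie_0^{\le n}\cdot\chi$, or with $\mf g\cdot\chi$ replaced by $W_{\ge0}\cdot\chi$). Note also that working ``modulo the stabiliser $(W_{\ge1})^\chi$'' cannot help, since $(W_{\ge1})^\chi\cdot\chi=0$ and adding the stabiliser does not enlarge $W_{\ge1}\cdot\chi$.
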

\begin{proof}Parts (a),~(b), and (d) are straightforward from the above discussion. 
For part (c) note that the image of $f \del$ in $\Loc_x^{\leq n}$ depends only on the Taylor series expansion of $f$ around $x$ up to degree $n+1$, which is not affected by the behaviour of $f$ at $0$. 
\end{proof}

It is useful to have a more detailed description of $\DLoc_x$ and $\DLoc_x^{\le n}$. 
The main point here is that $\DLoc_x^{\le n}$ is a connected solvable algebraic group which has a filtration by normal subgroups with one-dimensional quotients. 

Pick $k\ge 2$. Let $x \in \kk$ (or $x \in \kk^\times$ if $\mf g = W$), and recall  that $\tilde t$ denotes $t-x$. 
Elements of the form $$\End_{\tilde t\to \tilde t + {\tilde t}^kh},\hspace{10pt} h\in\CC[[\tilde t]],$$ constitute a subgroup of $\DLoc_x$ and we denote this subgroup by $\DLoc_x^{k+}$.  
\label{ind:DLock}
We use the notation $\DLoc_x^{\le n, k+}$ for the image of $\DLoc_x^{k+}$ in $\DLoc_x^{\le n}$. 
The following lemma is straightforward. 
\begin{lemma}\label{Lfiltd} Let $x\in\kk$, and let  $\mf g=W, W_{\ge1}, W_{\ge-1}$ with $x\ne0$ for $\mf g=W$.

\begin{itemize}
\item[(a)] $\DLoc_x^{2+, \le n}$ is the unipotent radical of $\DLoc_x^{\le n}$ and $\Dil_x$ is a maximal reductive subgroup of $\DLoc_x^{\le n}$. 
In particular, the natural map $$\Dil_x\to \DLoc_x^{\le n}/\DLoc_x^{2+, \le n}$$ is an isomorphism.

\item[(b)] $\DLoc_x^{k+, \le n}/\DLoc_x^{k+1+, \le n}$ is either isomorphic to $ \kk$ or $\{0\}$ if $2\le k\le n$. Moreover, the natural map $$\kk \to  \DLoc_x^{k+, \le n}/\DLoc_x^{k+1+, \le n}$$ 
induced by sending $h \mapsto \End_{\tilde t\to \tilde t+ h\tilde t^n}(\cdot)$ is surjective.

\item[(c)] $\DLoc_x^{k+, \le n}$ is trivial for $k\ge n$. \qed
\end{itemize} 
\end{lemma}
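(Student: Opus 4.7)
The approach for all three parts is an explicit analysis of how a formal diffeomorphism $\sigma = \End_{t\to s} \in \DLoc_x$ acts on $\Loc_x^{\leq n}$. Writing $s = x + \tilde t + \sum_{j \geq 2} c_j \tilde t^j$ and using the derived-action formula $\sigma(f\del) = (f(s)/s')\del$, one checks that the resulting action on $\chi_{x; \alpha_0, \ldots, \alpha_n}$ depends only on $s'(x)$ and the coefficients $c_2, \ldots, c_{n+1}$. Thus $\DLoc_x^{\leq n}$ is a finite-dimensional algebraic group, and the filtration by $\DLoc_x^{k+, \leq n}$ reflects the filtration of $s$-jets by order of tangency with the identity.

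For part~(a), I would introduce the ``derivative at $x$'' homomorphism $\DLoc_x \to \kk^\times$, $\End_{t \to s} \mapsto s'(x)$, whose being a homomorphism is the chain rule. Its kernel is exactly $\DLoc_x^{2+}$, and $\Dil_x$ maps isomorphically to $\kk^\times$ under it. Descending to $\DLoc_x^{\leq n}$ gives a split short exact sequence
\begin{equation*}
1 \to \DLoc_x^{2+,\leq n} \to \DLoc_x^{\leq n} \to \kk^\times \to 1,
\end{equation*}
with $\Dil_x \hookrightarrow \DLoc_x^{\leq n}$ providing a section. To finish I would verify that $\DLoc_x^{2+,\leq n}$ acts unipotently on $\Loc_x^{\leq n}$: the subspaces $\Loc_x^{\leq m}$ for $m \leq n$ give a $\DLoc_x^{2+,\leq n}$-stable flag on which the group acts as the identity modulo lower-order pieces. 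This identifies $\DLoc_x^{2+,\leq n}$ as the unipotent radical and $\Dil_x$ as a Levi complement.

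For part~(b), the essential input is the composition formula in $\DLoc_x^{k+}$. Writing $s_i = x + \tilde t + h_i \tilde t^k$ for $i=1,2$, direct substitution yields
\begin{equation*}
s_2 \circ s_1 \equiv x + \tilde t + \bigl(h_1(0)+h_2(0)\bigr)\tilde t^k \pmod{\tilde t^{k+1}}.
\end{equation*}
Hence the ``leading coefficient'' map $\End_{\tilde t \to \tilde t + h \tilde t^k} \mapsto h(0)$ is a surjective group homomorphism $\DLoc_x^{k+} \twoheadrightarrow \kk$ with kernel $\DLoc_x^{k+1+}$. Descending to $\DLoc_x^{\leq n}$ produces a surjection $\kk \twoheadrightarrow \DLoc_x^{k+,\leq n}/\DLoc_x^{k+1+,\leq n}$ whose image is therefore either $\kk$ or $\{0\}$; the dichotomy is controlled by whether $h(0)$ genuinely affects the action on $\Loc_x^{\leq n}$.

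For part~(c), a direct Taylor expansion of $s = x + \tilde t + h\tilde t^k$ gives $s'(\tilde t) = 1 + O(\tilde t^{k-1})$ and $f(s) = f(x+\tilde t) + O(\tilde t^k)$, whence $f(s)/s' - f(x+\tilde t) \in \tilde t^{k-1}\kk[[\tilde t]]$. So once $k$ is large enough relative to $n$, all derivatives of order $\leq n$ at $\tilde t = 0$ of this deviation vanish, forcing $\sigma$ to act trivially on every $\chi \in \Loc_x^{\leq n}$. The main obstacle throughout is careful bookkeeping with $\tilde t$-expansions, particularly in confirming the homomorphism property for the leading-coefficient map in~(b); once this computation is in hand each claim follows formally from the finite-dimensional structure of $\DLoc_x^{\leq n}$.
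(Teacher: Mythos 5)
The paper gives no proof of this lemma (it is introduced as ``straightforward''), so there is no argument to compare against; your jet-coefficient approach --- noting that the action of $\End_{t\to s}$ on $\Loc_x^{\le n}$ depends only on a finite jet of $s$, splitting off the character $\End_{t\to s}\mapsto s'(x)$ with $\Dil_x$ as a section, checking unipotence of $\DLoc_x^{2+,\le n}$ via the stable flag $\Loc_x^{\le 0}\subset\cdots\subset\Loc_x^{\le n}$, and using the leading-coefficient homomorphism $\End_{\tilde t\to\tilde t+h\tilde t^k}\mapsto h(0)$ with kernel $\DLoc_x^{(k+1)+}$ --- is the natural one, and parts (a) and (b) are fine as sketched (you correctly read the generator in (b) as $\tilde t^k$ rather than the printed $\tilde t^n$; the normality of $\DLoc_x^{(k+1)+,\le n}$ in $\DLoc_x^{k+,\le n}$ that you use implicitly holds because it is the image of a normal subgroup under the restriction map).

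The genuine problem is (c). Your own estimate shows the deviation $f(s)/s'-f$ lies in $\tilde t^{\,k-1}\kk[[\tilde t]]$, so the coefficients of $\tilde t^{\,0},\dots,\tilde t^{\,n}$ --- which is all a functional in $\Loc_x^{\le n}$ can read --- vanish only when $k\ge n+2$; the phrase ``once $k$ is large enough relative to $n$'' conceals that this never reaches the stated bound $k\ge n$, and no argument can reach it, because for $\mf g=W$ or $W_{\ge-1}$ (indeed for any base point $x\ne 0$) that bound is false. Concretely, by \eqref{star1} with $j=n$ and $i=n-1$, the element $\End_{\tilde t\to\tilde t+\alpha\tilde t^{\,n+1}}$ sends $e_{n-1}^*\mapsto e_{n-1}^*-\alpha(n+1)e_{-1}^*$, a nontrivial motion inside $\Loc_x^{\le n}$, so even $\DLoc_x^{(n+1)+,\le n}$ is nontrivial; this is also forced by the paper's Theorem~\ref{Texpw}, whose proof cancels coefficients using $\End_{t\to t+\alpha t^{j+1}}$ for all $j\le n$ and produces orbits of dimension $n+1$, which is impossible if the filtration stopped at $k=n$ (that would give $\dim\DLoc_x^{\le n}\le n-1$). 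The correct indexing for these cases is: one-dimensional quotients for $2\le k\le n+1$ and triviality for $k\ge n+2$; the printed bounds are valid only in the case $\mf g=W_{\ge1}$, $x=0$, where $e_{-1}^*=e_0^*=0$ on $\Loc_0^{\le n}$ (compare Theorem~\ref{Texpw+}). So rather than leaving the threshold unspecified, you should state and prove the corrected bound explicitly --- your computation already does --- and flag the index shift in the statement.
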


\subsection{Explicit description of $\DLoc_x^{\le n}$-orbits} \label{SSexd}

We now compute the $\DLoc_x$-orbits of one-point local functions; in the next subsection we will see that this allows us to compute the pseudo-orbit of an arbitrary local function.  
If $\mf g = W_{\geq -1}$ then the action of $\DLoc_x$ is clearly homogeneous in $x$ for all $x\in\CC$, and similarly  for $W$ for $x \neq 0$.
If $x=0$ and $\mf g=W_{\ge1}$ the story is a bit more delicate.

Let $\mf g $ be $W_{\geq -1}$ or $W$ or $W_{\geq 1}$, and let $x \in \kk$.  If $\mf g= W$ or $W_{\geq-1}$ we additionally assume that $x \neq 0$.  
For $i \in \ZZ_{\geq 0}$ we set $e_i(x)=(t-x)^{i+1}\del$ and define $e_i(x)^*$
\label{ind:ei}
 by the formula $$f\del\mapsto\frac{f^{i+1}(x)}{(i+1)!}, $$
so that $e_i(x)^*(e_j(x))=\delta_{ij}$.
(We view the $e_i(x)^*$ as elements either of $(\CC[[t-x]]\del)^*$ or $\mf g^*$, depending on context.) 
The main goal of this subsection is to prove the following theorem, and to consider its consequences.
\begin{theorem}\label{Texpw} Assume $\mf g=W_{\ge-1}$ or $W$ or $W_{\geq 1}$, and let $x \in \kk$.  If $\mf g= W$ or $W_{\geq 1}$ we additionally assume that $x \neq 0$. 
Fix $n, \beta_0, \ldots, \beta_n$ with $\beta_n\ne0$ and set $\chi=\chi_{x; \beta_0,..., \beta_n}$. 
Let $e_i := e_i(x)$ and $e_i^*:= e_i(x)^*$ for all $i \in \ZZ_{\geq 0}$.

\begin{itemize}
\item[(a)] If $n$ is even then $\DLoc_x^{\le n}\chi=\DLoc_x^{\le n}e_{n-1}^*$, and $\dim \DLoc_x^{\le n}\chi=n+1$. 

\item[(b)] If $n$ is odd and $n>1$ then there is $\beta$ such that $\DLoc_x^{\le n}\chi=\DLoc_x^{\le n} (e_{n-1}^*+\beta e_k^*)$ where $k=\frac{n-1}2$. We have 
$\dim \DLoc_x^{\le n}\chi=n$.

\item[(b$'$)] If $n=1$ then 
$\DLoc_0^{\le n}\chi=\DLoc_x^{\le n} (\beta_1 e_0^*)$, and $\dim \DLoc_x^{\le n}\chi=1$. 

\item[(c)] Pick $\beta_1, \beta_2\in\CC$ and $k\in\mathbb Z_{\ge1}$. 
Then $\DLoc_x^{\le n}(e_{2k}^*+\beta_1 e_k^*)= \DLoc_x^{\le n}(e_{2k}^*+\beta_2 e_k^*)$ if and only if $\beta_1=\pm \beta_2$. 

\item[(c$'$)] Pick $\beta_1, \beta_2\in\CC$. 
Then $\DLoc_x^{\le 1}(\beta_1e_{0}^*)= \DLoc_x^{\le 1}(\beta_2e_{0}^*)$ if and only if $\beta_1=\beta_2$.
\end{itemize}
\end{theorem}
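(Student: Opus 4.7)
The plan is to work in the explicit basis $\{e_j^*\}$ and normalize step by step using the solvable structure of $\DLoc_x^{\le n}$ from Lemma~\ref{Lfiltd}. A direct computation from Lemma~\ref{Lwgr}(b) and $[e_k,e_i]=(i-k)e_{k+i}$ yields the coadjoint formula
\[ e_k \cdot e_j^* = (j-2k)\,e_{j-k}^*, \]
interpreted as zero whenever $j-k$ falls outside the index range of $\Loc_x^{\le n}$ for the ambient $\mf g$; in particular $\Dil_x$ acts on $e_j^*$ with weight $j$. Writing $\chi = \sum_j c_j\,e_j^*$, the hypothesis $\beta_n \ne 0$ becomes $c_{n-1}\ne 0$.

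For parts (a), (b), and (b'), I normalize in two stages. First, for $n>1$ I use $\lambda\in\Dil_x$ with $\lambda^{n-1}=1/c_{n-1}$ to set $c_{n-1}=1$; this determines $\lambda$ up to an $(n-1)$-th root of unity. For $n=1$, $c_0=\beta_1$ has dilation-weight zero and is fixed. Next I invoke the filtration of $\DLoc_x^{2+,\le n}$ from Lemma~\ref{Lfiltd}(b) and process indices $m = n-2, n-3, \ldots$ in decreasing order by applying $\exp(h\,e_{n-1-m})$. The key claim is that at each such step the $e_m^*$-coefficient is modified \emph{exactly linearly} in $h$ with slope $(2m-n+1)c_{n-1}$, and that $c_l$ for $m<l<n-1$ is unchanged. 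This is because the only term in the current support that can contribute to $e_m^*$ under iterated application of $e_{n-1-m}$ is the leading $e_{n-1}^*$, and only through the first power (higher powers shift the index strictly below $m$); similarly, indices in $(m, n-1)$ cannot be reached from the support $\{-1,\ldots,m\}\cup\{n-1\}$ by index-shifts of size $n-1-m$. Since the slope vanishes iff $m=(n-1)/2$, for $n$ even ($(n-1)/2\notin\ZZ$) every coefficient below $c_{n-1}$ is killed, giving $\chi\sim e_{n-1}^*$; for $n$ odd and $n>1$ only $c_{(n-1)/2}$ survives, giving $\chi\sim e_{n-1}^*+\beta\,e_{(n-1)/2}^*$; and for $n=1$, $\exp(h\,e_1)$ kills $c_{-1}$. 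The asserted orbit dimensions then follow from a direct stabilizer computation in $\lie_x^{\le n}$: trivial in case (a), one-dimensional spanned by a polynomial deformation of $e_{(n-1)/2}$ in case (b), and spanned by $e_0$ in case (b').

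For parts (c) and (c'), I argue uniqueness by factoring $g\in\DLoc_x^{\le n}$ as $g=ud$ with $d\in\Dil_x$ and $u\in\DLoc_x^{2+,\le n}$. Since $c_{n-1}$ is invariant under the unipotent radical, the requirement $g\cdot(e_{n-1}^*+\beta e_{(n-1)/2}^*)\in\Sigma:=\{e_{n-1}^*+\gamma e_{(n-1)/2}^*\}$ forces $d^{n-1}=1$, and then $d\cdot(e_{n-1}^*+\beta e_{(n-1)/2}^*) = e_{n-1}^*+d^{(n-1)/2}\beta\,e_{(n-1)/2}^*$ is itself on the slice. Writing $u=\exp(A)$ and looking at the largest index $m$ at which $A\cdot d\chi_1$ has a nonzero $e_m^*$-coefficient, no higher power $A^r\cdot d\chi_1$ with $r\ge 2$ contributes at $e_m^*$ (each application of $A$ strictly lowers the index). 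Therefore $A\cdot d\chi_1$ already has vanishing $e_m^*$-component for every $m\ne(n-1)/2, n-1$, which forces $A$ into the stabilizer of $d\chi_1$ and hence $u\cdot d\chi_1 = d\chi_1$. Thus $g\cdot(e_{n-1}^*+\beta e_{(n-1)/2}^*) = e_{n-1}^*+d^{(n-1)/2}\beta\,e_{(n-1)/2}^*$; since $n-1$ is even, $d^{(n-1)/2}=\pm 1$ and both signs are realized by choosing $d$ a primitive $(n-1)$-th root of unity, yielding $\beta_1\sim\beta_2\iff\beta_1=\pm\beta_2$. For $n=1$, $\Dil_x$ acts trivially on $e_0^*$, so no rescaling is available and $\beta_1$ is a strict invariant. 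I expect the main bookkeeping obstacle to be the exactness (rather than just first-order accuracy) of the unipotent normalization; this is handled by the index-range observation above, which truncates all the relevant one-parameter flows to low-degree polynomials in $h$.
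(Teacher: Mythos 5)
Your approach tracks the paper's very closely: you work in the $e_i^*$-basis with the same coadjoint formula (your $e_k\cdot e_j^*=(j-2k)e_{j-k}^*$ is the infinitesimal form of the paper's (\ref{star1}) and (\ref{star2})), and you normalize step by step down the filtration of $\DLoc_x^{\le n}$ from Lemma~\ref{Lfiltd}, exactly as the paper does. Your treatment of (a), (b), (b$'$) is sound; the ``exact linearity'' observation via index ranges is the same cancellation the paper exploits. For the orbit dimensions you compute stabilizers in $\lie_x^{\le n}$ where the paper computes the rank of $B_\chi$; both are legitimate and give the same answer.

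The gap is in (c). Your top-index argument proves that if $m$ is the \emph{largest} index at which $A\cdot d\chi_1$ has a nonzero $e_m^*$-coefficient, then $m=(n-1)/2$ (since $m<n-1$ because $A$ strictly lowers degree, and any other value contradicts $u\cdot d\chi_1\in\Sigma$). But the next sentence --- ``therefore $A\cdot d\chi_1$ already has vanishing $e_m^*$-component for every $m\ne(n-1)/2,\,n-1$'' --- does not follow: at indices \emph{below} the top one, the terms $\tfrac{1}{r!}A^r\cdot d\chi_1$ with $r\ge 2$ genuinely do contribute, so the top-index elimination does not propagate downward, and you cannot conclude that $A$ stabilizes $d\chi_1$ from this alone. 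The paper avoids this by an explicit iteration using the stabilizer element (\ref{Estab}) to increase the offending degree one step at a time. Your argument can be repaired with one extra observation: the $e_k^*$-coefficient of $A\cdot d\chi_1$ (with $k=(n-1)/2$) is automatically zero, because the only unipotent shift landing on index $k$ from the support $\{k,2k\}$ of $d\chi_1$ is the shift by $l=k$ from $e_{2k}^*$, whose coefficient is $2k-2k=0$, while landing on $k$ from $e_k^*$ would require $l=0$, which is excluded. Hence if $A\cdot d\chi_1\ne 0$ its top index is $\ne k$, contradicting your own top-index claim; so $A\cdot d\chi_1=0$ and $u\cdot d\chi_1=d\chi_1$, as you wanted. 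Finally, in (c$'$) you should also rule out the unipotent radical explicitly (it produces an $e_{-1}^*$-term, as the paper notes), rather than appeal only to $\Dil_x$ acting trivially on $e_0^*$.
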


\begin{proof} 
The proof is the same for any base point; to reduce notation we give the proof for $\mf g= W_{\geq -1}$ and $x =0$.

First we compute the dimensions of the corresponding orbits.
Note that  
\[ \dim \DLoc_0^{\le n}\chi=\dim \lie_0^{\le n}\cdot \chi=\dim W_{\ge0}\cdot\chi, \]
see~\eqref{Etsp}. 
Moreover,~\eqref{Eshift} implies that $W_{\ge-1}\cdot\chi\not\subseteq \Loc_0^{\le n}$. 
Thus $\dim W_{\geq -1}\cdot \chi=\dim W_{\ge0}\cdot\chi+1$.  
Further $\dim W_{\geq -1}\cdot \chi=\rk B_{\chi}$. 
The rank of this form can be evaluated explicitly. 
Indeed,
$$\chi_{0; \beta_0, \ldots, \beta_n}([e_i, e_j])=
\begin{cases}0&{\rm~if~}i+j>n-1\\
(j-i)(i+j+1)!\beta_{i+j+1}&{\rm~if~}i+j\le n-1\end{cases}.$$
From this formula it is clear that $B_\chi(e_i, e_j)=0$ if $i\ge n+1$ or $j\ge n+1$. 
Thus the rank of this form can be evaluated on the first $(n+2)\times (n+2)$ entries corresponding to $-1\le i, j\le n$. 
This block is skew-upper-triangular.
If $n$ is even then all values on the  skew-diagonal line are nonzero (because $\beta_n\ne0$); hence the rank is $n+2$ in this case. 
If $n$ is odd then this skew-diagonal line contains zero only in the position corresponding to $i=j = \frac{n-1}2$; hence the rank is $n+1$ in this case. 
This provides the desired dimensions. 

The idea of the rest of the proof is to use the subnormal series in Lemma~\ref{Lfiltd} to reduce the number of coefficients which we have to consider step by step. 

Let $i \in \ZZ_{\geq 0}$.  For $j \geq 1$ we have 
\[
\End_{t \to t + \alpha t^{j+1}}(e_i) = 
    \frac{(t + \alpha t^{j+1})^{i+1}}{1 + \alpha(j+1) t^j} \del 
     = 
    (t^{i+1} + \alpha(i-j) t^{i+j+1} + \mbox{higher}) \del  = e_i + \alpha(i-j) e_{i+j} + \mbox{higher}.
\]
Thus 
\beq \label{star1}
\End_{t \to t+\alpha t^{j+1}}(e_i^*) = e_i^* + \alpha(i-2j) e_{i-j}^* + \mbox{a linear combination of $e_{< i-j}^*$}. 
\eeq
Likewise
\beq \label{star2}
\End_{t \to \zeta t}(e_i^*) = \zeta^i e_i^*.
\eeq

We apply these transformations to 
\beq \label{star3}
\chi_{0; \beta_0, \dots, \beta_n}= \beta_0 e_{-1}^* + \dots + \beta_n (n!) e_{n-1}^*,
\eeq
noting that the coefficient  of $e_{n-1}^*$ in \eqref{star3} is nonzero.

By applying $\End_{t \to t+ \alpha_1 t^2}$ with appropriate $\alpha_1$ we may cancel the coefficient of $e_{n-2}^*$, using \eqref{star1}.  This does not affect the coefficient of $e_{n-1}^*$.
Then by applying the appropriate $\End_{t \to t+\alpha_2 t^3}$ we can cancel the coefficient of $e_{n-3}^*$ without changing the coefficient of $e_{n-2}^*$ or $e_{n-1}^*$.  
Repeating, we may cancel the coefficients of all $e_{n-1-k}^*$ with $1 \leq k \leq n$, unless 
$k =\frac{ n-1}{2}$.

Thus in case (a) or (b$'$) we obtain that $\omega = \beta_n e_{n-1}^* \in \DLoc_0^{\leq n}(\chi)$.
In case (b) we obtain some $$\omega = \alpha e_{\frac{n-1}{2}}^* + \beta_n e_{n-1}^* \in \DLoc_0^{\leq n}(\chi).$$

Now applying \eqref{star2} to $\omega$ we may rescale the coefficient of $e_{n-1}^*$ by any $\zeta^{n-1}$, so if $n \neq 1$ we may set the coefficient of $e_{n-1}^*$ to be $1$.  This proves rest  of (a), (b), (b$'$). 

We now prove (c$'$). It follows from \eqref{star1} that   applying any nontrivial element of the unipotent radical of $\DLoc_0^{\leq 1}$ to $e_0^*$ will give a nonzero $e_{-1}^*$ term.  Thus we must simply consider acting by $\Dil_0$, and rescaling as in \eqref{star2} does not affect the coefficient of $e_0^*$. 

Finally, we prove (c). 
Formula 
\eqref{star2} implies
that if $\beta_1=\pm\beta_2$ then $$\DLoc_0^{\le n}(e_{2k}^*+\beta_1e_k^*)=\DLoc_0^{\le n}(e_{2k}^*+\beta_2e_k^*).$$ 
Pick $\beta_1, \beta_2\in\CC$ and set $\chi_1=e_{2k}^*+\beta_1e_{k}^*,  \chi_2=e_{2k}^*+\beta_2e_{k}^*$. 
Assume that $\DLoc_0^{\le n}(\chi_1)=\DLoc_0^{\le n}(\chi_2)$, so there exists $s\in\CC[[t]]$ with 
 $\End_{t\to s}(\chi_1)=~\!\!\chi_2$,  $s(0)=0$, and $ s'(0)\ne0$. 

If 
\beq\label{easy} s=s'(0)t \eeq then the statement of (c) is straightforward. 
Assume to the contrary that $s\ne s'(0)t$. Then $$s(t)\ne s'(0)t\mod t^d$$ for some $d$, and we choose $d$ to be minimal. 

If $d\ge 2k+2$ then linear operators $\End_{t\to s}$ and $\End_{t\to s'(0)t}$ coincide after the restriction to $\Loc_0^{\le n}$, i.e. we can replace $s$ by $s'(0)t$, which is case \eqref{easy}.

If $d\le 2k+1$ then $$s = s'(0)t+\frac{s^{(d)}(0)}{d!}t^n\mod t^{d+1}$$ with $s^{(d)}(0)\ne0$; set $\gamma=s'(0), \tau=\frac{s^{(d)}(0)}{d!}$. 
Further
\eqref{star1}, \eqref{star2} imply
$$\End_{t\to s}(\chi_1)=\gamma^{-2k}e_{2k}+\gamma^{i-2j-1}\tau (2k-2d) e_{2k-d} \mod \Loc_0^{\le 2k-d}.$$
This cannot give $\chi_2$ unless $d =k$.

So suppose $d=k$, and  note that 
$$\End_{t\to t+\alpha t^{k+1}+(\alpha^2(k+1)-\alpha \frac{\beta}2)t^{2k+1}}(e_{2k}^*+\beta e_k^*)=(e_{2k}^*+\beta e_k^*)$$ for every $\alpha\in\CC$. 
Thus we can replace $s$ by 
\begin{equation}s+\alpha s^{k+1}+(\alpha^2(k+1)-\alpha \frac{\beta}2)s^{2k+1}\label{Estab}\end{equation}
for an arbitrary $\alpha$. 
Pick $\alpha=-\frac{s^{(d+1)}(0)}{(d+1)!(s'(0))^{d+1}}$. 
Then \eqref{Estab} is equal to $s'(0)t \mod t^{d+1}$ and we have reduced this case to the previous one.
\end{proof}

\begin{remark}
If $\chi \in W^*$ is a one-point local function, the  beginning of the proof of Theorem~\ref{Texpw} provides an explicit description of $W^\chi$. 
Indeed, if $\chi=\chi_{x; \beta_0, \beta_1, \ldots, \beta_n}$ with $\beta_n\ne0$ then $W^\chi=W((t-x)^{n+1})$ if $n$ is even, and $W^\chi$ is the sum of $W((t-x)^{n+1})$ with a one-dimensional subspace if $n$ is odd.\end{remark}

A  modification of Theorem~\ref{Texpw} holds for $\mf g = W_{\ge1}$ and $x=0$ with a very similar proof, which we leave to the reader.
\begin{theorem}\label{Texpw+} Assume $\mf g=W_{\ge1}$.  
Fix $n \geq 2$ and $ \beta_2, \ldots, \beta_n$ with $\beta_n\ne0$ and set $\chi=\chi_{0; 0, 0, \beta_2,..., \beta_n}$. 

\begin{itemize}
\item[(a)] If $n$ is even then $\DLoc_0^{\le n}\chi=\DLoc_0^{\le n}e_{n-1}^*$. We have $\dim \DLoc_0^{\le n}\chi=n-1.$ 

\item[(b)] If $n$ is odd and $n>1$ then there is $\beta$ such that $\DLoc_0^{\le n}\chi=\DLoc_0^{\le n} (e_{n-1}^*+\beta e_k^*)$ where $k=\frac{n-1}2$. We have $\dim \DLoc_0^{\le n} \chi=n-2$. 

\item[(c)] Pick $\beta_1, \beta_2\in\CC$ and $k\in\mathbb Z_{\ge1}$. 
Then $\DLoc_0^{\le n}(e_{2k}^*+\beta_1 e_k^*)= \DLoc_0^{\le n}(e_{2k}^*+\beta_2 e_k^*)$ if and only if $\beta_1=\pm\beta_2$.
\end{itemize}
\qed
\end{theorem}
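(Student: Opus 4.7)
The strategy is to mimic the proof of Theorem~\ref{Texpw} verbatim, keeping track of the single structural difference: since $\chi_{0;1}=\chi_{0;0,1}=0$ on $W_{\ge 1}$, the functionals $e_{-1}^*$ and $e_0^*$ of the $W_{\ge -1}$ setting vanish, so that $\Loc_0^{\le n}$ is naturally identified with the span of $e_1^*,\dots,e_{n-1}^*$. This truncation drops each orbit dimension by exactly $2$ relative to Theorem~\ref{Texpw}, accounting for the values $n-1$ and $n-2$ appearing in (a) and (b).

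For the dimension I would compute $\dim\lie_0^{\le n}\cdot\chi$ directly. Writing $\chi=\sum_{i=1}^{n-1} c_i e_i^*$ with $c_i=(i+1)!\,\beta_{i+1}$ (so $c_{n-1}\ne 0$), Lemma~\ref{Lwgr}(b) identifies $\lie_0^{\le n}\cdot\chi$ with the linear span of $\{t^k\del\cdot\chi:k\ge 1\}$, and a short calculation gives
\[ t^k\del\cdot\chi \;=\; \sum_{i=1}^{n-k}(k-i-1)\,c_{i+k-1}\,e_i^*,\]
whose coefficient at $e_{n-k}^*$ equals $(2k-n-1)\,c_{n-1}$. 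For even $n$ this is nonzero for every $k\in\{1,\dots,n-1\}$, so the $n-1$ vectors $t^k\del\cdot\chi$ have distinct leading positions and are linearly independent, giving dimension $n-1$. For odd $n>1$ the value $k=(n+1)/2$ is exceptional: the corresponding vector is supported only on $e_1^*,\dots,e_{(n-3)/2}^*$, a subspace already covered by the vectors with $k>(n+1)/2$, so the rank drops to $n-2$.

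For the normal-form reduction in (a) and (b), I would apply $\End_{t\to t+\alpha_j t^{j+1}}$ for $j=1,\dots,n-2$ in sequence, using \eqref{star1} to cancel the $e_{n-1-j}^*$-coefficient against the contribution coming from $e_{n-1}^*$ (possible precisely when $n-1-2j\ne 0$), and then apply $\End_{t\to\lambda t}$ via \eqref{star2} to normalise $c_{n-1}$ to $1$. For (c) I would replay the uniqueness argument of Theorem~\ref{Texpw}(c) verbatim: a pure dilation $s=\gamma t$ forces $\gamma^{2k}=1$ and hence $\beta\mapsto\pm\beta$; a non-dilation $s$ is analysed via the minimal degree $d\ge 2$ at which $s$ departs from a dilation, either reducing to the dilation case when $d\ge 2k+2$ (since then $\End_{t\to s}$ and $\End_{t\to s'(0)t}$ agree on the truncated $\Loc_0^{\le n}$) or forcing $d=k$, where the explicit stabiliser \eqref{Estab} reduces matters once more to the dilation case. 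The main technical obstacle is the bookkeeping in this last stabiliser step: one has to check that every intermediate term produced via \eqref{Estab} is carried by some $e_i^*$ with $i\ge 1$, so that nothing relevant is lost when passing to the quotient killing $e_{-1}^*$ and $e_0^*$; since $2k\le n-1$ and the transformations involve only powers $t^{k+1}$ and $t^{2k+1}$ with $k\ge 1$, this check is routine.
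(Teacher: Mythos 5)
Since the paper offers no proof of this statement beyond ``a very similar proof, which we leave to the reader,'' the right comparison is between your sketch and the obvious transposition of the proof of Theorem~\ref{Texpw}. Your proposal does exactly that and is correct in all essentials: the identification of $\Loc_0^{\le n}$ with $\mathrm{span}(e_1^*,\dots,e_{n-1}^*)$ is right, the orbit dimensions drop by $2$ for the reason you state, and the normal-form reduction via \eqref{star1}--\eqref{star2} carries over with the sole change that the sweep stops at $e_1^*$.

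Two small remarks. First, your explicit formula for $t^k\del\cdot\chi$ has the opposite sign from the paper's convention $(u\cdot\chi)(v)=\chi([u,v])$, which gives $(i-k+1)$ rather than $(k-i-1)$; this of course does not affect the rank computation, which is correct, and in fact computing $\dim\lie_0^{\le n}\cdot\chi$ directly from the leading positions of the $t^k\del\cdot\chi$ is a cleaner route here than transposing the paper's passage through $\rk B_\chi$ and $\dim W_{\ge-1}\cdot\chi$, since the relation between $\dim W_{\ge1}\cdot\chi$ and $\dim\DLoc_0^{\le n}\chi$ behaves differently at $x=0$. Second, your diagnosis of the bookkeeping in (c) is slightly off: the transformations in \eqref{Estab} \emph{do} produce contributions at $e_0^*$ and $e_{-1}^*$ (e.g.\ from $e_k^*$ under a degree-$(k+1)$ correction), and these \emph{are} discarded in the quotient. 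The correct thing to verify is not that nothing is lost, but that the case analysis ($d$ large forces a dilation, $d$ intermediate forces the exceptional value, the stabiliser step then improves $d$) still yields the conclusion with the narrower coordinate range; alternatively, one can bypass the case analysis entirely by noting that part (b) gives $\dim\DLoc_0^{\le 2k+1}\chi=2k-1$, i.e.\ codimension $1$ in $\Loc_0^{\le 2k+1}$, while \eqref{star2} already shows $\beta^2$ is a $\Dil_0$-invariant separating nearby orbits, so orbits must coincide with the level sets of $\beta^2$. Either way the conclusion stands; this is a cosmetic imprecision rather than a gap.
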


We next describe the ``Bruhat order'' on $\DLoc_x^{\leq n}$ orbits (that is, the inclusions of orbit closures):  it turns out that it can almost  be computed just from the dimension of the orbit.  
\begin{corollary}\label{Cdim} Let $\mf g, x$ be as in the statement of Theorem~\ref{Texpw}.
Let $\chi^-, \chi^+$ be local functions on $\mf g$ based at $x$ and of order $\leq n$. 

Then the following conditions are equivalent:
\begin{itemize}
\item[(1)] $\DLoc_x^{\le n}\cdot \chi^-$ is contained in  the closure of $\DLoc_x^{\le n}\cdot \chi^+$ and $\DLoc_x^{\le n}\cdot \chi^-\ne \DLoc_x^{\le n}\cdot \chi^+$; 
\item[(2)]  $\CoreP(\chi^+)\subsetneqq\CoreP(\chi^-)$;
\item[(3)] \label{Edimo}$\dim \DLoc_x^{\le n}\cdot \chi^-<\dim \DLoc_x^{\le n}\cdot \chi^+$ and $\chi^+$ is not of order 1. 
\end{itemize}
\end{corollary}
\begin{proof}
We again give the proof for $\mf g = W_{\geq -1}$ and $x=0$. 
We will show that $(2)\Rightarrow (3) \Leftrightarrow (1)  \Rightarrow (2)$. 
Next, without loss of generality we assume that $n$ is the order of $\chi^+$.

We first show that  $(3)$ implies $(1)$.
As in Theorem~\ref{Texpw}, pick the related presentation $\chi_{0; \beta_0^+, \beta_1^+, \ldots, \beta_n^+}$ for $\chi^+$, with $\beta_n^+ \neq 0$. 
If $n$ is even then the closure of $\DLoc_0^{\le n}\cdot \chi^+$ equals $\Loc_0^{\le n}$. 
One the other hand all orbits of strictly smaller dimension belong to $\Loc_0^{\le n}$, so this completes the proof in this case. 

If $n=2k+1$ is odd then $k\geq 1$ by the assumptions of (3).
The closure of $\DLoc_0^{\le n}\cdot \chi^+$ is an irreducible subvariety of $\Loc_0^{\le n}$ of codimension 1, i.e. a hypersurface defined by some function $F_+$. 
We claim that this function is semiinvariant with respect to $\DLoc_0^{\le n}$.

To prove that $F_+$ is semiinvariant, we describe it in more detail.
Let $R=\kk[\beta_0, \beta_1,\ldots, \beta_{n-1}, \beta_n^{\pm1}]$, where $\beta_0, \beta_1, \ldots, \beta_n$ are  free variables. Denote by $(\cdot)_{\kk\to R}$ the base change from $\kk$ to $R$. 
The arguments of Theorem~\ref{Texpw}(b) imply that there exists a group element $g\in (\DLoc_0^{\le n})_{\kk\to R}$ and $h\in R$ such that 
$$g\cdot \chi_{\mbox{$0$}; \underbrace{0,\ldots, 0}_{k\text{~times}}, \mbox{$h$}, \underbrace{0, \ldots, 0}_{k\text{~times}}, \mbox{$\beta_n$}}=\chi_{0; \beta_0, \beta_1,\ldots, \beta_n}.$$
Recall that $k = \frac{n-1}{2}\geq 1$.
By replacing each $\beta_i$ by $\frac{e_{i-1}}{i!}$ (cf. \eqref{star3}) we may identify $R$ with $\kk[\Loc_0^{\leq n}][e_{2k}^{-1}]$.
(Here we regard the $e_i$ as functions on $\Loc_0^{\leq n}$ in the obvious way.)
Then $h=f/e_{2k}^\ell$ for some $f\in\kk[\Loc_0^{\le n}]$ and $\ell \in\mathbb Z_{\ge0}$.

Let $\chi, \chi' \in  \Loc_0^{\leq n}$ with $e_{2k}(\chi), e_{2k}(\chi') \neq 0$.
As in the proof of Theorem~\ref{Texpw}(c), $\DLoc_0^{\leq n} \cdot \chi = \DLoc_0^{\leq n} \cdot \chi'$ if and only if 
\[ e_{2k}(\chi)^{-1}\frac{f^2(\chi)}{e_{2k}(\chi)^{2\ell}}=e_{2k}(\chi')^{-1}\frac{f^2(\chi')}{e_{2k}(\chi')^{2\ell}}. \]
The rational function 
\beq\label{F} F:= f^2/e_{2k}^{2\ell+1}\eeq
is thus $\DLoc_0^{\leq n}$-invariant and separates orbits.
Therefore, $\DLoc_0^{\leq n} \cdot \chi^+$ is the hypersurface in $\Loc_0^{\leq n} \setminus \Loc_0^{\leq n-1}$ defined by
\[ F_+ := e_{2k}(\chi^+)^{2 \ell+1} f^2 - f^2(\chi^+) e_{2k}^{2\ell+1}.\]
Note that $F_+$ is semiinvariant, as claimed.

All orbits of dimension $<n$ belong to $\Loc_0^{\le n-2}$.  
We will  check that the  closure of $\DLoc_0^{\le n}\cdot\chi^+$ contains $\Loc_0^{\le n-2}$.   
The orbit closure $\overline{\DLoc_0^{\leq n} \chi^+}$ is  defined in $\Loc_0^{\leq n}$ by $F_+$, so we must show that  $F_+|_{\Loc_0^{\le n-2}}=0.$
The restriction of $F_+$ to $\Loc^{\le n-1}_{0}$ is also a semiinvariant function; in particular, this restriction is invariant with respect to $\DLoc_0^{1+, \le n}$, see Lemma~\ref{Lfiltd}. 
The arguments of Theorem~\ref{Texpw} imply that all $n$-dimensional $\DLoc_0^{1+, \le n}$-invariant hypersurfaces of $\Loc_0^{\le n-1}$ are defined by the equality $e_{n-1}=\beta_{n-1}$ for some $\beta_{n-1}\in\kk$. 
This implies that the restriction of $F_+$ is a polynomial in $e_{n-1}$. 
The fact that $F_+$ is $\DLoc_0^{\le n}$-semiinvariant implies that 
$F_+|_{\Loc^{\le n-1}_{0}}=c (e_{n-1})^m$ for some $m \ge 0$ and $c\in\kk$. 
This implies the desired condition that $F_+|_{\Loc^{\le n-2}_{0}}=0$.

We now show that $(1)$ implies $(3)$, so suppose that $(1)$ holds.  
The dimension inequality in $(3)$ is clear. 
Thus we are left to deal with the case when $\chi^+$ has order~1; we want to show this cannot happen.

Suppose now that  $\chi^+= \chi_{x;\alpha, \beta}$ has order 1, so $\beta \neq 0$.  
By the dimension inequality, $\chi^- = 0$.
Now, Theorem~\ref{Texpw}(b$'$) shows that
$\DLoc_0^{\leq 1} \cdot \chi^+ = \DLoc_0^{\leq 1} \cdot (\beta e_0^*)$ consists of all $\chi_{0; \alpha', \beta}$ for $\alpha' \in \kk$, and so 
is defined in $\Loc_0^{\leq 1} \setminus \Loc_0^{\leq 0} = \Spec \kk[\beta_0, \beta_1, \beta_1^{-1}]$ by $\beta_1 = \beta$.  
Thus $\overline{\DLoc_0^{\leq 1} \cdot \chi^+}$ is defined in $\Loc_0^{\leq 1} \cong \Spec \kk[\beta_0, \beta_1]$ by $\beta_1= \beta$ and does not contain $0 = \chi_{0; 0, 0}$, a contradiction.

We now show that $(1) \Rightarrow (2)$.
This is an application of Proposition~\ref{Practpoi}. 
Let $X = \widetilde{\Loc}^{\leq n}_{\mf g}$ (recall that $\mf g = W_{\geq -1}$) and let $H = \widehat{\DLoc}^{\leq n}$.
Let $\mf h = \widehat{\lie}^{\leq n} = \operatorname{Lie}(H)$.
Let $\phi = \phi^{\leq n}_{\mf g}:  X \to \mf g^*$.  
Choose $x^+ \in \phi^{-1}(\chi^+)$ and $x^- \in \phi^{-1}(\chi^-)$; note here that $\phi$ is bijective onto $\Loc^{\leq n}_{\mf g} \setminus \{0\}$ so $x^{\pm}$ may be uniquely determined.
By Lemma~\ref{Llact}, $\dim \mf h_{x^\pm} = \dim \mf g_{\chi^\pm}$.
Thus by Proposition~\ref{Practpoi}, $\CoreP(\chi^+)$ is the kernel of the map $\phi^+:\Sa(\mf g) \to \kk[\overline{H \cdot x^+}]$ induced from $\phi^*: \Sa(\mf g) \to \kk[X]$ and $\CoreP(\chi^-)$ is the kernel of $\phi^-:\Sa(\mf g) \to \kk[\overline{H \cdot x^-}]$.
However, by assumption $H \cdot x^- \subseteq \overline{H \cdot x^+} \subseteq X$, giving a commutative diagram
\[ \xymatrix{
\Sa(\mf g) \ar[r]^{\phi^+} \ar[rd]_{\phi^-} & \kk[\overline{H\cdot x^+}] \ar[d] \\
& \kk[\overline{H \cdot x^-}].
}
\]
Thus $\CoreP(\chi^-) = \ker \phi^- \supseteq \ker \phi^+ = \CoreP(\chi^+)$.
The two are clearly distinct.

We still must show  (2) implies (3). 
If $(2)$ holds then $\dim \OO(\chi^-)<\dim \OO(\chi^+)$.
By the discussion above and \eqref{EGK}, 
\[\dim \mb O(\chi^\pm) = \dim  \mf g_{\chi^\pm} = \dim \DLoc^{\leq n}_x \cdot \chi^\pm +1.\]

Finally, note that if $\chi^+ = \chi_{x;\alpha,\gamma}$ has order $0$ or $1$,
by the dimension inequality we must have $\chi^- = 0$.
We have seen in Lemma~\ref{lem:Jgamma} that $\CoreP(\chi^+)$ is the kernel of 
\[ p_\gamma:  \Sa(\mf g) \to \kk[t, t^{-1}, y], \quad f\del \mapsto fy + \gamma f'.\]
We compute:
\[
p_\gamma([(t-x)\del]^2 - \del [(t-x)^2 \del]) = 
((t-x)y + \gamma)^2 - y ((t-x)^2 y + 2\gamma (t-x)) = \gamma^2\]
and so $[(t-x)\del]^2 - \del [(t-x)^2 \del]-\gamma^2 \in \ker p_\gamma$.
%
This element is contained in $\mf m_0 = \CoreP(0)$ if and only if $\gamma=0$, so $\chi^+$ cannot have order 1.
\end{proof}

For future reference, we record that Corollary~\ref{Cdim} also gives information about  $\widehat{\DLoc}^{\leq n}$ orbits.
Let 
\[ \Loc_{W_{\geq -1}}^n := \Loc_{W_{\geq -1}}^{\leq n} \setminus \Loc_{W_{\geq -1}}^{\leq n-1}\]
and similarly define $\Loc_W^n$.
We identify  $\kk[\Loc_{W_{\geq -1}}^n]$ with $\kk[x, \beta_0, \dots, \beta_n, \beta_n^{-1}]$. 
Note that each $\Loc_{W_{\geq -1}}^n$ is $\widehat{\DLoc}^{\leq n}$-stable.

The action of $\Shifts$, and thus the action of $\widehat{DLoc}$, on  $\Loc_W$ is only partially defined, as we cannot shift the base point to $0$. We will refer to {\em orbits} of this partial action with the obvious meaning:  the intersection of an orbit in $\Loc_{W_{\geq -1}}$ with $W^*$. 

\begin{corollary}\label{cor:orbitspace}
Let $\mf g = W$ or $W_{\geq -1}$.
\begin{itemize}
\item[(a)] If $n$ is even, then $\Loc_{\mf g}^n$ is a single $\widehat{\DLoc}^{\leq n}$-orbit.
\item[(b)] If $n\geq 3$ is odd, there is   $F_n \in \kk[x, x^{-1}, \beta_0, \dots, \beta_n, \beta_n^{-1}]$ (if $\mf g = W$) or in $ \kk[x, \beta_0, \dots, \beta_n, \beta_n^{-1}]$ (if $\mf g = W_{\geq -1})$ so that the $\widehat{\DLoc}^{\leq n}$-orbits are precisely the fibres of the morphism $F_n: \Loc_{\mf g}^n \to \AA^1.$
\item[(c)] Define $F_1: \Loc_{\mf g}^1 \to \AA^1\setminus \{0\}$ by $\chi_{x; \alpha, \gamma}\mapsto \gamma$. 
Then the fibres of $F_1$ are exactly the orbits of $\widehat{\DLoc}^{\leq 1}$ on $\Loc_{\mf g}^1$. 
\end{itemize}
\end{corollary}

\begin{proof}
This  result is a consequence of Theorem~\ref{Texpw} and the proof of Corollary~\ref{Cdim}.  
If $n\geq3$ is odd, note that by definition, applying an element of $\Shifts$ changes $x$ but not any of the $\beta_i$.  
Thus we may take $F_n$ to be the polynomial $F$ defined in  \eqref{F}, regarded as an element of $\kk[x, \beta_0, \dots, \beta_n, \beta_n^{-1}]$. 
The proof for  $n=1$  is similar. 
\end{proof}

It is instructive to compute that for  $n=3$, the polynomial $F_3$ given by Corollary~\ref{cor:orbitspace} is a scalar multiple of $\beta_2^2/\beta_3$.  In particular, $x$ does not occur.

\begin{remark}\label{rem:dim}
 The dimension of a $\widehat{DLoc}^{\leq n}$-orbit in Corollary~\ref{cor:orbitspace} is 
\[ 2 \lfloor \frac{n+2}2 \rfloor = \begin{cases} n+2 & \mbox{ $n$ even} \\ n+1 & \mbox{ $n$ odd} \end{cases}.\]
\end{remark}

\subsection{Implicit description of pseudo-orbits}\label{SSimd} 
Let $\mf g = W$, $W_{\geq -1}$, or $W_{\geq 1}$.  We now consider arbitrary local functions on $\mf g$.
We will provide every pair of local functions $\chi, \nu\in \mf g^*$ with an affine variety $X$ acted on by an algebraic group $H$, an open subset $U\subseteq X$, a Poisson map $\phi:U \to \mf g^*$, and a pair of points $x, y\in X$ which will satisfy the conditions of Proposition~\ref{Practpoi}. 
Thus we will reduce the mysterious infinite-dimensional case to the more easily comprehensible action of an algebraic group on an affine variety.   We will use this to determine the pseudo-orbits in $\mf g^*$.

\begin{theorem}\label{Tlfpoi} Let $ \mf g = \Vir$ or $\mf g=W$ or $W_{\ge-1}$. Let $\chi^{\MakeUppercase{\romannumeral 1}}$ and $\chi^{\MakeUppercase{\romannumeral 2}}$ be local functions in $\mf g^*$ represented by sums
$$\chi^{\MakeUppercase{\romannumeral 1}}=\sum_{i=1}^{\ell}\chi_i^{\MakeUppercase{\romannumeral 1}}, \hspace{10pt}\chi^{\MakeUppercase{\romannumeral 2}}=\sum_{i=1}^{\ell}\chi_i^{\MakeUppercase{\romannumeral 2}},\hspace{10pt}\chi_i^{\MakeUppercase{\romannumeral 1}}\in \Loc_{x_i^{\MakeUppercase{\romannumeral 1}}}, \chi_i^{\MakeUppercase{\romannumeral 2}}\in \Loc_{x_i^{\MakeUppercase{\romannumeral 2}}},$$
such that $x^{\MakeUppercase{\romannumeral 1}}_i\ne x^{\MakeUppercase{\romannumeral 1}}_j$ and $x^{\MakeUppercase{\romannumeral 2}}_i\ne x^{\MakeUppercase{\romannumeral 2}}_j$ for $i\ne j$. 
Then $\chi^{\MakeUppercase{\romannumeral 1}}$ and $\chi^{\MakeUppercase{\romannumeral 2}}$ are in the same pseudo-orbit if and only if it is possible to reorder $x^{\MakeUppercase{\romannumeral 1}}_i$ and $x^{\MakeUppercase{\romannumeral 2}}_i$ in such a way that 
\begin{equation}\label{Edloc}\DLoc_{1}( \Shift_{1-x_i^{\MakeUppercase{\romannumeral 1}}}\chi^{\MakeUppercase{\romannumeral 1}}_i)=\DLoc_{1}( \Shift_{1-x_i^{\MakeUppercase{\romannumeral 2}}}\chi^{\MakeUppercase{\romannumeral 2}}_i) \mbox{ for all $i$;}
\end{equation}
that is, 
\[
\widehat{DLoc}(\chi^{\MakeUppercase{\romannumeral 1}}_i)=\widehat{\DLoc}(\chi^{\MakeUppercase{\romannumeral 2}}_i) \mbox{ for all $i$.}
\]

In particular, if $\OO(\chi^{\MakeUppercase{\romannumeral 1}})=\OO(\chi^{\MakeUppercase{\romannumeral 2}})$ then $\lambda(\chi^{\MakeUppercase{\romannumeral 1}})=\lambda(\chi^{\MakeUppercase{\romannumeral 2}})$ and hence the supports of $\chi^{\MakeUppercase{\romannumeral 1}}$ and $\chi^{\MakeUppercase{\romannumeral 2}}$ have the same cardinality.
\end{theorem}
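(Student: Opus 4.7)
The plan is to realise pseudo-orbits of local functions as images of orbits of a finite-dimensional algebraic group acting on an affine variety, then invoke Proposition~\ref{Practpoi}. Since every local function on $\Vir$ is pulled back from $W$ and in particular vanishes on $z$, the $\Vir$-case reduces to the $W$-case, so it suffices to treat $\mf g = W$ or $W_{\ge -1}$. Fix $N \in \NN$ exceeding the order of every one-point component of $\chi^{\mathrm{I}}$ and $\chi^{\mathrm{II}}$, and set
\[ X := \prod_{i=1}^\ell \widetilde{\Loc}^{\le N}_{\mf g}, \qquad H := \prod_{i=1}^\ell \widehat{\DLoc}^{\le N}, \]
with $H$ acting componentwise. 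Let $U \subseteq X$ be the open subset of tuples whose $\phi_{\mf g}^{\le N}$-images have pairwise distinct base points, and define $\phi : U \to \mf g^*$ by $\phi(y_1,\ldots,y_\ell) := \sum_i \phi_{\mf g}^{\le N}(y_i)$. Pick $x, y \in U$ with $\phi(x) = \chi^{\mathrm{I}}$ and $\phi(y) = \chi^{\mathrm{II}}$.

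First I would check that $\phi$ is Poisson and that the dimension hypothesis of Proposition~\ref{Practpoi}(b)(c) holds on all of $U$. Fix $z \in U$ and write $\phi(z) = \sum_i \chi_i$ with $\chi_i \in \Loc_{x_i}$, $x_i$ distinct. Given any $v_1,\ldots,v_\ell \in \mf g$, the Chinese Remainder Theorem in $\kk[t]$ (resp.\ $\kk[t,t^{-1}]$) yields a single $v \in \mf g$ whose Taylor expansion at each $x_i$ matches that of $v_i$ to any prescribed order; since the coadjoint action of $\mf g$ on $\Loc_{x_i}^{\le N}$ factors through a sufficiently high Taylor quotient, $v \cdot \chi_i = v_i \cdot \chi_i$ for all $i$. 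Therefore $\mf g \cdot \phi(z) = \bigoplus_i \mf g \cdot \chi_i$, the sum being direct because each summand lies in $\Loc_{x_i}$ and the $x_i$ are distinct. By Lemma~\ref{Llact} this equals $\phi_z(T_z(H \cdot z))$, so Proposition~\ref{Prpoi} makes $\phi$ Poisson, and moreover $\dim \mf g_{\phi(z)} = \dim H \cdot z$ throughout $U$.

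For the ``if'' direction, after the hypothesised reordering one has $\widehat{\DLoc}^{\le N} \cdot y_i^{\mathrm{I}} = \widehat{\DLoc}^{\le N} \cdot y_i^{\mathrm{II}}$ for each $i$ (the $\widehat{\DLoc}^{\le N}$-orbit of a one-point local function of order $\le N$ coincides with its $\widehat{\DLoc}$-orbit), so $H \cdot x = H \cdot y$ and $\CoreP(\chi^{\mathrm{I}}) = \CoreP(\chi^{\mathrm{II}})$ by Proposition~\ref{Practpoi}(b). For ``only if'', Proposition~\ref{Practpoi}(c) supplies open $U_x \subseteq Hx \cap U$ and $U_y \subseteq Hy \cap U$ with $\phi(U_x) = \phi(U_y)$. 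Unique decomposition of a local function as a sum of one-point local functions at distinct base points (CRT again) means each $z \in U_x$ determines a unique $z' \in U_y$ and a unique $\sigma(z) \in S_\ell$ with $\phi_{\mf g}^{\le N}(z_i) = \phi_{\mf g}^{\le N}(z'_{\sigma(z)(i)})$ for every $i$. By irreducibility of $U_x$ and finiteness of $S_\ell$, this permutation is locally, hence globally, constant; call it $\sigma$. Projecting componentwise yields $\widehat{\DLoc}^{\le N} \cdot y_i^{\mathrm{I}} = \widehat{\DLoc}^{\le N} \cdot y_{\sigma(i)}^{\mathrm{II}}$ for all $i$, i.e.\ the stated orbit-matching condition after reordering. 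The ``in particular'' statement then follows because $\widehat{\DLoc}^{\le N}$ preserves the order of a one-point local function (Corollary~\ref{cor:orbitspace}), forcing $n_i^{\mathrm{I}} = n_{\sigma(i)}^{\mathrm{II}}$.

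The main obstacle is the ``only if'' step, specifically the passage from the set-theoretic identity $\phi(U_x) = \phi(U_y)$ to a single globally defined permutation $\sigma$. The individual ingredients -- the CRT-based unique decomposition of local functions, irreducibility of the orbit products, and finiteness of $S_\ell$ -- are routine, but a careful continuity argument is required to fix $\sigma$ once and for all on $U_x$. With that in hand, projecting to each coordinate immediately delivers the componentwise orbit matching.
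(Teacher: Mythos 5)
Your strategy mirrors the paper's: reduce to $W$ (or $W_{\geq -1}$), set up the product variety $X$, the group $H=(\widehat{\DLoc}^{\le N})^{\times\ell}$ and the summation map $\phi$ on the off-diagonal open subset $U$, verify via CRT and Lemma~\ref{Llact} that the hypotheses of Proposition~\ref{Practpoi} hold, and then invoke parts (b) and (c) of that proposition. The ``if'' direction is fine. However, the ``only if'' direction contains a genuine gap. You assert that ``each $z\in U_x$ determines a \emph{unique} $z'\in U_y$ and a \emph{unique} $\sigma(z)\in S_\ell$.'' This is false in general: if two of the one-point components $\chi_i^{\mathrm{II}}$, $\chi_j^{\mathrm{II}}$ lie in the same $\widehat{\DLoc}$-orbit (e.g. $\chi^{\mathrm{II}}=\chi_{1;1}+\chi_{2;1}$, whose two components are both of order $0$ with the same orbit), then neither $z'$ nor $\sigma(z)$ is unique, and the map $z\mapsto\sigma(z)$ you propose to use in the local-constancy argument is not even well-defined. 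The rest of the continuity argument (that $z\mapsto z'$ is a morphism on all of $U_x$, rather than merely birational as Corollary~\ref{Cliso} would give) is also not justified.

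The good news is that the global permutation is entirely unnecessary, which is how the paper sidesteps the obstacle you flagged. Once Proposition~\ref{Practpoi}(c) gives you $\phi(U_x)=\phi(U_y)$, simply pick a \emph{single} point in this common image and write it both ways: there are $h_1^{\mathrm{I}},\dots,h_\ell^{\mathrm{I}},h_1^{\mathrm{II}},\dots,h_\ell^{\mathrm{II}}\in\widehat{\DLoc}^{\le n}$ with
$\sum_i h_i^{\mathrm{I}}\chi_i^{\mathrm{I}}=\sum_i h_i^{\mathrm{II}}\chi_i^{\mathrm{II}}$,
with both tuples having pairwise distinct base points. Linear independence of one-point local functions with distinct base points then forces the supports to agree and yields, after a reordering, $h_i^{\mathrm{I}}\chi_i^{\mathrm{I}}=h_i^{\mathrm{II}}\chi_i^{\mathrm{II}}$ for all $i$, which is exactly \eqref{Edloc}. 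No continuity or global constancy is needed. Your derivation of the ``in particular'' statement from orbit-matching (orders are $\widehat{\DLoc}$-invariant) is correct.
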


\begin{remark}\label{rem:dagdag}
Let $\mf g = W$ or $W_{\geq -1}$.  Fix a partition $\lambda = (m_1, \dots, m_r)$ and consider the set $V^\lambda_{\mf g}$ 
\label{ind:Vlambda}
of all local functions $\chi \in {\mf g}^*$ with $\lambda(\chi) = \lambda$.
Let $\chi = \chi_1 + \dots + \chi_r, \nu = \nu_1 + \dots + \nu_r \in V^\lambda_{\mf g}$, where each $\chi_i$ (respectively, $\nu_i$) is a one-point local function of order $m_i-1$. 
By Theorem~\ref{Tlfpoi}, $\mb O(\chi) = \mb O(\nu)$ $\iff$ $\mb O(\chi_i) = \mb O(\nu_i)$ for $1 \leq i \leq r$ (up to permutation of indices). 
Combining Corollary~\ref{cor:orbitspace} and Theorem~\ref{Tlfpoi} applied to one-point local functions we have
\begin{itemize}
\item[(a)]  if $m_i$ is odd, then $\Loc^{m_i-1}_{\mf g}$ is a single pseudo-orbit;

\item[(b)] if $m_i$ is even and $m_i\ne2$ then the pseudo-orbits in $\Loc_{\mf g}^{m_i-1}$ are parameterised by $\AA^1$; 

\item[(c)] the pseudo-orbits in $\Loc_{\mf g}^{1}$ are parameterised by $\AA^1\setminus\{0\}$.

\end{itemize}
Thus pseudo-orbits in $V^\lambda_{\mf g}$ (and the corresponding Poisson primitive ideals of $\Sa({\mf g})$) are parameterised by a symmetric product $(\AA^1)^{\times(k-k_2)}\times(\AA^1\setminus\{0\})^{\times k_2}$, where $k$ is the total number of even parts of $\lambda$ and $k_2:=|\{i\mid m_i=2\}|.$ 

More specifically, let $$
\AA(\lambda) := \begin{cases}\AA^k & \text{if}~k_2=0\\
\AA^{k-1} \times (\AA^1\setminus\{0\})& \text{if}~k_2\ne0\end{cases}.$$
\label{ind:Alambda}
Let $\mf S_{\lambda} = \{ \sigma \in \mf S_k \ | \ m_i = m_{\sigma(i)} \mbox{ for all } i\}$.
\label{ind:Slambda}
The
pseudo-orbits in $V^\lambda_{\mf g}$ are parameterised by $$
[(\AA^1)^{\times(k-k_2)}\times(\AA^1\setminus\{0\})^{\times k_2}] / \mf S_\lambda \cong \AA(\lambda).$$

In Subsection~\ref{SSpPp} we will refine this parameterisation to apply to arbitrary prime Poisson ideals of $\Sa(\mf g)$.
\end{remark}

To prove Theorem~\ref{Tlfpoi} we need two preparatory results.
\begin{lemma} \label{Lwcrt} Let $\chi \in W^*$ be a local function with $\chi =\chi_1+\ldots+\chi_s$ and $\chi_i\in \Loc_{x_i}$  such that $x_i\ne x_j$ if $i\ne j$. Then $W\cdot\chi=\bigoplus_i W\cdot \chi_i$.
Likewise, if $\chi \in W_{\geq-1}^*$, then $W_{\geq -1} \cdot \chi = \bigoplus_i W_{\geq -1}\cdot \chi_i$.
\end{lemma}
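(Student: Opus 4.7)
The plan is to use a local-to-global argument based on two observations: (i) the coadjoint action of $W$ preserves the subspace $\Loc_{x}$ of functions local at a single base point $x$; and (ii) local functions based at distinct points are linearly independent (as already recorded in Remark~\ref{rem:dag}). Granting (i) and (ii), each $W\cdot\chi_i$ lies in $\Loc_{x_i}$, and so the sum $\sum_i W\cdot\chi_i$ is automatically direct inside $W^{*}$. One inclusion is immediate: for any $v\in W$, $v\cdot\chi=\sum_i v\cdot\chi_i$, giving $W\cdot\chi\subseteq\bigoplus_i W\cdot\chi_i$.

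The substantive content is the reverse inclusion. Fix an index $i$ and an element $v\partial\in W$; I need to produce $u\partial\in W$ with $u\cdot\chi=v\cdot\chi_i$, i.e.\ simultaneously $(u-v)\cdot\chi_i=0$ and $u\cdot\chi_j=0$ for all $j\ne i$. Both vanishings will be guaranteed by suitable congruence conditions on $u$ modulo high powers of $(t-x_k)$, thanks to a stabiliser estimate: if $\chi_k$ has order $n_k$, then $W((t-x_k)^{n_k+2})\subseteq W^{\chi_k}$. This is an easy direct calculation from $[f\partial,g\partial]=(fg'-f'g)\partial$: if $v_{x_k}(f)\ge n_k+2$ and $g$ is regular at $x_k$, then $v_{x_k}(fg'-f'g)\ge n_k+1$, so all derivatives of $fg'-f'g$ of order $\le n_k$ vanish at $x_k$, whence $\chi_k([f\partial,g\partial])=0$. (This is essentially contained in the computation of $B_\chi$ in the proof of Theorem~\ref{Texpw} and the remark following Corollary~\ref{Cdim}.)

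With that estimate in hand, it suffices to find $u\in\CC[t,t^{-1}]$ (respectively $u\in\CC[t]$ for $W_{\ge-1}$) satisfying
\[ u\equiv v \pmod{(t-x_i)^{n_i+2}}, \qquad u\equiv 0\pmod{(t-x_j)^{n_j+2}}\ \text{for all }j\ne i. \]
Since $x_1,\dots,x_s$ are pairwise distinct (and nonzero for $W$), the ideals $(t-x_k)^{n_k+2}$ are pairwise coprime in the PIDs $\CC[t]$ and $\CC[t,t^{-1}]$, so the Chinese Remainder Theorem produces the required $u$. This completes the $W$ and $W_{\ge-1}$ cases simultaneously.

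The only thing that needs a little care — and is the ``main obstacle'' in the sense of requiring verification rather than quotation — is observation (i), that $v\cdot\chi_i\in\Loc_{x_i}$. But this drops out of the same order-of-vanishing estimate: for $f\in W((t-x_i)^{n_i+2})$ one has $(v\cdot\chi_i)(f\partial)=\chi_i([v,f\partial])=0$, so $v\cdot\chi_i$ vanishes on $W((t-x_i)^{n_i+2})$ and is therefore a local function at $x_i$ (of order at most $n_i+1$). Everything else is bookkeeping.
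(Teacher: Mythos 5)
Your proof is correct and amounts to the same Chinese-remainder argument as the paper: where you lift $v\cdot\chi_i$ to $u\cdot\chi$ by solving congruences for $u$ modulo powers of $(t-x_k)$, the paper works structurally, decomposing $W/W(f)\cong\bigoplus_i W(f_i)/W(f)$ via CRT (with $f=\prod(t-x_i)^{d_i+1}$) and noting that $B_{\chi_i}$ annihilates $W(f_j)$ for $j\ne i$. Your stabiliser bound $W((t-x_k)^{n_k+2})\subseteq W^{\chi_k}$ is correct and, incidentally, sharp when $n_k$ is even, which reveals a harmless off-by-one in the remark following Corollary~\ref{Cdim}.
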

\begin{proof}This statement is basically implied by the Chinese remainder theorem, but we give the details for $W$.
It is easy to verify that $W\cdot\chi=\{B_\chi(u, \cdot)\mid u\in W\}$. 
Since $\chi$ is local, there are  $d_1, \ldots, d_s \in \ZZ_{\geq 1}$ so  that $\chi|_{W(\prod_i(t-x_i)^{d_i})}=0$.  Let $f:=\prod_i(t-x_i)^{d_i+1}$. Then $B_\chi|_{W(f)}=0$. 
By the Chinese remainder theorem, 
\begin{equation}\label{Ecrt}\CC[t, t^{-1}]/(\prod(t-x_i)^{d_i+1})\cong\bigoplus_i \CC[t]/(t-x_i)^{d_i+1}.\end{equation}
This leads to the decomposition
$$W/W(f)\cong\bigoplus_i[W(f_i)/W(f)],\mbox{ where } f_i:=\frac{f}{(t-x_i)^{d_i+1}}.$$
It is straightforward to verify that the partial restriction $B_{\chi_i}(W(f_j), \cdot)$ is nonzero only if $i=j$. 
Hence
$$W\cdot\chi=\{B_\chi(u, \cdot)\mid u\in W\}=\bigoplus_i \{B_{\chi}(u_i, \cdot)\mid u_i\in W(f_i)\} .$$
This implies the desired result.
\end{proof}

We next apply the general results from Section~\ref{BACKGROUND} and the group action from Subsection~\ref{SSgra}  to the Poisson cores of local functions and deduce some consequences.
\begin{proposition}\label{Plfpoi}
\begin{itemize}
\item[(a)] Let $\chi \in W^*$. 
Then 
\[\CoreP(\chi) \cap \Sa(W_{\geq -1}) = \CoreP(\chi|_{W_{\geq -1}}).\]
\item[(b)] Let $\mf g $ be $W$ or $W_{\geq -1}$ or $W_{\geq 1}$ and pick $\chi \in \mf g^*$. 
Then 
\[ \dim \OO(\chi) = \GK \Sa(\mf g)/\CoreP(\chi) = \dim \mf g \cdot \chi.\]
\end{itemize}
\end{proposition}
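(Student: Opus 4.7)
My plan is to prove (b) first by applying Proposition~\ref{Practpoi}(b) to a suitable finite-dimensional group action, and then to deduce (a) by running the same construction twice, once for $W$ and once for $W_{\ge-1}$, and comparing the resulting descriptions of $\CoreP$. For (b), if $\chi$ is not local then by Theorems~\ref{Tloc01} and~\ref{Tloc} we have $\CoreP(\chi)=(0)$ and $\dim \mf g\cdot\chi=\infty$, so all three quantities equal $\GK \Sa(\mf g)=\infty$. Suppose now that $\chi$ is local, and write $\chi=\chi_1+\cdots+\chi_s$ as a sum of nonzero one-point local functions of orders $n_i$ at distinct base points $x_i$. Take
\[ X := \prod_{i=1}^{s} \widetilde{\Loc}_{W_{\ge-1}}^{\le n_i}, \qquad H := \prod_{i=1}^{s} H_i, \]
where $H_i = \widehat{\DLoc}^{\le n_i}$ when $x_i\ne 0$ and $H_i = \DLoc_0^{\le n_i}$ when $x_i=0$ (the latter only possible when $\mf g\in\{W_{\ge-1},W_{\ge 1}\}$). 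Let $U\subseteq X$ be the open affine subset on which the base points are pairwise distinct (and all nonzero when $\mf g=W$), let $\phi\colon U\to\mf g^*$ be the sum of the tautological maps $\phi_{\mf g}^{\le n_i}$, and lift $\chi$ to the unique point $y=(y_1,\dots,y_s)\in U$ with $\phi(y)=\chi$. By Lemma~\ref{Lwcrt} we have $\mf g\cdot\chi=\bigoplus_i \mf g\cdot\chi_i$, and Lemma~\ref{Llact} identifies each summand with $\mf h_i\cdot\chi_i$, where $\mf h_i$ is the Lie algebra of $H_i$; since each $\chi_i$ is nonzero, $\phi_{\mf g}^{\le n_i}$ is a local isomorphism at $y_i$, so pushing forward yields $\mf g_{\phi(y)}=\phi_y(\mf h_y)$ with matching dimensions. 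Proposition~\ref{Practpoi}(b) then delivers $\dim\OO(\chi)=\GK\Sa(\mf g)/\CoreP(\chi)=\dim\mf g\cdot\chi$.

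For (a), run the construction above once with $\mf g=W$ and once with $\mf g=W_{\ge-1}$; since $\chi\in W^*$ is local, all its base points lie in $\kk^\times$, so both constructions use the same ambient $X$, group $H=\prod_i\widehat{\DLoc}^{\le n_i}$, and point $y$, but differ in the open subset used: $U_W\subseteq U_{W_{\ge-1}}$, where $U_W$ additionally requires every base point to be nonzero. Proposition~\ref{Practpoi}(b) identifies $\CoreP_W(\chi)$ with $\ker(\Sa(W)\to\kk[U_W]/I^{U_W}(Hy))$ and $\CoreP_{W_{\ge-1}}(\chi|_{W_{\ge-1}})$ with $\ker(\Sa(W_{\ge-1})\to\kk[U_{W_{\ge-1}}]/I^{U_{W_{\ge-1}}}(Hy))$. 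Let $Y':=\overline{Hy}\cap U_{W_{\ge-1}}$ and $Y:=\overline{Hy}\cap U_W=Y'\cap U_W$; because $Y'$ is irreducible and $y\in U_W$, $Y$ is dense open in $Y'$ and so the restriction $\kk[Y']\hookrightarrow\kk[Y]$ is injective. The inclusions $W_{\ge-1}\subseteq W$ and $U_W\subseteq U_{W_{\ge-1}}$ assemble into a commutative square relating the map $\Sa(W_{\ge-1})\to\kk[Y]$ factored through $\Sa(W)$ on one side and through $\kk[Y']$ on the other; chasing kernels using the injectivity of $\kk[Y']\hookrightarrow\kk[Y]$ then gives
\[ \CoreP_W(\chi)\cap\Sa(W_{\ge-1})=\ker\bigl(\Sa(W_{\ge-1})\to\kk[Y]\bigr)=\ker\bigl(\Sa(W_{\ge-1})\to\kk[Y']\bigr)=\CoreP_{W_{\ge-1}}(\chi|_{W_{\ge-1}}). \]

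The main technical point is verifying the hypothesis of Proposition~\ref{Practpoi}(b) at the distinguished lift $y$, namely $\dim\mf g_{\phi(y)}=\dim\mf h_y$ together with $\mf g_{\phi(y)}\subseteq\phi_y(\mf h_y)$. This is precisely where the whole apparatus of Subsection~\ref{SSgra} is used: the multi-point verification reduces via Lemma~\ref{Lwcrt} to the one-point case handled by Lemma~\ref{Llact}, and the dimensions add over the disjoint supports. Once this is in hand, the diagram chase for (a) is essentially formal, the only geometric input being that the orbit closure $Y'$ computed in $U_{W_{\ge-1}}$ meets the smaller open $U_W$ in a dense open subset $Y$.
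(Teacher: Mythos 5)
Your overall strategy is the same as the paper's: apply Proposition~\ref{Practpoi}(b) to the product of local-function charts acted on by the product of the $\widehat{\DLoc}$-groups, match dimensions via Lemmas~\ref{Lwcrt} and~\ref{Llact}, and for part~(a) chase through the commutative diagram relating the $W$- and $W_{\ge-1}$-computations. Your treatment of (a) differs slightly from the paper's in a way that makes a little extra work for you: you run Proposition~\ref{Practpoi} with two different open sets $U_W\subsetneq U_{W_{\ge -1}}$ and then bridge the gap by arguing that restriction $\kk[Y']\hookrightarrow\kk[Y]$ is injective, whereas the paper simply applies Proposition~\ref{Practpoi} for both $W$ and $W_{\ge-1}$ on the single open set $U$ cut out by removing the diagonals from $\bigl(\widetilde{\Loc}^{\leq n}_W\bigr)^{\times\ell}$ (note that the hypotheses of Proposition~\ref{Practpoi} for $W_{\ge-1}$ only need to hold on whatever open set one uses, and they do hold on $U$). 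Your version is correct but the density-of-$Y$-in-$Y'$ step is avoidable.

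There is, however, a genuine small error in your part~(b) in the choice of the group factors $H_i$. You set $H_i=\DLoc_0^{\le n_i}$ whenever $x_i=0$, allowing this for both $W_{\ge -1}$ and $W_{\ge 1}$. For $W_{\ge 1}$ this is correct and is exactly Lemma~\ref{Llact}(d). But for $\mf g=W_{\ge -1}$ with $x_i=0$, the correct group is still $\widehat{\DLoc}^{\le n_i}=\Shifts\times\DLoc_1^{\le n_i}$, not $\DLoc_0^{\le n_i}$. Indeed Lemma~\ref{Llact}(a),(b) give $W_{\ge-1}\cdot\chi_i=\lie_0^{\le n_i}\cdot\chi_i+\kk\,\del\cdot\chi_i=\widehat{\lie}^{\le n_i}\cdot\chi_i$, and $\del\cdot\chi_i\notin\Loc_0^{\le n_i}$, so the $\Shifts$-direction contributes an extra dimension that $\lie_0^{\le n_i}$ misses. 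With your choice of $H_i$ the hypothesis $\mf g_{\phi(y)}\subseteq\phi_y(\mf h_y)$ of Proposition~\ref{Practpoi} fails (strict inclusion of dimensions, hence no containment), and Proposition~\ref{Practpoi}(b) does not apply. The fix is simple: for $W_{\ge -1}$ always take $H_i=\widehat{\DLoc}^{\le n_i}$, regardless of whether $x_i=0$; the distinction between $\widehat{\DLoc}$ and $\DLoc_0$ is only relevant for $W_{\ge 1}$. This error does not affect your part~(a), since there all base points lie in $\kk^\times$.
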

We alert the reader that the proof of Proposition~\ref{Plfpoi} establishes notation that is also  used in the proof of Theorem~\ref{Tlfpoi}.  (This notation generalises that used in the proof of the last part of Corollary~\ref{Cdim}.)
\begin{proof}
(a).  
If $\chi$ is not local then by Theorems~\ref{Tloc} and~\ref{Tloc01}  $\CoreP(\chi)$  and $\CoreP(\chi|_{W_{\geq-1}}) $ are the zero ideals, respectively, of $\Sa(W)$ and $\Sa(W_{\geq -1})$.

Thus we may assume that $\chi$ is local.
Write $\chi = \sum_{i=1}^\ell \chi_i$ with $\chi_i \in \Loc_{x_i}$ and $x_i \neq x_j$ if $i\neq j$; further assume that no $\chi_i =0$.  Let $n$ be the order of $\chi$.

Let $X := \Bigl(\widetilde{\Loc}^{\leq n}_{W_{\geq -1}}\Bigr)^{\times \ell}$, let
$Y :=\Bigl(\widetilde{\Loc}^{\leq n}_{W}\Bigr)^{\times \ell} $,
and let $U \subseteq Y$ be the complement of all the diagonals $\Delta_{ij}$ in $Y$, where for $i \neq j$ we define
\[ \Delta_{ij} = \{ (x^1, \underline{\alpha}^1, \dots, x^\ell, \underline{\alpha}^\ell) \in Y \ | \ x^i = x^j \},\]
where $\underline{\alpha}^i$ stands for the sequence $\underline{\alpha}^i_0, \underline{\alpha}^i_1, \ldots$. 
That is, $U$ gives representations of local functions as sums of one-point local functions based at distinct points.
Note that $U, Y$ are open subsets of $X$, and $H:= \Bigl(\widehat{\DLoc}^{\leq n}\Bigr)^{\times \ell}$ acts on $X$.
Let $\Sigma_W:  (W^*)^{\times \ell} \to W^*$ be the summation map, and likewise define $\Sigma_{W_{\geq -1}}$.
Let $\phi_W:  U \to W^*$ be $\Sigma_W \circ (\phi_W^{\leq n})^{\times \ell}$ and let 
$\phi_{W_{\geq-1}}:  X \to W_{\geq -1}^*$ be $\Sigma_{W_{\geq -1}} \circ (\phi_{W_{\geq -1}}^{\leq n})^{\times \ell}$.
There is a commutative diagram
\beq \label{E1}
\xymatrix{
X \ar[rr]^{\phi_{W_{\geq-1}}} && W_{\geq -1}^* \\
U \ar[rr]_{\phi_W} \ar[u]^{\subseteq} && W^* \ar[u]
}
\eeq
where the right vertical arrow is  restriction.
This induces maps
\beq\label{E2}
\xymatrix{
\kk[X] \ar[d]_{\subseteq} && \Sa(W_{\geq-1})  \ar[ll]_{\phi_{W_{\geq-1}}^*} \ar[d]^{\subseteq} \\
\kk[U] && \Sa(W). \ar[ll]^{\phi_W^*} 
}
\eeq
By Lemma~\ref{Lwcrt}, $\dim W \cdot \chi = \oplus_i \dim W \cdot \chi_i$, which is $\dim \Bigl( (\widehat{\lie}^{\leq n})^{\times \ell}\Bigr) \cdot \chi$  by Lemma~\ref{Llact}.
Let $y \in \phi_W^{-1}(\chi)$.  
By Proposition~\ref{Practpoi}, $\CoreP(\chi)$ is the full preimage under $\phi_W^*$ of $I^U(Hy)$, and likewise, $\CoreP(\chi|_{W_{\geq -1}})$ is the full preimage under $\phi_{W_{\geq-1}}^*$ of $I^U(Hy)$.  
By commutativity of \eqref{E2}, this is $\CoreP(\chi) \cap \Sa(W_{\geq -1})$.

(b) If $\dim \mf g\cdot\chi=\infty$ then the result follows from Lemma~\ref{Lrk+}. 
Thus we can assume $\mf g\cdot\chi<\infty$. 
For $\mf g = W$ or $W_{\geq -1}$ this follows from the discussion above and Proposition~\ref{Practpoi}; see in particular \eqref{EGK}.  
The proof for $\mf g = W_{\geq 1}$ is similar.  
\end{proof}

We now give the proof of Theorem~\ref{Tlfpoi}.

\begin{proof}[Proof of Theorem~\ref{Tlfpoi}] We give the proof for the case $\mf g=W$.

By definition, $\OO(\chi^{\MakeUppercase{\romannumeral 1}}) = \OO(\chi^{\MakeUppercase{\romannumeral 2}})$ if and only if $\CoreP(\chi^{\MakeUppercase{\romannumeral 1}}) = \CoreP(\chi^{\MakeUppercase{\romannumeral 2}})$.  So we compute the Poisson cores of $\chi^{\MakeUppercase{\romannumeral 1}}$ and~$\chi^{\MakeUppercase{\romannumeral 2}}$.

First, by changing $\ell$ and switching $\chi^{\MakeUppercase{\romannumeral 1}}$ and $\chi^{\MakeUppercase{\romannumeral 2}}$ if necessary, we may assume that none of the $\chi_i^{\MakeUppercase{\romannumeral 1}}$ are zero.
Let $n$ be the maximum of the orders of  $\chi^{\MakeUppercase{\romannumeral 1}}$ and of  $\chi^{\MakeUppercase{\romannumeral 2}}$; thus both are in the image of $(\Loc_W^{\leq n})^{\times \ell}$.

Define $U, V, X, H$ as in the proof of Proposition~\ref{Plfpoi}, and set up  maps as in \eqref{E1} and \eqref{E2}.  Let $\phi = \phi_W$.  
Then $\chi^{\MakeUppercase{\romannumeral 1}} = \phi(y^{\MakeUppercase{\romannumeral 1}})$, $\chi^{\MakeUppercase{\romannumeral 2}} = \phi(y^{\MakeUppercase{\romannumeral 2}})$ for some 
$y^{\MakeUppercase{\romannumeral 1}}, y^{\MakeUppercase{\romannumeral 2}}\in U$. 
As in the proof of Proposition~\ref{Plfpoi}, 
$\CoreP(\chi^{\MakeUppercase{\romannumeral 1}})$ is the full preimage under ${\phi}^*$ of the defining ideal of $I^U(Hy^{\MakeUppercase{\romannumeral 1}})$. 
Likewise,  $\CoreP(\chi^{\MakeUppercase{\romannumeral 2}})$ is the full preimage under ${\phi}^*$ of the defining ideal of $I^U(Hy^{\MakeUppercase{\romannumeral 2}})$.

Note that $\phi$ factors through the dominant map $U \to U/{\mf S}_{\ell}$, where $\mf S_\ell$ is the symmetric group.
It is clear that if we can reorder the $\chi_i^{\MakeUppercase{\romannumeral 1}}$ and $\chi_i^{\MakeUppercase{\romannumeral 2}}$ as described then $\CoreP(\chi^{\MakeUppercase{\romannumeral 1}}) = \CoreP(\chi^{\MakeUppercase{\romannumeral 2}})$.

Suppose now that $\CoreP(\chi^{\MakeUppercase{\romannumeral 1}}) = \CoreP(\chi^{\MakeUppercase{\romannumeral 2}})$.  
Then by Proposition~\ref{Practpoi}(c), 
there are open sets $U^{\MakeUppercase{\romannumeral 1}}\subseteq Hy^{\MakeUppercase{\romannumeral 1}}\cap U$ and $U^{\MakeUppercase{\romannumeral 2}}\subseteq Hy^{\MakeUppercase{\romannumeral 2}}\cap U$ such that their images in $W^*$ are the same. 
In particular, there are $h_1^{\MakeUppercase{\romannumeral 1}}, h_1^{\MakeUppercase{\romannumeral 2}}, \dots, h_\ell^{\MakeUppercase{\romannumeral 1}}, h_\ell^{\MakeUppercase{\romannumeral 2}} \in \widehat{\DLoc}^{\le n}$,
with $(h_1^{\MakeUppercase{\romannumeral 1}}\chi_1^{\MakeUppercase{\romannumeral 1}}, \ldots, h_\ell^{\MakeUppercase{\romannumeral 1}}\chi_\ell^{\MakeUppercase{\romannumeral 1}}) \in U$, 
and so that $\sum h_i^{\MakeUppercase{\romannumeral 1}} \chi_i^{\MakeUppercase{\romannumeral 1}} - \sum h_i^{\MakeUppercase{\romannumeral 2}} \chi_i^{\MakeUppercase{\romannumeral 2}} = 0$.
Now,  a  set of one-point local functions with distinct base points is linearly independent.  
Since by definition of $U$ the base points of the one-point local functions $h_i^{\MakeUppercase{\romannumeral 1}} \chi_i^{\MakeUppercase{\romannumeral 1}}$ are distinct, the only option is that the supports of $\sum h_i^{\MakeUppercase{\romannumeral 1}} \chi_i^{\MakeUppercase{\romannumeral 1}}$ and $\sum h_i^{\MakeUppercase{\romannumeral 2}} \chi_i^{\MakeUppercase{\romannumeral 2}}$ are equal, and we can reorder
$\chi_1^{\MakeUppercase{\romannumeral 2}}, \dots, \chi_\ell^{\MakeUppercase{\romannumeral 2}}$ so that $h_i^{\MakeUppercase{\romannumeral 1}} \chi_i^{\MakeUppercase{\romannumeral 1}} = h_i^{\MakeUppercase{\romannumeral 2}} \chi_i^{\MakeUppercase{\romannumeral 2}}$ for all $i$.
This is clearly equivalent to \eqref{Edloc}.
\end{proof}

\begin{remark}\label{rem:6}
Let $\mf g = \Vir$ or $W$ or $W_{\geq -1}$ and let $\chi \in \mf g^*$.
If $\chi$ is not local then $\CoreP(\chi) = (0)$ (if $\mf g = W$ or $W_{\geq-1}$) or $\CoreP(\chi) = (z- \chi(z))$ (if $\mf g = \Vir$).  If $\chi$ is local, then by Lemma~\ref{lem:basics}(a) we may construct
$\CoreP(\chi) = \bigcap \{ \mf m_\nu \ | \ \nu \in \mb O(\chi)\}$ from Theorem~\ref{Tlfpoi}.

Further combining Corollary~\ref{cor:orbitspace} with Theorem~\ref{Tlfpoi} one may in principle compute all the Poisson primitive ideals 
of $\Sa(W)$ or $\Sa(W_{\geq-1})$; the function $F_{2k+1}$ whose fibres by Corollary~\ref{cor:orbitspace} give the pseudo-orbits of a one-point local function of order $2k+1$ may be worked out  using Lemma~\ref{Lfiltd} for any $k$.
There is an example of this  Section~\ref{SSexample}, although we have not given a fully general formula. 

Note also that we have not studied generators (in any sense) for the Poisson prime ideals $\CoreP(\chi)$, and this might be an interesting subject of research. 
\end{remark}

A version of Theorem~\ref{Tlfpoi} holds for $W_{\ge1}$ and $W_{\ge0}$ with almost the same proof. We state it as follows:

\begin{theorem}\label{Tlfpoi0} Let $\mf g=W_{\ge0}$ or $W_{\ge1}$. 
Let $\chi^{\MakeUppercase{\romannumeral 1}}, \chi^{\MakeUppercase{\romannumeral 2}}\in \mf g^*$ be local functions. Then 
 there are unique local functions $\chi_W^{\MakeUppercase{\romannumeral 1}}, \chi_W^{\MakeUppercase{\romannumeral 2}}\in W^*$ and $\chi^{\MakeUppercase{\romannumeral 1}}_{[0]}, \chi^{\MakeUppercase{\romannumeral 2}}_{[0]}\in \Loc_0$ such that 
$$\chi^{\MakeUppercase{\romannumeral 1}}=\chi^{\MakeUppercase{\romannumeral 1}}_{W}|_{\mf g}+\chi^{\MakeUppercase{\romannumeral 1}}_{[0]},\hspace{10pt}\chi^{\MakeUppercase{\romannumeral 2}}=\chi^{\MakeUppercase{\romannumeral 2}}_{W}|_{\mf g}+\chi^{\MakeUppercase{\romannumeral 2}}_{[0]}.$$
Moreover $\CoreP(\chi^{\MakeUppercase{\romannumeral 1}})=\CoreP(\chi^{\MakeUppercase{\romannumeral 2}}
)$ if and only if $\CoreP(\chi^{\MakeUppercase{\romannumeral 1}}_{W})=\CoreP(\chi^{\MakeUppercase{\romannumeral 2}}_{W}
)$ and $\DLoc_0(\chi^{\MakeUppercase{\romannumeral 1}}_{[0]})=\DLoc_0(\chi^{\MakeUppercase{\romannumeral 2}}_{[0]}
)$.
\qed
\end{theorem}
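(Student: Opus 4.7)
The plan is to adapt the proof of Theorem~\ref{Tlfpoi} by handling the base point $0$ separately from the nonzero base points. These play qualitatively different roles under the coadjoint action for $\mf g = W_{\geq 0}$ or $W_{\geq 1}$: neither $\Shifts$ nor any $\DLoc_x$ can move the base point $0$ to a nonzero point, while by Lemma~\ref{Llact}(c) the $\mf g$-action on a one-point local function at a nonzero base point coincides with the full $W$-action.

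First I would establish the decomposition. Any local $\chi \in \mf g^*$ is by definition a finite sum of one-point local functions; group those with base point $0$ into $\chi_{[0]} \in \Loc_0$ and those with nonzero base points into $\chi_W \in W^*$, giving $\chi = \chi_W|_{\mf g} + \chi_{[0]}$. Uniqueness follows from linear independence of one-point local functions at distinct base points, once $\chi_{[0]}$ is normalised: for $\mf g = W_{\geq 0}$ the coefficient of $f(0)$ in $\chi_{[0]}$ is taken to be zero (since $\chi_{0;1}|_{W_{\geq 0}} = 0$), and for $\mf g = W_{\geq 1}$ the coefficients of $f(0)$ and $f'(0)$ are taken to be zero. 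The Chinese remainder argument of Lemma~\ref{Lwcrt} then adapts to show $\mf g\cdot\chi = \mf g\cdot(\chi_W|_{\mf g})\oplus \mf g\cdot\chi_{[0]}$: choose $d$ large so that $\chi$ vanishes on $\mf g \cap W(t^d \prod_i(t-x_i)^d)$ and decouple using the CRT splitting of $\kk[t]/(t^d\prod_i(t-x_i)^d)$.

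Next I would run Proposition~\ref{Practpoi} in product form. Let $n$ bound the orders of all one-point components of $\chi^{\MakeUppercase{\romannumeral 1}}$ and $\chi^{\MakeUppercase{\romannumeral 2}}$, and write each as a sum of $\ell$ components at nonzero base points plus one component at $0$ (padding with zeros). Set
\[X := \bigl(\widetilde{\Loc}^{\leq n}_{W}\bigr)^{\times \ell} \times \Loc^{\leq n}_0, \qquad H := \bigl(\widehat{\DLoc}^{\leq n}\bigr)^{\times \ell} \times \DLoc^{\leq n}_0,\]
let $U \subseteq X$ be the open subset where the nonzero base points are distinct, and let $\phi: U \to \mf g^*$ be the summation map. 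By the previous paragraph and Lemma~\ref{Llact}, the dimension hypothesis of Proposition~\ref{Practpoi} holds at every point of $U$, so $\CoreP(\phi(y))$ is the pullback along $\phi^*$ of $I^U(Hy)$ for each $y\in U$.

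Finally, since $H$ is a product and $\phi$ is $H$-equivariant factor-by-factor, the criterion of Proposition~\ref{Practpoi}(c) splits cleanly into a condition on the $W^*$-factor, equivalent to $\CoreP(\chi_W^{\MakeUppercase{\romannumeral 1}}) = \CoreP(\chi_W^{\MakeUppercase{\romannumeral 2}})$ via Theorem~\ref{Tlfpoi} applied to $W$, and a condition on the $\Loc_0$-factor, equivalent to $\DLoc_0 \cdot \chi_{[0]}^{\MakeUppercase{\romannumeral 1}} = \DLoc_0 \cdot \chi_{[0]}^{\MakeUppercase{\romannumeral 2}}$ by Theorems~\ref{Texpw} and~\ref{Texpw+}. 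The main subtlety, which is the hard part, is verifying this splitting: when two $H$-orbits in $U$ agree under $\phi$ on open subsets, one must show that the $\Loc_0^{\leq n}$ component on one side is matched by the $\Loc_0^{\leq n}$ component on the other, rather than being absorbed into a nonzero-base-point component. This follows from the impossibility of moving $0$ to a nonzero point under the $H$-action, together with linear independence of one-point local functions at distinct base points, exactly as exploited in the proof of Theorem~\ref{Tlfpoi}.
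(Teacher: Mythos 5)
Your proposal is correct and is essentially the paper's intended argument: the paper gives no separate proof, stating only that Theorem~\ref{Tlfpoi0} follows ``with almost the same proof'' as Theorem~\ref{Tlfpoi}, and your adaptation --- splitting off the base point $0$ (where only $\DLoc_0$, not $\Shifts$, acts), normalising the $\Loc_0$-component to get uniqueness of the decomposition, and rerunning the Proposition~\ref{Practpoi} machinery with the extra factor $\Loc_0^{\leq n}$ and group factor $\DLoc_0^{\leq n}$, using Lemma~\ref{Llact} and the Chinese remainder splitting of Lemma~\ref{Lwcrt} --- is exactly the intended modification.
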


The description of pseudo-orbits in Theorem~\ref{Tlfpoi} has several immediate applications.  
We start by comparing pseudo-orbits in $W^*$ and $W^*_{\geq -1}$.

\begin{corollary}\label{Csameprimspec}Let $\chi^{\MakeUppercase{\romannumeral 1}}, \chi^{\MakeUppercase{\romannumeral 2}}\in W^*$ be local functions. Then 
\[\OO(\chi^{\MakeUppercase{\romannumeral 1}})=\OO(\chi^{\MakeUppercase{\romannumeral 2}}) \
\quad \iff \quad \OO(\chi^{\MakeUppercase{\romannumeral 1}}|_{W_{\ge-1}})=\OO(\chi^{\MakeUppercase{\romannumeral 2}}|_{W_{\ge-1}}).\]
Further, restriction induces a bijection 
\beq\label{resbij} \PSpec_{\rm prim}\Sa(W) \to \PSpec_{\rm prim} \Sa(W_{\geq -1}).\eeq
\end{corollary}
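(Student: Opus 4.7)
The plan is to combine Proposition~\ref{Plfpoi}(a), which identifies $\CoreP(\chi) \cap \Sa(W_{\ge -1})$ with $\CoreP(\chi|_{W_{\ge -1}})$ for a local $\chi \in W^*$, with the explicit description of pseudo-orbits in Theorem~\ref{Tlfpoi}.

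I would prove the first statement first. The forward implication is immediate from Proposition~\ref{Plfpoi}(a): if $\CoreP(\chi^{\MakeUppercase{\romannumeral 1}})=\CoreP(\chi^{\MakeUppercase{\romannumeral 2}})$, intersecting with $\Sa(W_{\ge -1})$ gives $\CoreP(\chi^{\MakeUppercase{\romannumeral 1}}|_{W_{\ge -1}})=\CoreP(\chi^{\MakeUppercase{\romannumeral 2}}|_{W_{\ge -1}})$. For the converse, I would note that the support of any local $\chi \in W^*$ lies in $\kk^\times$, and a nonzero one-point local function $\chi_{x;\alpha}$ with $x\ne 0$ remains a nonzero one-point local function at the same base point on restriction to $W_{\ge -1}$, since the map $\kk[t]\del \to \kk[[t-x]]/(t-x)^{n+1}$ is surjective for $x\ne 0$. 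Hence the decomposition into one-point components is preserved by restriction. Theorem~\ref{Tlfpoi} then describes equal pseudo-orbits in $W^*$ and in $W_{\ge -1}^*$ by the same matching condition, namely that the one-point components can be reindexed so that their $\widehat{\DLoc}$-orbits coincide; since this $\widehat{\DLoc}$-action is purely local at a base point $x \ne 0$, it gives the same orbit whether the one-point local function is regarded as an element of $W^*$ or of $W_{\ge -1}^*$.

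Given the first statement, well-definedness of \eqref{resbij} follows from Proposition~\ref{Plfpoi}(a) in the local case, together with the observation that a non-local $\chi \in W^*$ has $\CoreP(\chi)=(0)$ (by Theorem~\ref{Tloc}), which intersects $\Sa(W_{\ge -1})$ in the zero ideal --- itself Poisson primitive by Theorem~\ref{Tloc01} via the existence of non-local functionals as in Remark~\ref{rem:dag}. Injectivity reduces to the first statement, since a local $\chi \in W^*$ restricts to a local (hence nonzero-Poisson-core) functional on $W_{\ge -1}$ and so cannot collapse with the non-local Poisson primitive~$(0)$.

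The main obstacle I expect is surjectivity, because a local $\eta \in W_{\ge -1}^*$ may include a one-point component $\eta_j \in \Loc_0$ at the base point~$0$, which is forbidden for local functions on $W$. I would resolve this by exploiting the inclusion $\Shifts \subseteq \widehat{\DLoc}$: choose $\tilde{x}_j \in \kk^\times$ distinct from all other base points of $\eta$, and replace $\eta_j$ by $\Shift_{\tilde{x}_j}(\eta_j)\in \Loc_{\tilde{x}_j}$. The $\widehat{\DLoc}$-orbit of the $j$-th component is unchanged, so by Theorem~\ref{Tlfpoi} the modified sum $\eta'$ satisfies $\OO(\eta')=\OO(\eta)$ in $W_{\ge -1}^*$. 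All base points of $\eta'$ now lie in $\kk^\times$, so the same defining formula produces a local $\chi \in W^*$ with $\chi|_{W_{\ge -1}}=\eta'$. Proposition~\ref{Plfpoi}(a) then gives $\CoreP(\chi)\cap \Sa(W_{\ge -1}) = \CoreP(\eta') = \CoreP(\eta)$, establishing surjectivity and completing the proof.
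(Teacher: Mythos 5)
Your proposal is correct and takes essentially the same route as the paper: both proofs reduce the first statement to the pseudo-orbit criterion of Theorem~\ref{Tlfpoi} (which applies uniformly for $W$ and $W_{\geq-1}$ since the $\widehat{\DLoc}$-action at a base point $x\ne 0$ is the same in either case), establish surjectivity of the restriction map by shifting any component based at $0$ off to a point of $\kk^\times$ within its $\widehat{\DLoc}$-orbit, and invoke Proposition~\ref{Plfpoi}(a) to identify the resulting bijection on Poisson primitive ideals with intersection of ideals.
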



 In Corollary~\ref{Csameprimespec} we will show that restriction also provides a bijection between Poisson prime ideals of $\Sa(W)$ and $\Sa(W_{\geq-1})$.

\begin{proof}
The first statement is immediate from Theorem~\ref{Tlfpoi}.
By that result, if $\chi \in W_{\geq -1}^*$ is a  local function then there is $\nu \in \mb O(\chi)$ so that $0$ is not in the support of $\nu$.  Thus restriction of local functions induces a bijection on pseudo-orbits.

By Lemma~\ref{lem:basics} there are bijections
\begin{multline*} \mbox{nonzero Poisson primitive ideals of } \Sa(W)  \leftrightarrow \mbox{ pseudo-orbits in } W^* \\
\leftrightarrow
\mbox{ pseudo-orbits in } W_{\geq -1}^* \leftrightarrow \mbox{ nonzero Poisson primitive ideals of } \Sa(W_{\geq -1}),
\end{multline*}
where the middle bijection comes from restriction of local functions as in the first paragraph of the proof.
By Proposition~\ref{Plfpoi}(a) $\CoreP(\chi) \cap \Sa(W_{\geq -1}) = \CoreP(\chi|_{W_{\geq -1}})$.
Thus the bijection from left to right above agrees with restriction of ideals from $\Sa(W)$ to $\Sa(W_{\geq-1})$.
\end{proof}

 \begin{remark}\label{rem:extension}
 Let $P = \CoreP(\chi)$ be a  Poisson primitive ideal of $\Sa(W_{\geq -1})$, where $\chi \in (W_{\geq-1})^*$ is local.  As  we may apply elements of $\Shifts$ to $\chi$ without affecting $\CoreP(\chi)$, we may assume that $\chi$ does not involve any element of $\Loc_0$.  
Then $\chi$ extends uniquely to a local function $\widetilde{\chi}$ on $W$, and we have seen that $\CoreP(\chi) = \CoreP(\widetilde{\chi}) \cap \Sa(W_{\geq -1})$. 
\end{remark}

Let $\gamma \in \kk$ and recall the definition of the map $p_\gamma$ of \eqref{pgamma} and the fact, proved in  Lemma~\ref{lem:Jgamma} that $\ker p_\gamma= \CoreP(\chi_{x;\alpha, \gamma})$.

\begin{proposition}\label{PGK2}
The ideals $\ker p_\gamma$ are all of the Poisson prime ideals of $\Sa(W)$ of co-GK~2.
\end{proposition}

\begin{proof}
Let $J(\gamma) = \ker p_\gamma$ for all $\gamma \in \kk$.
Let $J$ be a prime Poisson ideal of $\Sa(W)$ with $\GK \Sa(W)/J = 2$.
By Lemma~\ref{lem:basics} $$J = \bigcap \{ \CoreP(\chi) \ | \ \chi\in W^*, \ev_\chi(J)=0\}.$$
Fix such $\chi$.
By Lemma~\ref{Lrk+} $\rk B_\chi$, which must be even as $B_\chi$ is an alternating form, is $0$ or $2$.  The only $\chi \in W^*$ with $\rk B_\chi = 0$ is $\chi = 0$; as $J \neq \CoreP(0) = \mf m_0$ there is some $\nu \in W^*$ with $\ev_\nu(J) = 0$ and $\rk B_\nu = 2$.

If $J \subsetneqq \CoreP(\nu) $ then $2 = \GK \Sa(W)/J > \GK\Sa(W)/\CoreP(\nu)$, but this last is $\dim W \cdot \nu$ by Proposition~\ref{Plfpoi}, which must be an even positive integer.
Thus $J = \CoreP(\nu) $ and $ \dim W\cdot \nu = 2$.

Lemma~\ref{Lwcrt} implies that $\nu$ is a one-point local function, and by Theorem~\ref{Texpw} we must have $\nu = \chi_{x; \alpha, \gamma}$ for some $x \in \kk^\times, \alpha, \gamma \in \kk$.  So $J = \CoreP(\nu) = J(\gamma)$.
\end{proof}

\begin{remark}\label{rem:GK3} 
The codomain of the maps $p_\gamma$ of \eqref{pgamma} is the localised Poisson-Weyl algebra
 $B = \kk[t, t^{-1}, y]$ with $\{ y, t \}=  1$.
Define a Poisson bracket on $B[s]$ by setting $s$ to be Poisson central, and define
\[ \Phi:  \Sa(W) \to B[s], \quad \quad f\del \mapsto fy + sf'.\]
As in Subsection~\ref{SSexample}, it is easy to check that $\Phi$ is a Poisson map.
One can show using the methods of the proof of Proposition~\ref{PGK2} that $\ker \Phi$ is the unique prime Poisson ideal of $\Sa(W)$ of co-GK 3.  
This is proved in Remark~\ref{rem:GK3two} with a different argument, so we do not give a proof here.
\end{remark}

Let $\mf g$ be a Lie algebra and let $\chi \in \mf g^*$.
The closure of $\OO(\chi)$ is, by definition, $V(\CoreP(\chi))$.   
We call $V(\CoreP(\chi)) =\overline{\OO(\chi)}$ the {\em orbit closure} of $\chi$.
\label{ind:orbitclosure}
(Technically, this term should probably be ``pseudo-orbit closure''; we have used wording which we find more pleasant at the cost of a slight abuse of terminology.)
Note that $\mu$ is in the orbit closure of $\chi$ if and only if $\CoreP(\mu)\supseteq \CoreP(\chi)$.

The orbit closure relations for one-point local functions are essentially given by Corollary~\ref{Cdim}, but for arbitrary local functions they are quite complex.  
Consider the following example, which for simplicity we give for $W_{\geq -1}$ only:
\begin{example}\label{ex:derivative}
Let  $\kk = \mb C$, let $x \neq y \in \mb C$, and let $a,b,c,d \in \mb C$ with $(a,b)$ not both 0 and likewise for $(c,d)$.
Let $\chi = \chi_{x;a,b} + \chi_{y; c, d}$. 
We claim that $Loc_{W_{\geq -1}}^{\leq 1} \subseteq \overline{\mb O(\chi)}$, and in particular $\overline{\mb O(\chi)}$ contains the functions $0$, $\chi_{x;a,b}$, and $\chi_{y;c,d}$.

To see this, let $\gamma,s \in \mb C^\times$.  
 Consider the local function
\[ \chi(\gamma,s) := \chi_{s;\gamma/s,b} + \chi_{0; -\gamma/s,d} \in W_{\geq -1}^*.\]
By Theorems~\ref{Tlfpoi} and \ref{Texpw}, the  $\chi(\gamma,s)$ are all in $\OO(\chi)$.
For $f \in \kk[t]$, we have
\[ \lim_{s \to 0} \chi(\gamma, s)(f\del) = \lim_{s\to0} \gamma\frac{f(s)-f(0)}{s} + b f'(s) + d f'(0) = (\gamma+b+d)f'(0).\]
As Zariski closed sets are closed in the complex analytic topology, 
$$\lim_{s \to 0} \chi(\gamma,s) = \chi_{0; 0, \gamma+b+d} \in \overline{\OO(\chi)}$$ for any $\gamma$.
Applying Corollary~\ref{Cdim}, $\overline{\OO(\chi)}$ contains all of $\Loc_{W_{\geq-1}}^{\leq 1}$.

The same statement holds for arbitrary $\kk$ by the Lefschetz principle.
\end{example}

Although arbitrary orbit closures are complicated, we are able to give a partial description of orbit closures of general local functions in the next corollary.

\begin{corollary}\label{C:sumoflfs}
Let $\mf g = \Vir$  or $W$ or $W_{\geq -1}$ or $W_{\geq 1}$.  \begin{itemize}
\item[(a)]
Let $\mu, \chi \in \mf g^*$ be local functions with disjoint support, and let $\nu$ be in the orbit closure of $\mu$.
Then  $\nu + \chi$ is in the orbit closure of $\mu + \chi$.  
\item[(b)]
Let $\chi$ be a local function.  Then $0$ is not in the orbit closure of $\chi$ if and only if $\lambda(\chi) = (2)$, i.e. $\chi = \chi_{x;\alpha, \beta}$ for some $\beta \in \kk^\times$.
\end{itemize}
\end{corollary}


\begin{proof}
(a).
By Theorem~\ref{Tlfpoi}, the pseudo-orbit of $\mu+\chi$ consists of all local functions of the form $\mu'+\chi'$ where $\mu' \in \OO(\mu)$,  $\chi' \in \OO(\chi)$, and the supports of $\mu'$ and $\chi'$ are disjoint.
A  modification of Corollary~\ref{Cdim} for the multipoint case implies that the orbit closure $\overline{\OO(\mu+\chi)}$ contains $\nu+\chi$.

For (b), Corollary~\ref{Cdim} gives that if $\lambda(\chi) = (2)$ then $0 \not \in \overline{\OO(\chi)}$.

Assume now that $0 \not \in \overline{\OO(\chi)}$.  
Write
\[ \chi = \mu_1 + \dots +\mu_\ell + \nu_1 + \dots + \nu_r,\]
where the $\mu_i, \nu_j$ are 1-point local functions supported at distinct points, the $\mu_i$ have order $0$ or $1$, and the $\nu_j$ have order $n_j > 1$. 
By (a), $0$ must not be in the orbit closure of at least one of the component 1-point local functions of $\chi$, so $\ell \geq 1$ by Corollary~\ref{Cdim}.

Write $\mu_i = \chi_{x_i;\beta_i, \gamma_i}$ and 
$\nu_j  = \chi_{y_j; \alpha^j_0, \dots, \alpha^j_{n_j}}$, where $\alpha^j_{n_j} \neq 0$.
If $r \geq 1$ then let $\mu_i' = \chi_{y_1;\beta_i,\gamma_i}$.
As we may move points within an orbit, 
\[ \mu_1' + \dots + \mu_\ell' + \nu_1,\]
which is a 1-point local function of order $n_1 > 1$, is in the orbit closure of $\mu_1 + \dots +\mu_\ell+ \nu_1$.
By part (a) and the previous paragraph, then, 0 is in the orbit closure of $\chi$, a contradiction.
Thus $r=0$.

If $\ell \geq 2$, then, applying Example~\ref{ex:derivative} and part (a) repeatedly,  we have $0 \in \overline{\mb O(\chi)}$.  Thus $\ell=1$, and by Corollary~\ref{Cdim} $\chi = \mu_1$ must have order 1.
\end{proof}



We now apply our results to classify maximal Poisson ideals of $\Sa(\Vir)$.

\begin{corollary}\label{cor:augmentation}
The maximal Poisson ideals of $\Sa(\Vir)$ are the ideals $(z-\zeta)$ for $\zeta \in \kk^\times$, the ideals $\CoreP(\chi_{1;0, \gamma})$ for $\gamma \in \kk^\times$, and the augmentation ideal $
\CoreP(0)$.
\end{corollary}
\begin{proof}
That $(z-\zeta)$ is a maximal Poisson ideal if $\zeta \in \kk^\times$ is Corollary~\ref{cor:Psimple}.  
If $Q$ is any other maximal Poisson ideal then we must have $z \in Q$; it follows from the Nullstellensatz and Theorem~\ref{Tloc} that there is a local function $\chi$ with $Q = \CoreP(\chi)$.
If $\lambda(\chi) \neq (2)$ then by Corollary~\ref{C:sumoflfs} we have $\CoreP(\chi) \subseteq \CoreP(0)$ so we must have  $\chi = 0$, and $Q$ must be the augmentation ideal $\CoreP(0)$.

If $\lambda(\chi) = (2)$ then $\CoreP(\chi)$ is maximal among Poisson primitive ideals by Corollaries~\ref{Cdim} and~\ref{C:sumoflfs}, and thus maximal among Poisson ideals.
\end{proof}

\section{Prime Poisson ideals}\label{PDME}

In this section we apply our results on Poisson primitive ideals to the structure of arbitrary Poisson prime ideals of $\Sa(\mf g)$, where $\mf g$ is one of our Lie algebras of interest.
After a preliminary subsection 
on birational maps in our context, 
we  show that the partition data associated to local functions may be used to parameterise Poisson primes of $\Sa(W)$ and $\Sa(W_{\geq 1})$.
This will allow us to show that the bijection $\PSpec_{\rm prim}\Sa(W) \to \PSpec_{\rm prim}\Sa(W_{\geq-1})$ of Corollary~\ref{Csameprimspec} extends to prime Poisson ideals.
We then investigate when the Poisson analogue of the Dixmier-Moeglin equivalence holds for $\Sa(\mf g)$ and, in particular, when Poisson primitive ideals are locally closed in $\PSpec \Sa(\mf g)$; we will see that this is almost always, but not always, the case.
Finally, we answer a question of Le\'on S\'anchez and the second author \cite{LSS} on heights of Poisson prime ideals, and in the process show that $\PSpec \Sa(W)$ has the somewhat counterintuitive property that every proper radical Poisson ideal contains a proper Poisson primitive ideal.

 \subsection{Birational maps}\label{SSChev}


Chevalley's fundamental result that images of algebraic maps are constructible holds for morphisms of finite presentation \cite[Theorem~10.29.10, Tag 00FE]{Stacks} and thus applies to any homomorphism from $\Sa(\mf g) $ to an affine algebra.  
The main result of this subsection is related to Chevalley's theorem, and allows us to conclude that two domains are birational, even in our non-noetherian context.  

\begin{proposition} \label{prop:iso} 
Let $A$, $B$ be commutative $\kk$-algebras which are domains, and let $\phi:  A\to B$ be an injective homomorphism.
Assume also that $B$ is affine, (we make no additional assumption on $A$) 
and that the map $\MSpec B\to \MSpec A$ is also injective. 
Then there is $f\in A \setminus \{ 0\}$ so that the natural map $A[f^{-1}]\to B[f^{-1}]$ is an isomorphism.
\end{proposition}

Before proving Proposition~\ref{prop:iso} we provide a preliminary lemma.
\begin{lemma}\label{lem:chevalley}
Let $A$, $B$ be commutative $\kk$-algebras which are domains, and let $\phi:  A\to B$ be an injective homomorphism.
Assume also that $B$ is affine (we make no additional assumption on $A$).
Let $b \in B \setminus \{ 0\}$.  
There is $a\in A \setminus \{ 0\}$ so that if $\chi:  A\to \kk$ is a homomorphism with $\chi(a) \neq 0$, then there is some $\chi':B \to \kk$ with $\chi' \phi = \chi$ and $\chi'(b) \neq 0$.
\end{lemma}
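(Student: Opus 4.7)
The plan is to reduce to a module-finite extension of a polynomial ring over $A$ (after a localization) and then apply going-up. Identify $A$ with $\phi(A) \subseteq B$ and set $K = \operatorname{Frac}(A)$. The localization $B_K := K \otimes_A B$ is a finitely generated $K$-algebra domain, so Noether normalization produces $y_1, \dots, y_r \in B_K$, algebraically independent over $K$, such that $B_K$ is module-finite over $K[y_1, \dots, y_r]$; after multiplying by nonzero elements of $A$, I may assume $y_i \in B$.

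Since $B$ is finitely generated as a $\kk$-algebra and each of its generators satisfies a monic equation over $K[y_1,\dots,y_r]$ inside $B_K$, clearing denominators yields some $a_0 \in A \setminus \{0\}$ so that $B[a_0^{-1}]$ is module-finite, and in particular integral, over $R := A[a_0^{-1}][y_1, \dots, y_r]$. In particular $b$ satisfies a monic polynomial relation $b^N + c_{N-1} b^{N-1} + \dots + c_0 = 0$ with $c_i \in R$, which I take of minimal degree; since $B[a_0^{-1}]$ is a domain and $b \neq 0$, minimality forces $c_0 \neq 0$ in $R$. Now expand $c_0 = \sum_I d_I y^I$ with $d_I \in A[a_0^{-1}]$, choose a multi-index $I_0$ with $d_{I_0} \neq 0$, pick $m \geq 0$ with $\tilde d := a_0^m d_{I_0} \in A$, and set $a := a_0 \tilde d \in A \setminus \{0\}$.

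Given $\chi : A \to \kk$ with $\chi(a) \neq 0$, we have $\chi(a_0) \neq 0$, so $\chi$ extends uniquely to $A[a_0^{-1}]$, and the $y^{I_0}$-coefficient of the resulting polynomial $\chi(c_0) \in \kk[y_1,\dots,y_r]$ equals $\chi(a_0)^{-m} \chi(\tilde d) \neq 0$. Hence $\chi(c_0)$ is a nonzero polynomial over the infinite field $\kk$, so there exist $\alpha_1, \dots, \alpha_r \in \kk$ with $\chi(c_0)(\alpha_1, \dots, \alpha_r) \neq 0$. The map $\chi'' : R \to \kk$ extending $\chi$ and sending $y_i \mapsto \alpha_i$ has kernel a maximal ideal $\mathfrak n$ with $R/\mathfrak n \cong \kk$; by going-up for the integral extension $R \subseteq B[a_0^{-1}]$ there is a maximal ideal $\mathfrak p$ of $B[a_0^{-1}]$ contracting to $\mathfrak n$, and $B[a_0^{-1}]/\mathfrak p \cong \kk$ since $\kk$ is algebraically closed. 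The composite $\chi' : B \hookrightarrow B[a_0^{-1}] \twoheadrightarrow \kk$ then satisfies $\chi' \phi = \chi$, and evaluating the monic relation for $b$ at $\chi'$ shows that $\chi'(b) = 0$ would force $\chi''(c_0) = 0$, contradicting the choice of the $\alpha_i$.

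The main obstacle is that $A$ is not assumed noetherian, so the standard formulations of Chevalley's theorem or generic freeness do not apply directly; the remedy is to pass to the affine $K$-algebra $B_K$, where Noether normalization is available, and then carefully track the finitely many denominators in $A$ that must be inverted in order to pull the integrality data and the coefficients of the minimal relation for $b$ back to a single localization $A[a^{-1}]$.
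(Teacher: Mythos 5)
Your proof is correct, but it takes a genuinely different route from the paper's. The paper argues by induction on the number of $\kk$-algebra generators of $B$ over $A$: the base case $n=1$ splits into the transcendental case (trivial) and the algebraic case (take $a$ to be a nonzero coefficient of a minimal relation for $b$ over $A$), and the inductive step feeds the $n=1$ output into the $(n-1)$ case via the intermediate ring $A[b_1,\dots,b_{n-1}]$. Your argument instead invokes Noether normalization for the affine $K$-algebra $B_K = \operatorname{Frac}(A)\otimes_A B$, clears denominators to realise $B[a_0^{-1}]$ as module-finite over a polynomial ring $R = A[a_0^{-1}][y_1,\dots,y_r]$, extracts $a$ from the constant term of a minimal monic relation for $b$ over $R$, and finishes with lying-over (you call it ``going-up'', a harmless slip) for the integral extension $R \subseteq B[a_0^{-1}]$ together with Zariski's lemma over the algebraically closed $\kk$. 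Both proofs are sound. The paper's induction is more elementary and entirely self-contained, avoiding Noether normalization and integral-extension theory; yours is closer to the standard ``generic lying-over'' argument one would give over a noetherian base, and the interesting point is precisely your observation that the non-noetherian $A$ causes no trouble because one only ever needs to invert a single element $a_0 \in A$ to pull all the relevant integrality data and coefficients down to $A[a_0^{-1}]$. One minor point worth making explicit: after rescaling the $y_i$ to lie in $B$, module-finiteness of $B_K$ over $K[y_1,\dots,y_r]$ is preserved because the scalars $s_i$ are units in $K$; and the maximality of the prime $\mathfrak p$ lying over $\mathfrak n$ follows from the standard fact that in an integral extension a prime contracting to a maximal ideal is itself maximal.
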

\begin{proof}
Without loss of generality $A \subseteq B$ and $\phi$ is the inclusion. 
Denote by $B[b^{-1}]$ the localisation of $B$ by $b$.
Since $B$ is affine, $B[b^{-1}]$ is finitely generated over $A$. 
By Chevalley's theorem  the image of $\MSpec B[b^{-1}]$ contains an open subset $\{\mf m\in \MSpec A\mid a\notin \mf m\}$ of $\MSpec A$ defined by some element $a\in A\setminus \{0\}$. 
This implies that $a\in A$ satisfies the desired property. 
%
\end{proof}

\begin{proof}[Proof of Proposition~\ref{prop:iso}]
Since $\dim_\kk A \leq \dim_\kk B \leq \aleph_0 < |\kk|$, $A$ is a Jacobson ring and the Nullstellensatz applies.  In particular, if $\mf m \in \MSpec A$ then the natural map $\kk \to A/\mf m$ is an isomorphism.

Without loss of generality $A \subseteq B$ and $\phi$ is the inclusion.
Let $Q(A)$ and $Q(B)$ be the quotient fields of $A$ and $B$ respectively. Assume that $Q(B)$ is not algebraic over $Q(A)$. 
Then there exists $x\in B$ so that $x$ is not algebraic over $Q(A)$, i.e. the natural map $A[t]\to B, t\mapsto x$  is injective; we denote the image of this map by $A[x]$. 

Choose $p\in A[x]\setminus \{0\}$ to be the element defined by Lemma~\ref{lem:chevalley} for $A=A[x], B=B, b=1$. 
Let $p_0\in A$ be a nonzero  coefficient of $p$.  
Then, for every $\mf m\in\MSpec A$ with $p_0\not\in \mf m$ there are at most finitely many $x_0\in\kk$ such that $p(x_0)\in \mf  m$. Every other $x_0$ provides a maximal ideal $(\mf m, x-x_0)$ of $A[x]$ which extends to a maximal ideal of $B$ thanks to Lemma~\ref{lem:chevalley}. Thus every such $\mf m$ has uncountably many preimages in $\MSpec B$, contradicting our assumption. 

Thus we can assume that $Q(B)$ is algebraic over $Q(A)$. 
Let $B$ be generated as a $\kk$-algebra by $b_1, \dots, b_k$.
If $Q(A)=Q(B)$ then every $b_i$ can be written $\frac{a^+_i}{a^-_i}$ with $a_i^\pm\in A\setminus\{0\}$; further we have that $A[f^{-1}]= B[f^{-1}]$ for $f=a_1^-\dots a_k^-$. 
Thus we can assume that $Q(A)\ne Q(B)$ and hence there exists $x\in B\setminus Q(A)$. 

As $x$ is algebraic over $Q(A)$ there are $a_0, a_1, \ldots, a_n\in A$ with $a_0\ne0, a_n\ne0$ such that 
\[F(x) := a_0 x^0+a_1 x^1+a_2 x^2+\ldots+a_nx^n=0\]
with $n\ge 2$. 
Let $p=\frac{p^+}{a_n^d}$ with $p^+\in A\setminus\{0\}, d\in\NN$ be the resultant of $F$. 
Consider $\mf m\in\MSpec A$ with $a_n, p^+\not\in \mf m$. 
Then it is well-known that there are exactly $n$ maximal ideals  $\mf m'$ of $ A[x]$ with $\mf m=\mf m' \cap A$. 
Pick $y\in A[x]$ to be the element defined by Lemma~\ref{lem:chevalley} for $A=A[x], B=B, b=1$. 
This element $y$ is algebraic over $A$ and thus there are $a_0', a_1', \ldots\in A$ with $a_0'\ne0$ such that 
\[a_0' y^0+a_1' y^1+a_2' y^2+\ldots = 0 .\]
If $\mf m'$ is a maximal ideal of $A[x]$ such that $a_0'\not\in \mf m=\mf m'\cap A$ then we have $y\notin \mf m'$. 

Pick $\mf m\in\MSpec A$ such that $a_np^+a_0'\notin \mf m$.
Then there are at least $n$ maximal ideals $\mf m''$ of $B$ such that $\mf m''\cap A=\mf m$. 
This contradicts the injectivity of $\MSpec B\to \MSpec A$.
\end{proof}



\subsection{Parameterising Poisson primes}
\label{SSpPp}

Throughout this subsection, let  $\mf g = W$ or $W_{\geq -1}$.  
We will refine Remark~\ref{rem:dagdag} to  a parameterisation of Poisson primes of $\Sa(\mf g)$; we will use this to show in Corollary~\ref{Csameprimespec} that the bijection of Corollary~\ref{Csameprimspec} extends to prime Poisson ideals.

Given a local function $\chi$, recall the definition of the partition $\lambda(\chi)$ from Section~\ref{Spridlf}.
Write $\lambda(\chi) = (m_i)$ and 
 define
\[ D(\lambda(\chi)) := 2 \sum \lfloor \frac{m_i+1}2 \rfloor. \]
\label{ind:Dlambda}
By Proposition~\ref{Plfpoi} and Remark~\ref{rem:dim}, 
\beq\label{dimlambda}
\dim \OO(\chi) = D(\lambda(\chi)), 
\eeq 
and in particular $\dim \OO(\chi)$ depends only on $\lambda(\chi)$.

Fix a partition $\lambda = (m_1 \geq m_2 \geq \cdots \geq m_r)$, where $m_i \in \ZZ_{\geq 1}$, and let 
\[ V^\lambda_{\mf g} := \{ \chi \in \mf g^* \ | \ \lambda(\chi) = \lambda\}.\]
\label{ind:Vlambda2}
Let 
\[\Loc^\lambda_{\mf g}:= \prod_{i=1}^r \Loc^{m_i-1}_{\mf g},\]
\label{ind:Loclambda}
where recall that $\Loc^{m}_{\mf g} = \{ \chi_{x; \alpha_0, \dots, \alpha_m} \in \mf g^* \ | \ \alpha_{m} \neq 0\}.$
There is a natural summation map $\Sigma^\lambda_{\mf g}: \Loc^\lambda_{\mf g} \to \mf g^*$
\label{ind:Sigma}
 defined by $\Sigma^\lambda_{\mf g}(\chi_1, \dots, \chi_r) = \sum \chi_i$.  
As in the proof of Proposition~\ref{Plfpoi}, let 
\[ U^\lambda_{\mf g} := \{ (\chi_{x_1;\underline{\alpha}^1}, \dots, \chi_{x_r;\underline{\alpha}^r}) \in \Loc^\lambda_{\mf g} \ | \ \mbox{the $x_i$ are all distinct }\}\]
(here $\underline{\alpha}^i$ stands for a sequence $\underline{\alpha}^i_0, \underline{\alpha}^i_1, \ldots$). 
Note that $\Sigma^\lambda_{\mf g}(U^\lambda_{\mf g}) $ is precisely $V^\lambda_{\mf g}$, whereas $\Sigma^\lambda_{\mf g}(\Loc^\lambda_{\mf g})$ usually includes other local functions. 

Let the partition
\[ \lambda^{\rm e} := (m_i \ | \ \mbox{$m_i$ is even} )\]
consist of the even parts occurring in $\lambda$ (with the same multiplicity); set $k:= \len(\lambda^{\rm e})$, $k_2:=|\{i\mid m_i=2\}|$.  
For any $m_i \in \lambda^{\rm e}$ let 
\[ \pi_i:  \Loc^{m_i-1}_{\mf g} \to \begin{cases} \AA^1 & \mbox{ if $m_i > 2$} \\ \AA^1 \setminus \{0\} & \mbox{ if $m_i=2$} \end{cases}
\]
be the morphism $F_{m_i-1}$ defined in Corollary~\ref{cor:orbitspace}.
The fibres of $\pi_i$ are pseudo-orbits (that is, $\widehat{\DLoc}^{\leq m_i-1}$-orbits) in $\Loc^{m_i-1}_{\mf g}$.  

Let
\[ \pi^\lambda_{\mf g}:  \Loc^\lambda_{\mf g} \to \AA^{k-k_2}\times(\AA^1\setminus \{0\})^{k_2}\]
be the composition
\[ \xymatrix{
\Loc^\lambda_{\mf g} \ar[r]^{\rm pr} & \Loc^{\lambda^{\rm e}}_{\mf g}\ar[r]^(0.2){\prod \pi_i} & [\AA^1 \times \dots ]\times[(\AA^1\setminus \{0\}) \times \dots] = \AA^{k-k_2}\times (\AA^1\setminus \{0\})^{\times{}k_2},
}
\]
where ${\rm pr}$ denotes projection.

Now, $\widehat{\DLoc}^{\leq m_i-1}$-orbits in $\Loc^{m_i-1}_{\mf g}$ are nonsingular and all are of dimension $2 \lfloor \frac{m_i+1}2 \rfloor$.
Further, $\pi_i$ is flat:  any torsion-free module over $\kk[x]$ or $\kk[x, x^{-1}]$ is flat. 
Thus, by \cite[Theorem~III.10.2]{Hartshorne}, $\pi_i$ is smooth, and so $\pi^\lambda_{\mf g}$ is also smooth.

Given a partition $\lambda = (m_i)$ as above, we also write
\[ \lambda = (\underbrace{n_1, \ldots,n_1}_{j_1},\underbrace{n_2, \ldots,n_2}_{j_2}, \dots, \underbrace{n_\ell, \ldots,n_\ell}_{j_\ell}),\]
where $n_1 > n_2 > \dots > n_\ell$.
Note that $m_1= \dots = m_{j_1} = n_1$, $m_{j_1+1} = m_{j_1+2} = \dots = m_{j_1+j_2} = n_2$, etc., and $r =\len(\lambda)= \sum_{i=1}^\ell j_i  $.
The multisymmetric group $\mf S_\lambda := \mf S_{j_1} \times \cdots \times \mf S_{j_\ell}$ 
acts on $\Loc^\lambda_{\mf g}$ and on $U^\lambda_{\mf g}$ by permuting the factors.
Further, $\mf S_\lambda$ also acts on $\AA^{k-k_2}\times (\AA^1\setminus 0)^{\times{}k_2}$ by permutations (if $n_i$ is odd, the factor $\mf S_{j_i}$ acts trivially).
We will name the action map
\beq\label{muaction} \mu_\lambda:  \mf S_\lambda \times \AA^{k-k_2}\times (\AA^1\setminus 0)^{\times{}k_2} \to \AA^{k-k_2}\times (\AA^1\setminus 0)^{\times{}k_2} \eeq
since we will need to refer to it later.
As in Remark~\ref{rem:dagdag}, denote  the  quotient  $[(\AA^1)^{\times(k-k_2)}\times(\AA^1\setminus\{0\})^{\times k_2}] / \mf S_\lambda$ by $\AA(\lambda)$.
\label{ind:Alambda2}

Note that $\pi^\lambda_{\mf g}$ is $\mf S_\lambda$-equivariant.
There is thus an induced map
\[ \overline{\pi^\lambda_{\mf g}} : U^\lambda_{\mf g}/ \mf S_\lambda \to \AA(\lambda).\]
We remark that $U^\lambda_{\mf g}/\mf S_\lambda$ and $\AA(\lambda)$ are affine. 
Further, $\AA(\lambda)$ is nonsingular --- in fact, isomorphic to $\AA^k$ or $\AA^{k-1}\times(\AA^1\setminus\{0\})$ --- since $\mf S_\lambda$ is a reflection group, see also Remark~\ref{rem:dagdag}.  
The fibres of $\overline{\pi^\lambda_{\mf g} } $  are isomorphic to fibres of the smooth morphism $\pi^\lambda_{\mf g}$ and are thus nonsingular.  Therefore $U^\lambda_{\mf g}/ \mf S_\lambda $  is also nonsingular.

The summation map $\Sigma^\lambda_\mf g$ factors through $\Loc^\lambda_\mf g/\mf S_\lambda$.
This induces a morphism of varieties
\[ \psi^\lambda_{\mf g}:  U^\lambda_{\mf g}/\mf S_\lambda \to \mf g^*,\]
which is  injective, as any collection of one-point functions with distinct supports is linearly independent.
The image of $\psi^\lambda_{\mf g}$ is precisely the set $V^\lambda_{\mf g}$ of local functions with order partition $\lambda$. 
Further note that $\psi_\mf g^\lambda$ maps the tangent space to a $\widehat{\DLoc}$-orbit of $\chi\in\mf g^*$ at $\chi^*$ to $\mf g_{\psi^\lambda_\mf g(\chi)}$ (see Lemma~\ref{Llact} and~\ref{Lwcrt}), i.e. it satisfies the conditions of Propositions~\ref{Prpoi} and~\ref{Practpoi}.

\begin{remark}\label{rem:countableU}
We may identify $V^\lambda_{\mf g}$ with $U^\lambda_{\mf g}/\mf S_{\lambda}$ as a set.
This construction allows us to write the set of local functions on $\mf g$ as a countable union of the affine varieties $U^\lambda_{\mf g}/\mf S_{\lambda}$, further justifying the statement in Remark~\ref{rem:dag} that most elements of $\mf g^*$ are not local.
\end{remark}

We abuse notation slightly, and identify $\Loc^\lambda_W$, via restriction, with  the subset of $\Loc^\lambda_{W_{\geq-1}}$ consisting of  functions whose support does not contain $0$.
 The diagram
 \beq \label{diagram} \xymatrix@C=3em{
 W^* \ar[rr]^{\rm res} && W_{\geq-1}^* \\
 U^\lambda_W/\mf S_\lambda \ar[u]^{\psi^\lambda_W}
 \ar[rr]^{\subset} \ar[rd]_{\overline{\pi^\lambda_W}} && 
 U^\lambda_{W_{\geq -1}}/\mf S_\lambda \ar[u]_{\psi^\lambda_{W_{\geq-1}}} \ar[ld]^{\overline{\pi^\lambda_{W_{\geq-1}}}} \\
 & \AA(\lambda) &
 }
 \eeq
 commutes, and both  maps to $\AA(\lambda)$ are surjective.

Let $\widehat{\DLoc}^\lambda := \widehat{\DLoc}^{\leq m_1-1} \times \cdots \times\widehat{\DLoc}^{\leq m_r-1}$.
\label{ind:whDLoclambda}
  The group $\widehat{\DLoc}^\lambda$ acts on $ \Loc^\lambda_{W_{\geq-1}}$, and there are partial $\widehat{\DLoc}^\lambda$-actions on $\Loc^\lambda_W$, on $U^\lambda_W$, and on $U^\lambda_{W_{\geq-1}}$; as usual, we speak of $\widehat{\DLoc}^\lambda$-orbits on these varieties to mean the intersections with full orbits in $ \Loc^\lambda_{W_{\geq-1}}$.
This (partial) action does not commute with the $\mf S_\lambda$-action. 
Nevertheless, it is compatible with the natural action of $\mf S_\lambda$ on $\widehat{\DLoc}^\lambda$ and together these provide an action of the semidirect product $\widehat{\DLoc}^\lambda\rtimes \mf S_\lambda$. 
Further by Corollary~\ref{cor:orbitspace} the fibres of $\overline{\pi^\lambda_{\mf g}}$ are precisely the images in $U^\lambda_{\mf g}/\mf S_\lambda$ of $\widehat{\DLoc}^\lambda\rtimes \mf S_\lambda$-orbits in $U^\lambda_{\mf g}$.
(In particular, they all have dimension $D(\lambda)$.)  
It follows from Theorem~\ref{Tlfpoi} that these fibres correspond under $\psi^\lambda_{\mf g}$ precisely with pseudo-orbits of local functions with partition $\lambda$.
We will refer to the image in $U^\lambda/\mf S_\lambda$ of a $\widehat{\DLoc}^\lambda\rtimes \mf S_\lambda$-orbit in $U^\lambda_{\mf g}$ as simply a {\em $\widehat{\DLoc}^\lambda$-orbit quotient}.  
\label{ind:orbitquotient}
We abuse terminology and say a subset or subvariety of $U^\lambda/\mf S_\lambda$  is {\em $\widehat{\DLoc}^\lambda$-invariant} if it is a union of $\widehat{\DLoc}^\lambda$-orbit quotients. 

We can use the maps $\psi^\lambda_{\mf g}$ defined above to strengthen Proposition~\ref{Prpoi} in our setting.

\begin{proposition}\label{prop:poisson}
Let $\mf g = W$ or $\mf g = W_{\geq-1}$.
Let $Q$ be  a proper radical ideal of $\Sa(\mf g)$ and let $Z := V(Q)$.  
Suppose also that for some $\lambda$ the set $V^\lambda_{\mf g} \cap Z$ is (Zariski) dense  in $Z$.
Then $Q$ is Poisson if and only if $(\psi^\lambda_{\mf g})^{-1}(Z \cap V^\lambda_{\mf g})$ is  $\widehat{\DLoc}^\lambda$-invariant.
\end{proposition}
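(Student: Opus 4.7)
The plan is to prove both implications by translating between pseudo-orbits in $V^\lambda_{\mf g}$ and $\widehat{\DLoc}^\lambda$-orbit quotients in $U^\lambda_{\mf g}/\mf S_\lambda$ via Theorem~\ref{Tlfpoi}, and between radical Poisson ideals and intersections of Poisson primitive ideals via Lemma~\ref{lem:basics}. A  preliminary observation: Theorem~\ref{Tlfpoi} implies that for any $\chi \in V^\lambda_{\mf g}$ the pseudo-orbit $\mb O(\chi)$ is contained in $V^\lambda_{\mf g}$ (the order partition is a pseudo-orbit invariant), and that $(\psi^\lambda_{\mf g})^{-1}(\mb O(\chi))$ is precisely the $\widehat{\DLoc}^\lambda$-orbit quotient in $U^\lambda_{\mf g}/\mf S_\lambda$ of any preimage of $\chi$.

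For the forward direction, assume $Q$ is Poisson and pick $\chi \in V^\lambda_{\mf g} \cap Z$. Since $Q \subseteq \mf m_\chi$ and $Q$ is Poisson, one has $Q \subseteq \CoreP(\chi)$, so
\[\overline{\mb O(\chi)} = V(\CoreP(\chi)) \subseteq V(Q) = Z.\]
Combined with $\mb O(\chi) \subseteq V^\lambda_{\mf g}$, this gives $\mb O(\chi) \subseteq V^\lambda_{\mf g} \cap Z$. Pulling back through $\psi^\lambda_{\mf g}$ and invoking the preliminary observation, the $\widehat{\DLoc}^\lambda$-orbit quotient of $(\psi^\lambda_{\mf g})^{-1}(\chi)$ lies in $(\psi^\lambda_{\mf g})^{-1}(Z \cap V^\lambda_{\mf g})$. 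As $\chi$ was arbitrary, $(\psi^\lambda_{\mf g})^{-1}(Z \cap V^\lambda_{\mf g})$ is $\widehat{\DLoc}^\lambda$-invariant.

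For the converse, assume the preimage is $\widehat{\DLoc}^\lambda$-invariant. By the preliminary observation, $V^\lambda_{\mf g} \cap Z$ is then a union of pseudo-orbits. Set
\[ Q' := \bigcap_{\chi \in V^\lambda_{\mf g} \cap Z} \CoreP(\chi);\]
this is Poisson as an intersection of Poisson ideals. Every $\chi \in V^\lambda_{\mf g}$ is nonzero (since $\lambda$ has at least one part), so Lemma~\ref{lem:basics}(b) applies and gives $\CoreP(\chi) = \bigcap_{\nu \in \mb O(\chi)} \mf m_\nu$. Since $V^\lambda_{\mf g} \cap Z$ is a union of such pseudo-orbits,
\[ Q' = \bigcap_{\nu \in V^\lambda_{\mf g} \cap Z} \mf m_\nu = I(V^\lambda_{\mf g} \cap Z).\]
Density of $V^\lambda_{\mf g} \cap Z$ in $Z$, together with $Q = I(Z)$ (using that $Q$ is radical), gives $I(V^\lambda_{\mf g} \cap Z) = I(Z) = Q$. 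Hence $Q = Q'$ is Poisson.

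There is no serious obstacle: the argument is essentially bookkeeping that assembles Theorem~\ref{Tlfpoi} (pseudo-orbits $\leftrightarrow$ $\widehat{\DLoc}^\lambda$-orbit quotients), Lemma~\ref{lem:basics}(b) (Poisson primitive $\leftrightarrow$ intersection over pseudo-orbit), and the density hypothesis (which collapses $I(V^\lambda_{\mf g} \cap Z)$ to $I(Z) = Q$). The only subtlety worth flagging is that partition invariance of pseudo-orbits is needed to confine the argument to the single stratum $V^\lambda_{\mf g}$; without this one would have no reason to expect the preimage condition restricted to $\lambda$ to characterize Poisson-ness of the full ideal $Q$.
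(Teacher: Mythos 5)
Your proof is correct and takes a genuinely different route from the paper's. The paper's argument is differential-geometric in spirit: it invokes Proposition~\ref{Prpoi} to reduce the Poisson condition on $Q$ to a tangency condition $\mf g_{\psi(x)} \subseteq \psi_x(T_x X)$ on $X := \overline{\psi^{-1}(Z)}$, then uses Corollary~\ref{Cliso} (birationality of the injective map from $X$ to $\MSpec$ of the image of $\Sa(\mf g)$) together with generic smoothness of the finite cover $\widetilde X \to X$ to identify tangent spaces over a dense open subset, and finally observes that tangency of $\widehat{\DLoc}^\lambda$-orbit directions to $\widetilde X$ on that dense subset is equivalent to $\widetilde X$ being a union of orbits. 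Your argument works entirely at the level of ideals and pseudo-orbits, never touching tangent spaces: the forward direction combines $Q \subseteq \CoreP(\chi)$ with the $\lambda$-invariance of pseudo-orbits from Theorem~\ref{Tlfpoi}, and the converse rewrites $Q$ as $\bigcap_{\chi \in V^\lambda_{\mf g} \cap Z} \CoreP(\chi)$ via Lemma~\ref{lem:basics}(b) and then collapses this to $I(Z) = Q$ using density and radicality. Your route is shorter and avoids the technical algebraic geometry (birationality, generic smoothness), but it leans on Lemma~\ref{lem:basics}(b) --- itself a nontrivial countability-based result built on \cite{LSS} --- which the paper's proof of this proposition does not use, paying instead with tangent-space bookkeeping. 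Both are valid; given the machinery already developed in the paper, yours is arguably cleaner. One minor remark: you correctly flag the need for $\chi \neq 0$ when invoking Lemma~\ref{lem:basics}(b), and the justification that a nonempty partition forces $\chi \neq 0$ is sound; the degenerate case of the empty partition would make the statement trivially true.
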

\begin{proof}
Let $\psi := \psi^\lambda_{\mf g}$, let $U := U^\lambda_{\mf g}/\mf S_\lambda$, let $X := \overline{\psi^{-1}(Z)} \subseteq U $, and let $\widetilde X$ be the preimage of $X$ in $U^\lambda_W$.
As $Z \cap V^\lambda$ is dense in $Z$, thus $Q$ is the kernel of the composition
\[ \xymatrix{
\Sa(\mf g) \ar[r]^{\psi^*} & \kk[U] \ar[r] & \kk[X] \ar@{^{(}->}[r] & \kk[\widetilde X]
}\]
and by Proposition~\ref{Prpoi}, $Q$ is Poisson if and only if $\mf g_{\psi(x)} \subseteq \psi_x(T_xX)$ for all $x \in X$. 

Denote by $A$ the image of $\Sa(\mf g)$ in $\CC[X]$. 
The map $U\to\mf g^*$ is injective and therefore the map $X\to\MSpec A$ is also injective. Let $f\in A$ be the element given by Proposition~\ref{prop:iso}. 
Then the open subsets $\MSpec(A[f^{-1}])$ and $\MSpec(\CC[X][f^{-1}])$ of $\MSpec(A)$ and $X$, respectively, are isomorphic to each other. 
This implies that $\psi_x$ identifies the tangent spaces $T_xX$ and $T_{\psi(x)}Z$ for every $x \in \MSpec(\CC[X][f^{-1}])$. 

Moreover, $\widetilde{X}\to X$ is finite and so by generic smoothness  there are dense open affine subsets $X' \subseteq X$, $\widetilde{X'} \subseteq \widetilde{X}$ so that $\widetilde{X'} \to X'$ is \'etale and so identifies tangent spaces.

Let $X'' = X' \cap \MSpec(\kk[X][f^{-1}])$ and let $\widetilde{X''} \subseteq \widetilde X$ be the preimage of $X''$, which is also affine.
For every $x\in X''$ and every preimage $\widetilde x \in \widetilde{X''}$ of $x$, the map $\widetilde X\to\mf g^*$ gives an isomorphism of the tangent spaces $T_{\widetilde x} \widetilde X$ and $T_{\psi(x)}Z$.

Let $\mf h$ be the Lie algebra of $\widehat{\DLoc}^\lambda$.  
The proofs of Lemma~\ref{Llact} and Lemma~\ref{Lwcrt} show that $\psi_x( \mf h_x) =  \mf g_{\psi(x)}$ for any $x\in U$. 
By Proposition~\ref{Prpoi} applied to the morphism $X'' \to \mf g^*$ and the above discussion, the ideal $Q$ is Poisson if and only if 
$\mf h_x \subseteq T_x\widetilde X$ for all $x\in \widetilde{X''}$.
To finish the proof we note that the following conditions are equivalent:
\begin{itemize}
\item $\mf h_x \subseteq T_x\widetilde{X}$ for all $x\in \widetilde{X''}$ (that is, tangent spaces to $\widehat{\DLoc}^\lambda$-orbits of points of $\widetilde{X''}$ are tangent to $\widetilde{X}$);
\item $\widetilde X=\overline{\widetilde{X''}}$ is $\widehat{\DLoc}^\lambda$-invariant: that is, a union of $\widehat{DLoc}^\lambda$-orbits;
\item the defining ideal of $X = \overline{X''}$ is $\widehat{\DLoc}^\lambda$-invariant.
\end{itemize}
\end{proof}

Let $J^\lambda_{\mf g} := I(V^\lambda_{\mf g})$.
\label{ind:Jlambda}
We immediately obtain:
\begin{corollary}\label{cor:J}
The ideal $J^\lambda_{\mf g}$ is a prime Poisson ideal. 
\end{corollary}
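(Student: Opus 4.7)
By construction $J^\lambda_{\mf g}=I(V^\lambda_{\mf g})$ is a radical ideal, so it remains to show it is Poisson and that it is prime.

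For the Poisson property, the plan is to apply Proposition~\ref{prop:poisson} directly, with $Q:=J^\lambda_{\mf g}$ and $Z:=V(J^\lambda_{\mf g})$. By definition of the operators $I$ and $V$ the set $V^\lambda_{\mf g}$ is Zariski dense in $Z$, so the density hypothesis is satisfied. Moreover
\[(\psi^\lambda_{\mf g})^{-1}(Z\cap V^\lambda_{\mf g})=(\psi^\lambda_{\mf g})^{-1}(V^\lambda_{\mf g})=U^\lambda_{\mf g}/\mf S_\lambda,\]
which is manifestly $\widehat{\DLoc}^\lambda$-invariant, being the whole quotient. Proposition~\ref{prop:poisson} therefore gives that $J^\lambda_{\mf g}$ is Poisson.

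For primeness, the plan is to identify $J^\lambda_{\mf g}$ as the kernel of the pullback $(\psi^\lambda_{\mf g})^*:\Sa(\mf g)\to \kk[U^\lambda_{\mf g}/\mf S_\lambda]$ and then to check that the target is a domain. The kernel identification is immediate from the Nullstellensatz applied to the finite-type affine variety $U^\lambda_{\mf g}/\mf S_\lambda$: for $f\in \Sa(\mf g)$, $(\psi^\lambda_{\mf g})^*(f)$ vanishes pointwise on $U^\lambda_{\mf g}/\mf S_\lambda$ iff $f$ vanishes on $\psi^\lambda_{\mf g}(U^\lambda_{\mf g}/\mf S_\lambda)=V^\lambda_{\mf g}$, i.e. iff $f\in J^\lambda_{\mf g}$. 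For the second claim, each factor $\Loc^{m_i}_{\mf g}$ is birationally parametrised by the irreducible affine variety $\widetilde{\Loc}^{\le m_i}_{\mf g}$, hence is irreducible; consequently $\Loc^\lambda_{\mf g}$ and its nonempty open subset $U^\lambda_{\mf g}$ are irreducible, so $\kk[U^\lambda_{\mf g}]$ is a domain. Its subring $\kk[U^\lambda_{\mf g}/\mf S_\lambda]=\kk[U^\lambda_{\mf g}]^{\mf S_\lambda}$ is therefore also a domain, whence $J^\lambda_{\mf g}$ is prime.

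Neither step presents a serious obstacle: the real content---the correspondence between $\widehat{\DLoc}^\lambda$-orbit quotients and pseudo-orbits, together with the Poisson criterion for vanishing loci of local functions---is already packaged into the geometric setup and into Proposition~\ref{prop:poisson}, so what remains is simply to unwind the definitions and verify the trivial invariance and irreducibility statements above.
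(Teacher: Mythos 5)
Your proof is correct and follows essentially the same route as the paper's: you invoke Proposition~\ref{prop:poisson} for the Poisson property and then note that $J^\lambda_{\mf g}$ is the kernel of $(\psi^\lambda_{\mf g})^*:\Sa(\mf g)\to\kk[U^\lambda_{\mf g}/\mf S_\lambda]$, with the latter ring a domain since $U^\lambda_{\mf g}/\mf S_\lambda$ is irreducible. You simply unwind a few steps (density of $V^\lambda_{\mf g}$ in its closure, irreducibility of $U^\lambda_{\mf g}$ via the birational parametrisation, the invariant subring being a domain) that the paper leaves implicit.
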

\begin{proof}
That $J^\lambda_{\mf g}$ is Poisson is an immediate application of Proposition~\ref{prop:poisson}. To see that it is prime, note that $U^\lambda_{\mf g}/\mf S_\lambda$ is irreducible and that $J^\lambda_{\mf g}$ is the kernel of $(\psi^\lambda_{\mf g})^*:  \Sa(\mf g) \to \kk[U^\lambda_{\mf g}/\mf S_\lambda]$.
\end{proof}

\begin{remark}\label{rem:opensubset}
Recall that $V^\lambda_{\mf g} \subset \mf g^*$ is the image of the affine variety $U^\lambda_{\mf g}/\mf S_\lambda$.  
Thus by Chevalley's Theorem $V^\lambda_{\mf g} $  contains a nonempty open subset of $\overline{V^\lambda_{\mf g}}$.
Let $U$ be the union of all open subsets of $\overline{V^\lambda_{\mf g}}$ contained in $V^\lambda_{\mf g}$.  It is surely true that $U$ is Poisson, in the sense that the defining ideal of $\overline{V^\lambda_{\mf g}} \setminus U$ is a Poisson ideal. However, we cannot prove this at this point. 
\end{remark}

We next show how to parameterise Poisson primes of $\Sa(\mf g)$.

\begin{theorem}\label{thm:paramprimes}
Let $\mf g = W$ or $\mf g = W_{\geq -1}$.
\begin{itemize}\item[(a)]
Let $Q$ be a nonzero proper prime Poisson ideal of $\Sa(\mf g)$.  
There is a unique $\lambda:= \lambda(Q)$ so that $V(Q) \cap V^\lambda_{\mf g}$ contains a nonempty open subset of $V(Q)$.  
Further, $Y(Q) := \overline{\pi^\lambda_{\mf g}}(\psi^\lambda_{\mf g})^{-1}(V(Q))$ is a (nonempty) irreducible closed subvariety of $\AA(\lambda)$, and
\[ \GK \Sa(\mf g)/Q = \dim Y(Q) + D(\lambda(Q)).\]
\item[(b)] The function $Q \mapsto (\lambda(Q), Y(Q))$ defines a bijection  
\[
\Psi_{\mf g}:  
\PSpec \Sa(\mf g) \to \{ (\lambda, Y) \ | \ Y \mbox{ is an irreducible subvariety of } Var_{\lambda(Q)} \} .
\]
\item[(c)]  If $Q, Q'$ are Poisson primes of $\Sa(\mf g)$ with $\lambda(Q) = \lambda(Q')$, then $Q\subseteq Q'$ iff $Y(Q) \supseteq Y(Q')$.  
\end{itemize}
\end{theorem}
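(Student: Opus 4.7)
For part (a), the plan is as follows. Let $Q$ be a nonzero prime Poisson ideal of $\Sa(\mf g)$. By Theorem~\ref{Tgkf}, $d := \GK(\Sa(\mf g)/Q) < \infty$. Any non-local $\chi \in V(Q)$ would give $Q \subseteq \CoreP(\chi) = (0)$ by Theorems~\ref{Tloc01} and~\ref{Tloc}, contradicting $Q \neq 0$; hence $V(Q) = \bigsqcup_\lambda \bigl(V(Q) \cap V^\lambda_\mf g\bigr)$ consists entirely of local functions. Formula~\eqref{dimlambda} shows $\dim \mb O(\chi)$ depends only on $\lambda(\chi)$ and is bounded by $d$, so only finitely many partitions $\lambda_1, \ldots, \lambda_N$ contribute, and $V(Q) = \bigcup_{i=1}^N \overline{V(Q) \cap V^{\lambda_i}_\mf g}$. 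Irreducibility of $V(Q)$, together with the disjointness of the $V^\mu_\mf g$'s, then selects a unique $\lambda = \lambda(Q)$ with $V(Q) = \overline{V(Q) \cap V^\lambda_\mf g}$; Corollary~\ref{cor:chevalley} applied to the dominant morphism $(\psi^\lambda_\mf g)^{-1}(V(Q)) \to V(Q)$ upgrades ``dense'' to ``contains a nonempty open subset of $V(Q)$''. The preimage $Z_Q := (\psi^\lambda_\mf g)^{-1}(V(Q))$ is closed and, by Proposition~\ref{prop:poisson}, $\widehat{\DLoc}^\lambda$-invariant, so it is a union of fibres of the smooth surjection $\overline{\pi^\lambda_\mf g}$ and has pure dimension $\dim V(Q)$; an elementary argument combining the injectivity of $\psi^\lambda_\mf g$ with Corollary~\ref{cor:chevalley} shows that two distinct top-dimensional irreducible components of $Z_Q$ would produce two open subsets of $V(Q)$ whose (nonempty) intersection lifts injectively into both components, forcing them to coincide. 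So $Z_Q$ is irreducible, and then $Y(Q) = \overline{\pi^\lambda_\mf g}(Z_Q)$ is a closed irreducible subvariety of $\AA^k/\mf S_\lambda$ (closed because the flat map $\overline{\pi^\lambda_\mf g}$ is open and $Z_Q$ is saturated).

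For part (b), injectivity is straightforward: if $\lambda(Q_1) = \lambda = \lambda(Q_2)$ and $Y(Q_1) = Y = Y(Q_2)$, the $\widehat{\DLoc}^\lambda$-invariance of $Z_{Q_i}$ gives $Z_{Q_1} = (\overline{\pi^\lambda_\mf g})^{-1}(Y) = Z_{Q_2}$, so $V(Q_1) \cap V^\lambda_\mf g = V(Q_2) \cap V^\lambda_\mf g$; density in each $V(Q_i)$ and the Nullstellensatz then yield $Q_1 = Q_2$. For surjectivity, given a pair $(\lambda, Y)$, I set $Z := (\overline{\pi^\lambda_\mf g})^{-1}(Y)$, which is closed, $\widehat{\DLoc}^\lambda$-invariant, and irreducible (the smooth morphism $\overline{\pi^\lambda_\mf g}$ has orbits of the connected group $\widehat{\DLoc}^\lambda$ as fibres, hence irreducible fibres over the irreducible base $Y$). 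Let $Q$ be the kernel of the composition $\Sa(\mf g) \xrightarrow{(\psi^\lambda_\mf g)^*} \kk[U^\lambda_\mf g/\mf S_\lambda] \to \kk[Z]$: then $Q$ is prime (as $\kk[Z]$ is a domain) and Poisson, the latter by Proposition~\ref{Prpoi} together with the computation $\psi^\lambda_\mf g(\mf h_x) = \mf g_{\psi^\lambda_\mf g(x)}$ from Lemmas~\ref{Llact} and~\ref{Lwcrt}, where $\mf h$ is the Lie algebra of $\widehat{\DLoc}^\lambda$. The density of $\psi^\lambda_\mf g(Z)$ in $V(Q) \cap V^\lambda_\mf g$ immediately gives $\lambda(Q) = \lambda$.

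The genuinely delicate step --- and the main obstacle --- is proving $Y(Q) = Y$, equivalently $Z_Q = Z$. The inclusion $Z \subseteq Z_Q$ is automatic, and the injectivity of $\psi^\lambda_\mf g$ forces the dimension equality $\dim Z = \dim \psi^\lambda_\mf g(Z) = \dim V(Q) = \dim Z_Q$. Any additional top-dimensional irreducible component $W$ of $Z_Q$ would be $\widehat{\DLoc}^\lambda$-invariant (the connected group $\widehat{\DLoc}^\lambda$ permutes irreducible components trivially) and, by the same argument as in part (a) applied to $W$ and $Z$, would have to equal $Z$; hence $Z_Q = Z$ and $Y(Q) = \overline{\pi^\lambda_\mf g}(Z) = Y$. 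The reason this step is subtle is that $\psi^\lambda_\mf g$ is not a closed embedding in general --- its image $V^\lambda_\mf g$ is only locally closed in $\mf g^*$, per Corollary~\ref{cor:chevalley} --- so one cannot straightforwardly pull closures back through $\psi^\lambda_\mf g$; it is the use of Corollary~\ref{cor:chevalley} to extract open subsets of $V(Q)$ inside the $\psi^\lambda_\mf g$-images that makes the dimension and irreducibility comparisons work.
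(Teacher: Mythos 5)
Your proof has the same overall architecture as the paper's: finiteness of $\GK(\Sa(\mf g)/Q)$ plus \eqref{dimlambda} to reduce to finitely many partitions, irreducibility of $V(Q)$ to single out a unique $\lambda$, Corollary~\ref{cor:chevalley} to upgrade density to openness, Proposition~\ref{prop:poisson} to get $\widehat{\DLoc}^\lambda$-invariance of $Z_Q$, and then the same construction for the inverse of $\Psi_\mf g$ via $(\overline{\pi^\lambda_\mf g})^{-1}(Y)$. The small differences in how you arrive at $\lambda(Q)$ (you avoid the paper's detour through minimal primes of $Q + J^\lambda_\mf g$) are cosmetic.

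The one step I would push back on is your claim that $Z_Q$ ``is a union of fibres of the smooth surjection $\overline{\pi^\lambda_\mf g}$ and has pure dimension $\dim V(Q)$.'' Being a saturated union of fibres of a smooth map with equidimensional fibres does \emph{not} give pure-dimensionality unless the image $Y(Q) = \overline{\pi^\lambda_\mf g}(Z_Q)$ is itself pure-dimensional --- but that is (essentially) what you are trying to prove. Without this, your ``two top-dimensional components must coincide'' argument shows only that $Z_Q$ has a \emph{unique} top-dimensional component; it does not preclude lower-dimensional $\widehat{\DLoc}^\lambda$-invariant components lurking inside the closed locus where $\psi^\lambda_\mf g$ fails to be an open embedding (recall $\psi^\lambda_\mf g$ is only birational onto its image via Corollary~\ref{Cliso}, and $V^\lambda_\mf g$ need not be locally closed in $\mf g^*$, cf.\ Remark~\ref{rem:opensubset}). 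The same remark applies to your proof of $Z_Q = Z$ in part (b). To be fair, the paper is also terse at exactly this point --- it asserts ``$X := (\psi^\lambda_\mf g)^{-1}(Z)$ is an irreducible closed subvariety'' without argument --- so your proposal is not less rigorous than the published proof, and in fact shows more awareness of where the subtlety lies. But the specific pure-dimensionality justification you offer is circular and should be repaired (for instance, by showing directly that any $\widehat{\DLoc}^\lambda$-invariant irreducible component $W$ of $Z_Q$ with $\overline{\psi^\lambda_\mf g(W)} \subsetneq V(Q)$ would force $Q$ to be contained in but unequal to the corresponding prime, contradicting $Q = \bigcap_i I(\psi^\lambda_\mf g(W_i))$ and primeness; this still requires care).
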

We call the partition $\lambda(Q)$ defined in Theorem~\ref{thm:paramprimes} the {\em generic order partition of $Q$},  in other words the partition of a generic orbit in the associated variety. 
It is easy to check that the construction of Theorem~\ref{thm:paramprimes}(a) guarantees that a Poisson prime $Q$ is the kernel of the map
\begin{equation}\label{Eqy}\Sa(\mf g)\to \CC[\overline{(\pi_\mf g^\lambda)}^{-1}(Y(Q))],\end{equation}
where $\lambda = \lambda(Q)$.
This provides the explicit inverse to the map $\Psi_\mf g$ of Theorem~\ref{thm:paramprimes}(b). 

\begin{proof}
Let $Q$ be a nonzero proper prime Poisson ideal of $\Sa(\mf g)$ and let $Z:= V(Q)$.  As $Q$ is prime, there can be at most one $\lambda$ so that $Z \cap V^\lambda_{\mf g}$ contains a nonempty open subset of $Z$; it remains to establish existence of some such $\lambda$.

By Theorem~\ref{Tgkf} $d:=\GK(\Sa(\mf g)/Q) < \infty$.
By \eqref{dimlambda} there are thus  only finitely many partitions $\lambda$ such that $\dim \OO(\chi) \leq d$ for any $\chi$ with order partition $ \lambda$, and so for some $\lambda^1, \dots, \lambda^n$ we have
\[ Z \subseteq V^{\lambda^1}_{\mf g}\cup \dots \cup V^{\lambda^n}_{\mf g}.\]

Fix $i$ and let $\lambda = \lambda^i$.
Let $Q'$ be a minimal prime of $Q + J^\lambda_{\mf g}$, which exists by Proposition~\ref{prop9}, and let $Z'= V(Q')$, which is an irreducible component of $Z \cap \overline{V^\lambda_{\mf g}}$.  Let $X' = (\psi^\lambda_{\mf g})^{-1}(Z')$.
By Chevalley's Theorem 
$ V^{\lambda}_{\mf g} \cap Z$ contains a dense subset $U_i$ which is locally closed in $Z$. 
As $\overline{ \bigcup U_i } = Z$, by primeness of $Q$ some $U_j$ has $\overline{U_j} =Z$: in other words, $U_j$ is open in its closure $Z$.  Thus $\lambda(Q) = \lambda_j$.

 Let $\lambda:= \lambda(Q) = \lambda_j$, and let $X := (\psi^{\lambda}_{\mf g})^{-1}(Z)$.  
 This is an irreducible closed subvariety of $U^\lambda_{\mf g}/\mf S_\lambda$, and by Proposition~\ref{prop:poisson} it is a union of $\widehat{\DLoc}^\lambda$-orbits.  It therefore follows that $Y(Q):= \overline{\pi^\lambda_{\mf g}}(X)$ is closed in $\AA(\lambda)$.  This proves all but the last statement of (a). 
 
 We now prove (b). 
 Fix a partition $\lambda$ and let $k := \len(\lambda^{\rm e})$. 
 Let $\mf S_\lambda$ acts as in \eqref{muaction}.  
 Let $Y$ be an irreducible
 closed subvariety of $\AA(\lambda)$.  
 Let $X:= (\overline{\pi^\lambda_{\mf g}})^{-1}(Y)$.  
 As the fibres of $\overline{\pi^\lambda_{\mf g}}$ are $\widehat{\DLoc}^\lambda$-orbit quotients and are therefore irreducible (note that $\widehat{\DLoc}^\lambda$ is a connected algebraic group), $X$ is an irreducible $\widehat{\DLoc}^\lambda$-invariant closed subvariety of $U^\lambda_{\mf g}/\mf S_\lambda$ with $Y = \pi^\lambda_{\mf g}(X)$.  
 Let $Q$ be the kernel of $\Sa(\mf g) \to \kk[X]$; clearly $Q$ is prime, and by Proposition~\ref{prop:poisson} $Q$ is a Poisson ideal of $\Sa(\mf g)$.  
Let $Z:= V(Q)$. 
 By applying Chevalley's Theorem to the morphism $\psi^\lambda_{\mf g}: X \to Z$ we see that $\psi^\lambda_{\mf g}(X)$ contains a nonempty open subset of $Z$.  
 This shows in particular that $\lambda = \lambda(Q)$, and it is clear that $Y = Y(Q)$.  
 
 Conversely, given $Q$ apply the procedure in the previous paragraph to $\lambda(Q)$ and $Y(Q)$; one recovers $Q$, see also~\eqref{Eqy}.  This shows that $\Psi_{\mf g}$ is bijective, completing the proof of (b).  
 
Given a nonzero proper Poisson prime $Q$ of $\Sa(\mf g)$ let $\lambda := \lambda(Q)$ and $Y := Y(Q)$.  
 Let $X:= (\overline{\pi^\lambda_{\mf g}})^{-1}(Y)$.  
 We have seen that the fibres of $\overline{\pi^\lambda_{\mf g}}$ have dimension $D(\lambda)$ and so $\dim X = \dim Y + D(\lambda)$.
 It is shown in the proof of Proposition~\ref{prop:poisson} that $\Sa(\mf g)$ and $\kk[X]$ are birational, so $\dim X = \GK \Sa(\mf g)/Q$. 
 This completes (a).
 
 For part (c), let $\lambda = \lambda(Q)=\lambda(Q')$, and let $\psi = \psi^\lambda_{\mf g}$ and $\pi = \overline{\pi^\lambda_{\mf g}}$.  Now let $Z = V(Q)$, $X = \psi^{-1}(Z) = \psi^{-1}(Z \cap V^\lambda_{\mf g})$, and $Y = Y(Q) = \pi(X)$.
 Likewise define $Z'=V(Q')$, $X' = \psi^{-1}(Z')$, and $Y' = Y(Q')$.
 
 By the Nullstellensatz, $$Q \subseteq Q' \iff Z \supseteq Z'.$$  
 Thus if $Q \subseteq Q'$ we have $Z \cap V^\lambda_{\mf g} \supseteq Z' \cap V^\lambda_{\mf g}$ and we see immediately that $Y\supseteq Y'$.
 Conversely, if $Y \supseteq Y'$ then $\psi(X) = Z \cap V^\lambda_{\mf g} \supseteq \psi(X') = Z' \cap V^\lambda_{\mf g}$.
The proof of (b) showed that $Z = \overline{Z \cap V^\lambda_{\mf g}}$, which clearly contains $\overline{Z' \cap V^\lambda_{\mf g}} = Z'$.
\end{proof}

It follows from  Theorem~\ref{thm:paramprimes} that $\PSpec(W)$ and $\PSpec(W_{\geq -1})$ are partitioned into countably many affine strata, corresponding to partitions.
Given a partition $\lambda$, the corresponding stratum consists of prime Poisson ideals $Q$ with $\lambda(Q)=\lambda$, and is homeomorphic to the affine variety $\AA(\lambda)$, an open subset of
a finite-dimensional affine space. 
However, we do not know how to tell in terms of the parameterisation $Q \leftrightarrow (\lambda, Y)$ when $Q \subseteq Q'$ for arbitrary $Q, Q'$.  In fact, we cannot answer this question completely even when $Q, Q'$ are Poisson primitive; see Corollary~\ref{C:sumoflfs} and Example~\ref{ex:derivative} for some discussion of the complexities.

\begin{corollary}[{cf. \cite[Lemma~2.9]{PS}}]
Let $\mf g = W$ or $\mf g = W_{\geq -1}$ and let $Q$ be a prime Poisson ideal of $\Sa(\mf g)$. 
Then there exists $f\in\Sa(\mf g)\setminus Q$ such that $(\Sa(\mf g)/Q)[f^{-1}]$ is a finitely generated Poisson algebra.\end{corollary}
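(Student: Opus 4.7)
The plan is to combine the parameterisation of Theorem~\ref{thm:paramprimes} with the nonnoetherian Chevalley-type result Corollary~\ref{Cliso}.

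First, let $\lambda := \lambda(Q)$ and $Y := Y(Q)$ be the data attached to $Q$ by Theorem~\ref{thm:paramprimes}. By the explicit inverse in equation~\eqref{Eqy}, $Q$ is precisely the kernel of the induced homomorphism $\phi^*:\Sa(\mf g) \to \CC[X]$, where
\[ X := \overline{(\overline{\pi^\lambda_\mf g})^{-1}(Y)}\]
is a closed irreducible subvariety of the \emph{affine} variety $U^\lambda_\mf g/\mf S_\lambda$. In particular, $\CC[X]$ is a finitely generated $\kk$-algebra that is a domain, and $\Sa(\mf g)/Q$ embeds into $\CC[X]$ via the map induced by $\phi^*$.

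Next, I would verify the hypotheses of Corollary~\ref{Cliso} applied to $A = \Sa(\mf g)/Q$, $B = \CC[X]$, and the induced injection. The associated map $\MSpec \CC[X] \to \MSpec(\Sa(\mf g)/Q) = V(Q)$ factors through the restriction of $\psi^\lambda_\mf g$ to $X$; since $\psi^\lambda_\mf g$ is injective (noted just before Remark~\ref{rem:countableU}, as distinct one-point local functions with distinct supports are linearly independent), this composition is injective on maximal spectra. Corollary~\ref{Cliso} thus produces $\bar f \in \Sa(\mf g)/Q \setminus \{0\}$ such that $(\Sa(\mf g)/Q)[\bar f^{-1}] \to \CC[X][\bar f^{-1}]$ is an isomorphism. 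Lift $\bar f$ to some $f \in \Sa(\mf g) \setminus Q$; then $(\Sa(\mf g)/Q)[f^{-1}] \cong \CC[X][\bar f^{-1}]$ is the coordinate ring of a principal affine open of $X$, hence is finitely generated as a $\kk$-algebra.

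Finally, the Poisson bracket on $\Sa(\mf g)/Q$ extends uniquely to any localization by the Leibniz rule (specifically, $\{g, h f^{-n}\} = f^{-n}\{g,h\} - n h f^{-n-1}\{g,f\}$), so $(\Sa(\mf g)/Q)[f^{-1}]$ inherits a Poisson structure and the result follows. The only nontrivial step is the verification of injectivity on maximal spectra, but this is immediate from the injectivity of $\psi^\lambda_\mf g$, so I do not foresee a substantive obstacle.
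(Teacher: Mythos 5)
Your proof is correct and takes essentially the same route as the paper: both invoke Theorem~\ref{thm:paramprimes} to realise $Q$ as the kernel of the map to $\kk[X]$ with $X$ the preimage of $Y(Q)$ under $\overline{\pi^\lambda_\mf g}$ inside the affine variety $U^\lambda_{\mf g}/\mf S_\lambda$, and then apply Corollary~\ref{Cliso} to $\Sa(\mf g)/Q \hookrightarrow \kk[X]$, with injectivity on $\MSpec$ following from injectivity of $\psi^\lambda_\mf g$. (Your extra closure in defining $X$ is redundant, since $(\overline{\pi^\lambda_\mf g})^{-1}(Y)$ is already closed, but this is harmless.)
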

\begin{proof} It is clear that $(\Sa(\mf g)/Q)[f^{-1}]$ is a Poisson algebra for every such $f$ and thus we need to choose $f$ in such a way that $(\Sa(\mf g)/Q)[f^{-1}]$ is finitely generated. 
  Set $A=\Sa(\mf g)/Q$,  $\lambda= \lambda(Q)$, and $B=\CC[\overline{(\pi_\mf g^{\lambda})}^{-1}(Y(Q))]$; note that $B$ is finitely generated as it is easy to check that $\overline{(\pi_\mf g^\lambda)}^{-1}(Y(Q))$ is affine. 
By \eqref{Eqy} we have an injective map $A\to B$ and the induced map $\MSpec B\to\MSpec A$ is injective as well because it is a restriction of an injective map $U^\lambda_\mf g/\mf S_\lambda \to\mf g^*$. 
This together with Proposition~\ref{prop:iso} guarantees the existence of 
$f$.\end{proof}

\begin{remark}\label{rem:GK3two} 
Let $Q$ be a prime Poisson ideal of $\Sa(W)$ of co-GK 3.  
By Theorem~\ref{thm:paramprimes} we have
\[ 3 = \dim Y(Q) + D(\lambda(Q)),\]
and, recalling that $D(\lambda(Q))$ is even by definition, we must have $\dim Y(Q) = 1$ and $D(\lambda(Q)) = 2$.  
(The case $\dim Y(Q) = 3$, $D(\lambda(Q)) = 0$ cannot occur.)
Thus $\lambda(Q) = (2)$ and $\AA(\lambda) = \AA^1 \setminus \{0\} = Y(Q)$.

Now, if $\chi \in W^*$  then $\lambda(\chi) = (2)$ if and only if $\chi = \chi_{x; \alpha, \gamma}$ for some $x, \gamma \in \kk^*$, $\alpha \in \kk$,
and so
\[ Q= \bigcap_{\substack{x, \gamma \in \kk^* \\ \alpha \in \kk}} \CoreP (\chi_{x;\alpha, \gamma}) = \bigcap_\gamma \ker p_\gamma,\]
using Lemma~\ref{lem:Jgamma}.
But from the definition of $p_\gamma$ in \eqref{pgamma} we see that $\bigcap \ker p_\gamma$ is precisely the kernel of the map $\Phi$ from Remark~\ref{rem:GK3}.  
Thus $\ker \Phi$ is the only prime Poisson ideal of $\Sa(W)$ of co-GK 3.

We thank the anonymous referee for drawing our attention to this fact.
\end{remark}

\begin{remark}\label{rem:J}
Fix a partition $\lambda$.  Taking $Y = \AA(\lambda)$ in Theorem~\ref{thm:paramprimes} we see that the pair $(\lambda, Y)$ corresponds to the ideal $J^\lambda_{\mf g}$.  This is another way to see that $J^\lambda_{\mf g}$ is prime.
\end{remark}

\begin{remark}\label{rem:paramprimesVir}
By Corollary~\ref{cor:primeVir} any non-centrally generated prime Poisson ideal of $\Sa(\Vir)$ strictly contains $(z)$ and thus corresponds to a nontrivial prime Poisson ideal of $\Sa(W)$ as given in Theorem~\ref{thm:paramprimes}.  
On the other hand, centrally generated prime Poisson ideals of course correspond to prime ideals of $\kk[z]$.
Thus $\PSpec \Sa(\Vir)$ is also partitioned into countably many strata, each homeomorphic either to a finite-dimensional affine space or to some $\AA^k \times (\AA^1\setminus \{0\})$. 
\end{remark}

We now prove that restriction from $\Sa(W)$ to $\Sa(W_{\geq-1})$ induces a bijection on arbitrary prime Poisson  ideals, not just Poisson primitive ideals.
We conjecture that this bijection is in fact a homeomorphism between ${\rm PSpec_{prim}}(\Sa(W))$ and ${\rm PSpec_{prim}}(\Sa(W_{\geq -1}))$, but at this point we cannot prove it.
 
\begin{corollary}\label{Csameprimespec} 
Restriction induces a bijection between prime Poisson $\Sa(W)$  and prime Poisson ideals of $\Sa(W_{\geq -1})$, and  a bijection between  irreducible closed subsets of  ${\rm PSpec_{prim}}\Sa(W_{\geq-1})$ and of ${\rm PSpec_{prim}}\Sa(W)$.
\end{corollary}
\begin{proof}
We must show that  restriction gives a bijection 
\[  \operatorname{res}:  \{ \mbox{ prime Poisson ideals of $\Sa(W)$ }\} \to \{ \mbox{ prime Poisson ideals of $\Sa(W_{\geq-1})$} \}.\]

Certainly, if $J$ is a prime Poisson ideal of $\Sa(W)$ then $J$ is closed under all $\{v, -\}$ for $v \in W_{\geq -1}$ and so $J \cap \Sa(W_{\geq -1})$ is a prime Poisson ideal.
Thus $\operatorname{res}$ is well-defined.

Let $P \in \PSpec \Sa(W)$.  Let $(\lambda, Y) := \Psi_W(P)$ and let $Q := \Psi_{W_{\geq -1}}^{-1}(\lambda, Y)$.
We claim that $Q = \operatorname{res}(P)$.

For each $y \in Y$ choose a representative $\chi_y \in W^*$ lying in the pseudo-orbit corresponding via $\overline{\pi^\lambda_W}$ to $y$.
Then Theorem~\ref{thm:paramprimes} gives that 
\[ P = \Psi_W^{-1}(\lambda, Y) = \bigcap\{ \mf m_\chi \ | \ \chi \in W^*, \lambda(\chi) = \lambda,  \overline{\pi^\lambda_W}(\chi) \in Y\}.\]
By Lemma~\ref{lem:basics}, this is $\bigcap_{y\in Y} \CoreP(\chi_y)$.
Likewise, $Q$ is equal to 
\[ \Psi_{W_{\geq-1}}^{-1}(\lambda, Y) = \bigcap\{ \mf m_\chi \ | \ \chi \in W_{\geq -1}^*, \lambda(\chi) = \lambda,  \overline{\pi^\lambda_{W_{\geq -1}}}(\chi) \in Y\} = \bigcap_{y \in Y} \CoreP(\chi_y|_{W_{\geq-1}}).\]
By Proposition~\ref{Plfpoi}(a), this is 
\[ \bigcap_{y \in Y} \CoreP(\chi_y) \cap \Sa(W_{\geq -1}) = P \cap \Sa(W_{\geq -1}) = \operatorname{res}(P).\]

As $\operatorname{res}$ acts on nonzero Poisson prime ideals as the composition of the bijections $\Psi_W$ and $\Psi_{W_{\geq-1}}^{-1}$, it is a bijection.
%
%
%
\end{proof}

\subsection{The Poisson Dixmier-Moeglin equivalence}\label{SSPDME}
We next consider which Poisson prime ideals satisfy the {\em Poisson Dixmier-Moeglin equivalence}.  This is the Poisson version of the equivalent conditions for primitive ideals in enveloping algebras of finite-dimensional Lie algebras, which are due to Dixmier and Moeglin.  We describe the conditions here.
\label{ind:PDME2}

Let $Q$ be a Poisson prime ideal in a Poisson algebra $A$, which we assume to be a domain.  
Then $Q$ is {\em Poisson locally closed} if it is locally closed in the Zariski topology on $\PSpec(A)$.
We say $Q$ is {\em Poisson rational} if the Poisson centre of the field of fractions of $A/Q$ is algebraic over $\kk$.  
We say the {\em Poisson Dixmier-Moeglin equivalence (PDME) holds for $A$} if for any prime Poisson ideal $Q$ of $A$, the conditions
\[
\begin{array}{c}
\mbox{$Q$ is Poisson locally closed,}\\
\mbox{$Q$ is Poisson primitive,}\\
\mbox{$Q$ is Poisson rational}
\end{array}
\]
are equivalent. 
If $\mf g$ is a finite-dimensional Lie algebra, then the PDME holds for $\Sa(\mf g)$ \cite[Theorem~2]{LLS}.

On the other hand, the next result shows that the PDME fails for $\Sa(\Vir)$, $\Sa(W)$, and $\Sa(W_{\geq -1})$.  However, it {\em almost} holds:  for all these algebras, there is only one Poisson prime for which the PDME fails.
\begin{theorem}\label{thm:PDME}
The Poisson Dixmier-Moeglin equivalence holds for all prime Poisson  ideals of $\Sa(\Vir)$ except for $(z)$:  that is, if $Q \neq (z)$ is a prime Poisson  ideal of $\Sa(\Vir)$ then $Q$ is locally closed in the Poisson spectrum if and only if $Q$ is Poisson primitive, if and only if $Q$ is Poisson rational.  However, $(z)$ is Poisson primitive and Poisson rational but not Poisson locally closed.
Thus the PDME for $\Sa(W)$ fails but holds for all prime Poisson ideals  except for $(0)$.
Likewise, the PDME for $\Sa(W_{\geq-1})$ fails but holds for all prime Poisson ideals  except for $(0)$.
\end{theorem}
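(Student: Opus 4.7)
The plan is to reduce the three statements to an analysis of the nonzero Poisson primes of $\Sa(W)$, parameterised by Theorem~\ref{thm:paramprimes}, and to handle one genuinely exceptional ideal. For $\Sa(\Vir)$, I would use Corollary~\ref{cor:primeVir} and Remark~\ref{rem:paramprimesVir} to split $\PSpec\Sa(\Vir)$ into the centrally generated primes $(z-\lambda)$ for $\lambda\in\kk$ and the primes properly containing $(z)$, the latter matching via $\Sa(\Vir)/(z)\cong\Sa(W)$ with the nonzero Poisson primes of $\Sa(W)$ in a way that preserves all three PDME conditions. For $\Sa(W_{\geq-1})$, I would extend the primitive-spectrum homeomorphism of Corollary~\ref{Csameprimespec} to all of $\PSpec$ using the common $(\lambda,Y)$-parameterisation of Theorem~\ref{thm:paramprimes}. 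Two boundary cases are then immediate: for $\lambda\neq 0$, Corollary~\ref{cor:Psimple} makes $(z-\lambda)$ maximal in $\PSpec\Sa(\Vir)$, hence locally closed; it is primitive by Theorem~\ref{Tlocvir} for any $\chi$ with $\chi(z)=\lambda$, and rational by Poisson simplicity combined with the $\gcd$ trick below. The ideal $(0)\subset\Sa(\Vir)$ fails all three --- $\CoreP(\chi)\ne(0)$ for every $\chi$ by Theorem~\ref{Tlocvir}, $z$ is a transcendental Poisson-central element, and $\bigcap_\lambda(z-\lambda)=(0)$ --- so PDME holds vacuously.

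The first substantive case is $(0)\subset\Sa(W)$ (equivalently $(z)\subset\Sa(\Vir)$). Primitivity is immediate from Theorem~\ref{Tloc}: any non-local $\chi$ has $\CoreP(\chi)=(0)$. For Poisson rationality I would use the $\gcd$ trick: writing $f/g\in\operatorname{Frac}\Sa(W)$ Poisson-central with $\gcd(f,g)=1$, the identity $g\{h,f\}=f\{h,g\}$ forces $g\mid\{h,g\}$ for every $h\in\Sa(W)$, so $(g)$ is a Poisson ideal; if $g\notin\kk^\times$, then $\Sa(W)/(g)$ still contains infinitely many algebraically independent generators and so has infinite Gelfand--Kirillov dimension, contradicting Theorem~\ref{Tgkf}. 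Hence $g,f\in\kk$ and the Poisson centre of $\operatorname{Frac}\Sa(W)$ is $\kk$. For failure of local closedness I would prove the stronger fact $\bigcap_{P\in\PSpec\Sa(W)\setminus\{(0)\}}P=(0)$: given $0\ne f\in\Sa(W)$ and a finite-dimensional $V\subseteq W$ with $f\in\Sa(V)$, sums of evaluations $\sum_i\alpha_i\chi_{x_i;1}$ at sufficiently many distinct $x_i\in\kk^\times$ surject by Vandermonde onto $V^*$, so some local $\chi$ satisfies $\chi(f)\ne 0$, whence $\CoreP(\chi)$ is a nonzero Poisson prime not containing $f$.

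Finally, for a nonzero prime $Q=Q(\lambda,Y)$ of $\Sa(W)$ with $k:=\len(\lambda^{\mathrm{o}})$ and $Y\subseteq\AA^k$ irreducible closed, the plan is to show that all three PDME conditions are equivalent to ``$Y$ is a single point''. Primitivity already has this description by Theorem~\ref{Tlfpoi} and Remark~\ref{rem:dagdag}. If $\dim Y\ge 1$, pulling back by $\pi^\lambda_W$ injects $\kk[Y]$ into $\Sa(W)/Q$ with image in the Poisson centre --- $\pi^\lambda_W$ is constant on $\widehat{\DLoc}^\lambda$-orbit quotients, which by Lemma~\ref{Llact} are the integral submanifolds of the Hamiltonian flow on $V^\lambda_W$ --- so $Q$ is not rational; and $\bigcap_{y\in Y\text{ closed}}Q(\lambda,\{y\})=Q(\lambda,Y)=Q$ shows it is not locally closed. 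If $Y=\{y\}$, every $P\supsetneq Q$ has generic order partition $\lambda(P)\ne\lambda$, so $V(P)\cap V^\lambda_W=\emptyset$ while $V(Q)\cap V^\lambda_W$ is dense in $V(Q)$; any function vanishing on the proper closed subset $V(Q)\setminus V^\lambda_W$ but not on $V(Q)$ gives an element of $\bigcap_{P\supsetneq Q}P\setminus Q$, proving local closedness. The main obstacle is rationality in this case: I would use the localisation corollary immediately following Theorem~\ref{thm:paramprimes} to identify $\Sa(W)/Q$, up to inverting an element, with the coordinate ring of a single $\widehat{\DLoc}^\lambda$-orbit, then argue via Lemma~\ref{Llact} that any Poisson-central rational function on this orbit is $\widehat{\DLoc}^\lambda$-invariant and hence constant, since a connected algebraic group acting transitively on an irreducible variety admits only constant invariant rational functions.
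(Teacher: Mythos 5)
Your proposal is correct in outline but takes a genuinely different route from the paper, and it redoes work the paper delegates to an external citation. The paper's proof is short: it cites \cite[Theorem~6.3]{LSS} to get that ``Poisson locally closed $\Rightarrow$ Poisson primitive $\Leftrightarrow$ Poisson rational'' for any countable-dimensional Lie algebra, and then only needs to prove (i) ``Poisson primitive $\Rightarrow$ Poisson locally closed'' for $Q\ne(z)$, which it does by producing the determinantal Poisson ideal $I(d-1)$ from Lemma~\ref{lem:detideal} (with $d=\dim\Vir/\Vir^\chi$) and checking $\VP(\CoreP(\chi))\setminus\VP(I(d-1))=\{\CoreP(\chi)\}$, and (ii) the exceptional behaviour of $(z)$, which matches your argument. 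You instead avoid \cite{LSS} entirely and prove all three equivalences from the $(\lambda,Y)$-parameterisation of Theorem~\ref{thm:paramprimes}, which is a legitimate and somewhat more self-contained alternative (your gcd trick for rationality of $(0)$ is clean and not in the paper). What each buys: the paper's use of $I(d-1)$ gives a two-line proof of local closedness that uses only the single numerical invariant $\rk B_\chi$; your partition-comparison approach gives a more structural picture (all three conditions $\Leftrightarrow Y$ is a point) but at the cost of more bookkeeping.

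Two soft spots to flag. First, your local-closedness argument refers to ``the proper closed subset $V(Q)\setminus V^\lambda_W$,'' but the paper explicitly does \emph{not} know whether $V^\lambda_{\mf g}$ is open in $\overline{V^\lambda_{\mf g}}$ (see Remark~\ref{rem:opensubset}), so $V(Q)\setminus V^\lambda_W$ need not be closed. The fix is easy but must be made: replace it by $V(Q)\setminus U$, where $U\subseteq V(Q)\cap V^\lambda_W$ is the nonempty open subset of $V(Q)$ produced in the proof of Theorem~\ref{thm:paramprimes}(a). Second, your claim ``$\lambda(P)\ne\lambda\Rightarrow V(P)\cap V^\lambda_W=\emptyset$'' is true but not immediate from the definition of $\lambda(P)$: one needs the argument that any $\chi\in V(P)\cap V^\lambda_W$ would give $Q\subsetneq P\subseteq\CoreP(\chi)$ with $\lambda(\CoreP(\chi))=\lambda$ and hence $Y(\CoreP(\chi))\subseteq Y(Q)=\{y\}$, forcing $\CoreP(\chi)=Q$, a contradiction. (Alternatively, and more directly for the purpose of local closedness, it suffices to observe that $V(P)\cap U$ is an open subset of the irreducible $V(P)$ that cannot be dense since $\lambda(P)\ne\lambda$, hence is empty.) With these repairs, and with the rationality argument for $Q(\lambda,\{y\})$ fleshed out along the lines you indicate (birationality via Corollary~\ref{Cliso} to the coordinate ring of a single orbit closure, then transitivity of the connected group $\widehat{\DLoc}^\lambda$), your proof goes through.
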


\begin{remark}\label{rem:pathology}
By Corollary~\ref{cor:Psimple}, the Poisson spectrum of $\Sa(\Vir)[z^{-1}]$, which is in natural bijection with the set of prime Poisson ideals of $\Sa(\Vir)$ that do not contain $z$, consists of $(0)$ and the ideals $(z-\lambda)$ for $\lambda \in \kk^*$:  in other words, $\PSpec(\Sa(\Vir)[z^{-1}]) \cong \AA^1 \ssm \{0\}$.
Therefore,  the most interesting structure of $\Sa(\Vir)$ is concentrated above the ideal $(z)$, which Theorem~\ref{thm:PDME} shows to be pathological in some sense. 
\end{remark}

To prove Theorem~\ref{thm:PDME}, we will need notation for the Zariski topology on $\PSpec \Sa(\mf g)$.  If $A$ is a Poisson algebra and  $N$ is a Poisson ideal of $A$, we denote the corresponding closed subset of $\PSpec A$ 
by
\[ \VP(N) := \{ P \in \PSpec A \ |\  N \subseteq P\}.\]
\label{ind:VP}
We caution the reader that although $V(N) \subseteq \MSpec A$ consists of maximal ideals,  $\VP(N) \subseteq \PSpec A$ consists of prime (Poisson) ideals.  

Before proving Theorem~\ref{thm:PDME}, we note that the standard equivalent condition for a Poisson prime to be locally closed also holds in this infinite-dimensional setting. 

\begin{lemma}\label{lem:locally closed}
Let $A$ be a (possibly non-noetherian) Poisson algebra, and let $Q$ be a prime Poisson ideal of $A$.  Then $Q$ is locally closed in $\PSpec A$ if and only if there is some $f \in A \setminus Q$ so that $(A/Q)[f^{-1}]$ is Poisson simple.
\qed
\end{lemma}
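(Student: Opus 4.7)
The plan is to translate ``locally closed'' into a statement about the Poisson ideal $J := \bigcap\{P \in \PSpec A \mid P \supsetneq Q\}$ (set equal to $A$ if no such $P$ exists) and argue both directions, leaning on the well-known characteristic-zero facts that (i) the radical of a Poisson ideal is Poisson and (ii) every minimal prime over a Poisson ideal is Poisson. Observe first that $\overline{\{Q\}} = \VP(Q)$ in $\PSpec A$, so $\{Q\}$ is locally closed if and only if $J \not\subseteq Q$.

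For the implication ``$(A/Q)[f^{-1}]$ Poisson simple $\Longrightarrow$ locally closed,'' let $f \in A \setminus Q$ witness Poisson simplicity and let $P \supsetneq Q$ be any prime Poisson ideal. If $f \notin P$ then $\bar P := P/Q$ is a nonzero prime Poisson ideal of $A/Q$ avoiding $f$, and the extension $\bar P[f^{-1}]$ is a nonzero proper Poisson ideal of $R := (A/Q)[f^{-1}]$ --- nonzero because contraction recovers $\bar P$, and proper because $f \in \bar P[f^{-1}] \cap (A/Q) = \bar P$ would contradict $f \notin P$. This contradicts Poisson simplicity, so $f \in P$ for every such $P$; hence $f \in J \setminus Q$.

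For the converse, pick $f \in J \setminus Q$ and suppose for contradiction that $R := (A/Q)[f^{-1}]$ contains a nonzero proper Poisson ideal $K$. By (ii), $K$ is contained in a prime Poisson ideal $\tilde P \subsetneq R$. Since $\bar f$ is a unit in $R$ we have $\bar f \notin \tilde P$; moreover $K \cap (A/Q) \neq 0$, because clearing denominators sends any nonzero $x/f^n \in K$ to a nonzero element $x = f^n \cdot (x/f^n) \in K \cap (A/Q)$, and so $\tilde P \cap (A/Q) \neq 0$. Pulling back along $A \to A/Q$ yields a prime Poisson ideal $P \supsetneq Q$ of $A$ with $f \notin P$, contradicting $f \in J$.

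The one step that deserves comment is fact (ii) in the non-noetherian setting, which is the main obstacle. It is handled by localisation: if $P$ is a minimal prime over a Poisson ideal $I$, then $PA_P$ is the unique minimal prime over $IA_P$ in $A_P$, so $PA_P = \sqrt{IA_P}$, which is Poisson by (i); hence $P = PA_P \cap A$ is Poisson as well. No finiteness hypothesis on $A$ is needed for this local reduction.
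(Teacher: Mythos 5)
Your proof is correct and follows essentially the same route as the paper's: both directions reduce to the correspondence between Poisson ideals of $A/Q$ avoiding $f$ and Poisson ideals of $(A/Q)[f^{-1}]$, with local closedness encoded by a single radical Poisson ideal (your $J$, the paper's $RP(f)$ and $N$ after reducing to $Q=(0)$). The only difference is that you make explicit the step the paper leaves implicit in the converse --- producing a Poisson prime of $A$ strictly containing $Q$ and avoiding $f$ from a nontrivial Poisson ideal of the localisation --- via the standard characteristic-zero fact that minimal primes over Poisson ideals are Poisson, and your localisation proof of that fact is valid without any noetherian hypothesis.
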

We leave the proof to the reader.

%
\begin{proof}[Proof of Theorem~\ref{thm:PDME}]
Let $\mf g$ be any countable-dimensional Lie algebra.  
By \cite[Theorem~6.3]{LSS}, for prime Poisson ideals of $\Sa(\mf g)$, Poisson locally closed implies Poisson primitive and Poisson primitive is equivalent to Poisson rational.  Thus to prove the theorem for $\Sa(\Vir)$ and $\Sa(W)$, it suffices to prove:
\begin{itemize}\item[(a)]Let $Q$ be a prime Poisson  ideal of $\Sa(\Vir)$ with $Q \neq (z)$.   If $Q$ is Poisson primitive, then  $Q$ is  Poisson locally closed.
\item[(b)] The ideal $(z)$ is Poisson primitive but not Poisson locally closed.
\end{itemize}
We first prove (b).   
Let $\nu \in \Vir^*$ be any non-local function with $\nu(z) = 0$ (for example, we can take $\nu$ to induce the non-local function $\varkappa$ on $W$ given in Remark~\ref{rem:dag}).  By Theorem~\ref{Tlocvir}, $\CoreP(\nu) = (z)$ and so $(z)$ is Poisson primitive.

To prove that $(z)$ is not Poisson locally closed, by Lemma~\ref{lem:locally closed} it is enough to prove that $\Sa(W)[f^{-1}]$ is not Poisson simple for any $f\in\Sa(W) \setminus \{0\}$.
Suppose that $\Sa(W)[f^{-1}]$ is Poisson simple. Then $f$ is contained in all proper Poisson ideals of $\Sa(W)$.  Let $\chi \in W^*$ be a local function.  The Poisson core $\CoreP(\chi)$ of $\chi$ is nontrivial, so $f \in \CoreP(\chi) \subseteq \mf m_\chi$.  In other words, for any local function $\chi \in W^*$, we have $\ev_\chi(f) = 0$, which is ridiculous. 

We now prove (a).   Let $\chi \in \Vir^*$ with $\CoreP(\chi) \neq (z)$.  We show that $\CoreP(\chi)$ is Poisson locally closed.

The ideals $(z-\zeta)$ with $\zeta \neq 0$ are maximal in $\PSpec \Sa(\Vir)$ by Corollary~\ref{cor:Psimple} and are thus closed points of $\PSpec \Sa(\Vir)$.  
So we may assume that 
 $\CoreP(\chi)  \neq (z-\chi(z))$.  
By Theorem~\ref{Tlocvir} $\chi$ is therefore local and $d:= \dim \Vir/\Vir^\chi < \infty$.  

Let $I(d-1)$ be the Poisson ideal of $\Sa(\Vir)$ defined in Lemma~\ref{lem:detideal}.  
We claim that
\[ \VP(\CoreP(\chi)) \setminus \VP(I(d-1))  = \{ \CoreP(\chi)\}\]
so $\CoreP(\chi)$ is Poisson locally closed.

First, by \eqref{detideal} $I(d-1) \not\subseteq \mf m_\chi$ so $\CoreP(\chi) \not \in \VP(I(d-1))$.
By Proposition~\ref{Plfpoi} $d = \GK \Sa(\Vir)/ \CoreP(\chi)$.
Thus if $\nu \in \overline{\OO(\chi)} \setminus \OO(\chi)$, we have
\[ d-1 \geq \GK \Sa(\Vir)/\CoreP(\nu) = \dim \Vir \cdot \nu,\]
where we have used Proposition~\ref{Plfpoi} again for the last equality.
Thus $I(d-1) \subseteq \mf m_\nu$ by \eqref{detideal}, so $I(d-1) \subseteq \CoreP(\nu)$.
It follows that if $P$ is a Poisson prime ideal of $\Sa(\Vir)$ with $P \supsetneqq \CoreP(\chi)$ then, writing
\[ P =\bigcap \{ \CoreP(\nu) \ | \ \mf m_\nu \supseteq P\},\]
we have $P \supseteq I(d-1)$, establishing the claim.  

The proof for $\Sa(W_{\geq-1})$ is almost identical. 
\end{proof}

We remark that we have used relatively few pieces of structure theory of $\Vir$, $W$, and $W_{\geq-1}$ in the proof of Theorem~\ref{thm:PDME} (the inputs are essentially that $\dim \OO(\chi)=\dim\mf g\cdot\chi$ and that $\dim\mf g\cdot\chi<\infty$ for all $\chi$ in which we are interested). 
Thus similar results may hold for a wider class of Lie algebras.

\subsection{Radical Poisson ideals contain Poisson primitive ideals}\label{SSPradprim}

Let $\mf g = W$ or $W_{\geq-1}$.  
%
From the bijection in Theorem~\ref{thm:paramprimes} it is tempting to think of Poisson primitive ideals of $\Sa(\mf g)$ as analagous to closed points:  by that result, a prime Poisson ideal $Q$ is Poisson primitive if and only if $Y(Q)$ is a single point.  
Note, however, that Theorem~\ref{thm:PDME} also tells how to distinguish Poisson primitive ideals in $\PSpec \Sa(W)$, and we saw there that in general they are only locally closed.
The next result shows how far most of the Poisson primitive ideals are from being maximal in $\PSpec \Sa(\mf g)$; see also Corollary~\ref{cor:augmentation}.

\begin{proposition}\label{Pradprim}
Let  $\mf g=W$ or  $W_{\ge-1}$ or $W_{\ge1}$ and let $I$ be a nonzero radical Poisson ideal of $\Sa(\mf g)$. 
Then there is some local function $\nu\in\mf g^*$ with $\CoreP(\nu)\subseteq I$.  
\end{proposition}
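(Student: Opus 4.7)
The plan is to find a partition $\lambda^\ast$ with all parts even, such that $J^{\lambda^\ast}_{\mf g}\subseteq I$. This suffices because when every part of $\lambda^\ast$ is even we have $\len(\lambda^{\ast,\mathrm{o}})=0$, so Remark~\ref{rem:dagdag} forces the space of pseudo-orbits of partition $\lambda^\ast$ to be $\AA^0=\{\mathrm{pt}\}$; by Remark~\ref{rem:J} the prime $J^{\lambda^\ast}_{\mf g}$ is then Poisson primitive, and in fact $\CoreP(\nu)=J^{\lambda^\ast}_{\mf g}$ for every $\nu\in V^{\lambda^\ast}_{\mf g}$.

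To produce $\lambda^\ast$, first invoke Theorem~\ref{Tgkf} so that $\GK\Sa(\mf g)/I<\infty$, and Proposition~\ref{prop9} to obtain the finitely many minimal primes $Q_1,\ldots,Q_m$ of $I$, each of which is Poisson. For $\mf g=W$ or $W_{\ge-1}$, Theorem~\ref{thm:paramprimes} associates a generic order partition $\lambda_i:=\lambda(Q_i)=(m^i_1,\ldots,m^i_{s_i})$ to each $Q_i$, with $V(Q_i)\cap V^{\lambda_i}_{\mf g}$ containing a dense open subset of $V(Q_i)$. Choose an even integer $M\ge \max_{i,j}m^i_j$ and an integer $R\ge\max_i s_i$, and set
\[
\lambda^\ast:=(\underbrace{M,M,\ldots,M}_{R\text{ copies}}).
\]

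The heart of the argument is then the inclusion $V^{\lambda_i}_{\mf g}\subseteq\overline{V^{\lambda^\ast}_{\mf g}}$ for each $i$, because this yields $V(Q_i)\subseteq\overline{V^{\lambda_i}_{\mf g}}\subseteq\overline{V^{\lambda^\ast}_{\mf g}}=V(J^{\lambda^\ast}_{\mf g})$, hence $J^{\lambda^\ast}_{\mf g}\subseteq Q_i$ by taking radicals, and therefore $J^{\lambda^\ast}_{\mf g}\subseteq\bigcap_i Q_i=I$. I would verify the inclusion by explicit degeneration: given $\chi=\sum_{j=1}^{s_i}\chi_{x_j;\alpha^j_0,\ldots,\alpha^j_{m^i_j}}\in V^{\lambda_i}_{\mf g}$, pick auxiliary points $y_{s_i+1},\ldots,y_R\in\kk$ distinct from each other and from the $x_j$ (nonzero when $\mf g=W$ or $W_{\ge1}$) and consider the family
\[
\chi_\epsilon:=\sum_{j=1}^{s_i}\chi_{x_j;\alpha^j_0,\ldots,\alpha^j_{m^i_j},0,\ldots,0,\epsilon}+\sum_{j=s_i+1}^{R}\chi_{y_j;0,\ldots,0,\epsilon},
\]
where in each one-point summand the coefficient $\epsilon$ is placed at index $M$. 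The assignment $\epsilon\mapsto\chi_\epsilon$ is a morphism $\AA^1\to\mf g^*$; for $\epsilon\in\kk^\times$ the function $\chi_\epsilon$ has partition exactly $\lambda^\ast$, whereas $\chi_0=\chi$, which places $\chi$ in $\overline{V^{\lambda^\ast}_{\mf g}}$ as required. Any $\nu\in V^{\lambda^\ast}_{\mf g}$ then satisfies $\CoreP(\nu)=J^{\lambda^\ast}_{\mf g}\subseteq I$.

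For $\mf g=W_{\ge1}$ the strategy is the same, with two minor adjustments: the parameterisation of Theorem~\ref{thm:paramprimes} is not stated in the paper but can be developed using Theorem~\ref{Tlfpoi0}, which reduces pseudo-orbits on $W_{\ge1}$ to those on $W$ plus a one-point datum at $0$; and one must take the support points $x_j$ and $y_j$ all nonzero to avoid the degeneracies at $0$ recorded in Theorem~\ref{Texpw+}. The main obstacle I anticipate is confirming that a single $\lambda^\ast$ can simultaneously dominate all the $\lambda_i$ in the degeneration order used above, but taking $\lambda^\ast$ to be a sufficiently long tuple of copies of a large even integer handles every partition $\lambda_i$ uniformly, and the rest of the proof proceeds by routine closure computations.
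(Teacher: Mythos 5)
For $\mf g = W$ or $W_{\geq -1}$ your argument is correct and takes a genuinely different route from the paper's. The paper never passes through minimal primes or Theorem~\ref{thm:paramprimes}: it writes $I=\bigcap\{\CoreP(\mu)\mid \ev_\mu(I)=0\}$ via Lemma~\ref{lem:basics}(a), bounds the data of each such (necessarily local) $\mu$ by $2d>\GK \Sa(\mf g)/I$, and proves $\CoreP(\nu)\subseteq\CoreP(\mu)$ for the single function $\nu=\sum_{i=1}^{2d}\chi_{i;0,\dots,0,1}$ of order $2d$ at the points $1,\dots,2d$, using Theorem~\ref{Texpw}, Corollary~\ref{Cdim} and Corollary~\ref{C:sumoflfs}. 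Your one-parameter $\epsilon$-degeneration (a polynomial in $\epsilon$ vanishing on $\kk^\times$ vanishes at $\epsilon=0$) is a clean substitute for the orbit-closure results \ref{Cdim} and \ref{C:sumoflfs}, and your identification $\CoreP(\nu)=J^{\lambda^\ast}_{\mf g}$ is right, though it should be justified by ``$V^{\lambda^\ast}_{\mf g}$ is a single pseudo-orbit, so Lemma~\ref{lem:basics}(b) gives $\CoreP(\nu)=I(V^{\lambda^\ast}_{\mf g})$'' rather than by Remark~\ref{rem:J}, which only gives primeness. Two small fixes: choose $M$ strictly larger than every part $m^i_j$ (so that placing $\epsilon$ at index $M$ makes sense and each summand has order exactly $M$), and note that both proofs end up with essentially the same witness $\nu$, a sum of many one-point functions of one large even order.

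The genuine gap is the case $\mf g=W_{\geq 1}$, which is part of the statement and which you dismiss as a minor adjustment. First, Theorem~\ref{thm:paramprimes} and the ideals $J^\lambda_{\mf g}$ are only established for $W$ and $W_{\geq-1}$; developing the analogue for $W_{\geq1}$ is real work, not a footnote. Second, and more seriously, your key identity $\CoreP(\nu)=I(V^{\lambda^\ast}_{W_{\geq1}})$ fails as stated: a local function on $W_{\geq1}$ may have a one-point component based at $0$, and by Theorem~\ref{Tlfpoi0} (there is no \Shifts\ action moving the base point $0$, and Theorem~\ref{Texpw+} shows the orbits there are different) such functions lie in $V^{\lambda^\ast}$ but in pseudo-orbits distinct from $\OO(\nu)$, so $I(V^{\lambda^\ast})$ is an intersection of several Poisson cores and is not obviously a single $\CoreP(\nu)$. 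Your remark ``take the support points nonzero'' only governs the auxiliary points you introduce; the supports of the functions vanishing on $I$ (or of points of $V(Q_i)$) are given and may contain $0$. The repair is either an additional degeneration moving a $0$-based component to a varying nonzero base point, showing the $0$-supported part of $V^{\lambda^\ast}$ still lies in $V(\CoreP(\nu))$, or, closer to the paper, to drop Theorem~\ref{thm:paramprimes} entirely: apply Lemma~\ref{lem:basics}(a) directly, bound the partitions of all local $\chi$ with $\ev_\chi(I)=0$ via Theorem~\ref{Tgkf}, and use your degenerations to show $\CoreP(\nu)\subseteq\mf m_\chi$, hence $\CoreP(\nu)\subseteq\CoreP(\chi)$, for each such $\chi$. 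As written, the $W_{\geq1}$ case is not proved.
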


If $I$ is prime and $\mf g = W$ or $W_{\geq -1}$ this may be deduced from the bijection $\Psi$ of Subsection~\ref{SSpPp}.  
We provide a direct proof, however.

\begin{proof} 
Thanks to Theorem~\ref{Tgkf} we have that $\GK(\Sa(\mf g)/I)<\infty$; pick $d\in\ZZ_{\ge1}$ with $2d>\GK(\Sa(\mf g)/I)$. 
By Lemma~\ref{lem:basics} $I$ is an intersection of a family of primitive ideals $\CoreP(\mu)$, and
$$\GK(\Sa(\mf g)/\CoreP(\mu))\le \GK(\Sa(\mf g)/I)$$
for each $\mu$ with $\mf m_\mu \supseteq I$. 
We will show that there exists $\nu = \nu_{d} \in\mf g^*$
such that if $\mu \in \mf g^*$ with $$\GK(\Sa(\mf g)/\CoreP(\mu))<2d,$$ then $\CoreP(\nu)\subseteq \CoreP(\mu)$. 

Pick $\mu\in\mf g^*$ with $\dim  \mf g\cdot\mu=\GK(\Sa(\mf g)/\CoreP(\mu))<2d$, see Proposition~\ref{Plfpoi}(b). 
Then $\mu$ is local and thus is a sum of several (say $\ell$) nonzero one-point 
local functions $\mu_i\in\mf g^*$ with distinct supports $x_i$. 
Recall that $\dim \mf g\cdot\mu=\sum_{i=1}^\ell\dim\mf g\cdot \mu_i$ by Lemma~\ref{Lwcrt} and therefore $\dim \mf g\cdot \mu_i<2d$ together with $\ell< 2d$. 
Pick $n\in\NN$ such that $n\ge 2d$ and $\mu_i\in \Loc_{x_i}^{\le n}$ for all $i$. 
Thanks to Lemmata~\ref{Lwgr},~\ref{Llact} we have $\dim \DLoc_{x_i}^{\le n}\mu_i<2d$ for all $i$.

Let $\widetilde\chi_i:=\chi_{\mbox{$x_i$}; \small{\underbrace{0, 0, \ldots,0,}_{2d\text{~times}}} \mbox{1}}$. 
Thanks to Theorem~\ref{Texpw} we have $\dim \DLoc_{x_i}^{\le n}\widetilde\chi_i=2d+1>\dim \DLoc_{x_i}^{\le n}\mu_i$. 
This together with Corollary~\ref{Cdim} implies that $\CoreP(\widetilde \chi_i)\subseteq \CoreP(\chi_i)$.

By Corollary~\ref{C:sumoflfs}, $\CoreP(\sum_{i=1}^{\ell}\widetilde{\chi_i})\subseteq \CoreP(\mu)$. 

Set $\widetilde{\chi_i}':=\chi_{\mbox{\it{i}}; \underbrace{\scriptsize{0, 0, \ldots,0,}}_{2d\text{~times}} \mbox{1}}$.
By Theorem~\ref{Tlfpoi},  $\CoreP(\sum_{i=1}^{\ell}\widetilde {\chi_i})=\CoreP(\sum_{i=1}^\ell\widetilde{\chi_i}')$ and hence, applying Corollary~\ref{C:sumoflfs}, $\nu=\sum_{i=1}^{2d}\widetilde{\chi_i}'$ satisfies the desired properties.

By Lemma~\ref{lem:basics}, $\CoreP(\nu) \subseteq I$.
\end{proof}

The statement of the above corollary can be rephrased as follows: every proper algebraic collection of pseudo-orbits is (strictly) contained in the closure of a single finite-dimensional pseudo-orbit. 
If $\dim \mf g < \infty$, this means that $\mf g^*$ contains a dense coadjoint orbit; such Lie algebras are called Frobenius Lie algebras and there are very few of them.
For infinite-dimensional Lie algebras this statement seems quite counterintuitive.
We expect that only a few Lie algebras satisfy it.

The next corollary answers Question 6.8 of \cite{LSS}.

\begin{corollary}\label{Cinfo}
Let $\mf g = W $ or $W_{\geq -1}$ or $W_{\geq 1}$.  
Then $\Sa(\mf g)$ has no  nonzero prime Poisson ideals  of finite height.
\end{corollary}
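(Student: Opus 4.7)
The plan is to argue by contradiction. Suppose $Q$ is a nonzero prime Poisson ideal of $\Sa(\mf g)$ of finite height $h$; the goal is to produce, inside $Q$, a strictly descending chain of prime Poisson ideals of length exceeding $h$.

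The construction proceeds by induction with Proposition~\ref{Pradprim} as the main input. For the base case, apply Proposition~\ref{Pradprim} to the nonzero radical Poisson ideal $I = Q$ to obtain a local function $\nu_1 \in \mf g^*$ with $\CoreP(\nu_1) \subseteq Q$, and set $P_1 := \CoreP(\nu_1)$. The locality of $\nu_1$ forces $P_1 \ne (0)$ by Theorem~\ref{Tloc01} (for $\mf g = W_{\ge-1}$ or $W_{\ge1}$) or Theorem~\ref{Tloc} (for $\mf g = W$). Inductively, given a nonzero Poisson primitive ideal $P_k = \CoreP(\nu_k) \subseteq Q$, apply Proposition~\ref{Pradprim} to $I = P_k$ to produce a local function $\nu_{k+1}$ with $P_{k+1} := \CoreP(\nu_{k+1}) \subseteq P_k$.

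The main subtlety is ensuring strict containment $P_{k+1} \subsetneq P_k$ at each step, which requires examining the proof of Proposition~\ref{Pradprim} rather than merely its statement. That proof constructs the relevant local function as a sum of $2d$ nonzero one-point local functions of order $2d$ at distinct base points, where $d$ may be taken as large as desired, subject only to $2d > \GK(\Sa(\mf g)/P_k)$. By Lemma~\ref{Lwcrt} and Proposition~\ref{Plfpoi}(b) this yields
\[ \GK(\Sa(\mf g)/P_{k+1}) = 2d(2d+2) > 2d > \GK(\Sa(\mf g)/P_k), \]
so $P_{k+1}$ and $P_k$ are distinct primes (their quotients having different Gelfand-Kirillov dimensions) with $P_{k+1} \subseteq P_k$, giving the strict containment. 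Iterating produces an infinite strictly descending chain of prime Poisson ideals inside $Q$, contradicting the finite height hypothesis. Beyond this bookkeeping for strict containment, I foresee no substantive obstacle.
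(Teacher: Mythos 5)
Your argument is correct and is essentially the paper's own proof: both iterate the construction of $\nu_d$ from the proof of Proposition~\ref{Pradprim} (a sum of $2d$ one-point local functions of order $2d$, with $2d$ exceeding the GK-codimension of the previous ideal) and use the resulting strict growth of $\GK(\Sa(\mf g)/\CoreP(\nu))$ to force strict descent, yielding an infinite descending chain of Poisson primitive ideals below $Q$. The only cosmetic difference is bookkeeping: the paper extracts the property ``$\GK(\Sa(\mf g)/\CoreP(\mu))<2d \Rightarrow \CoreP(\nu_d)\subseteq\CoreP(\mu)$'' and applies it with $\mu=\nu_{d_i}$, while you invoke the statement of Proposition~\ref{Pradprim} at each stage and then appeal to its proof for strictness.
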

\begin{proof}
For $d \in \NN$ let $\nu_d \in \mf g^*$ be the local function defined in the proof of Proposition~\ref{Pradprim}.  
The crucial property of $\nu_d$, established in that proof, is that
\beq\label{crucial}
\mbox{if $\GK(\Sa(\mf g)/\CoreP(\mu))<2d$ then $\CoreP(\nu_d)\subseteq \CoreP(\mu)$.}
\eeq
Let $Q$ be a nonzero Poisson prime ideal of $\Sa(\mf g)$ and let $d_1 := \GK \Sa(\mf g)/Q$, which is finite by Theorem~\ref{Tgkf}.  
By the proof of Proposition~\ref{Pradprim}, $\CoreP(\nu_{d_1}) \subseteq Q$.
Then define $d_i$ by induction:  let $d_{i+1} := \GK(\Sa(\mf g)/\CoreP(\nu_{d_i}))+2$.  
By \eqref{crucial}, each $\CoreP(\nu_{d_{i+1}}) \subsetneqq \CoreP(\nu_{d_i})$.
\end{proof}

By Corollary~\ref{cor:Psimple}, if $\zeta \in \kk^\times$ then $(0)$ is the only prime Poisson ideal of $\Sa(\Vir)$ which is contained in $(z-\zeta)$; thus the maximal Poisson ideal $(z-\zeta)$ has height 1 as a prime Poisson ideal.

\section{Subalgebras of finite codimension}\label{Svirfd}

In this section we sharpen earlier results to classify subalgebras of $\Vir$ of small codimension. 
By Proposition~\ref{prop:4.15} we know any such subalgebra contains $z$, so we may reduce to considering the  corresponding subalgebra of $W$; by  Proposition~\ref{prop:4.14} this contains some $W(f)$ with $f \neq 0$. We refine these results and provide more precise statements on subalgebras of $\Vir$ and of $W$ of codimensions 1, 2, 3.

Throughout this section we assume that $\mf k$ is a subalgebra of $W$ of finite codimension.
Let  $f_{\mf k} \in\CC[t]$ 
\label{ind:fk}
be the lowest degree monic polynomial with $\mf k\supset W(f)$, which exists  by Proposition~\ref{prop:4.14}.
 Proposition~\ref{prop:4.14} in fact gives us that 
\beq \label{boff} W(f_{\mf k}) \subseteq \mf k \subseteq W({\rm rad}(f_{\mf k})), \eeq
 where recall that ${\rm rad}(f)=\prod \{ (t-x) | f(x) = 0\}$.
Thus
\beq \label{codima}
\codim_{W} \mf k \geq \deg{\rm rad}(f_{\mf k})=|\{x\in\CC^\times\mid  f_{\mf k}(x) =0\}|.\eeq
By the Euclidean algorithm,  $W(h)\subseteq\mf k$ if and only if $f_{\mf k} \mid h$. 

We immediately obtain a classification of subalgebras of codimension 1, a more conceptual proof of a result originally due to  Ondrus and Wiesner~\cite[Proposition~2.3]{OW}.

\begin{corollary}
\label{cor:codim1}
Let $\mf h$ be a subalgebra of $\Vir$ of codimension 1.  
Then there is $x \in \CC^\times$ so that $$\mf h = (t-x) \CC[t, t^{-1}] \del \oplus \CC z.$$
\end{corollary}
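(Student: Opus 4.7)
The plan is to reduce to the $W$ case via Proposition~\ref{prop:4.15}, then pin down $\mf k \subseteq W$ using Proposition~\ref{prop:4.14} together with the codimension bound \eqref{codima}.

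First I would apply Proposition~\ref{prop:4.15}: since $\mf h$ has finite codimension in $\Vir$, it must contain $z$. Let $\mf k$ denote the image of $\mf h$ under the canonical projection $\Vir \to W$; then $\mf h = \mf k \oplus \CC z$ and $\codim_W \mf k = 1$. So the problem reduces to classifying codimension-1 subalgebras of $W$.

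Next, by Proposition~\ref{prop:4.14}, there is a unique monic $f = f_{\mf k} \in \CC[t]$ with $f(0) \neq 0$, of minimal degree, such that $W(f) \subseteq \mf k \subseteq W(\operatorname{rad}(f))$. By \eqref{codima} we have $\deg \operatorname{rad}(f) \leq \codim_W \mf k = 1$. If $\deg \operatorname{rad}(f) = 0$, then $f = 1$ and $\mf k = W$, which is impossible. Thus $\deg \operatorname{rad}(f) = 1$, and since $f(0) \neq 0$ we have $f = (t-x)^n$ for some $x \in \CC^\times$ and $n \geq 1$.

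Finally, using the Chinese remainder theorem (or direct computation) one checks that $W(t-x) = (t-x)\CC[t,t^{-1}]\del$ has codimension exactly $1$ in $W$, since $\CC[t,t^{-1}]/(t-x)\CC[t,t^{-1}] \cong \CC$. From $\mf k \subseteq W(\operatorname{rad}(f)) = W(t-x)$ and $\codim_W \mf k = 1 = \codim_W W(t-x)$, we conclude $\mf k = (t-x)\CC[t,t^{-1}]\del$. Therefore $\mf h = (t-x)\CC[t,t^{-1}]\del \oplus \CC z$ as claimed. There is no real obstacle here; the work has all been done in Propositions~\ref{prop:4.14} and \ref{prop:4.15}, and this corollary is essentially bookkeeping.
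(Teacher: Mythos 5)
Your proof is correct and follows essentially the same route as the paper's: apply Proposition~\ref{prop:4.15} to reduce to a codimension-1 subalgebra $\mf k$ of $W$, then use Proposition~\ref{prop:4.14} and the bound \eqref{codima} to force $f_{\mf k} = (t-x)^a$, and finish by observing that $\mf k \subseteq W(t-x)$ with both of codimension 1. The only cosmetic difference is that you explicitly rule out $\deg \operatorname{rad}(f_{\mf k}) = 0$, which the paper leaves implicit.
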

\begin{proof}
As remarked above, by Proposition~\ref{prop:4.15} $z \in \mf h$ so it suffices to prove that $\mf k := \mf h /(z)$, which is a subalgebra of $W$ of codimension 1, is equal to some $W(t-x)$. 
By \eqref{codima} $f_{\mf k}$ must be equal to some $(t-x)^a$ and by \eqref{boff} $\mf k \subseteq W(t-x)$.  
But $W(t-x)$ already has codimension 1.
\end{proof}

The problem of classifying general cofinite-dimensional subalgebras of $\Vir$ is equivalent, by Proposition~\ref{prop:4.15} and \eqref{boff}, to the problem  of classifying subalgebras of an arbitrary Lie algebra of the form $W({\rm rad}(f))/W(f)$;
note that $W(f)$ is a Lie ideal of $W({\rm rad}(f))$ and that $W({\rm rad}(f))/W(f)$ is finite-dimensional and nilpotent.
In subsections~\ref{SScd2} and \ref{SScd3} we give a complete classification of  Lie subalgebras $\mf k\subseteq W$ of codimension 2 and 3, illustrating the complexity of the problem.

\subsection{Notation and concepts}
We begin by establishing some needed notation.  
We call the full list of subalgebras of $W$ of codimension 1 the  {\em  spectrum} of $W$. 
Corollary~\ref{cor:codim1} implies that every such  subalgebra is of the form $W(t-x)$ for an appropriate $x\in\CC^\times$ and thus we can identify the spectrum of $W$ with $\CC^\times$. 
Pick a subalgebra $\mf k$ of $W$ of finite codimension. 
Denote by $\supp(\mf k)$
\label{ind:suppk}
 the list of $x\in\CC^\times$ so that $\mf k\subseteq W(t-x)$. 
By \eqref{boff} $\supp(\mf k)$ is nonempty and finite.  
The  picture is parallel to that of ideals in a commutative $\kk$-algebra, where subalgebras of codimension 1 (maximal subalgebras) are the natural analogues of ideals of codimension 1 (maximal ideals).

It will be useful below to carry out a more detailed analysis of subalgebras with $|\supp(\mf k)|=1$. 
 We establish notation for various invariants of such $\mf k$.

\begin{notation}\label{not:onepoint}
Let $\mf k$ be a finite codimension subalgebra of $W$  with $|\supp(\mf k)|=1$. 
We define the following invariants of $\mf k$.

Let $d:= d(\mf k) := \codim_W\mf k$. 
As $\supp(\mf k)=\{x\}$ for some $x\in\CC^\times$ 
we have $f_{\mf k} = (t-x)^a$ for some $a \geq d$.   
Let $a(\mf k) := a$.
If $a= d$ then $\mf k = W((t-x)^{d})$.
By definition of $f_{\mf k}$, if $a \neq d$ then 
\beq \label{a} (t-x)^{a-1}\del \not \in \mf k. \eeq

Assume that $a \neq d$.
Set $\tilde t:=t-x$.
Let $f_1, f_2, \dots, f_{a-d}$ be elements of $\CC[t, t^{-1}]$ so that the images of the $f_i \del$  give a basis for $\mf k / W(f)$. 
We may write each $f_i$ as $\tilde t^{n_i} k_i$ where $k_i(x) = 1$. 
By cancelling leading terms in the Taylor expansion of $f_i$ around $\tilde t=0$ we may assume that
$$1 \leq n_1 < n_2 < \dots < n_{a-d} < a-1,$$ where  we  used \eqref{a} for the last inequality.
Write $\{ 1, \dots, a-1\} \setminus \{ n_1, \dots, n_{a-d}\} = \{ g_1, \dots, g_{d-1}\}$ where $g_1 < g_2< \dots$.
We say that $$\ldeg(\mf k):=\{n_1-1, \ldots, n_{a-d}-1\}$$ are the {\em leading degrees} 
\label{ind:ldeg}
of $\mf k$. We say that $\sdeg(\mf k):=\{g_1-1, \ldots, g_d-1\}$ are {\em gaps} of~$\mf k$.
\label{ind:gaps}

Note that we do not allow $\sdeg(\mf k) = \{0, \dots, a-2\}$ as this would mean that $\ldeg (\mf k) = \emptyset$, contradicting the assumption that $a(\mf k) \neq d(\mf k)$. 
\end{notation}

\begin{lemma}\label{Lagro}
\begin{itemize}
\item[(a)] 
Let $\mf k$ satisfy $|\supp(\mf k) | = 1$ with $a := a(\mf k) > d:= d(\mf k)$.
 Pick $1\le i\ne j\le a-d$. 
Then either $(n_i-1)+(n_j-1)\ge (a-1)$ or $(n_i-1)+(n_j-1)\in \ldeg(\mf k)$.

\item[(b)]  Let $S = \{ n_1, \dots, n_\ell\}$ be a subset of $\{1,\ldots, a-2\}$ satisfying the conclusion of (a). 
Then there exists a subalgebra  $\tilde{\mf k}$ of $W$ with $a(\tilde{\mf k})=a$ and $\ldeg(\tilde{\mf k})=S$.
\end{itemize}
\end{lemma}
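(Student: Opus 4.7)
The plan is to read both assertions off the bracket formula $[f_i \del, f_j \del] = (f_i f_j' - f_i' f_j) \del$. For part (a), I would write $f_k = \tilde t^{n_k} g_k$ with $g_k(x) = 1$ and expand via the Leibniz rule, obtaining
\[
[f_i \del, f_j \del] = (n_j - n_i)\, \tilde t^{n_i + n_j - 1} g_i g_j \,\del + \tilde t^{n_i + n_j}\, h\, \del
\]
for some $h \in \CC[[\tilde t]]$, so the leading exponent at $\tilde t = 0$ is $n_i + n_j - 1$ with nonzero coefficient $(n_j - n_i) g_i(x) g_j(x) = n_j - n_i$, using $i \ne j$ and $g_k(x) = 1$. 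Since the bracket lies in $\mf k$ and $W(\tilde t^a) \subseteq \mf k$, I would reduce modulo $W(\tilde t^a)$ and split into three cases. If $n_i + n_j - 1 \ge a$, equivalently $(n_i - 1) + (n_j - 1) \ge a - 1$, the first disjunct holds. If $n_i + n_j - 1 \le a - 2$, the reduced bracket is a nonzero element of $\mf k/W(\tilde t^a)$ with leading exponent $n_i + n_j - 1$, and by the choice of basis any such leading exponent must lie in $\{n_1, \ldots, n_{a-d}\}$, so $(n_i - 1) + (n_j - 1) \in \ldeg(\mf k)$. The borderline value $n_i + n_j - 1 = a - 1$ is ruled out by \eqref{a}: it would force $\tilde t^{a-1} \del \in \mf k$.

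For part (b), I would construct $\tilde{\mf k}$ explicitly from monomial generators. Writing $S = \{n_1 < \cdots < n_\ell\}$, set
\[
\tilde{\mf k} := W(\tilde t^a) + \sum_{k=1}^{\ell} \CC \cdot \tilde t^{n_k}\, \del \;\subseteq\; W.
\]
To see $\tilde{\mf k}$ is a Lie subalgebra it suffices to check closure on pairs of spanning elements. Two checks are immediate: $[W(\tilde t^a), W(\tilde t^a)] \subseteq W(\tilde t^{2a-1}) \subseteq W(\tilde t^a)$, and $[\tilde t^{n_k} \del, W(\tilde t^a)] \subseteq W(\tilde t^{n_k + a - 1}) \subseteq W(\tilde t^a)$ since $n_k \ge 1$. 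The remaining case is the mixed bracket
\[
[\tilde t^{n_i} \del, \tilde t^{n_j} \del] = (n_j - n_i)\, \tilde t^{n_i + n_j - 1}\del,
\]
which by the hypothesis on $S$ is either contained in $W(\tilde t^a)$ (when $n_i + n_j - 1 \ge a$) or a scalar multiple of a spanning element $\tilde t^{n_k} \del$ of $\tilde{\mf k}$. A short check then confirms $a(\tilde{\mf k}) = a$, using $S \subseteq \{1, \ldots, a-2\}$ to rule out $\tilde t^{a-1}\del \in \tilde{\mf k}$, and that the leading exponents of $\tilde{\mf k}$ are exactly the elements of $S$.

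Neither part presents a serious obstacle: both reduce to direct manipulation of the bracket. The one subtlety I anticipate is careful bookkeeping of the shift between the ``leading position'' $n_k$ and the ``leading degree'' $n_k - 1$, and in particular the exclusion of the borderline case $n_i + n_j - 1 = a - 1$ in (a), which is precisely what property \eqref{a} was recorded to rule out.
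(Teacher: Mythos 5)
Your proof is correct and follows essentially the same route as the paper: part (a) by computing $[\tilde t^{n_i}k_i\del,\tilde t^{n_j}k_j\del]$ and reading off its order of vanishing at $x$ (with the borderline case excluded by \eqref{a}), and part (b) by taking the span of $W(\tilde t^a)$ and the monomial vector fields $\tilde t^{n_k}\del$ for $n_k\in S$. The paper's proof is just a terser version of the same argument, so no further comparison is needed.
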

\begin{proof}(a). Consider $$\tilde f_{i, j}\del:=[f_i\del, f_j\del]=[\tilde t^{n_i}k_i\del, \tilde t^{n_j}k_j\del]=(j-i)\tilde t^{n_i+n_j-1}k_ik_j+\tilde t^{n_i+n_j}(k_ik_j'-k_i'k_j) \in \mf k.$$ 
It is easy to verify that $k_{i, j}=\tilde{f}_{i, j}/\tilde t^{n_i+n_j-1} \in\CC[t, t^{-1}]$ and $k_{i, j}(x)\ne0$. This implies~(a). 

For (b) it is clear that the space $\tilde{\mf k}$ defined as the span of $W(\tilde t^a)$ and $\tilde t^i\del$ with $i\in S$ is a Lie subalgebra of~$W$. 
\end{proof}

The reason for subtracting 1 in the definition of the leading degrees of $\mf k$ is that 
Lemma~\ref{Lagro} shows that  $\ldeg(\mf k)$ has the structure of a partial semigroup under addition. 

We deduce from Lemma~\ref{Lagro} the following fact.
\begin{lemma}\label{Lgaps2}
Let $\mf k$ satisfy $|\supp(\mf k) | = 1$ with $a := a(\mf k) > d:= d(\mf k)$.
If $g_1=1$ then $g_i\le 2i-1$ for all $i, 1\le i\le d-1$. 
If $g_1\ne 1$ then $g_i\le 2i+1$ for all $i, 1\le i\le d$. \end{lemma}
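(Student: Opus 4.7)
The plan is to recast the statement as a purely combinatorial question about $L := \ldeg(\mf k)$ and its complement inside a suitable interval. Write $m_j := n_j - 1$, so that Lemma~\ref{Lagro}(a) becomes the clean semigroup-type statement: for distinct $m, m' \in L$, either $m + m' \in L$ or $m + m' \ge a - 1$. Set $\tilde g := g_i - 1$ and $\tilde G := \sdeg(\mf k)$, so that $L \sqcup \tilde G = \{0, 1, \dots, a-2\}$; in particular $\tilde g \le a - 2 < a - 1$, so the key condition of Lemma~\ref{Lagro}(a) can be brought to bear on $\tilde g$.

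The core step is the reflection $m \mapsto \tilde g - m$. For any $m \in L$ with $0 \le m \le \tilde g$, I claim that either $\tilde g - m \in \tilde G$ or $m = \tilde g/2$: indeed, if $\tilde g - m \in L$ and $m \ne \tilde g/2$, then $m$ and $\tilde g - m$ would be distinct elements of $L$ with sum $\tilde g < a - 1$, forcing $\tilde g \in L$ by Lemma~\ref{Lagro}(a) and contradicting $\tilde g \in \tilde G$. Therefore the reflection is an injection $L \cap [0,\tilde g] \hookrightarrow \tilde G \cap [0, \tilde g]$, with at most one element (the fixed point $\tilde g/2$, when $\tilde g$ is even and $\tilde g/2 \in L$) mapped outside $\tilde G$.

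Combining this injection with the identity $|L \cap [0,\tilde g]| + |\tilde G \cap [0, \tilde g]| = \tilde g + 1$ and $|\tilde G \cap [0, \tilde g]| = i$ yields $\tilde g \le 2i$, i.e.\ $g_i \le 2i + 1$; this is the second case of the lemma. For the first case, $g_1 = 1$ means $0 \in \tilde G$, so every $m \in L$ satisfies $m \ge 1$ and hence $\tilde g - m \le \tilde g - 1$; together with $\tilde g \in \tilde G$, this forces the image of the reflection to land in $\tilde G \cap [1, \tilde g - 1]$, which has only $i - 2$ elements. The count is tightened by exactly one, producing $\tilde g \le 2i - 2$, i.e.\ $g_i \le 2i - 1$.

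The only real subtlety, and essentially the only work in the proof, is the bookkeeping around the fixed point $m = \tilde g/2$ of the involution: this single exception accounts for the ``$+1$'' slack in the bound and is exactly what produces the $\pm 2$ gap between the two cases of the lemma. Once that is handled, the argument is a three-line counting argument applied to the reflection, which is why I expect no serious obstacle beyond writing down the involution correctly.
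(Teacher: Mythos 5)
Your proof is correct and rests on the same key mechanism as the paper's: applying Lemma~\ref{Lagro}(a) to pairs of leading degrees summing to $\tilde g = g_i - 1$, and observing that both members of such a pair cannot lie in $L = \ldeg(\mf k)$ because $\tilde g \in \sdeg(\mf k)$ and $\tilde g < a-1$. The paper frames this as a proof by contradiction, writing $g_i = 2i-1+\delta$ with $\delta>0$ and enumerating the pairs $(p,q)$ with $p+q-1 = g_i$ and $2 \le p \le i$, each of which must contain one of the $i-2$ gaps $g_2, \dots, g_{i-1}$; you package the identical pairing as the involution $m \mapsto \tilde g - m$ on $[0,\tilde g]$ and read off the inequality directly by counting $|L \cap [0,\tilde g]|$. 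Both arguments handle the diagonal case --- you by isolating the unique possible fixed point $\tilde g/2$, the paper implicitly via $\delta>0$, which guarantees all listed pairs have $p<q$. So this is the same idea with slightly cleaner bookkeeping, not a genuinely different route.

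One small imprecision in your closing remark: the $2$-unit gap between the bounds $g_i \le 2i+1$ and $g_i \le 2i-1$ is not produced by the fixed-point exception --- that exception contributes the same $+1$ slack in both cases. Rather, it comes from the endpoints of the interval $[0,\tilde g]$: when $g_1 = 1$, the element $0 = g_1 - 1$ now sits in $\tilde G \cap [0,\tilde g]$ but is never attained as an image (that would need $m = \tilde g$, which lies in $\tilde G$, not $L$), and the element $\tilde g = g_i - 1 \in \tilde G$ ceases to be attainable (that would need $m=0$, and $0 \notin L$ precisely when $g_1=1$). Each endpoint removal shrinks the effective target by one. This lives in your expository summary rather than in the argument itself, so the proof is unaffected.
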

\begin{proof} First assume that $g_1=1$. 
Next, assume to the contrary that $g_i=2i-1+\delta$ with $\delta>0$. 
Consider the list of pairs
\beq\label{pq} (2, 2i-2+\delta), (3, 2i-3+\delta), \ldots, (i, i+\delta). \eeq

For each pair $(p,q)$ in \eqref{pq} we have $p+q-1=g_i$; further, 
Lemma~\ref{Lagro} implies that either $p-1$ or $q-1$ belongs to $\sdeg(\mf k)$ for each pair $(p, q)$ from \eqref{pq}, it must be in $\{g_2, \dots, g_{i-1}\}$ as $g_1 = 1< p, q$. 
There are  $i-1$ pairs in \eqref{pq} but only $i-2$ gaps  from $g_2$ to $g_{i-1}$. 
This contradiction completes the case $g_1=1$. In the case $g_1>1$ we have to do the same thing with a minor modification: we have to add $g_1$ to the list $\{g_2, \ldots, g_{i-1}\}$.
\end{proof}
\begin{remark}Note that $g_d=a-1\le 2d+1$ and hence one can enumerate all the pairs $(S, a)$ satisfying the conclusion of condition (a) of Lemma~\ref{Lagro} for a given codimension $d$. 
We believe  that such pairs $(S, a)$ are in bijection with 
irreducible components of the moduli space of subalgebras $\mf k$ of codimension $d$ with $|\supp(\mf k)|=1$. \end{remark}

\subsection{Subalgebras of codimension 2}\label{SScd2}
The goal of this subsection is to show that all subalgebras of $W$ of codimension 2 are listed in the following table.
$$
\begin{array}{|c|c|c|c|}
\hline
\multicolumn{4}{|c|}{\mbox{Subalgebras of $W$ of codimension 2}}\\
\hline \rm{Code} &\sdeg(\mf k)&f_{\mf k}&\begin{tabular}{c}Additional\\ generators\end{tabular}\\

\hline W((t-x)(t-y))&-&(t-x)(t-y)&-\\

\hline
W_{x; \alpha}^{2; 1}&1&(t-x)^3&\begin{array}{c}(t-x)\del+\alpha(t-x)^2\del\end{array}\\
\hline
W_{x; \alpha}^{2; 2}&2&(t-x)^4&\begin{array}{c}(t-x)\del+\alpha(t-x)^3\del,\\
(t-x)^2\del\end{array}\\
\hline
\end{array}
$$
\label{ind:codim2}
Here $x, y, \alpha$ are parameters taking values in $\CC$ with $x, y\ne0$. 

The proof consists of the following two statements.
\begin{proposition}\label{Prcodim2a}
\begin{itemize}
\item[(a)] If $\codim_W \mf k=2$ then $|\supp(\mf k)|\in \{1, 2\}$.

\item[(b)] If $|\supp(\mf k)|=2$ then $\mf k=W((t-x)(t-y))$ and $\supp(\mf k)=\{x, y\}$ for some $x \neq y\in\CC^\times$.
\end{itemize}
\end{proposition}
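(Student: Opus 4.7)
The proof is essentially a dimension count exploiting the double inclusion $W(f_{\mf k})\subseteq \mf k\subseteq W(\operatorname{rad}(f_{\mf k}))$ from~\eqref{boff}.

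For part (a), I would invoke~\eqref{codima} directly. Since $\supp(\mf k)$ is precisely the set of roots of $f_{\mf k}$ in $\CC^\times$, we have
\[
2=\codim_W\mf k\;\ge\;\deg\operatorname{rad}(f_{\mf k})\;=\;|\supp(\mf k)|.
\]
Because $\mf k$ has finite codimension in $W$, we know $\supp(\mf k)$ is nonempty, so $|\supp(\mf k)|\in\{1,2\}$.

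For part (b), assume $\supp(\mf k)=\{x,y\}$ with $x\ne y\in\CC^\times$. Then, since $f_{\mf k}$ is a monic polynomial in $\CC[t]$ with $f_{\mf k}(0)\ne 0$ whose roots are exactly $x,y$, we have $f_{\mf k}=(t-x)^a(t-y)^b$ for some $a,b\ge 1$, and $\operatorname{rad}(f_{\mf k})=(t-x)(t-y)$. Since $x,y\ne 0$, both $(t-x)$ and $(t-y)$ are non-units in $\CC[t,t^{-1}]$, and by the Chinese remainder theorem
\[
\CC[t,t^{-1}]/(f_{\mf k})\;\cong\;\CC[t-x]/(t-x)^a\oplus\CC[t-y]/(t-y)^b,
\]
so $\codim_W W(f_{\mf k})=a+b$ and $\codim_W W((t-x)(t-y))=2$.

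Now the sandwich from~\eqref{boff} gives
\[
W((t-x)^a(t-y)^b)\;\subseteq\;\mf k\;\subseteq\;W((t-x)(t-y)),
\]
so
\[
\dim\bigl(\mf k/W(f_{\mf k})\bigr)=\codim_W W(f_{\mf k})-\codim_W\mf k=(a+b)-2=\dim\bigl(W((t-x)(t-y))/W(f_{\mf k})\bigr).
\]
This forces $\mf k=W((t-x)(t-y))$. By the minimality of $f_{\mf k}$, we then conclude $f_{\mf k}=(t-x)(t-y)$, i.e.\ $a=b=1$, which completes the proof. There is no genuine obstacle here: the argument is a clean application of~\eqref{boff} and~\eqref{codima}, with the only point to verify being that both $t-x$ and $t-y$ contribute to codimension since $x,y\ne 0$.
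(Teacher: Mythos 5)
Your proposal is correct and follows essentially the same route as the paper: part (a) is exactly the paper's combination of Proposition~\ref{prop:4.14} (nonempty support) with \eqref{codima}, and part (b) is the same dimension count showing the containment $\mf k\subseteq W((t-x)(t-y))$ must be an equality since both sides have codimension $2$. Your detour through the quotient by $W(f_{\mf k})$ and the Chinese remainder theorem is harmless but unnecessary — the paper simply compares $\codim_W\mf k$ with $\codim_W W((t-x)(t-y))$ directly.
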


\begin{proposition}\label{Prcodim2b}
\begin{itemize} \item[(a)] If $\codim_W\mf k=2$ and $| \supp(\mf k)| = 1$ (so $\supp(\mf k)=\{x\}$  for some $x \in \kk^\times$) then either $\mf k = W((t-x)^2)$ or $\sdeg(\mf k)$ is $\{1\} $ or $ \{2\}$.

\item[(b)] If $\sdeg(\mf k)=\{1\}$ then $\mf k=W_{x; \alpha}^{2; 1}$ for a unique $\alpha\in\CC$.

\item[(c)] If $\sdeg(\mf k)=\{2\}$ then $\mf k=W_{x; \alpha}^{2; 2}$ for a unique $\alpha\in\CC$.
\end{itemize}
\end{proposition}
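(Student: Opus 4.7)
The plan is to determine $\mf k$ completely by analysing the possible leading degrees in $\mf k/W(f_{\mf k})$.  Since $\codim_W \mf k = 2$ and $\supp(\mf k) = \{x\}$, we have $f_{\mf k} = (t-x)^a$ for some $a \ge 2$; set $\tilde t := t - x$.  If $a = 2$ then $\mf k = W((t-x)^2)$ by minimality of $f_{\mf k}$.  Otherwise $a > 2$, and Notation~\ref{not:onepoint} applies: $\mf k/W(\tilde t^a)$ has dimension $a - 2$ inside the $(a-1)$-dimensional quotient $W(\tilde t)/W(\tilde t^a)$, and \eqref{a} gives $(t-x)^{a-1}\del \notin \mf k$.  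The leading indices must therefore exhaust the available values, so $\{n_1, \dots, n_{a-2}\} = \{1, 2, \dots, a-2\}$ and $\sdeg(\mf k) = \{a-2\}$.

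For (a), I would invoke Lemma~\ref{Lagro}(a): for any $i \ne j$, the sum $(n_i - 1) + (n_j - 1)$ either lies in $\ldeg(\mf k) = \{0, 1, \dots, a-3\}$ or is $\ge a - 1$, so $n_i + n_j \ne a$.  If $a \ge 5$ then $n_i = 2$ and $n_j = a - 2$ are distinct elements of $\{1, \dots, a-2\}$ summing to $a$, a contradiction.  Therefore $a \in \{3, 4\}$ and $\sdeg(\mf k) \in \{\{1\}, \{2\}\}$.

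For (b), when $a = 3$ the space $\mf k/W(\tilde t^3)$ is one-dimensional and, after reducing a generator modulo $W(\tilde t^3)$, is spanned by $(t-x)\del + \alpha(t-x)^2\del$ for some $\alpha \in \CC$, identifying $\mf k = W^{2;1}_{x;\alpha}$; $\alpha$ is unique because $(t-x)^2\del \notin \mf k$.  For (c), when $a = 4$ the quotient $\mf k/W(\tilde t^4)$ is two-dimensional, and after a basis change we may take representatives $g_1\del = (\tilde t + \lambda_1 \tilde t^3)\del$ and $g_2\del = (\tilde t^2 + \lambda_2 \tilde t^3)\del$ for some $\lambda_1, \lambda_2 \in \CC$.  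A direct computation gives
\[
[g_1\del, g_2\del] = (\tilde t^2 + 2\lambda_2 \tilde t^3 - \lambda_1 \tilde t^4)\del,
\]
so that $[g_1\del, g_2\del] - g_2\del \equiv \lambda_2 \tilde t^3 \del \pmod{W(\tilde t^4)}$; since $(t-x)^3\del \notin \mf k$ we conclude $\lambda_2 = 0$, and then setting $\alpha := \lambda_1$ yields $\mf k = W^{2;2}_{x;\alpha}$ uniquely.  The proof finishes with the routine verification that $W^{2;1}_{x;\alpha}$ and $W^{2;2}_{x;\alpha}$ are closed under bracket, using the identity $[W((t-x)^a), W((t-x)^a)] \subseteq W((t-x)^{2a})$ and a short direct check on the extra generators.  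The only substantive computation is the bracket identity in (c); once Lemma~\ref{Lagro} has forced $a \leq 4$, the rest is structural bookkeeping around the non-membership $(t-x)^{a-1}\del \notin \mf k$.
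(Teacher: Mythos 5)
Your proof is correct and follows essentially the same route as the paper: both reduce part (a) to a bound $a(\mf k)\le 4$ coming from the partial-semigroup structure of $\ldeg(\mf k)$ established in Lemma~\ref{Lagro} (the paper cites Lemma~\ref{Lagro} but in practice applies the consequence recorded as Lemma~\ref{Lgaps2}, giving $g_1\le 3$; your direct argument with $n_i=2$, $n_j=a-2$ is the same idea applied by hand), and then pin down the cases $a=3,4$ by choosing a reduced basis of $\mf k/W(\tilde t^a)$ and computing one bracket, obtaining $\beta=0$ (your $\lambda_2=0$) in the $a=4$ case. Your closing check that $W^{2;1}_{x;\alpha}$ and $W^{2;2}_{x;\alpha}$ are closed under bracket is a sensible addition for completeness of the classification, but it is not needed to prove the stated implication (the hypothesis already assumes $\mf k$ is a subalgebra); the paper omits it.
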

\begin{proof}[Proof of Proposition~\ref{Prcodim2a}] Theorem~\ref{prop:4.14} implies $|\supp(\mf k)|\ge1$, and  \eqref{codima} implies that  $|\supp(\mf k)|\le2$.  This proves~(a). 

Assume $\supp(\mf k)=\{x, y\}$ for some distinct $x, y\in\CC^\times$. 
We have $\mf k\subseteq W((t-x)(t-y))$ and 
$$\codim_W W((t-x)(t-y))=\codim_W\mf k=2.$$ 
Hence $\mf k=W((t-x)(t-y))$ and (b) is complete.
\end{proof}
\begin{proof}[Proof of Proposition~\ref{Prcodim2b}]
We may assume that $\mf k \neq W((t-x)^2)$.  
Adopt the terminology of Notation~\ref{not:onepoint}.
We have 
 $|\sdeg(\mf k)|=\codim(\mf k)-1$ and hence $|\sdeg(\mf k)|=\{g_1-1\}$ for a certain positive integer $g_1$.
 Note that $g_1 = a(\mf k) - 1$ by \eqref{a}.
 Thanks to Lemma~\ref{Lagro} we have $g_1\le 3$. 
This proves (a). 

 Note that $g_1 \neq 1$ as by convention $\sdeg(\mf k) $ cannot be equal to $\{0, \dots, a-2\}$.
Assume $g_1=2$, so 
 $a(\mf k)=3$. 
 Recall that $\tilde t=t-x$.
Let $f_1 = \tilde{t}k_1$ be as in Notation~\ref{not:onepoint}; as $a(\mf k) - d(\mf k) = 1$, 
the image of $f_1\del$ in $\mf k / W(\tilde t^3)$ provides a generator of this one-dimensional vector space. 
We can assume that $f_1$ is monic in $\tilde t$ and contains no terms in $\tilde t$ of degree $3$ or more. 
Thus $f_1=\tilde t+\alpha \tilde t^2$ for some $\alpha\in\CC$ and  
 $\mf k=W_{x; \alpha}^{2; 1}$.  Further, $\alpha$ is unique as $\dim \mf k /W(\tilde t^3) = 1$.

Assume $g_1=3$, so  $a(\mf k)=4$. 
Let $f_1 = \tilde t k_1, f_2 = \tilde t^2 k_2 \in\CC[t]=\CC[\tilde t]$ be 
as in Notation~\ref{not:onepoint}, so the images of $f_1\del, f_2\del$ in $\mf k / W((t-x)^3)$ give a basis. 
We can assume that $f_1, f_2$ contain no terms in $\tilde t$ of degree $4$ or more and $f_1$ contains no terms in $\tilde t$ of degree $2$. 
Thus $f_1=\tilde{t}+\alpha \tilde{t}^3, f_2=\tilde{t}^2+\beta \tilde t^3$ for some $\alpha, \beta\in\CC$. 
We have
$$\mf k \ni [f_1\del, f_2\del]=\tilde t^2\del+ 2\beta \tilde t^3\del \mod W(\tilde t^4).$$
Hence $[f_1\del, f_2\del]=f_2\del$ modulo $ W(\tilde t^4)$. 
This implies $\beta=0$ and therefore $\mf k= W_{x; \alpha}^{2; 2}$.  Again $\alpha$ is unique.
\end{proof}

\begin{remark}\label{rem:codim2W-1}
Similar proofs give a classification of subalgebras of $W_{\geq -1}$ of codimension 2:  these are either of the form
$W_{\geq-1}((t-x)(t-y))$ or may be written $(W_{\geq-1})^{2;1}_{x;\alpha}$ or $(W_{\geq-1})^{2;2}_{x;\alpha}$, where these last two are defined similarly to the analogous subalgebras of $W$.  
(Here of course we allow $x,y$ to be $0$.)
These subalgebras are all deformations of $W_{\geq-1}(t^2) = W_{\geq 1}$.

Deformations of $W_{\geq 1}$ are classified in \cite{F1} (see also \cite{FF}).  
These papers show that up to isomorphism there are three such deformations, denoted in \cite{F1} by $L_1^{(1)}$, $L_1^{(2)}$, $L_1^{(3)}$.
It can be shown that
\[ L_1^{(1)} \cong W(t(t-y)), \quad L_1^{(2)} \cong (W_{\geq-1})^{2;1}_{0;\alpha}, \quad L_1^{(3)} \cong (W_{\geq-1})^{2;2}_{0;\alpha}\]
for appropriate $y, \alpha$. 
We thank Lucas Buzaglo for explaining this to us.
\end{remark}

\subsection{Subalgebras of codimension 3} \label{SScd3}
It can also be shown that all subalgebras of $W$ of codimension 3 are listed in the following table.
Because the methods are similar to those in Subsection~\ref{SScd2} we omit the proof. 
$$
\begin{array}{|c|c|c|c|}
\hline
\multicolumn{4}{|c|}{\mbox{Subalgebras of $W$ of codimension 3}}\\
\hline {\rm Code} &\sdeg(\mf k)&f_{\mf k}&\begin{tabular}{c}Additional generators\\ or description\end{tabular}\\

\hline W(f_{\mf k})&-&(t-x)(t-y)(t-z)&-\\

\hline W_{x, y; \alpha, \beta}^{3A}&-&(t-x)^2(t-y)^2&\begin{array}{c}(t-x)(t-y)(\alpha t+\beta)\del,\\ \alpha x+\beta, \alpha y+\beta \ne 0, x\ne y\end{array}\\

\hline
W_{x, y; \alpha}^{3B1}&-&(t-x)^3(t-y)&\begin{array}{c}W^{2; 1}_{x; \alpha}\cap W(t-y),\\x\ne y\end{array}\\

\hline
W_{x, y; \alpha}^{3B2}&-&(t-x)^4(t-y)&\begin{array}{c}W^{2; 2}_{x; \alpha}\cap W(t-y),\\x\ne y\end{array}\\

\hline
W_{x; \alpha}^{3C1}&0, 2&(t-x)^4& 
(t-x)^2\del+\alpha(t-x)^3\del\\

\hline
W_{x; \alpha, \beta}^{3C2}&1, 2&(t-x)^4&
(t-x)\del+\alpha(t-x)^2\del+\beta(t-x)^3\del\\

\hline
W_{x; \alpha, \beta}^{3C3}&1, 3&(t-x)^5&\begin{array}{c} (t-x)\del+\alpha(t-x)^2\del+\beta(t-x)^4\del,\\
(t-x)^3\del-\alpha(t-x)^4\del\end{array}\\
\hline

W_{x; \alpha, \beta}^{3C4}&1, 4&(t-x)^6&\begin{array}{c}(t-x)\del+\alpha(t-x)^2\del+\beta(t-x)^5\del,\\
(t-x)^3\del-\alpha^2(t-x)^5\del,\\
(t-x)^4\del-2\alpha(t-x)^5\del\\\end{array}\\
\hline

W_{x; \alpha, \beta}^{3C5}&2,  3&(t-x)^5&\begin{array}{c}(t-x)\del+\alpha(t-x)^3\del+\beta(t-x)^4\del,\\
(t-x)^2\del+\frac{\alpha}2(t-x)^4\del\end{array}\\
\hline

\end{array}
$$
\label{ind:codim3}

The notation here is as follows:

$\bullet$ $\mf k$  is  given in the final column as either an intersection of two explicitly given subalgebras or is spanned by $W(f_\mf k)$ and a few more explicit generators;

$\bullet$ $\alpha, \beta$ are parameters taking arbitrary values in $\CC$ for all cases but $W^{3A}_{x, y; \alpha, \beta}$ (for this case the required restrictions on $\alpha, \beta$ are given in the table);

$\bullet$ $x, y$ are parameters taking arbitrary values in $\CC^\times$ except when restrictions are given in the table.

It is easy to verify that a subalgebra $\mf k$ of $W$ of codimension 3 belongs to only one type and moreover the parameters defining $\mf k$ are unique if $\mf k\ne W_{x, y; \alpha, \beta}^{3A}$ and unique up to scaling $(\alpha, \beta)\to (\lambda \alpha, \lambda \beta)$ in the case $\mf k = W_{x, y; \alpha, \beta}^{3A}$.

\begin{remark} \label{Rgstr} 
$(1)$ Recall the analogy between subalgebras of $W$ of codimension 1 and maximal ideals.  It is natural to ask whether or not an analogue of the Lasker-Noether primary decomposition theorem holds in this setting. However, this  statement fails as we can easily see that 
the Lie algebra $W^{3A}_{x, y; \alpha, \beta}$ is not an intersection of subalgebras $\mf k_x$ and $\mf k_y$ with $\supp(\mf k_x)=\{x\}$ and $\supp(\mf k_y)=\{y\}$. 

\noindent $(2)$
Recall that every subalgebra $\mf k$ of finite codimension in $W$ lies between $W(f_\mf k)$ and $W({\rm rad}(f_\mf k))$. 
One can  construct a sequence of polynomials $$h_0={\rm rad} f_\mf k, h_1, h_2,\ldots, h_s=f_\mf k$$ with $\deg h_{i+1}=\deg h_i+1$ and $h_i\mid h_{i+1}$ and consider the filtration of $W({\rm rad}f_\mf k)$ by the Lie ideals $W(h_i)$. 
The associated graded algebra $$\oplus_{i\ge0}([W(h_i)\cap\mf k]/[W(h_{i+1})\cap\mf k])$$ is isomorphic to a graded algebra $\mathord{\vtop{\offinterlineskip\lineskip.2ex\halign{\hfil#\hfil\cr \the\textfont1 $\mf k$\cr\the\textfont0\char126\cr}}}
$ 
with
\begin{center}$W(h_0)\supseteq\mathord{\vtop{\offinterlineskip\lineskip.2ex\halign{\hfil#\hfil\cr \the\textfont1 $\mf k$\cr\the\textfont0\char126\cr}}}\supseteq W(f_\mf k)$, $\codim_W\mf k=\codim_W\mathord{\vtop{\offinterlineskip\lineskip.2ex\halign{\hfil#\hfil\cr \the\textfont1 $\mf k$\cr\the\textfont0\char126\cr}}}$ and $f_\mf k=f_{\mathord{\vtop{\offinterlineskip\lineskip.2ex\halign{\hfil#\hfil\cr \the\textfont1 $\mf k$\cr\the\textfont0\char126\cr}}}}$.\end{center} 
The subalgebras $\mathord{\vtop{\offinterlineskip\lineskip.2ex\halign{\hfil#\hfil\cr \the\textfont1 $\mf k$\cr\the\textfont0\char126\cr}}}$ can be described in purely combinatorial terms and thus they give a collection of discrete invariants for $\mf k$.
This generalises the notation of gaps and leading degrees for subalgebras with one-point support.
\end{remark}

\subsection{Some general comments on finite codimension subalgebras of $W$}
The lattice of subalgebras $\mf k$ with $\supp(\mf k) = \{x\}$ and $a(\mf k)\le a$ can be naturally identified with the subalgebras of $$W(t-x)/W((t-x)^a))\cong W(t)/W(t^a);$$
in particular the isomorphism class of this lattice is independent of $x$. 

A similar result holds true in a greater generality. 
Pick $s\ge 0, a\ge1$ and distinct $x_1, \ldots, x_s\in\CC^\times$; set $h:=(t-x_1)\ldots(t-x_s)$. 
Consider subalgebras $\mf k$ satisfying $W(h)\supset \mf k\supset W(h^a)$. 
It is clear that $W(h^a)$ is a Lie ideal of $W(h)$ and the quotient $W(h)/W(h^a)$ is a finite-dimensional solvable Lie algebra. 
A version of the Chinese remainder theorem implies that 
\begin{equation}\label{Ecrmw+}W(h)/W(h^a)\cong \oplus_i W(t-x_i)/W((t-x_i)^a)\cong [W(t)/W(t^a)]^{\oplus s}.\end{equation} In particular, the isomorphism class of $W(h)/W(h^a)$ depends only on $a$ and $s$ but not on the particular choice of the $x_i$. 
This immediately gives the following corollaries.
\begin{corollary}Let $\mf k$ be a subalgebra of finite codimension of $\mf g=W, W_{\ge-1}, W_{\ge1}$ or $\Vir$. Then there are subalgebras $\mf k^+, \mf k^-$ with $$\mf k^-\subseteq\mf k\subseteq\mf k^+\mathrm{~and~}\codim_\mf g(\mf k^-)+1=\codim_\mf g(\mf k)=\codim_\mf g(\mf k^+)-1.$$\end{corollary}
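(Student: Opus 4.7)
The plan is to reduce each of the four cases to $\mf g = W$ and then work in a finite-dimensional solvable quotient. For $\mf g = \Vir$, Proposition~\ref{prop:4.15} forces $z \in \mf k$, so $\mf k/\CC z$ is a finite codimension subalgebra of $W$ of the same codimension; conversely, given $\bar{\mf k}^\pm \subseteq W$, the preimages $\bar{\mf k}^\pm \oplus \CC z \subseteq \Vir$ serve as $\mf k^\pm$. The cases $\mf g = W_{\geq -1}, W_{\geq 1}$ are handled analogously to $\mf g = W$ below. So I focus on $\mf g = W$. By Proposition~\ref{prop:4.14} there exists $f \in \CC[t]$ with $W(f) \subseteq \mf k \subseteq W(h_0)$, where $h_0 := \operatorname{rad}(f)$. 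A computation with logarithmic derivatives (or Remark~\ref{Rgstr}(2)) shows that $W(g)$ is a Lie ideal of $W(h_0)$ whenever $h_0 \mid g$; in particular $W(f^N)$ is an ideal of $W(h_0)$ for every $N \geq 1$, and by~\eqref{Ecrmw+} the quotient $W(h_0)/W(f^N)$ is a finite-dimensional solvable Lie algebra.

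To construct $\mf k^+$: first suppose $\mf k \subsetneq W(h_0)$. Then $V := W(h_0)/\mf k$ is a nonzero finite-dimensional $\mf k$-module under the adjoint action, and the action factors through the solvable Lie algebra $\mf k/W(f)$. By Lie's theorem there is a one-dimensional $\mf k$-stable subspace $\CC \bar v \subseteq V$; lifting $\bar v$ to $v \in W(h_0)$, the subspace $\mf k^+ := \mf k + \CC v$ is closed under bracket with $\mf k$ by construction, hence is a Lie subalgebra of $W$ with $\dim \mf k^+/\mf k = 1$. In the boundary case $\mf k = W(h_0)$, minimality of $f$ forces $f = h_0 = \prod_{i=1}^s (t-x_i)$ to be squarefree (so $\codim_W \mf k = s$), and we take $\mf k^+ := W(h_0/(t-x_1))$, which contains $\mf k$ with $\codim_W \mf k^+ = s-1$.

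To construct $\mf k^-$: for any $N \geq 2$ we have $W(f^N) \subsetneq W(f) \subseteq \mf k$, so $\mf k/W(f^N)$ is a nonzero finite-dimensional solvable Lie algebra, and hence its derived subalgebra is a proper subspace. Any hyperplane $V$ in $\mf k/W(f^N)$ containing this derived subalgebra is automatically a Lie ideal; its preimage $\mf k^-$ in $\mf k$ is a Lie subalgebra of $W$ with $\codim_W \mf k^- = \codim_W \mf k + 1$.

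The main technical obstacle is to confirm the solvability of $W(h_0)/W(f^N)$. By~\eqref{Ecrmw+} this reduces to solvability of each factor $W(t-x)/W((t-x)^a)$, which follows from an elementary inductive computation using the bracket $[(t-x)^i\partial, (t-x)^j\partial] = (j-i)(t-x)^{i+j-1}\partial$: iterating this relation strictly raises the minimum exponent appearing in the derived series, which must eventually exceed $a$.
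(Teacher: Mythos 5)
Your proof is correct in substance and follows essentially the same route as the paper: after reducing $\Vir$ and $W_{\geq\pm 1}$ to $W$, you use Proposition~\ref{prop:4.14} to sandwich $\mf k$ between $W(f)$ and $W({\rm rad}(f))$, then pass to a finite-dimensional solvable quotient via \eqref{Ecrmw+} and invoke the standard facts about solvable Lie algebras (Lie's theorem for $\mf k^+$, properness of the derived subalgebra for $\mf k^-$) that the paper's one-line argument cites as well-known. One small imprecision: your intermediate claim that $W(g)$ is a Lie ideal of $W(h_0)$ whenever $h_0 \mid g$ is false in general --- for instance $W((t-x)(t-y))$ is not an ideal of $W(t-x)$ when $y\neq x$, since $[(t-x)\del,\,(t-x)(t-y)\del]=(t-x)^2\del\notin W((t-x)(t-y))$. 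The correct hypothesis is ${\rm rad}(g)=h_0$; as ${\rm rad}(f^N)={\rm rad}(f)=h_0$, your application to $g=f^N$ is valid, so the argument stands.
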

\begin{proof}A similar statement is well-known for subalgebras of solvable Lie algebras so \eqref{Ecrmw+} implies the desired result if $\mf k\ne W(f_\mf k)$. 
If $\mf k=W(f_\mf k)$ the result follows from a similar fact on ideals in $\CC[t, t^{-1}]$.\end{proof}
\begin{corollary}The lattices of subalgebras of finite codimension of $\Vir$, of $ W$, and of $W_{\ge-1}$ are all isomorphic.\end{corollary}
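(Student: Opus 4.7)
The plan is to handle the $\Vir$-versus-$W$ identification immediately from Proposition~\ref{prop:4.15} and then construct a lattice isomorphism between the finite codimension subalgebras of $W$ and of $W_{\geq -1}$ by patching together local coordinate shifts via the Chinese remainder isomorphism \eqref{Ecrmw+}. For the first identification, Proposition~\ref{prop:4.15} shows that every finite codimension subalgebra of $\Vir$ contains $z$, so the projection $\Vir \to \Vir/(z) \cong W$ restricts to an inclusion-preserving bijection between the two lattices.

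For the $W$ to $W_{\ge -1}$ isomorphism $\Phi$, I would fix a set bijection $\sigma \colon \kk^\times \to \kk$, which exists since both sets have cardinality $|\kk|$. For each $x \in \kk^\times$, the coordinate change $(t-x) \mapsto (t-\sigma(x))$, $\del \mapsto \del$ gives a Lie algebra isomorphism $\kk[[t-x]]\del \xrightarrow{\sim} \kk[[t-\sigma(x)]]\del$. For any finite $S \subseteq \kk^\times$ and $a \geq 1$, combining these local shifts with the Chinese remainder decomposition \eqref{Ecrmw+} produces a Lie algebra isomorphism
\[
\Psi_{S,a} \colon W(h_S)/W(h_S^a) \xrightarrow{\sim} W_{\geq -1}(h_{\sigma(S)})/W_{\geq -1}(h_{\sigma(S)}^a),
\]
where $h_S := \prod_{x \in S}(t-x)$. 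Given a finite codimension subalgebra $\mf k$ of $W$, I would set $S := \supp(\mf k)$, choose $a$ with $W(h_S^a) \subseteq \mf k$ (possible by \eqref{boff}), and define $\Phi(\mf k)$ to be the preimage in $W_{\geq -1}(h_{\sigma(S)})$ of $\Psi_{S,a}(\mf k/W(h_S^a))$. Since $\Psi_{S,a+1}$ reduces to $\Psi_{S,a}$ modulo $W(h_S^a)$, the subalgebra $\Phi(\mf k)$ does not depend on $a$, and applying the same construction with $\sigma^{-1}$ produces an inverse, so $\Phi$ is a bijection.

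The hard part will be to verify that both $\Phi$ and $\Phi^{-1}$ preserve inclusion. Given $\mf k_1 \subseteq \mf k_2$ in $W$, their supports satisfy $S_2 := \supp(\mf k_2) \subseteq S_1 := \supp(\mf k_1)$, and the subalgebras $\Phi(\mf k_i)$ live a priori in different ambient pieces $W_{\geq -1}(h_{\sigma(S_i)})$. My strategy is to pick a common $a$ so that both $W(h_{S_1}^a) \subseteq \mf k_1$ and $W(h_{S_2}^a) \subseteq \mf k_2$, and then establish commutativity of the square
\[
\xymatrix{
W(h_{S_1})/W(h_{S_1}^a) \ar[r]^{\Psi_{S_1,a}} \ar[d] & W_{\geq -1}(h_{\sigma(S_1)})/W_{\geq -1}(h_{\sigma(S_1)}^a) \ar[d] \\
W(h_{S_2})/W(h_{S_2}^a) \ar[r]^{\Psi_{S_2,a}} & W_{\geq -1}(h_{\sigma(S_2)})/W_{\geq -1}(h_{\sigma(S_2)}^a)
}
\]
whose vertical maps are induced by the inclusions $W(h_{S_1}) \hookrightarrow W(h_{S_2})$ and $W_{\geq -1}(h_{\sigma(S_1)}) \hookrightarrow W_{\geq -1}(h_{\sigma(S_2)})$. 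This commutativity reduces to a direct Chinese remainder computation because the shifts act locally at each point and the CRT quotient $W_{\geq -1}(h_{\sigma(S_2)})/W_{\geq -1}(h_{\sigma(S_2)}^a)$ only sees the components at points of $\sigma(S_2)$. Applying the square to $\mf k_1 \subseteq \mf k_2$ then forces the image of $\Phi(\mf k_1)$ in $W_{\geq -1}(h_{\sigma(S_2)})/W_{\geq -1}(h_{\sigma(S_2)}^a)$ to lie inside $\Phi(\mf k_2)/W_{\geq -1}(h_{\sigma(S_2)}^a)$, and since $W_{\geq -1}(h_{\sigma(S_2)}^a) \subseteq \Phi(\mf k_2)$ this yields $\Phi(\mf k_1) \subseteq \Phi(\mf k_2)$; running the symmetric argument through $\Phi^{-1}$ then completes the proof.
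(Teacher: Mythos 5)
Your proof is correct and follows the same approach as the paper: reduce $\Vir$ to $W$ via Proposition~\ref{prop:4.15}, fix a bijection $\kk^\times \to \kk$, and transport subalgebras locally using the Chinese remainder decomposition \eqref{Ecrmw+}. The only difference is one of presentation: the paper phrases this tersely as an isomorphism of directed systems of sublattices indexed by tuples $(a; x_1, \ldots, x_s)$, while you construct the lattice isomorphism $\Phi$ explicitly and spell out the well-definedness (independence of $a$) and inclusion-preservation checks that the paper leaves implicit.
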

\begin{proof}Lemma~\ref{prop:4.15} implies that the lattices of subalgebras of finite codimension are isomorphic for $W$ and $\Vir$; hence we left to show that the lattices are isomorphic for $W$ and $W_{\ge-1}$. 

Theorem~\ref{prop:4.14} implies that the lattices of subalgebras of finite codimension for both $W$ and $W_{\ge-1}$ are direct limits of the sublattices of subalgebras $\mf k$ containing $W((t-x_1)^a\cdots(t-x_s)^a)$ for all tuples $( a; x_1, \ldots, x_s)$; the only difference between $W$ and $W_{\ge-1}$ here is that in the first case $x_i\ne0$. 
These lattices are isomorphic for $W, W_{\ge-1}$ and the embeddings between them are the same for $W$ and $W_{\ge-1}$ (they essentially depend only on the integer-valued parameters $s$ and $a$ and on the cardinality of $\CC$).
\end{proof}

\section{Implications of our results for $\Ua(\mf g)$}\label{SUg}

In this final section, we shift, for the first time in this paper, to considering the universal enveloping algebra $\Ua(\mf g)$ of one of our Lie algebras of interest.
We
 apply a version of the orbit method to relate 
 the Poisson primitive ideals 
 $\ker p_\gamma =  \CoreP(\chi_{x;\alpha, \gamma})$ of $\Sa(\mf g)$ to primitive ideals of $\Ua(W)$ obtained as kernels of maps to the (localised) Weyl algebra.  
 We end with some conjectures about ideals in $\Ua(\Vir)$, $\Ua(W)$, and $\Ua(W_{\geq-1})$.
 
 %

%
%

\subsection{Constructing primitive ideals through the orbit method} \label{SSorbit}
 Kirillov's orbit method  for nilpotent and solvable Lie algebras attaches to $\chi\in\mf g^*$ the annihilator in $\Ua(\mf g)$ of the module induced from a polarization of $\chi$ and the induced  character. 
We apply the same construction to $\mf g=W$ and $\chi=\chi_{x; \alpha, \gamma}$.
We will denote the corresponding induced $W$-modules by $M_{x; \gamma}$.

A description of the annihilators of $M_{x, \gamma}$ is given in Proposition~\ref{Prometb}. 
The main result here is that  $$ \Ann_{\Ua(W)}M_{x; \gamma}$$ depends only on $\gamma$; thanks to Theorem~\ref{Texpw} (or Lemma~\ref{lem:Jgamma}) the same holds for $\CoreP(\chi_{x; \alpha; \gamma})$. 
This shows that the constructions of Kirillov's orbit method give rise to a map from a certain class of Poisson primitive ideals of $\Sa(W)$ to a certain class of primitive ideals of $\Ua(W)$, which are known in the literature \cite{CM, SW2} as kernels of maps to the localised Weyl algebra. 
We believe this map extends to a surjection from Poisson primitive ideals of $\Sa(\mf g)$ to primitive ideals of $\Ua(\mf g)$; this is the subject of ongoing research. 

Throughout this section, we write the localised Weyl algebra   as   $A = \kk [t,t^{-1}, \del]$, with $\del t = t \del + 1$.

We first describe a polarization for $\chi_{x; \alpha, \gamma}$. 
Let $x, \alpha , \gamma \in \kk$ with $x \neq 0, (\alpha , \gamma) \neq (0,0)$ and let $\chi= \chi_{x;\alpha , \gamma}$. Recall the computation of $W^\chi$ in Lemma~\ref{lem:Wchi}, and consider the Lie subalgebra $W(t-x)$ of $W$, which contains $W^{\chi}$.
We have $$\dim W/W^\chi = 2\implies W(t-x) = \CC (t-x) \del \oplus W^\chi.$$  
As $B_\chi((t-x) \del, (t-x)\del) = 0$,  thus $W(t-x)$ is a totally isotropic subspace of $(W, B_\chi)$; by dimension count it is maximal totally isotropic.  
Thus $W(t-x)$ is a {\em polarization} of $W$ at $\chi$, as in \cite[1.12.8]{Dixmier}.
Further, $W(t-x)$ is the unique polarization of $W$ at $\chi$: since any polarization of $W$ at $\chi$ must be a codimension 1 subalgebra of $W$, by Corollary~\ref{cor:codim1} it must be equal to some $W(t-y)$ and it is easy to see that we must have $y=x$. 

Note that $\chi$ is a character of $W(t-x)$; let 
 $\CC m_{x;\gamma}$ be the corresponding 1-dimensional representation of $W(t-x)$, with basis element $m_{x; \gamma}$.
 (The restriction $\chi|_{W(t-x)}$, which sends $p \del \mapsto \gamma p'(x)$, depends only on $x$ and $\gamma$ --- this is the reason to omit $\alpha $ in the notation $\CC m_{x; \gamma}$.  )
Put $$M_{x; \gamma} := \Ua(W) \otimes_{\Ua(W(t-x))} \CC m_{x;\gamma}.$$
Since $e_{-1} =  \del \not \in W(t-x)$, thus $W = \CC e_{-1} \oplus W(t-x)$ and by the Poincar\'e-Birkhoff-Witt theorem the set $\{ e_{-1}^k m_{x;\gamma} | k \in \NN\} $ is  a basis for~$M_{x; \gamma}$.

We now give an alternative construction of $M_{x; \gamma}$.  Set $$N_x := \kk[t, t^{-1}, (t-x)^{-1}]/\kk[t, t^{-1}].$$ 
For every $x$, $N_x$ is a simple (faithful) left $A$-module. 

\begin{remark}
The space $N_x$ can be thought of as a space of distributions on $\kk^\times$; for, setting $\delta_x = (t-x)^{-1} \in N_x$ we have $(t-x) \delta_x = 0$ so $\delta_x$ behaves like  a $\delta$-function at $x$.  The elements $\del^k \delta_x$ form a basis of $N_x$.
\end{remark}

Recall that, for any $\gamma \in \CC$, the map
\begin{equation}\label{Epgm} \pi_\gamma: W \to A, \quad f\del  \mapsto f \del + \gamma f'\end{equation}
is  a Lie algebra homomorphism; see \cite{CM}. 
Thus $\pi_\gamma$ extends to define a ring homomorphism $\Ua(W) \to A$.  
Note that the map $p_\gamma$ defined in \eqref{pgamma} is {\em not} the associated graded map attached to $\pi_\gamma$ even though they  are clearly analogous.

The images of $\pi_\gamma$ have been computed in  \cite[Lemma~2.1]{CM}, and we give them here.

\begin{lemma}\label{Lpiim} We have 
\begin{itemize} \item[(a)]  $\im(\pi_0) = \kk \oplus A \del$, and $\im \pi_1 = \kk \oplus \del A$.
\item[(b)]  $\im(\pi_\gamma) = A$ if $\gamma\ne0, 1$.
\end{itemize}
\end{lemma}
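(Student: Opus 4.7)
For part (a) the plan is to identify each image explicitly through two-way containments. Since $\pi_0(f\del) = f\del \in A\del$, every product of generators $\pi_0(f_1\del)\cdots\pi_0(f_n\del)$ stays inside the left ideal $A\del$, forcing $\im \pi_0 \subseteq \kk \oplus A\del$; the reverse inclusion follows because expanding $(f\del)(g\del) = fg\,\del^2 + fg'\,\del$ already reaches every $h\del^2$ with $h \in \kk[t,t^{-1}]$, and an easy induction (using the already-established $\del^{n-1} \in \im \pi_0$) extends this to every $h\del^n$. Directness of the sum is clear since $A\del$ has no constant term. For $\pi_1$, the starting point is the identity $\pi_1(f\del) = f\del + f' = \del f$ in $A$, which places every generator inside $\del A$ and gives $\im \pi_1 \subseteq \kk \oplus \del A$. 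The converse follows from $\pi_1(\del) = \del$ together with $\del^n f = \del \cdot \del^{n-1}f$.

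For part (b) the plan is to prove that $R := \im \pi_\gamma$ contains $\del$, $t^{-1}$, and $t$, since these three elements generate $A = \kk[t, t^{-1}, \del]$. The element $\del = \pi_\gamma(\del)$ lies in $R$ for free, so $\del^2 \in R$. The extraction of pure powers of $t$ from the mixed generators $\pi_\gamma(t^n\del) = t^n\del + \gamma n\, t^{n-1}$ proceeds in three short steps. First,
\[ \pi_\gamma(t\del)\,\pi_\gamma(t^{-1}\del) + \pi_\gamma(t^{-1}\del)\,\pi_\gamma(t\del) = 2\del^2 + 2\gamma(1-\gamma)\,t^{-2}, \]
which produces $t^{-2} \in R$ using $\gamma(1-\gamma) \neq 0$. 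Second,
\[ \pi_\gamma(t^2\del)\cdot t^{-2} = \del + 2(\gamma - 1)\,t^{-1}, \]
giving $t^{-1} \in R$ using $\gamma \neq 1$. Third,
\[ \pi_\gamma(t^3\del)\cdot t^{-1} - \pi_\gamma(t^2\del) = (\gamma - 1)\,t, \]
yielding $t \in R$.

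The main content of the argument, and where I expect any real difficulty to lie, is in locating the correct products and linear combinations so that the scalar prefactor of the desired power of $t$ is nonzero exactly when $\gamma \neq 0, 1$. These two excluded values appear precisely as the zeros of the polynomial factors $\gamma(1-\gamma)$ and $\gamma-1$ that arise in the computation. Each individual step then reduces to a short application of $\del \cdot t^n = t^n\del + n\,t^{n-1}$, so once the pattern is identified the verification is routine.
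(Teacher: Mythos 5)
Your proof is correct, and it supplies something the paper deliberately omits: the paper states this lemma by citing Conley--Martin \cite[Lemma~2.1]{CM} without reproducing an argument, so your computation is an independent verification rather than a reconstruction of what appears in the text. Both halves of (a) check out: the containment $\im\pi_0 \subseteq \kk \oplus A\del$ follows because any product of elements ending in $\del$ still ends in $\del$ after normal-ordering, and the reverse containment is immediate from $f\del^n = \pi_0(f\del)\,\pi_0(\del)^{n-1}$; the identity $\pi_1(f\del) = \del f$ is exactly the right observation for the other case, and the same spanning argument using $\del^n t^m = \pi_1(\del)^{n-1}\pi_1(t^m\del)$ finishes it. For (b), I verified all three bracket/product identities by normal-ordering, namely
\[ \pi_\gamma(t\del)\pi_\gamma(t^{-1}\del) + \pi_\gamma(t^{-1}\del)\pi_\gamma(t\del) = 2\del^2 + 2\gamma(1-\gamma)t^{-2}, \]
\[ \pi_\gamma(t^2\del)\cdot t^{-2} = \del + 2(\gamma-1)t^{-1}, \qquad \pi_\gamma(t^3\del)\cdot t^{-1} - \pi_\gamma(t^2\del) = (\gamma-1)t, \]
and each is correct; together with $\del = \pi_\gamma(\del)$ these put $t, t^{-1}, \del$ into $\im\pi_\gamma$, which generates $A$. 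It is a pleasant feature of your argument that the excluded values $\gamma = 0, 1$ appear transparently as the vanishing loci of the scalar coefficients $\gamma(1-\gamma)$ and $\gamma - 1$, which makes the dichotomy in the lemma feel inevitable rather than ad hoc.
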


\begin{remark}\label{rem:kerpi} The restriction of $\pi_\gamma$ to $\Ua(W_{\geq 1})$ was  considered, under slightly different notation, in \cite{SW2}; see \cite[Remark~3.14]{SW2}.  It was shown there that the ideal $\ker \pi_0|_{W_{\geq 1}} = \ker \pi_1|_{W_{\geq 1}}$ is not finitely generated as a left or right ideal of $\Ua(W_{\geq 1})$.
\end{remark}

For every $\gamma\in\CC$ the map $\pi_\gamma$ from~\eqref{Epgm} induces the structure of a $W$-module on $N_x$; we denote the space $N_x$ with the corresponding $W$-module structure $\cdot_\gamma$  by $N_x^\gamma$.  
\begin{proposition}\label{Prometb} 
Let $x\neq 0, \alpha, \gamma \in \kk$.
Then $N_x^\gamma\cong M_{x; \gamma-1}$. 
Moreover, $N_x^\gamma$ is a simple $W$-module if and only if $\gamma\ne1$.
There is an exact sequence 
\[ 0 \to N_x^0 \to N_x^1 \to \kk \to 0.\]

For all $\gamma$ the annihilator of $N_x^\gamma$ is equal to $\ker \pi_\gamma$, which is primitive.\end{proposition}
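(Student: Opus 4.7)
First I would establish the isomorphism $N_x^\gamma \cong M_{x;\gamma-1}$ via a direct computation of the $W(t-x)$-action on the cyclic vector $\delta_x := (t-x)^{-1} \in N_x$. The Taylor expansion of any $h \in \kk[t,t^{-1}]$ around $x$ yields the reduction $h(t)(t-x)^{-k} \equiv \sum_{j<k} \frac{h^{(j)}(x)}{j!}(t-x)^{j-k}$ in $N_x$. Applied with $f = (t-x)g \in W(t-x)$, so that $f(x) = 0$ and $f'(x) = g(x)$, this gives
\[ \pi_\gamma(f\del)\delta_x = -f(t-x)^{-2} + \gamma f'(t)(t-x)^{-1} \equiv -f'(x)\delta_x + \gamma f'(x)\delta_x = (\gamma-1) f'(x)\delta_x, \]
matching the restriction $\chi_{x;\alpha,\gamma-1}|_{W(t-x)}$. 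The resulting $W$-module map $M_{x;\gamma-1} \to N_x^\gamma$ sending $m_{x;\gamma-1} \mapsto \delta_x$ is an isomorphism, as the PBW basis $\{e_{-1}^k m_{x;\gamma-1}\}_{k\ge0}$ is sent to $\{(-1)^k k!(t-x)^{-k-1}\}_{k\ge0}$, a basis of $N_x$.

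For the exact sequence, the Weyl-algebra identity $\del f = f\del + f'$ yields $\pi_1(f\del)\cdot\del = \del\cdot\pi_0(f\del)$, so $\del: N_x^0 \to N_x^1$ is a $W$-module homomorphism, injective with image $\del N_x$ of codimension one in $N_x$ (missing only $\delta_x$). A computation analogous to the one above shows that $\pi_1(f\del)\delta_x \equiv -f'(x)\delta_x + f'(x)\delta_x = 0$ modulo $\del N_x$, confirming that the quotient is the trivial module $\kk$. The simplicity claim I would split into cases: if $\gamma \neq 0,1$ then $\pi_\gamma$ is surjective onto $A$ by Lemma~\ref{Lpiim}, so simplicity of $N_x^\gamma$ reduces to simplicity of $N_x$ as an $A$-module. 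For $\gamma = 0$, the element $(t-x)\del$ acts via $\pi_0$ as a diagonal operator with distinct eigenvalues $-k$ on $(t-x)^{-k}$, so polynomials in $(t-x)\del$ isolate the highest-pole term of any nonzero vector; applying $\pi_0((t-x)^m\del)$ for $0 \le m \le n$ then sends $(t-x)^{-n}$ to nonzero multiples of each $(t-x)^{-k}$ with $1 \le k \le n+1$, and iterating $\del$ covers the rest. Non-simplicity for $\gamma = 1$ is immediate from the exact sequence.

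The annihilator computation is uniform: since $N_x$ is a faithful $A$-module and $\im\pi_\gamma$ is a subring of $A$, the restricted action of $\im\pi_\gamma$ on $N_x$ is still faithful, forcing $\Ann_{\Ua(W)}(N_x^\gamma) = \pi_\gamma^{-1}(0) = \ker\pi_\gamma$. Primitivity is then immediate when $\gamma \neq 1$. The main obstacle is the case $\gamma = 1$, since $N_x^1$ is not simple. To handle this I would introduce the principal anti-involution $\iota$ of $\Ua(W)$ (given on generators by $\iota(u) = -u$ for $u \in W$) and the anti-involution $\tau$ of $A$ fixing $t$ and sending $\del \mapsto -\del$; a check on $u = f\del$ yields $\tau(\pi_0(\iota(u))) = \tau(-f\del) = \del f = f\del + f' = \pi_1(u)$, and since both $\tau \circ \pi_0 \circ \iota$ and $\pi_1$ are algebra homomorphisms on $\Ua(W)$ they agree, giving $\ker \pi_1 = \iota(\ker \pi_0)$. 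The embedding $\del N_x \cong N_x^0 \hookrightarrow N_x^1$ yields $\ker\pi_1 \subseteq \ker\pi_0$, and applying $\iota$ gives the reverse inclusion. Hence $\ker\pi_1 = \ker\pi_0 = \Ann(N_x^0)$ is primitive as $N_x^0$ is simple.
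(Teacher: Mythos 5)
Your proof is correct, and while its skeleton (the Taylor-expansion computation $(\gamma-1)f'(x)\delta_x$, the PBW-basis identification $M_{x;\gamma-1}\cong N_x^\gamma$, the reduction of simplicity for $\gamma\neq 0,1$ to surjectivity of $\pi_\gamma$ via Lemma~\ref{Lpiim}, and faithfulness of $N_x$ over $A$ for the annihilator) matches the paper's, you diverge in two sub-arguments, both legitimately. For $\gamma=0$ you argue simplicity by diagonalising $\pi_0((t-x)\del)$ on the basis $(t-x)^{-k}$ and isolating the top pole, whereas the paper uses the quicker observation that $fn=0$ for some $f\neq 0$ forces $\del n\neq 0$ and $A\del+Af=A$, so $A\del n=N_x$ with $A\del\subseteq\im\pi_0$; your version is longer but makes the weight structure explicit. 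For $\gamma=1$ you build the exact sequence from the intertwiner $\del$ (using $\pi_1(f\del)\del=\del\,\pi_0(f\del)$ in $A$), which is essentially the paper's submodule $\del A N_x\cong M_{x;-1}$ seen from the other end; more substantially, you prove $\ker\pi_0=\ker\pi_1$ yourself, combining the one inclusion coming from the embedding $N_x^0\hookrightarrow N_x^1$ with the identity $\tau\circ\pi_0\circ\iota=\pi_1$ for the principal anti-automorphism $\iota$ of $\Ua(W)$ and the transpose $\tau$ of $A$. The paper instead imports this equality from \cite{SW2} (see Remark~\ref{rem:kerpi}), so your argument has the merit of being self-contained; your anti-involution computation and the resulting symmetry $\ker\pi_1=\iota(\ker\pi_0)$ check out, and together with $\iota^2=\mathrm{id}$ they do yield the equality and hence primitivity of $\ker\pi_1=\Ann(N_x^1)$ as the annihilator of the simple module $N_x^0$.
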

\begin{proof}
Let $p\in\CC[t, t^{-1}]$ with $(t-x)|p$. By taking the  Taylor expansion of $p $ about $x$ one may verify that 
\beq\label{eqp}(p\del)\cdot_\gamma\delta_x=(\gamma-1) p'(x)\delta_x.\eeq
This immediately implies that there exists a surjective module homomorphism $M_{x; \gamma-1}\to N_x^{\gamma} $ which sends $m_{x; \gamma-1}\mapsto\delta_x$. 
The basis element $e_{-1}^k m_{x; \gamma-1}$ maps to $\del^k \delta_x$, so this map is an isomorphism.

Assume $\gamma \neq 0,1$. 
In this case $\pi_\gamma$ is surjective by Lemma~\ref{Lpiim}, so simplicity of  $N_x^\gamma$ follows from simplicity of $N_x$ as an $A$-module.

We claim that $N_x^0$ is simple.  To see this, let $ 0\neq n \in N_x$, which we recall is a simple $A$-module. 
By construction of $N_x$ there is some $0 \neq f(t) \in \kk[t, t^{-1}]$ so that $fn =0$, and thus $\del n \neq 0$ as $A\del+ Af(t) = A$.
Thus $A \del n = N_x$, proving the claim since by Lemma~\ref{Lpiim} $A\del \subseteq \pi_0(\Ua(W))$.

Finally, we consider $N_x^1$.    
Lemma~\ref{Lpiim} implies $\del A \triangleleft \im \pi_1 = \kk \oplus \del A$, and thus $\tilde{N} := \del A N_x = \del N_x$ is a submodule of $N_x^1$, with  $N_x^1/\tilde{N} \cong \kk$.
We claim that $\tilde{N}$ is simple.  Let $0 \neq n \in \tilde{N}$.  
As $N_x$ is a simple $A$-module, $\del A n = \tilde{N}$ and so $\Ua(W) \cdot_1 n = (\kk \oplus \del A) n = \tilde{N}$, as needed.  The reader may verify that as in \eqref{eqp}
\[ p \del \cdot_1 \del \delta_x = -p'(x) \del \delta_x,\]
and so $\tilde{N} \cong M_{x; -1} \cong N_x^0$.
The claim about annihilators follows from the fact that $N_x$ is a faithful module over the (simple) ring $A$.
That $\ker \pi_\gamma$ is primitive is immediate for $\gamma \neq 1$; for $ \gamma=1$ it follows from the fact that $\ker \pi_1 = \ker \pi_0$.
\end{proof}
\begin{remark}
\begin{enumerate}
\item[(a)]Note that the primitive ideal $\ker \pi_\gamma$ is completely prime. 
We do not know of  a primitive (or prime) ideal of $\Ua(W)$ which is not completely prime.

\item[(b)] We believe that the ideals $\ker \pi_\gamma$ above  are all of the primitive ideals of $\Ua(W)$ of Gelfand-Kirillov codimension 2. 

\item[(c)]  By Remark~\ref{rem:kerpi}, $\ker \pi_0 = \ker\pi_1$.  However, $\ker p_0 \neq \ker p_1$.  
To see this, note that if $\ker p_0 = \ker p_1 $ then $\chi_{1;1,0} \in V(\ker p_1) = \overline{\OO(\chi_{1;1,1})}$.
Thus either $\OO(\chi_{1;1,0} ) = \OO(\chi_{1;1,1})$ or $\dim \OO(\chi_{1;1,0}) < \dim \OO(\chi_{1;1,1})$.  Neither is true.

Thus the orbit method does not give a bijection from Poisson primitive ideals of $\Sa(W)$ to primitive ideals of $\Ua(W)$.  
\end{enumerate}
\end{remark}

\subsection{Conjectures for $ \Ua(\mf g)$}\label{SSconjectures}
We have focused almost entirely on the symmetric algebra of $\mf g$, where $\mf g$ is one of $\Vir$, $W$, or several related Lie algebras.  
However, our results are at minimum suggestive for the enveloping algebras of these Lie algebras.  
In this final subsection, we make several conjectures for $\Ua(\mf g)$.  
Broadly speaking, these are instances of the meta-conjecture:

\begin{center}
{\em The ideal  structure of $\Ua(\mf g)$ is closely analogous to the Poisson structure of $\Sa(\mf g)$. }
\end{center}

For each conjecture, we give the Poisson result which suggested it to us.

\begin{conjecture}[cf. Corollary~\ref{cor:Psimple}]
If $\zeta \neq 0$ then $\Ua(\Vir)/(z-\zeta)$ is simple.
\end{conjecture}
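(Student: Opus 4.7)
The plan is to attack Conjecture~7.1 via a refined filtration/associated-graded argument, combined with the structural results on Poisson primes of $\Sa(\Vir)$ established in Sections~3--5. Suppose for contradiction that $J \subsetneqq \Ua(\Vir)$ is a two-sided ideal with $J \supsetneqq (z-\lambda)$; we wish to derive a contradiction.

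The naive PBW filtration assigns every element of $\Vir$ degree one, so the symbol of $z-\lambda$ is $z$ and we recover only $\gr J \supseteq (z)$, which tells us nothing beyond Corollary~\ref{cor:Psimple}. To extract more information, I would instead work with the filtration $F_\bullet$ on $\Ua(\Vir)$ in which each $e_n$ has degree $1$ and $z$ has degree $2$ (which is natural because $z$ appears quadratically in the bracket $[e_n, e_{-n}]$). Under this filtration, $\gr \Ua(\Vir)$ is still $\Sa(\Vir)$ as an algebra, with the Kostant-Kirillov Poisson bracket concentrated in top degree. Crucially, $\lambda \in \kk$ now has strictly smaller filtration degree than the symbol $z$ of the generator $z$, so one may hope that $\gr J$ contains elements outside the Poisson ideal $(z) \subseteq \Sa(\Vir)$.

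The central step is a lifting / cancellation argument: given $u \in J$ whose leading symbol lies in $(z) \subseteq \Sa(\Vir)$, use the relation $z \equiv \lambda \pmod{J}$ with $\lambda \neq 0$ to produce a new element $u' \in J$ of strictly smaller filtration degree recording the next-to-leading symbol of $u$. Iterating, one extracts from $J$ an element $v$ whose symbol lies entirely outside $(z)$. Since $\gr J$ is Poisson, Theorem~\ref{thm:paramprimes} (or more directly the fact that $\Sa(\Vir)/(z)$ has no nonzero Poisson ideal avoiding a prime Poisson ideal of positive dimension) would then force $\gr J$ to contain elements of arbitrarily large pseudo-orbit dimension, ultimately giving $\gr J = \Sa(\Vir)$ and hence $1 \in J$.

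The main obstacle is making the lifting step rigorous: one must show that the $z$-content of any $u \in J$ can genuinely be cancelled against multiples of $z-\lambda$ without introducing uncontrolled commutator error terms in lower filtration degree. This is subtle because the PBW decomposition of an element into monomials separating $z$ from the $e_n$ is not canonical, and commutators among the $e_n$ themselves generate new $z$-terms --- this is essentially the quantized content of Proposition~\ref{prop:4.15}, and one will likely need that result (or rather its proof) in an enhanced form controlling higher commutators. A possible alternative route, should the filtration approach fail, is representation-theoretic: verify via the Kac determinant formula that for $\lambda \neq 0$ a Zariski-dense set of Verma modules $M(h, \lambda)$ are irreducible, then show that the common annihilator of these modules equals $(z-\lambda)$ by reducing to our computation of pseudo-orbits via the quantization map $\pi_\gamma$ of Section~\ref{Sorbit}. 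Primitivity of $\Ua(\Vir)/(z-\lambda)$ would follow from Jacobson density, but upgrading primitivity to simplicity would still require an additional argument --- for instance, exploiting that a proper two-sided ideal $J/(z-\lambda)$ would have to act as zero on every irreducible module with central charge $\lambda$, contradicting the richness of the family produced.
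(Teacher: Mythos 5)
This statement is Conjecture~\ref{conj:Usimple}, and the paper gives \emph{no proof} of it: immediately after stating it, the authors write ``However, we as yet have no proof of Conjecture~\ref{conj:Usimple}'' and go on to explain that the obvious associated-graded strategy fails because for any ideal $J \supsetneqq (z-\lambda)$ one only gets $\gr J \ni z$, at which point Corollary~\ref{cor:Psimple} says nothing. Your proposal is therefore not to be measured against an existing argument --- there isn't one --- and indeed you yourself stop short of a proof, flagging the lifting step as the ``main obstacle'' and hedging with an unrealized alternative route. What you have written is a research plan, not a proof.

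Beyond that, the plan as stated has a concrete technical flaw. You propose to put $\deg e_n = 1$ and $\deg z = 2$ and claim the associated graded is still $\Sa(\Vir)$. It is not. With this grading, $e_n e_{-n} - e_{-n} e_n = -2n\,e_0 + \tfrac{n^3-n}{12}\,z$ has the $z$-term sitting in filtration degree~$2$, the same degree as $e_n e_{-n}$ itself, so the commutator does \emph{not} drop filtration degree and the associated graded is noncommutative: in $\gr$ one has $\bar e_n \bar e_{-n} - \bar e_{-n} \bar e_n = \tfrac{n^3-n}{12}\,\bar z \neq 0$. What you obtain is the enveloping algebra of an infinite-dimensional Heisenberg-type graded Lie algebra (the Witt part of the bracket dies, the $2$-cocycle survives), not $\Sa(\Vir)$, and the Poisson machinery of Sections~3--5 does not apply to it. So the ``crucial'' gain you hope for --- that $\lambda$ now has smaller degree than $z$, letting you extract symbols outside $(z)$ --- is built on a false premise. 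The representation-theoretic alternative you sketch (Kac determinant, dense families of irreducible Vermas, Jacobson density) could in principle give primitivity of $\Ua(\Vir)/(z-\lambda)$, but, as you correctly note, primitivity is strictly weaker than simplicity, and you do not close that gap either.
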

\begin{conjecture}[cf. Corollaries~\ref{Csameprimspec}, \ref{Csameprimespec}]
Restriction gives a bijection between primitive (respectively, prime) ideals of $\Ua(W)$ and $\Ua(W_{\ge-1})$, and  a homeomorphism  ${\rm PSpec}_{\rm prim}\Ua(W) \stackrel{\sim}{\to} {\rm PSpec}_{\rm prim}\Ua(W_{\geq -1})$.
\end{conjecture}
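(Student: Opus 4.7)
The plan is to mirror the proof of Corollary~\ref{Csameprimespec} at the noncommutative level, using the action of the shift group $\Shifts$ by algebra automorphisms of $\Ua(W)$ together with an enveloping-algebra analogue of Proposition~\ref{Plfpoi}(a).

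First, I would verify that $I \mapsto I \cap \Ua(W_{\ge-1})$ sends prime (respectively primitive) ideals of $\Ua(W)$ to prime (respectively primitive) ideals of $\Ua(W_{\ge-1})$. Since $\Ua(W)$ is free as a right (and left) $\Ua(W_{\ge-1})$-module by PBW, contracted ideals should inherit good behaviour, although care is needed because neither algebra is Noetherian. For primitivity, given $I = \Ann_{\Ua(W)}(M)$ for a simple $W$-module $M$, one would seek a simple $W_{\ge-1}$-subquotient $N$ of $M$ with $\Ann_{\Ua(W_{\ge-1})}(N) = I \cap \Ua(W_{\ge-1})$.

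Second, I would construct the candidate inverse $J \mapsto \tilde J$. The shift group $\Shifts$ acts on $\Ua(W)$ by algebra automorphisms (though it does not preserve $\Ua(W_{\ge-1})$), so, imitating Remark~\ref{rem:extension}, one may translate $J$ to be ``supported away from $0$'' in an appropriate sense and then extend it canonically to a primitive ideal $\tilde J \subseteq \Ua(W)$ whose restriction is $J$. Reducing the prime case to the primitive case will require showing that every prime ideal of $\Ua(W)$ and $\Ua(W_{\ge-1})$ is an intersection of primitive ideals --- a noncommutative Nullstellensatz-type statement which is itself part of the conjecture's content. The topological half (bijection is a homeomorphism) should then follow formally from the ideal-theoretic bijection together with the fact that restriction preserves inclusions, exactly as in the Poisson case.

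The main obstacle is the inverse construction: a simple $W_{\ge-1}$-module need not extend to a simple $W$-module, and even when it does, the extension is not canonical. A natural approach is to use the orbit-method construction of Section~\ref{Sorbit} --- realising primitive ideals as annihilators of modules like $N_x^\gamma$ --- suitably extended to annihilators of simple modules quantising higher-order local functions. Proving this amounts to classifying primitive ideals of $\Ua(W)$ and $\Ua(W_{\ge-1})$ in terms of representation-theoretic data parallel to the local functions of Section~\ref{Spridlf}, a program the paper explicitly flags as ongoing. The conjecture is therefore expected to be resolved in tandem with a full orbit method for $\Ua(\Vir)$ and $\Ua(W)$, rather than by an independent argument.
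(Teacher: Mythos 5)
The statement you are addressing is not proved in the paper: it appears in the final section as an open conjecture, motivated by the Poisson results (Corollaries~\ref{Csameprimspec} and \ref{Csameprimespec}), and the authors explicitly say it is the subject of ongoing research. So there is no proof of the paper's to compare yours against, and your proposal --- which you candidly present as a programme rather than an argument --- does not close the conjecture. It is a reasonable outline of how one might try to transport the Poisson proof, but every step where the Poisson argument does real work is left open at the enveloping-algebra level.

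Concretely, the gaps are these. First, the Poisson proof that restriction is well defined and injective rests on Proposition~\ref{Plfpoi}(a), i.e. $\CoreP(\chi) \cap \Sa(W_{\geq -1}) = \CoreP(\chi|_{W_{\geq -1}})$, and that in turn is proved by realising both Poisson cores as kernels of maps to the coordinate ring of a single $\widehat{\DLoc}$-orbit via Propositions~\ref{Prpoi} and \ref{Practpoi}; no analogue of this finite-dimensional group action on ideals of $\Ua(W)$ is available, and PBW-freeness of $\Ua(W)$ over $\Ua(W_{\geq -1})$ by itself does not give that contractions of primitive ideals are primitive (your suggested search for a simple $W_{\geq -1}$-subquotient with the right annihilator is exactly the missing content). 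Second, the inverse map in the Poisson case comes from the classification of Poisson primitive ideals by local functions (Theorems~\ref{Tloc}, \ref{Tlfpoi}), whose noncommutative counterpart --- a classification of primitive ideals of $\Ua(W)$, e.g. via the orbit method of Section~\ref{Sorbit} --- is precisely what is conjectural; note also that the finite-co-GK input (Theorem~\ref{Tgkf}) has no established analogue for two-sided ideals of $\Ua(W)$. Third, your reduction of the prime case to the primitive case needs every prime of $\Ua(W)$ and $\Ua(W_{\geq -1})$ to be an intersection of primitives (the analogue of Lemma~\ref{lem:basics}(a)), which is again open, as is the analogue of the parameterisation in Theorem~\ref{thm:paramprimes} that the paper uses to upgrade the bijection to a homeomorphism in Corollary~\ref{Csameprimespec}. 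In short, the proposal correctly identifies the shape a proof should take, but each of its three stages presupposes results that are currently conjectures themselves.
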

\begin{conjecture}[cf. Proposition~\ref{Pradprim}] Every proper prime ideal of $\Ua(W)$ contains a proper primitive ideal.\end{conjecture}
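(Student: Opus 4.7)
The plan is to transpose the strategy of Proposition~\ref{Pradprim} to the enveloping algebra setting. Recall the Poisson proof: given a proper radical Poisson ideal $I \subseteq \Sa(W)$, one invokes Theorem~\ref{Tgkf} to bound $\GK(\Sa(W)/I)$; writes $I$ as an intersection of Poisson primitives $\CoreP(\mu_\alpha)$ via Lemma~\ref{lem:basics}(a); and finally constructs, for each $d$, a single local function $\nu_d$ whose Poisson core $\CoreP(\nu_d)$ sits inside every Poisson primitive of co-GK strictly below $2d$. Choosing $d > \tfrac12\GK(\Sa(W)/I)$ then forces $\CoreP(\nu_d) \subseteq \CoreP(\mu_\alpha)$ for every $\alpha$, and hence $\CoreP(\nu_d) \subseteq I$.

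The first step transfers cleanly. Given a proper prime $I \subsetneq \Ua(W)$, the associated graded $\gr I$ is a proper Poisson ideal of $\Sa(W) = \gr \Ua(W)$, so Theorem~\ref{Tgkf} gives $\GK(\Ua(W)/I) = \GK(\Sa(W)/\gr I) < \infty$. Consequently any primitive ideal of $\Ua(W)$ containing $I$ has quotient of bounded GK-dimension.

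The second step is to construct, for each $d$, a primitive ideal $J(d) \subseteq \Ua(W)$ playing the role of $\CoreP(\nu_d)$: we need $J(d) \subseteq P$ for every primitive $P$ of $\Ua(W)$ with $\GK(\Ua(W)/P) < 2d$. The natural candidates are annihilators of induced modules $\Ua(W) \otimes_{\Ua(\mf p)} \kk_\chi$, where $\mf p$ is a polarisation of a local function $\chi$ whose pseudo-orbit has dimension roughly $2d$, following the orbit-method construction developed in Section~\ref{Sorbit} for the two-dimensional case. Establishing the required inclusions $J(d) \subseteq P$ will presumably require a quantisation of Corollary~\ref{Cdim}: containments among closures of pseudo-orbits of local functions ought to translate into reverse containments among the primitive ideals obtained from their induced modules. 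Constructing suitable polarisations for one-point local functions of high order is feasible using Theorem~\ref{Texpw}, and the multi-point case should reduce to this via the decomposition of Lemma~\ref{Lwcrt}.

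The third and most serious step is to show that every proper prime ideal of $\Ua(W)$ is the intersection of the primitive ideals containing it, the quantum analogue of the Nullstellensatz input in Lemma~\ref{lem:basics}(a). For finite-dimensional Lie algebras this is a theorem of Duflo, but its proof relies heavily on tools (such as tensoring with finite-dimensional modules and exploiting the large centre of $\Ua(\mf g)$) that are unavailable here. This Jacobson-type property for $\Ua(W)$, together with the detailed orbit method needed to produce the universal primitives $J(d)$, constitute the main obstacles. In essence, the conjecture asserts that the Poisson parameterisation of prime ideals in Theorem~\ref{thm:paramprimes} and Remark~\ref{rem:6} admits a faithful quantisation, and proving it appears to require substantially more structural understanding of $\Ua(W)$ than is currently available.
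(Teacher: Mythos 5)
This statement is one of the paper's closing conjectures, explicitly left open: Section~8 ends by noting that ``these conjectures are the subject of ongoing research.'' There is therefore no proof in the paper against which to compare your proposal, and you should not expect to find one. Your proposal is, quite properly, not a proof either --- it is a correct diagnosis of why the Poisson argument of Proposition~\ref{Pradprim} does not immediately transfer.

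Your three-step analysis is accurate. The reduction via $\gr$ in step one is sound, and indeed matches how the authors invoke Theorem~\ref{Tgkf} for $\Ua(W)$ elsewhere. You correctly identify the two genuine obstructions: (i) the absence of a Duflo/Jacobson-type theorem asserting that every prime of $\Ua(W)$ is an intersection of primitive ideals --- this was the role of Lemma~\ref{lem:basics}(a) on the Poisson side, where it follows from the extended Nullstellensatz, but no analogue is available for $\Ua(W)$; and (ii) the need for a quantised orbit method producing, for each $d$, a ``universal'' primitive ideal $J(d)$ contained in all primitives of small co-GK-dimension, generalising the $\ker\pi_\gamma$ construction of Section~\ref{Sorbit} beyond two-dimensional pseudo-orbits and quantising the closure relations of Corollary~\ref{Cdim}. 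You are right that neither of these is within reach of the techniques in the paper, and you are right to stop short of claiming a proof. In short, your proposal agrees with the authors' own assessment that the conjecture requires new structural input on $\Ua(W)$.
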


\begin{conjecture}[cf. Proposition~\ref{PGK2}] The $\Ker \pi_\gamma$ are all of the prime ideals of $\Ua(W)$ of co-GK-dimension 2.
\end{conjecture}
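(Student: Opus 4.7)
The plan is to mirror the proof of Proposition~\ref{PGK2} in the filtered setting, passing to the associated graded to extract the parameter $\gamma$ and then attempting to lift this classification back to $\Ua(W)$ itself. Let $P \triangleleft \Ua(W)$ be a prime ideal with $\GK \Ua(W)/P = 2$, and equip $\Ua(W)$ with the standard PBW filtration, so that $\gr \Ua(W) = \Sa(W)$. Then $\gr P$ is a Poisson ideal of $\Sa(W)$ with $\GK \Sa(W)/\gr P = 2$, and by Proposition~\ref{prop9} applied to $\sqrt{\gr P}$, we obtain finitely many minimal Poisson primes $Q_1, \ldots, Q_n$ over $\gr P$, at least one of which --- say $Q_1$ --- has GK-codimension exactly $2$. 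By Proposition~\ref{PGK2}, $Q_1 = J(\gamma)$ for some $\gamma \in \CC$. The goal is then to conclude that $P = \ker \pi_\gamma$.

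First I would show that $\ker\pi_\gamma$ is prime of co-GK $2$. Primeness follows from Lemma~\ref{Lpiim} together with the fact that $A=\kk[t,t^{\pm 1},\del]$ is a domain (so each $\im \pi_\gamma$ is a domain). The GK computation follows because $\GK A = 2$ and $\im\pi_\gamma$ has finite codimension in $A$. Next I would verify that the associated graded $\gr(\ker\pi_\gamma)$ has radical $J(\gamma)$, which requires a direct calculation: writing $\pi_\gamma(f\del) = fy + \gamma f' + \text{(order lowering)}$ in a suitable bifiltration on $A$ that separates the $y$-degree and $\del$-degree, one verifies that the symbol map for $\pi_\gamma$ is exactly $p_\gamma$, giving $\sqrt{\gr(\ker\pi_\gamma)} = J(\gamma)$. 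This is the key compatibility between the quantum and Poisson pictures.

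The main obstacle, as always in filtered/graded arguments of this type, is lifting the identification of $Q_1 = J(\gamma)$ to the identification $P = \ker \pi_\gamma$, since $\gr$ is lossy. One viable strategy proceeds in two substeps: (i) show that the minimal prime $Q_1 = J(\gamma)$ of $\gr P$ is in fact unique, so $\sqrt{\gr P} = J(\gamma)$; (ii) show that the two filtered algebras $\Ua(W)/P$ and $\Ua(W)/\ker \pi_\gamma$ coincide. For (i), primeness of $P$ forces a strong coherence on minimal primes of $\gr P$, and one should be able to rule out multiple $J(\gamma_i)$'s by exhibiting a central-like element distinguishing them (compare the computation $p_\gamma(e_0 e_2 - e_1^2) = -\gamma^2 t^2$ which shows the $J(\gamma)$ are genuinely distinct). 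For (ii), since $\Ua(W)/\ker \pi_\gamma \cong \im \pi_\gamma$ is either simple (when $\gamma \neq 0,1$, as $A$ is simple) or nearly so (containing only the ideal $A\del$ or $\del A$ as a proper nonzero ideal when $\gamma\in\{0,1\}$), there is essentially no room between $P$ and a maximal such ideal once one shows $P \supseteq \ker \pi_\gamma$ or produces a surjection $\Ua(W)/P \twoheadrightarrow \im\pi_\gamma$.

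A complementary representation-theoretic attack would be to first establish (or assume) the preceding conjecture that every proper prime of $\Ua(W)$ contains a proper primitive, reducing to the case that $P = \Ann_{\Ua(W)} M$ for a simple $W$-module $M$, and then to classify simple $W$-modules whose annihilator has co-GK $2$ as being precisely the $N_x^\gamma$ of Proposition~\ref{Prometb}. The hardest part of this alternative is the representation-theoretic classification of such simple modules; indirect evidence is supplied by Proposition~\ref{Practpoi} and Theorem~\ref{Texpw}, which show that at the classical level the only two-dimensional pseudo-orbits are those of the $\chi_{x;\alpha,\gamma}$, so by the meta-principle guiding these conjectures the quantum simples with co-GK $2$ annihilator should likewise form a one-parameter family indexed by $\gamma$.
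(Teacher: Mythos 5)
This statement is one of the closing conjectures of the paper: the authors give no proof and explicitly describe it as the subject of ongoing research, so the only thing available to compare with is the Poisson analogue, Proposition~\ref{PGK2}. Your proposal is a sensible programme, but it is a strategy outline rather than a proof, and the steps you yourself label the ``main obstacle'' are exactly where the conjecture lives. Two concrete problems already occur on the graded side. First, the opening claim that $\GK \Sa(W)/\gr P = 2$ is unjustified: since the PBW filtration of $\Ua(W)$ has infinite-dimensional pieces, the standard comparison only gives $\GK(\Ua(W)/P) \le \GK(\Sa(W)/\gr P)$, and the reverse inequality is precisely the kind of statement that fails to be automatic here (the authors flag the same lossiness of $\gr$ right after Conjecture~\ref{conj:Usimple}). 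Second, your ``key compatibility'' --- that the symbol map of $\pi_\gamma$ is $p_\gamma$ --- contradicts the paper's explicit remark that $p_\gamma$ is \emph{not} the associated graded map attached to $\pi_\gamma$: with the PBW filtration on $\Ua(W)$ and the order filtration on $A$, the graded map sends $f\del \mapsto fy$ for \emph{every} $\gamma$, so the parameter $\gamma$ is invisible at the graded level, and producing a ``suitable bifiltration'' in which it becomes visible is unproven content, not a routine verification.

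Even granting that $\sqrt{\gr P} = J(\gamma)$ for a single $\gamma$, nothing in the sketch produces any containment between $P$ and $\ker\pi_\gamma$; equality of the radicals of two associated graded ideals does not make the ideals upstairs comparable, and the paper's own observation that $\ker\pi_0 = \ker\pi_1$ while $\ker p_0 \neq \ker p_1$ shows the quantum/Poisson dictionary is not a simple ``take $\gr$'' correspondence, so this is a genuinely new difficulty rather than a formality. Your substeps (i) and (ii) are descriptions of what one ``should be able to'' do; moreover, a surjection $\Ua(W)/P \twoheadrightarrow \im\pi_\gamma$ would only give $P \subseteq \ker\pi_\gamma$, and having equal GK-dimension does not by itself force equality of a prime ideal with a larger one in this non-noetherian setting (maximality of $\ker\pi_\gamma$ for $\gamma \neq 0,1$ helps only for containments in the other direction). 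The alternative representation-theoretic route presupposes the preceding, equally open, conjecture and a classification of simple $W$-modules with co-GK $2$ annihilator that is essentially equivalent to the statement being proved. In short: the parts you verify (primeness and co-GK $2$ of $\ker\pi_\gamma$, distinctness of the $J(\gamma)$) are correct and already in the paper, but the lifting from the Poisson classification to $\Ua(W)$ --- the entire content of the conjecture --- remains open in your proposal.
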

\begin{conjecture}[cf. Subsection~\ref{SSorbit}]
 Kirillov's orbit method, i.e. the assignment 
\begin{center}a local function $\chi\to$ a polarization of $\chi\to$ the annihilator of the induced module\end{center} always produces a primitive ideal, is independent of polarization, and depends only on $\OO(\chi)$.  There is thus an induced map  ${\rm PSpec}_{prim}\Sa(\mf g) \to \Spec_{\rm prim}\Ua(\mf g)$ for $\mf g=\Vir, W, W_{\ge-1}$.
This map is surjective onto $\Spec_{\rm prim} \Ua(\mf g)$.
\end{conjecture}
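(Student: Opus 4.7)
The plan is to reduce the orbit method construction to the finite-dimensional solvable case as much as possible, then to address primitivity and invariance properties in turn. For a local function $\chi$ on $\mf g$, Theorems~\ref{Tloc01}, \ref{Tloc}, and \ref{Tlocvir} provide $f \in \kk[t, t^{-1}] \setminus \{0\}$ such that $\chi$ vanishes on $W(f)$ (with $z - \chi(z)$ adjoined if $\mf g = \Vir$). Hence $B_\chi$ descends to a skew-symmetric form on the finite-dimensional solvable quotient $\overline{\mf g} := \mf g/W(f)$, and classical results of Vergne produce a polarization $\overline{\mf p} \subseteq \overline{\mf g}$ at the induced character $\bar\chi$. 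Pulling back yields a polarization $\mf p \subseteq \mf g$ of codimension $\tfrac12 \rk B_\chi$. I then form the induced $\Ua(\mf g)$-module $M_{\chi, \mf p} := \Ua(\mf g) \otimes_{\Ua(\mf p)} \kk_\chi$ and its annihilator $I(\chi, \mf p)$.

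To show $I(\chi, \mf p)$ is primitive and independent of the auxiliary choices, I would establish the following in sequence. First, $M_{\chi, \mf p}$ should have a unique simple quotient: since $W(f) \subseteq \mf p$ acts on the cyclic vector by $\chi$, the $W(f)$-action on $M_{\chi, \mf p}$ is controlled by a finite-dimensional solvable quotient of $\mf g$ (containing $\overline{\mf g}$) where Dixmier's theorem applies. The example of Section~\ref{SSorbitmethod} is a useful model: $M_{x;\gamma}$ identifies with $N_x^{\gamma+1}$ inside the localised Weyl algebra and is simple except in one critical case, where it has a codimension-one unique simple quotient. Second, polarization-independence reduces via the same finite-dimensional quotient to Dixmier's classical result for solvable Lie algebras. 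Third, pseudo-orbit invariance should follow by lifting the $\widehat{\DLoc}$-action from Subsection~\ref{SSgra} to automorphisms of an appropriate completion of $\Ua(\mf g)$, intertwining the induced modules; carrying this out is the first significant technical hurdle, since the action of $\widehat{\DLoc}$ on $\Sa(\mf g)$ is geometric while its lift to $\Ua(\mf g)$ must be constructed by hand from the derived Lie algebra action in Lemma~\ref{Lwgr}.

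The principal obstacle, however, is surjectivity. To obtain every primitive ideal $I$ of $\Ua(\mf g)$ from the construction, one would need a noncommutative analogue of Theorem~\ref{Tgkf}: that every proper primitive ideal of $\Ua(\mf g)$ has finite Gelfand--Kirillov codimension. Given such a finiteness theorem, one could study simple subquotients of $\Ua(\mf g)/I$ under the action of various $W(f)$ to extract a local function $\chi$ with $\Ann M_{\chi, \mf p} \subseteq I$, then argue equality via the primitivity established above. A further subtlety, already visible in Section~\ref{SSorbitmethod} from the equality $\ker \pi_0 = \ker \pi_1$ despite $\CoreP(\chi_{x;\alpha,0}) \ne \CoreP(\chi_{x;\alpha,1})$, is that the orbit-method map is not expected to be injective; a clean formulation would identify its fibres combinatorially in terms of the partition $\lambda(\chi)$ and the parameters of Theorem~\ref{thm:paramprimes}, a description that is itself part of the problem.
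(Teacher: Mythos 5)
The statement you are addressing is a conjecture: the paper does not prove it, offers only the evidence of Section~\ref{Sorbit} (the two-dimensional pseudo-orbits in $W^*$, where $M_{x;\gamma}\cong N_x^{\gamma+1}$ and $\Ann M_{x;\gamma}=\ker\pi_{\gamma+1}$ is computed by hand via the localised Weyl algebra), and explicitly states that these conjectures are the subject of ongoing research. Your proposal is therefore to be judged as a purported proof, and it is not one; it is a research plan, and several of its steps fail or are left open. The most concrete problem is the opening reduction: $W(f)$ is \emph{not} a Lie ideal of $\mf g$ (the paper only notes that $W(f)$ is an ideal of $W(\mathrm{rad}(f))$; e.g. $[t\del,(t-x)h\del]\notin W(t-x)$ in general), so $\overline{\mf g}=\mf g/W(f)$ is not a Lie algebra, and Vergne's and Dixmier's theorems for finite-dimensional solvable Lie algebras cannot be applied to it. The finite-dimensional solvable object available in the paper is the subquotient $W(\mathrm{rad}(f))/W(f)$, not a quotient of $\mf g$, and since $W$ itself is simple, neither the existence of polarizations for an arbitrary local function nor primitivity of the annihilator of the induced module follows formally from the solvable theory; in the paper even the existence and uniqueness of the polarization is only verified for $\chi_{x;\alpha,\gamma}$, where it is exhibited explicitly as $W(t-x)$, and primitivity of the annihilator is proved by identifying the image of $\pi_\gamma$ inside the Weyl algebra (Lemma~\ref{Lpiim}, Proposition~\ref{Prometb}).

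The remaining two pillars of your plan are likewise unsupported. Pseudo-orbit invariance is proposed via a lift of the $\widehat{\DLoc}$-action to (a completion of) $\Ua(\mf g)$ intertwining the induced modules; no such lift is constructed, and the derived action of Lemma~\ref{Lwgr} lives on $\kk[[t-x]]\del$, which does not act on $\Ua(W)$ in any obvious way, so this step is an open problem rather than a reduction. Finally, surjectivity is made to rest on a noncommutative analogue of Theorem~\ref{Tgkf} (that every proper primitive ideal of $\Ua(\mf g)$ has finite GK-codimension); no such theorem is known -- the cited results \cite{PS,IS} concern Poisson ideals of $\Sa(\mf g)$ and ideals of $\Ua(W_{\geq 1})$, not $\Ua(W)$ or $\Ua(\Vir)$ -- so the key finiteness input is itself conjectural. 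Your closing observation that the map cannot be injective (since $\ker\pi_0=\ker\pi_1$ while $\CoreP(\chi_{x;\alpha,0})\neq\CoreP(\chi_{x;\alpha,1})$) is correct and consistent with the paper's remark, but it does not advance the proof. In short, the proposal correctly identifies the shape of the problem but each of the three main steps (well-definedness and primitivity, orbit-invariance, surjectivity) contains a genuine gap, and the first step as written uses a quotient that does not exist.
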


These conjectures are the subject of ongoing research.

 \section*{Index of notation}\label{index}
 
\begin{multicols}{2}
{\small  \baselineskip 14pt
 %
 %

$(a, b)_f$ \hfill\pageref{Esres} 

$a(\mf k)$ \hfill\pageref{not:onepoint}

$\AA(\lambda)$ \hfill\pageref{ind:Alambda}, \pageref{ind:Alambda2}

$B_\chi(x,y)$ \hfill\pageref{ind:Bchi}

$\chi_{x; \alpha_0,\ldots, \alpha_n}$ \hfill\pageref{Elf}

$d(\mf k)$ \hfill\pageref{not:onepoint}

$D_\chi$ \hfill\pageref{ind:Dchi}

$D(\lambda(\chi))$ \hfill\pageref{ind:Dlambda}

$\Dil_x$ \hfill\pageref{ind:Dil}

$\DLoc_x$ \hfill\pageref{ind:DLoc}

$\DLoc_x^{\le n}$ \hfill\pageref{ind:DLocn}

$\DLoc_x^{k+}$, $\DLoc_x^{\le n, k+}$ \hfill \pageref{ind:DLock}

$\widehat{\DLoc}^\lambda$ \hfill \pageref{ind:whDLoclambda}

$\widehat{\DLoc}^{\le n}$,  $\widehat{\DLoc} $ \hfill\pageref{ind:whDloc}

$\widehat{\DLoc}^\lambda$-orbit quotient \hfill\pageref{ind:orbitquotient}

$D(u_1,  \ldots, u_n; v_1,  \ldots, v_n)$  \hfill\pageref{ind:D}

$e_i(x)$, $e_i(x)^*$ \hfill\pageref{ind:ei}

$\End_{t\to s}(\cdot)$ \hfill\pageref{ind:End}

 $\ev_\chi$ \hfill\pageref{ind:evchi}
 
 $f_{\mf k}$ \hfill\pageref{ind:fk}
 
 $\sdeg(\mf k)$,  gaps of  a subalgebra \hfill\pageref{ind:gaps}
 
 $\mf g^\chi$ \hfill\pageref{ind:isotropy}
 
 $I(n)$ \hfill\pageref{detideal}
 
 $I(X) \subseteq A$ \hfill \pageref{ind:I}
 
 $J^\lambda_{\mf g} := I(V^\lambda_{\mf g})$ \hfill\pageref{ind:Jlambda}
 
 $\lambda(\chi)$, order partition of $\chi$ \hfill \pageref{ind:lambda}
 
 $\lambda(Q)$, generic order partition of $Q$ \hfill \pageref{thm:paramprimes}
 
$ \ldeg(\mf k)$, leading degrees of  a subalgebra \hfill\pageref{ind:ldeg}

 $\lie_x^{\le n}$ \hfill\pageref{ind:lien}

 $\widehat{\lie}^{\leq n}$ \hfill\pageref{ind:whlie}

$\Loc^\lambda_{\mf g}$ \hfill\pageref{ind:Loclambda}

 $\Loc^{\le n}_{\mf g}$, 
 $\widetilde{\Loc}_{\mf g}^{\leq n}$, 
 $\Loc_x$, 
 $\Loc_x^{\le n}$ \hfill\pageref{ind:Loc1}
 
 local function \hfill\pageref{def:local}

 $\mf{m}_\chi$  \hfill\pageref{ind:mchi}, \pageref{ind:mchi2}

 $\MSpec A$ \hfill\pageref{ind:Mspec}
 
 $\overline{\mb O(\chi)}$, orbit closure of $\chi$ \hfill\pageref{ind:orbitclosure}
 
order of a local function \hfill\pageref{ind:order}

$p_\gamma: \Sa(W) \to \kk[t, t^{-1}, y]$ \hfill \pageref{pgamma}
 
$ \pi_{\mf g}^{\leq n}$, 
$\phi_{\mf g }^{\leq n}$ \hfill\pageref{ind:phig}

 
 $\CoreP(\chi)$, Poisson core \hfill\pageref{ind:poissoncore}, \pageref{ind:poissoncore2}
 
 
 
 
 Poisson morphism \hfill\pageref{ind:poissonmor}
 
 

$\PSpec_{\rm prim}A$, $\PSpec A$ \hfill\pageref{ind:Pspec}

pseudo-orbit, $\mb O(\chi)$ \hfill\pageref{def:pseudoorbit}

${\rm rad}(f)$ \hfill\pageref{ind:rad}

$\mf S_\lambda$ \hfill\pageref{ind:Slambda}

$\Sigma^\lambda_{\mf g}$ \hfill\pageref{ind:Sigma}

$\Shift_z$, $\Shifts$ \hfill\pageref{ind:Shifts}

support of a local function \hfill\pageref{ind:support}

support of a subalgebra, $\supp(\mf k)$ \hfill\pageref{ind:suppk}

 
 $u_\chi, \mf u_\chi$ \hfill\pageref{ind:uchi}
 
$\Vir$ \hfill\pageref{ind:Vir}

$V^\lambda_{\mf g}$ \hfill\pageref{ind:Vlambda}, \pageref{ind:Vlambda2}

$V(N) \subseteq \MSpec(A)$ \hfill\pageref{ind:V}

$\VP(N)  \subseteq  \PSpec A $ \hfill\pageref{ind:VP}

$W$ \hfill\pageref{ind:W}

$W(g)$ \hfill\pageref{ind:Wg}, \pageref{ind:Wg2}

$W_{\geq-1}$ \hfill\pageref{ind:W-1}

$W_{\geq -1}(f)$ \hfill\pageref{ind:W-1f}

$W_{\geq1}$ \hfill\pageref{ind:W1}

$W_{x; \alpha}^{2; 1}$, $W_{x; \alpha}^{2; 2}$ \hfill\pageref{ind:codim2}

$ W_{x, y; \alpha, \beta}^{3A}$, $W_{x, y; \alpha}^{3B1}$, etc.
\hfill\pageref{ind:codim3}

$Y(Q)$ \hfill \pageref{thm:paramprimes}
 
}
\end{multicols}

\end{document}